\renewcommand{\sl}{\mathfrak{sl}}
\newcommand{\asl}{\widehat{\sl}}
\newcommand{\Tr}{\text{Tr}}
\newcommand{\hh}{\mathfrak{h}}
\newcommand{\bb}{\mathfrak{b}}
\newcommand{\cR}{\mathcal{R}}
\newcommand{\JJ}{\mathbb{J}}
\newcommand{\cJ}{\mathcal{J}}
\newcommand{\wh}{\widetilde{\hh}}
\newcommand{\Ad}{\text{Ad}}
\newcommand{\wt}{\text{wt}}
\newcommand{\QQ}{\mathbb{Q}}
\newcommand{\id}{\text{id}}
\newcommand{\wtilde}{\widetilde}
\renewcommand{\AA}{\mathcal{A}}
\newcommand{\CC}{\mathbb{C}}
\newcommand{\RR}{\mathbb{R}}
\newcommand{\eps}{\varepsilon}
\let\mgg\gg
\renewcommand{\gg}{\mathfrak{g}}
\newcommand{\abb}{\widehat{\bb}}
\newcommand{\agg}{\widehat{\gg}}
\newcommand{\wgg}{\widetilde{\gg}}
\newcommand{\Aut}{\text{Aut}}
\newcommand{\wrho}{\widetilde{\rho}}
\newcommand{\hotimes}{\widehat{\otimes}}
\newcommand{\Hom}{\text{Hom}}
\newcommand{\DD}{\mathcal{D}}
\newcommand{\MM}{\mathcal{M}}
\newcommand{\whh}{\widehat{h}}
\newcommand{\End}{\text{End}}
\newcommand{\ch}{h^\vee} 
\newcommand{\ahh}{\widehat{\hh}}
\newcommand{\wPhi}{\wtilde{\Phi}}
\newcommand{\wPsi}{\wtilde{\Psi}}
\newcommand{\wJ}{\wtilde{J}}
\newcommand{\Ind}{\text{Ind}}
\newcommand{\hV}{\widehat{V}}
\newcommand{\cL}{\mathcal{L}}
\newcommand{\LL}{\mathbb{L}}
\newcommand{\trig}{\text{trig}}
\newcommand{\GG}{\mathbb{G}}
\newcommand{\weta}{\wtilde{\eta}}
\newcommand{\wGG}{\wtilde{\GG}}
\theoremstyle{definition}
\newtheorem{theorem}{Theorem}[section]
\newtheorem{prop}[theorem]{Proposition}
\newtheorem{lemma}[theorem]{Lemma}
\newtheorem{corr}[theorem]{Corollary}
\newtheorem*{remark}{Remark}
\numberwithin{equation}{section}
\begin{document}
\title[Traces of intertwiners for quantum affine algebras and difference equations]{Traces of intertwiners for quantum affine algebras and difference equations \\
(after Etingof-Schiffmann-Varchenko)}
\author{Yi Sun}
\address{Y.S.: Department of Mathematics\\ Columbia University\\ 2990 Broadway\\ New York, NY 10027, USA}
\email{yisun@math.columbia.edu}
\date{\today}

\begin{abstract}
We modify and give complete proofs for the results of Etingof-Schiffmann-Varchenko in \cite{ESV} on traces of intertwiners of untwisted quantum affine algebras in the opposite coproduct and the standard grading.  More precisely, we show that certain normalized generalized traces $F^{V_1, \ldots, V_n}(z_1, \ldots, z_n; \lambda, \omega, \mu, k)$ for $U_q(\agg)$ solve four commuting systems of $q$-difference equations: the Macdonald-Ruijsenaars, dual Macdonald-Ruijsenaars, $q$-KZB, and dual $q$-KZB equations.  In addition, we show a symmetry property for these renormalized trace functions.  Our modifications are motivated by their appearance in the recent work \cite{Sun:qafftr}.
\end{abstract}

\maketitle
\setcounter{tocdepth}{1}
\tableofcontents

\section{Introduction}

This work presents complete proofs for modifications of the results of Etingof-Schiffmann-Varchenko in \cite{ESV} on traces of intertwiners of untwisted quantum affine algebras.  In that work, certain normalized generalized traces $F^{V_1, \ldots, V_n}(z_1, \ldots, z_n; \lambda, \omega, \mu, k)$ for $U_q(\agg)$ were shown to solve four commuting systems of $q$-difference equations: the Macdonald-Ruijsenaars, dual Macdonald-Ruijsenaars, $q$-KZB, and dual $q$-KZB equations.  In addition, these renormalized trace functions were shown to satisfy a symmetry property.  These results were generalizations to the quantum affine setting of the prior results \cite{EV, ES} of Etingof-Varchenko and Etingof-Schiffmann for finite-type quantum groups and \cite{Eti4, ES3} of Etingof and Etingof-Schiffmann for classical affine algebras.  The $q$-KZB equations which appear were previously studied by Felder-Tarasov-Varchenko in \cite{FTV, FTV2, FV}.

The purpose of the present work is twofold.  First, we provide an exposition of the proofs omitted from \cite{ESV}.  Second, we modify the statements of \cite{ESV} to use the opposite coproduct and use the standard grading instead of the principal grading.  The second modification in particular allows trigonometric limits of the results to be easily taken to recover the results of \cite{EV} for finite-type quantum groups.

These modifications were motivated by the recent work \cite{Sun:qafftr} of the author, where a trace function for $U_q(\asl_2)$ was explicitly computed and related to certain theta hypergeometric integrals appearing in \cite{FV} as part of Felder-Varchenko's solutions to the $q$-KZB heat equation.  In \cite{Sun:qafftr}, the opposite coproduct to that of \cite{ESV} was considered in order to use the bosonization of \cite{Mat}, and the standard grading was used both to compare with the Felder-Varchenko function and to enable comparison with the trigonometric limit.  While the techniques used in this paper are similar to those sketched in \cite{ESV}, to ensure that the appropriate modifications to the corresponding $q$-difference equations are made, the author has chosen to write complete proofs for these modified versions.

In the remainder of this introduction, we state our results more precisely and describe the organization of the paper. For the reader's convenience, all notations will be redefined in later sections.

\subsection{Normalized trace functions for $U_q(\agg)$}

Throughout this introduction, we adopt the conventions for the quantum affine algebra $U_q(\agg)$ from Section \ref{sec:qa} and for dynamical notation from Section \ref{sec:dyn}.  Let $V_1, \ldots, V_n$ be finite dimensional $U_q(\agg)$-representations, and let $M_{\mu, k, a}$ be the Verma module with highest weight $\mu + k\Lambda_0 + a \delta$.  By the results of \cite{EFK}, there is a unique universal intertwiner
\[
\Phi_{\mu, k, a}^{V_1, \ldots, V_n}(z_1, \ldots, z_n): M_{\mu, k, a} \to M_{\mu - \tau_1 - \cdots - \tau_n, k - k_1' - \cdots - k_n', a} \hotimes V_1[z_1^{\pm 1}] \hotimes \cdots \hotimes V_n[z_n^{\pm 1}] \otimes V_n^* \otimes \cdots \otimes V_1^*.
\]
In these terms, the universal trace function is defined by 
\[
\Psi^{V_1,\ldots, V_n}(z_1, \ldots, z_n; \lambda, \omega, \mu, k) := \Tr|_{M_{\mu, k}}\Big(\Phi^{V_1, \ldots, V_n}_{\mu, k}(z_1, \ldots, z_n)q^{2\lambda + 2 \omega d}\Big).
\]
Our main object of study will be the following normalized version of the universal trace function, where the fusion operator $\JJ$, its collapsed version $\QQ$, and the Weyl denominator $\delta_q$ are defined in Sections \ref{sec:fus-op}, \ref{sec:fus-col}, and \ref{sec:wd-sec}.  We define the normalized trace function for $U_q(\agg)$ by 
\begin{multline*}
F^{V_1, \ldots, V_n}(z_1, \ldots, z_n; \lambda, \omega, \mu, k) := \QQ_{V_n^*}((\mu, k) - h^{(* 1 \cdots * n)})^{-1} \cdots \QQ_{V_1^*}((\mu, k) - h^{(*1)})^{-1} \\
 \JJ^{1 \cdots n}_{V_1[z_1^{\pm 1}] \otimes \cdots \otimes V_n[z_n^{\pm 1}]}(z_1, \ldots, z_n; \lambda, \omega)^{-1} \Psi^{V_1, \ldots, V_n}(z_1, \ldots, z_n; \lambda, \omega, \mu - \rho, k - h) \delta_q(\lambda, \omega).
\end{multline*}

\subsection{Macdonald-Ruijsenaars and dual Macdonald-Ruijsenaars equations}

Our first class of main results concerns affine analogues of the Macdonald-Ruijsenaars and dual Macdonald-Ruijsenaars equations.  These equations state that certain infinite difference operators are diagonalized on the normalized trace functions.  Let $W$ be an integrable lowest weight $U_q(\wgg)$-module of non-positive integer level $k_W$, and let $V_1, \ldots, V_n$ be finite-dimensional $U_q(\agg)$-modules.  Let also $\RR$ denote the exchange operator defined in Section \ref{sec:ex-op-def}.  Define the difference operator 
\[
\DD_W(\omega, k) := \sum_{\nu \in \hh^*, a \in \CC} \Tr|_{W[\nu + a \delta + k_W \Lambda_0]}\Big(\RR_{WV_1}(1, z_1; (\lambda, \omega) - h^{(2\cdots n)}) \cdots \RR_{W V_n}(1, z_n; \lambda, \omega)\Big) q^{-2ka} T^{\lambda, \omega}_{\nu, k_W},
\]
where $T^{\lambda, \omega}_{\nu, k_W} f(\lambda, \omega) = f(\lambda - \nu, \omega - k_W)$.  Define also the dual operator
\[
\DD_W^\vee(\omega, k) = \sum_{\nu \in \hh^*, a \in \CC} \Tr|_{W[\nu + a \delta + k_W \Lambda_0]}\Big(\RR_{WV_n^*}(1, z_n; (\mu, k) - h^{(* 1 \cdots * (n-1))}) \cdots \RR_{WV_1^*}(1, z_1; \mu, k)\Big) q^{-2\omega a} T^{\mu, k}_{\nu, k_W},
\]
where $T^{\mu, k}_{\nu, k_W}f(\mu, k) = f(\mu - \nu, k - k_W)$.  The Macdonald-Ruijsenaars and dual Macdonald-Ruijsenaars equations state that $\DD_W$ and $\DD_W^\vee$ are diagonalized on the normalized trace functions.

{\renewcommand{\thetheorem}{\ref{thm:mr}}
\begin{theorem}[Macdonald-Ruijsenaars equation] 
For any integrable lowest weight representation $W$ of non-positive integer level $\eta$, we have
\[
\DD_W(\omega, k) F^{V_1, \ldots, V_n}(z_1, \ldots, z_n; \lambda, \omega, \mu, k) = \chi_W(q^{-2\mu - 2kd}) F^{V_1, \ldots, V_n}(z_1, \ldots, z_n; \lambda, \omega, \mu, k),
\]
where $\chi_W$ is the character of $W$.
\end{theorem}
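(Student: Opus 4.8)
The plan is to reduce the statement to an identity for the unnormalized universal trace function $\Psi$ and then transfer it to $F$ through the normalization. Conceptually, I read $\DD_W(\omega, k)$ as a quantum transfer matrix: the coefficient $\Tr|_{W[\nu + a\delta + k_W\Lambda_0]}(\RR_{WV_1}\cdots\RR_{WV_n})$ records the insertion of $W$ as an auxiliary space linked to each $V_i$ through an exchange operator, while the shift $T^{\lambda,\omega}_{\nu,k_W}$ and the scalar $q^{-2ka}$ bookkeep the weight $\nu$, level $k_W$, and imaginary-root degree $a$ carried by the traced-out copy of $W$. My first step is therefore to realize each factor $\RR_{WV_i}$ through the intertwiner description of the exchange operator from Section \ref{sec:ex-op-def}, and to assemble the sum over $(\nu, a)$ into a single trace over all of $W$. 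This turns the left-hand side into a trace over $M_{\mu,k} \hotimes W$ of the intertwiner $\Phi^{V_1,\ldots,V_n}_{\mu,k}$ dressed by the braiding that moves $W$ past each $V_i$, with $q^{2\lambda + 2\omega d}$ acting on $M_{\mu,k}$.

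Next I would exploit cyclicity of the trace over the auxiliary factor $W$ together with the exchange relations satisfied by the operators $\RR_{WV_i}$. The shift operators $T^{\lambda,\omega}_{\nu,k_W}$ acting on $\Psi$ reinsert, weight space by weight space, the action of the relevant group-like element on the traced-out copy of $W$, so that summing over $(\nu, a)$ rebuilds a genuine trace over all of $W$. Sliding this closed auxiliary $W$-line around the trace loop makes the exchange operators telescope against their inverses, precisely as the monodromy of an auxiliary space collapses on a highest weight module. What survives is a central element acting on $M_{\mu,k}$, and since $M_{\mu,k}$ is a highest weight module with highest weight governed by $\mu + k\Lambda_0$, this element acts by the scalar $\chi_W(q^{-2\mu - 2kd})$. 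This establishes $\widetilde{\DD}_W \Psi = \chi_W(q^{-2\mu - 2kd}) \Psi$ for the bare operator $\widetilde{\DD}_W$ assembled from the same data prior to renormalization.

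The final step is the passage from $\Psi$ to $F$. Since $F$ is obtained from $\Psi$ by applying the inverse fusion operator $\JJ^{-1}$, the inverse collapsed fusion operators $\QQ^{-1}$, and the Weyl denominator $\delta_q$ with the shifted arguments of the definition, I must check that conjugating $\widetilde{\DD}_W$ by these factors reproduces exactly the operator $\DD_W$ written in terms of the renormalized exchange operators $\RR$. This is where the coherence and cabling identities for $\JJ$ from Section \ref{sec:fus-op}, the relation between $\RR$ and $\JJ$ from Section \ref{sec:ex-op-def}, and the shift behavior of $\QQ$ from Section \ref{sec:fus-col} enter: the exchange operators appearing in $\DD_W$ are by construction the $\JJ$-conjugates of the bare braiding, and $\delta_q$ is compatible with the relevant shifts, so the normalization is engineered to intertwine the bare transfer matrix with $\DD_W$ while leaving the eigenvalue $\chi_W(q^{-2\mu - 2kd})$ untouched.

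I expect the main obstacle to be the bookkeeping specific to the affine setting rather than the conceptual skeleton. One must track the level $k_W$ and the degree $a$ through every shift, and confirm that the infinite sum over $\nu \in \hh^*$ and $a \in \CC$ defining $\DD_W$ converges and may be interchanged with the traces, the convergence being controlled by the non-positivity of $k_W$ and the lowest weight structure of $W$. A closely related difficulty is matching the dynamical arguments of the exchange operators, where the shifts $(\lambda, \omega) - h^{(2 \cdots n)}$ by the weights of $V_2, \ldots, V_n$ must be reconciled with the shift operators $T^{\lambda,\omega}_{\nu, k_W}$ and with the use of the opposite coproduct and the standard grading adopted here. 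Once these compatibilities are verified, the diagonalization follows from the telescoping described above.
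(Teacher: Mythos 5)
Your overall skeleton is the paper's: the left-hand side is recognized as the trace of the intertwiner against the central element $C_W = (\id \otimes \Tr|_W)(\cR^{21}\cR)(1 \otimes q^{-2\wrho})$ of Proposition \ref{prop:eti-cent}, and the eigenvalue comes from the scalar action of $C_W$ on the Verma module (the shift by $\wrho$ in $\chi_W(q^{-2\mu-2kd-2\wrho})$ being absorbed by the arguments $(\mu-\rho, k-h)$ in the definition of $F$). But there is a genuine gap in your middle step. The exchange operators do not simply ``telescope against their inverses'' leaving a central element behind: what the radial-part computation actually yields (Proposition \ref{prop:trace-coeff}, via Lemmas \ref{lem:z-val} and \ref{lem:x-val} and repeated use of the ABRR equation to produce the dynamical shifts) is that inserting $C_W$ into the trace equals the sum over $(\nu, a)$ of
\[
\Tr|_{W[\nu + a\delta + k_W\Lambda_0]}\Big(q^{-2ka - 2ha}\,\GG(\lambda,\omega)\,\RR_{WV}(1; z_1, \ldots, z_n; \lambda, \omega)\Big)
\]
applied to the shifted trace function, where $\GG(\lambda,\omega) = \QQ((\lambda,\omega) - h^{(1)})^{-1} S(\QQ(\lambda,\omega)) q^{-2\rho}$ is an a priori unknown element of a completion of $U_q(\wgg)$ sitting \emph{inside} the trace over $W$. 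Your proposal produces no such factor, so the bare operator you obtain is not $\DD_W$, and it cannot be matched to $\DD_W$ by conjugating with the normalization factors alone.

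Closing this gap is the technical heart of the paper's proof of Theorem \ref{thm:mr}: one establishes the twisted coproduct identity for $\GG$ (Lemma \ref{lem:g-coprod}), deduces the zero-curvature relation for its highest-weight eigenvalues (Lemma \ref{lem:zero-curve}), solves it by a formal expansion in $\hbar$, and finally evaluates the whole identity at $V = \CC$ to conclude that $\GG(\lambda,\omega)$ acts as the scalar $\delta_q((\lambda,\omega) - h^{(1)})/\delta_q(\lambda,\omega)$ (Lemma \ref{lem:g-comp}). Only after this does the Weyl denominator in the normalization of $F$ absorb the coefficient, and only then does Lemma \ref{lem:c-fuse} convert $\RR_{WV}$ into the product $\RR_{WV_1}\cdots\RR_{WV_n}$ with the stated dynamical shifts. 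Your assertion that ``the normalization is engineered to intertwine the bare transfer matrix with $\DD_W$'' assumes exactly what must be proved; in particular, without computing $\GG$ one cannot even see why $\delta_q(\lambda,\omega)$ appears in the definition of $F$ at all.
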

\addtocounter{theorem}{-1}}

{\renewcommand{\thetheorem}{\ref{thm:dmr}}
\begin{theorem}[dual Macdonald-Ruijsenaars equation] 
For any integrable lowest weight representation $W$ of non-positive integer level $k_W$, we have
\[
\DD_W^\vee(\omega, k) F^{V_1, \ldots, V_n}(z_1, \ldots, z_n; \lambda, \omega, \mu, k) = \chi_W(q^{-2\lambda - 2 \omega d}) F^{V_1, \ldots, V_n}(z_1, \ldots, z_n; \lambda, \omega, \mu, k),
\]
where $\chi_W$ is the character of $W$.
\end{theorem}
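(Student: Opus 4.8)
The plan is to prove the dual Macdonald-Ruijsenaars equation as the mirror image of the primal equation (Theorem \ref{thm:mr}), with the roles of the trace module $M_{\mu, k}$ and the grading operator $q^{2\lambda + 2\omega d}$—equivalently, of the parameters $(\mu, k)$ and $(\lambda, \omega)$—interchanged. Whereas the primal operator $\DD_W$ is built from exchange operators $\RR_{WV_i}$ with the primal factors and is diagonalized by contracting an inserted $W$-strand against the highest-weight data of the Verma module (giving $\chi_W(q^{-2\mu - 2kd})$), the dual operator $\DD_W^\vee$ is built from $\RR_{WV_i^*}$ with the dual factors and should be diagonalized by contracting the $W$-strand against the grading operator $q^{2\lambda + 2\omega d}$ instead. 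The core idea is to unfold $\DD_W^\vee$ and recognize its action on $F^{V_1, \ldots, V_n}$ as a single trace over $M_{\mu, k}$ carrying one additional $W$-strand, which can then be evaluated to the character $\chi_W(q^{-2\lambda - 2\omega d})$.

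Concretely, I would proceed as follows. First, unfold $\DD_W^\vee F^{V_1, \ldots, V_n}$ as a sum over the weight spaces $W[\nu + a\delta + k_W\Lambda_0]$ of traces of exchange operators, weighted by $q^{-2\omega a}$ and composed with the shift $T^{\mu, k}_{\nu, k_W} F^{V_1, \ldots, V_n}$. Second, use this sum over weight spaces together with the shifts $T^{\mu, k}_{\nu, k_W}$ to re-glue the weight decomposition of a $W$-intertwiner, thereby reconstructing a trace over $M_{\mu, k}$ with an inserted $W$-strand placed adjacent to the rightmost dual factor $V_1^*$. Third, observe that the exchange operators $\RR_{WV_j^*}$ record commuting this strand leftward past $V_1^*, V_2^*, \ldots, V_n^*$, with the dynamical parameters shifted by the accumulated weights $h^{(*1 \cdots *(j-1))}$ of the dual factors already passed; this produces exactly the ordered product $\RR_{WV_n^*} \cdots \RR_{WV_1^*}$ occurring in $\DD_W^\vee$. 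Fourth, cyclicity of the trace frees the $W$-strand to wrap around the grading operator $q^{2\lambda + 2\omega d}$, and by the integrability of the lowest-weight module $W$ this $W$-loop evaluates to $\chi_W(q^{-2\lambda - 2\omega d})$, leaving $F^{V_1, \ldots, V_n}$ behind. Throughout, I would track how the normalization factors $\JJ$, $\QQ$, and $\delta_q$ in the definition of $F^{V_1, \ldots, V_n}$ convert the naive $R$-matrices arising from the intertwiner commutations into the renormalized exchange operators $\RR$; this is precisely the role those factors are designed to play.

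The main obstacle will be the exact bookkeeping of the dynamical shifts in the arguments of the $\RR_{WV_j^*}$ and their telescoping under cyclicity: one must verify that the accumulated shifts $h^{(*1 \cdots *(j-1))}$ combine with the Verma shift $T^{\mu, k}_{\nu, k_W}$ to reconstitute $\DD_W^\vee$ exactly, rather than a conjugated or reordered variant. A second difficulty, special to the affine setting, is justifying the convergence and the formal rearrangement of the infinite sums over $\nu \in \hh^*$ and $a \in \CC$; these must be controlled using lowest-weight integrability of $W$ and the damping provided by the $q^{-2\omega a}$ weighting. Finally, since this paper adopts the opposite coproduct and the standard grading, I would need to confirm that the reversed ordering $V_n^*, \ldots, V_1^*$ of the dual block and the placement of $q^{2\lambda + 2\omega d}$ are consistent with these conventions, since those choices fix both the precise form of $\DD_W^\vee$ and the argument of $\chi_W$. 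As an alternative, one could instead deduce this theorem from the primal equation together with the symmetry property of $F^{V_1, \ldots, V_n}$, provided that symmetry is established independently; the direct route sketched above has the advantage of avoiding any potential circularity with the symmetry result.
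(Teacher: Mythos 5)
Your proposal matches the paper's proof in all essentials: the paper likewise decomposes $M_{\mu, k} \otimes W^*$ via the isomorphism $\eta$ furnished by Proposition \ref{prop:int-exist}, establishes the intertwiner identity of Proposition \ref{prop:l-comm} relating the $\cR$-matrix insertion to the dual exchange operator, and evaluates the resulting trace in two ways so that only the Cartan part of $\cR$ survives and produces $\chi_W(q^{-2\lambda - 2\omega d})$. The one point where your picture is imprecise is that the factors $V_i^*$ in the trace function are dual-basis labels rather than intertwiner legs, so the operators $\RR_{WV_i^*}$ are not obtained by literally commuting the $W$-strand past them but by dualizing the exchange operators $\RR_{W^*V_i}$ acting on the primal factors via Lemma \ref{lem:r-trans-val} --- which is exactly where the $\QQ$-normalization in $F$ is consumed, a bookkeeping step your final paragraph correctly anticipates.
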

\addtocounter{theorem}{-1}}

Define the function $F_\star^{V_n^*, \ldots, V_1^*}$ to be the result of interchanging $V_i$ and $V_{n + 1 - i}^*$ in the definition of $F^{V_1, \ldots, V_n}$.  We may also deduce from Theorems \ref{thm:mr} and \ref{thm:dmr} the following symmetry relation.

{\renewcommand{\thetheorem}{\ref{thm:mrs}}
\begin{theorem}[Macdonald symmetry identity]
The functions $F^{V_1, \ldots, V_n}$ and $F_\star^{V_n^*, \ldots, V_1^*}$ satisfy the symmetry relation
\[
F^{V_1, \ldots, V_n}(z_1, \ldots, z_n; \lambda, \omega, \mu, k) = F_\star^{V_n^*, \ldots, V_1^*}(z_n, \ldots, z_1; \mu, k, \lambda, \omega).
\]
\end{theorem}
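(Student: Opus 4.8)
The plan is to show that both sides of the claimed identity are common eigenfunctions of the \emph{same} commuting systems of Macdonald--Ruijsenaars and dual Macdonald--Ruijsenaars difference operators, with identical eigenvalues, and then to invoke uniqueness of such eigenfunctions (normalized by their leading asymptotic term) to conclude that they coincide. Concretely, write
\[
G(\lambda, \omega, \mu, k) := F_\star^{V_n^*, \ldots, V_1^*}(z_n, \ldots, z_1; \mu, k, \lambda, \omega)
\]
for the right-hand side, viewed again as a function of $(\lambda, \omega, \mu, k)$; the goal is to prove $F^{V_1, \ldots, V_n} = G$.

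First I would apply Theorems \ref{thm:mr} and \ref{thm:dmr} to $F_\star^{V_n^*, \ldots, V_1^*}$, whose ordered list of representations is $V_n^*, \ldots, V_1^*$ with spectral parameters $z_n, \ldots, z_1$. The Macdonald--Ruijsenaars operator attached to this data is built from the product $\RR_{WV_n^*}(1, z_n; \cdot) \cdots \RR_{WV_1^*}(1, z_1; \cdot)$, which is precisely the operator $\DD_W^\vee$ of Theorem \ref{thm:dmr} for $F^{V_1, \ldots, V_n}$; likewise, using the double-dual identification $(V_i^*)^* \cong V_i$, the dual operator for $F_\star^{V_n^*, \ldots, V_1^*}$ is built from $\RR_{WV_1}(1, z_1; \cdot) \cdots \RR_{WV_n}(1, z_n; \cdot)$ and so agrees with $\DD_W$ of Theorem \ref{thm:mr}. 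Substituting $(\mu, k)$ for the first dynamical pair and $(\lambda, \omega)$ for the second, the Macdonald--Ruijsenaars equation for $F_\star$ becomes the dual equation $\DD_W^\vee(\omega, k) G = \chi_W(q^{-2\lambda - 2\omega d}) G$, while the dual equation for $F_\star$ becomes $\DD_W(\omega, k) G = \chi_W(q^{-2\mu - 2kd}) G$. These are exactly the equations that Theorems \ref{thm:mr} and \ref{thm:dmr} assert for $F^{V_1, \ldots, V_n}$, so $G$ and $F^{V_1, \ldots, V_n}$ satisfy the same system for every integrable lowest weight $W$.

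It then remains to establish a uniqueness statement: a common eigenfunction of all the operators $\DD_W$ and $\DD_W^\vee$ with the prescribed eigenvalues is determined by its leading asymptotic coefficient. I would prove this by expanding a putative solution as a formal series in the dynamical exponential variables and showing that the difference equations recursively determine every coefficient from the top one; since the characters $\chi_W$, as $W$ ranges over all integrable lowest weight modules, separate the joint spectrum, the leading coefficient is the only remaining freedom. Finally I would compare the leading terms of $F^{V_1, \ldots, V_n}$ and $G$, verifying that the normalizing factors $\QQ$, $\JJ$, and $\delta_q$ are arranged so that both functions have the same invertible leading coefficient, whence $F^{V_1, \ldots, V_n} = G$.

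The main obstacle is this last uniqueness-and-normalization step. Matching the operators requires care with the identification $(V_i^*)^* \cong V_i$, which in the quantum setting is twisted by the square of the antipode; the interchange $V_i \leftrightarrow V_{n+1-i}^*$ built into the definition of $F_\star$ is precisely what absorbs this twist, and I would need to check that it does so compatibly with the exchange operators $\RR$. Establishing simplicity of the joint spectrum in the affine setting---where the level direction must be controlled alongside the finite weight directions, and where convergence of the relevant expansions is delicate---and computing the leading asymptotics of the normalized trace function are the substantive inputs; the remaining bookkeeping of eigenvalues and operator orders is routine.
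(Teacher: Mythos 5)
Your proposal is correct and follows essentially the same route as the paper: observe that the Macdonald--Ruijsenaars and dual Macdonald--Ruijsenaars systems are exchanged under the swap of $(\lambda,\omega)$ with $(\mu,k)$ and $V_i$ with $V_{n+1-i}^*$, then invoke uniqueness of formal solutions (Lemma \ref{lem:form-sol}, proved via the leading-monomial/character-separation argument you sketch) and compare leading terms. The only cosmetic difference is at the very end: rather than directly computing that the leading coefficients match, the paper relates the two functions by an operator $M(\mu,k)$, runs the argument in both variables to get $M(\mu,k)M'(\lambda,\omega)=1$, and concludes $M=1$ from constancy plus the leading term, which sidesteps some of the normalization bookkeeping you flag as the main obstacle.
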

\addtocounter{theorem}{-1}}

\subsection{$q$-KZB and dual $q$-KZB equations}

Our second class of main results concerns affine analogues of the $q$-KZ equations known as the $q$-KZB equations introduced by Felder in \cite{Fel} and studied by Felder-Tarasov-Varchenko in \cite{FTV, FTV2}.  These equations state that shifts in the spectral parameters by the modular parameters are implemented by certain difference operators in $(\lambda, \omega)$ and $(\mu, k)$.  More precisely, the $q$-KZB operators and dual $q$-KZB operators are defined by 
\begin{align*}
K_j(z_1, \ldots, z_n;& \lambda, \omega, k) := \RR_{V_{j+1}V_j}(z_{j+1}, q^{2k} z_j; (\lambda, \omega) - h^{((j + 2) \cdots n)})^{-1} \cdots \RR_{V_nV_j}(z_n, q^{2k}z_j; \lambda, \omega)^{-1} \Gamma_j\\
&\phantom{=} \RR_{V_jV_1}(z_j, z_1; (\lambda, \omega) - h^{(2 \cdots (j-1))} - h^{((j+1) \cdots n)}) \cdots \RR_{V_j V_{j-1}}(z_j, z_{j-1}; (\lambda, \omega) - h^{((j + 1) \cdots n)})\\
&\phantom{=}D_j(\mu) := q_{*j}^{-2\mu + \sum_i x_i^2} q^{\Omega_{*j, *(j-1)}} \cdots q^{\Omega_{*j*1}},
\end{align*}
where $\Gamma_j f(\lambda, \omega) = f\Big((\lambda, \omega) + h^{(j)}\Big)$ and
\begin{align*}
K_j^\vee(&z_1, \ldots, z_n; \mu, k, \omega) := \RR_{V_{j-1}^*, V_j^*}(z_{j-1}, q^{2\omega} z_j; (\mu, k) - h^{(*1 \cdots *(j-2))})^{-1} \cdots \RR_{V_1^*, V_j^*}(z_1, q^{2\omega}z_j; \mu, k)^{-1} \Gamma_{*j} \\
&\phantom{=} \RR_{V_j^* V_n^*}(z_j, z_n; (\mu, k) - h^{(*1 \cdots *(j-1))} - h^{(*(j+1) \cdots *(n-1))}) \cdots \RR_{V_j^* V_{j+1}^*}(z_j, z_{j+1}; (\mu, k) - h^{(*1  \cdots *(j-1))})\\
&\phantom{======!}D_j^\vee(\lambda) := q_j^{-2\lambda + \sum_i x_i^2} q^{\Omega_{j, j + 1}} \cdots q^{\Omega_{j, n}},
\end{align*}
where $\Gamma_{*j} f(\mu, k) = f\Big((\mu, k) + h^{(*j)}\Big)$.  We recall here that $\RR$ is the exchange operator defined in Section \ref{sec:ex-op-def}.

Note that $K_j(z_1, \ldots, z_n; \lambda, \omega, k)$ and $K_j^\vee(z_1, \ldots, z_n; \mu, k, \omega)$ are difference operators in $(\lambda, \omega)$ and $(\mu, k)$ whose coefficients are linear operators on $V$ and $V^*$ and that $D_j(\mu)$ and $D_j^\vee(\lambda)$ are linear operators on $V^*$ and $V$.  The $q$-KZB and dual $q$-KZB equations relate the actions of $K_j$ and $D_j$ and $K_j^\vee$ and $D_j^\vee$ on the normalized trace functions.

{\renewcommand{\thetheorem}{\ref{thm:qkzb}}
\begin{theorem}[$q$-KZB equation]
For $j = 1, \ldots, n$, we have
\begin{multline*}
F^{V_1, \ldots, V_n}(z_1, \ldots, q^{2k}z_j,\ldots, z_n; \lambda, \omega, \mu, k)\\ = \Big(K_j(z_1, \ldots, z_n;\lambda, \omega, k) \otimes D_j(\mu)\Big) F^{V_1, \ldots, V_n}(z_1, \ldots, z_j, \ldots, z_n; \lambda, \omega, \mu, k).
\end{multline*}
\end{theorem}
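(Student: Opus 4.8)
The plan is to prove the corresponding identity for the universal trace function $\Psi$ first and then transfer it to $F$ by conjugating with the fusion operator $\JJ$, the collapsed fusion operators $\QQ$, and the Weyl denominator $\delta_q$. The engine of the argument is the cyclicity of the trace over $M_{\mu,k}$ together with the behavior of the universal intertwiner under the spectral shift $z_j \mapsto q^{2k}z_j$.

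First I would realize the universal intertwiner as an ordered composition of single-variable intertwiners, so that $\Psi$ takes the form $\Tr|_{M_{\mu,k}}\bigl(\Phi^{V_1}(z_1)\cdots \Phi^{V_n}(z_n)\, q^{2\lambda+2\omega d}\bigr)$. I would then rewrite the shifted object $\Psi(z_1,\ldots,q^{2k}z_j,\ldots,z_n)$ using the homogeneity of $V_j[z_j]$ under the degree operator $d$, which presents the spectral shift as a conjugation of the $j$-th factor by a grading element. Since the intertwiner conserves total degree and the center acts on $M_{\mu,k}$ by the level $k$, this conjugation can be matched against the grading element $q^{2\omega d}$ already present in the trace; it is the central charge entering here that forces the natural shift parameter to be $q^{2k}$ rather than a bare $q^{2\omega}$.

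Next I would use cyclicity of the trace to transport the $j$-th factor together with its grading element all the way around the cyclic word, past $\Phi^{V_{j+1}},\ldots,\Phi^{V_n}$ in one direction and past $\Phi^{V_1},\ldots,\Phi^{V_{j-1}}$ in the other. By the definition of the exchange operator $\RR$ in Section \ref{sec:ex-op-def}, each passage replaces a pair of intertwiners in one order by the opposite order dressed with an $\RR_{V_iV_j}^{\pm 1}$ whose dynamical argument is shifted by the weights $h^{(\cdots)}$ of the tensor factors lying between the moving strand and the Cartan element; the moving strand carries spectral parameter $q^{2k}z_j$ on the first half of its journey and $z_j$ on the second, the change occurring exactly as it crosses $q^{2\lambda+2\omega d}$. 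That crossing also yields the dynamical shift $\Gamma_j$, and the parallel rotation of the dual strand $V_j^*$ produces the diagonal operator $D_j(\mu)=q_{*j}^{-2\mu+\sum_i x_i^2}q^{\Omega_{*j,*(j-1)}}\cdots q^{\Omega_{*j*1}}$ on the dual tensor factors.

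Finally I would pass from this bare identity for $\Psi$ to the stated identity for $F$. Here the point is that the fusion operator $\JJ^{1\cdots n}$ on the $V$-factors intertwines the naive composition of intertwiners with the dynamically corrected $\RR$-operators, so that conjugating by $\JJ^{1\cdots n}$, together with $\QQ_{V_i^*}$ on the dual factors and $\delta_q$, converts the bare crossings on the $V$-side into exactly the ordered product of $\RR$'s appearing in $K_j$, while the $\QQ$-normalization collapses the dual crossings to the diagonal $D_j(\mu)$; this collapse is the source of the asymmetry between $K_j$ and $D_j$. I expect the main obstacle to be precisely this last bookkeeping step: one must verify that every dynamical-parameter shift and every $q$-power liberated by moving the grading element through the trace matches, after the $\JJ$-$\QQ$-$\delta_q$ normalization, the exact spectral and dynamical arguments recorded in the definitions of $K_j$ and $D_j$. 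The affine setting compounds this difficulty, since one must track the Cartan direction $\lambda$, the degree direction $\omega$, the level $k$, and the imaginary-root grading simultaneously, and confirm that it is the central element which produces the shift by $q^{2k}$.
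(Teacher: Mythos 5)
There is a genuine gap in your proposal: the trace-cyclicity argument you describe does not produce the $q^{2k}$ spectral shift that appears in the $q$-KZB equation. When you cycle $\Phi^{V_j}(z_j)$ around the trace and move it past the insertion $q^{2\lambda+2\omega d}$, it is the degree operator $d$ (acting by $z_j\frac{\partial}{\partial z_j}$ on the evaluation module) that rescales the spectral parameter, and its coefficient in the trace is $\omega$, not $k$. So this manipulation naturally yields a shift $z_j\mapsto q^{2\omega}z_j$ — that is, it proves the \emph{dual} $q$-KZB equation (Theorem \ref{thm:dual-qkzb}), not Theorem \ref{thm:qkzb}. Your claim that "the central charge entering here forces the natural shift parameter to be $q^{2k}$" does not hold: the central element $c$ acts by $k$ on $M_{\mu,k}$, but $c$ does not rescale $z_j$; the level $k$ enters only through the dynamical parameters $(\mu,k)$ of the fusion and exchange operators. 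The mismatch also shows up in the structure of the operators: commuting intertwiners past one another via Lemma \ref{lem:inter-comm} produces exchange operators $\RR(\cdots;\mu+\rho,k+\ch)^*$ acting on the dual factors with dynamical parameter $(\mu,k)$ — the building blocks of $K_j^\vee$ — whereas $K_j$ consists of exchange operators in the dynamical variable $(\lambda,\omega)$ acting on the $V$-factors, which cannot arise from the composition structure of the intertwiners (that structure is governed by the Verma weights $(\mu,k)$).

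The missing ingredient is the Macdonald symmetry identity (Theorem \ref{thm:mrs}), which is how the paper obtains Theorem \ref{thm:qkzb}: one first proves the dual $q$-KZB equation directly by exactly the kind of argument you outline (Lemma \ref{lem:inter-comm}, cyclicity of the trace, Lemma \ref{lem:r-trans-val} to convert double-dual exchange operators into $K_j^\vee$ conjugated by $\QQ$'s, and the ABRR equation to identify $D_j^\vee(\lambda)$), and then applies the symmetry $F^{V_1,\ldots,V_n}(z;\lambda,\omega,\mu,k)=F_\star^{V_n^*,\ldots,V_1^*}(z;\mu,k,\lambda,\omega)$, which interchanges $(\lambda,\omega)$ with $(\mu,k)$ and $V_i$ with $V_{n+1-i}^*$, thereby converting the $q^{2\omega}$-shift statement into the $q^{2k}$-shift statement. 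Without invoking that symmetry (itself a consequence of the two Macdonald-Ruijsenaars equations and a uniqueness argument for formal solutions), your direct approach cannot reach the stated theorem.
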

\addtocounter{theorem}{-1}}

{\renewcommand{\thetheorem}{\ref{thm:dual-qkzb}}
\begin{theorem}[dual $q$-KZB equation]
For $j = 1, \ldots, n$, we have
\begin{multline*}
F^{V_1, \ldots, V_n}(z_1, \ldots, q^{2\omega}z_j,\ldots, z_n; \lambda, \omega, \mu, k)\\ = \Big(D_j^\vee(\lambda) \otimes K_j^\vee(z_1, \ldots, z_n; \mu, k, \omega)\Big) F^{V_1, \ldots, V_n}(z_1, \ldots, z_j, \ldots, z_n; \lambda, \omega, \mu, k).
\end{multline*}
\end{theorem}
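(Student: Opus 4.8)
The plan is to deduce the dual $q$-KZB equation from the ordinary $q$-KZB equation of Theorem \ref{thm:qkzb} together with the Macdonald symmetry identity of Theorem \ref{thm:mrs}, rather than to reprove it by a parallel trace computation from scratch. The symmetry identity exchanges the two pairs of dynamical variables $(\lambda, \omega) \leftrightarrow (\mu, k)$ while simultaneously reversing and dualizing the representations via $V_i \leftrightarrow V_{n + 1 - i}^*$ and reversing the spectral parameters $z_i \leftrightarrow z_{n + 1 - i}$. Under this exchange, a shift of a spectral parameter by the $(\mu, k)$-level $q^{2k}$ is carried to a shift by the $(\lambda, \omega)$-level $q^{2\omega}$, the operator $K_j$ acting on the $V$-factor in the $(\lambda, \omega)$-variables is carried to an operator acting on the $V^*$-factor in the $(\mu, k)$-variables, and $D_j$ is carried to an operator on the $V$-factor. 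This is exactly the shape of the dual $q$-KZB equation, so the two statements ought to be formally equivalent under the symmetry.

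Concretely, I would first use Theorem \ref{thm:mrs} to rewrite
\[
F^{V_1, \ldots, V_n}(z_1, \ldots, z_n; \lambda, \omega, \mu, k) = F_\star^{V_n^*, \ldots, V_1^*}(z_n, \ldots, z_1; \mu, k, \lambda, \omega),
\]
so that the substitution $z_j \mapsto q^{2\omega} z_j$ on the left becomes a shift of the spectral parameter sitting in position $n + 1 - j$ of the reversed list on the right. Since $F_\star$ satisfies the $q$-KZB equation obtained from Theorem \ref{thm:qkzb} by the same interchange of representations used to define it, applying that equation at index $n + 1 - j$ — and noting that the roles of $(\lambda, \omega)$ and $(\mu, k)$ are now interchanged, so that the distinguished level is $\omega$ rather than $k$ — produces exactly the shift $q^{2\omega}$. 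Translating the result back through Theorem \ref{thm:mrs} then yields an identity of precisely the asserted form, with the transported $q$-KZB operators for $F_\star$ standing in for $D_j^\vee(\lambda)$ and $K_j^\vee(z_1, \ldots, z_n; \mu, k, \omega)$.

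The main work, and the step I expect to be the principal obstacle, is verifying that these transported operators coincide on the nose with $D_j^\vee(\lambda)$ and $K_j^\vee(z_1, \ldots, z_n; \mu, k, \omega)$. This is a matter of bookkeeping through the dualization and reversal: one must check that each exchange operator $\RR_{V_a V_b}$ appearing in $K_{n + 1 - j}$ for $F_\star$ maps, under the identification $V_i \leftrightarrow V_{n + 1 - i}^*$, to the corresponding factor $\RR_{V_a^* V_b^*}$ in $K_j^\vee$ with the correct spectral arguments and dynamical shifts $h^{(\cdots)}$, and that reversing the index order inverts the order of the $\RR$-product in the expected way. One must likewise confirm that the operator-valued factor $D_{n + 1 - j} = q_{*(n+1-j)}^{-2\mu + \sum_i x_i^2} q^{\Omega_{\cdots}} \cdots$ for $F_\star$ becomes $D_j^\vee(\lambda) = q_j^{-2\lambda + \sum_i x_i^2} q^{\Omega_{j, j+1}} \cdots q^{\Omega_{j, n}}$, so that in particular the decreasing product of $\Omega$-factors in $D$ turns into the increasing product in $D_j^\vee$ under the reversal $j \mapsto n + 1 - j$. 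The essential inputs here are the compatibility of the exchange operators with duality and their dynamical shift relations, as established in Section \ref{sec:ex-op-def}.

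Finally, I would record the consistency check that the dual $q$-KZB equation can alternatively be proved directly, by repeating the trace manipulation of Theorem \ref{thm:qkzb} on the dual tensor factors $V_n^* \otimes \cdots \otimes V_1^*$ and in the $(\mu, k)$-variables; agreement of this direct derivation with the symmetry argument provides a useful sanity check on the operator identifications above.
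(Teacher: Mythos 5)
Your proposal is circular as it stands. In this paper the logical order is the reverse of what you assume: Theorem \ref{thm:dual-qkzb} is proved \emph{directly}, and Theorem \ref{thm:qkzb} is then \emph{deduced} from it via the symmetry identity of Theorem \ref{thm:mrs}. There is no independent proof of the $q$-KZB equation available to feed into your argument, so invoking Theorem \ref{thm:qkzb} (applied to $F_\star^{V_n^*,\ldots,V_1^*}$ at index $n+1-j$) to obtain the dual equation assumes exactly what is being proved. The transport-through-symmetry bookkeeping you describe is sound in itself — it is essentially how the paper gets the non-dual equation from the dual one — but it cannot be the proof of the dual equation unless one of the two $q$-KZB equations is first established by other means. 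The remark you defer to the end as a ``consistency check'' (a direct trace manipulation) is in fact the entire content of the proof and cannot be omitted.

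Concretely, the direct argument the paper gives runs as follows, and none of its ingredients appear in your proposal: one starts from the commutation relation $P_{VW}\cR_{VW}\Phi^{V,W}_{\mu,k}(z_1,z_2) = \RR_{WV}(z_2,z_1;\mu+\rho,k+\ch)^{*}\Phi^{W,V}_{\mu,k}(z_2,z_1)$ of Lemma \ref{lem:inter-comm}, uses it to commute $\Phi^{V_j}$ past the other intertwiners, applies the cyclic property of the trace (which conjugates by $q^{2\lambda+2\omega d}$ and is the source of the spectral shift $z_j \mapsto q^{-2\omega}z_j$), and commutes $\Phi^{V_j}$ back. One then needs Lemma \ref{lem:r-trans-val} to convert the $*V$-adjoints of exchange operators into exchange operators on the dual spaces, producing precisely the $\QQ$-factors that the normalization (\ref{eq:norm-tr}) absorbs and assembling $K_j^\vee$; and one needs the ABRR equation (via the identity (\ref{eq:d-op-val})) to identify the residual product of universal $\cR$-matrices conjugated by $\JJ^{1\cdots n}(\lambda,\omega)$ with $D_j^\vee(\lambda)$. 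To repair your proof you would either have to carry out this computation for the dual equation directly, or carry out the mirror-image computation to establish Theorem \ref{thm:qkzb} independently first — in either case the trace manipulation, not the symmetry bookkeeping, is where the work lies.
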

\addtocounter{theorem}{-1}}

\subsection{Organization of the paper}

The remainder of the paper is organized as follows.  In Section \ref{sec:qa}, we fix our conventions for the quantum affine algebra $U_q(\wgg)$ and its universal $\cR$-matrix.  In Section \ref{sec:inter}, we define and characterize intertwiners for $U_q(\wgg)$.  In Section \ref{sec:fus-ex}, we define fusion and exchange operators in the quantum affine setting and relate their universal versions to their evaluations in representations.  In Section \ref{sec:tr}, we define the normalized trace function.  In Section \ref{sec:mr}, we prove the Macdonald-Ruijsenaars equations (Theorem \ref{thm:mr}) by computing difference operators originating as the radial part of certain central elements for $U_q(\agg)$.  In Section \ref{sec:dmr}, we prove the dual Macdonald-Ruijsenaars equations (Theorem \ref{thm:dmr}) by computing an $U_q(\agg)$-intertwiner in two different ways.  In Section \ref{sec:sym}, we use these two equations to prove a symmetry identity (Theorem \ref{thm:mrs}) for renormalized trace functions.  In Section \ref{sec:kzb}, we prove the dual $q$-KZB equation (Theorem \ref{thm:dual-qkzb}) and use the symmetry identity to deduce the $q$-KZB equation (Theorem \ref{thm:qkzb}).

\subsection{Acknowledgments}

The author thanks P. Etingof for suggesting the writing of this paper and for many helpful discussions.  Y.~S. was partially supported by a NSF Graduate Research Fellowship (NSF Grant \#1122374) and a Junior Fellow award from the Simons Foundation.

\section{Quantum affine algebras and $\cR$-matrices} \label{sec:qa}

In this section we fix our conventions on the quantum affine algebra $U_q(\agg)$ and its central extension $U_q(\wgg)$.  We recall also Drinfeld's construction of a central element in a completion of $U_q(\wgg)$.

\subsection{Cartan subalgebras}

Let $\gg$ be a simple Lie algebra, $\agg$ its affinization, and $\wgg$ the central extension.  Let $\alpha_i$, $i = 1, \ldots, r$ be the simple roots, $r$ the rank of $\gg$, $\theta$ the highest root, $\rho = \frac{1}{2} \sum_{\alpha > 0} \alpha$, and $\ch = 1 + (\theta, \rho)$ the dual Coxeter number. Let $A = (a_{ij})_{i, j = 0}^r$ be the extended Cartan matrix of $\agg$ and $d_i$ relatively prime positive integers so that $(d_i a_{ij})$ is symmetric.  Let the Cartan and dual Cartan algebras be 
\[
\wh = \hh \oplus \CC c \oplus \CC d \text{ and } \wh^* = \hh^* \oplus \CC \Lambda_0 \oplus \CC \delta,
\]
with $\Lambda_0 = c^*$ and $\delta = d^*$. Take $\alpha_0 := \delta - \theta \in \wh$.  The algebra $\wgg$ admits a non-degenerate invariant form $(-,-)$ whose restriction to $\wh$ has non-trivial values given by
\[
(d, d) = 0 \qquad (c, d) = 1 \qquad (\alpha_i, \alpha_i) = 2 \text{ for $i > 0$}
\]
and agrees with the standard non-degenerate form on $\hh$. Fix an orthonormal basis $\{x_i\}$ of $\hh$ under $(, )$. Define $\wrho := \rho + \ch \Lambda_0$.

\subsection{Quantum affine algebra}

Let $q$ be a non-zero complex number with $|q| < 1$.  The quantum affine algebra $U_q(\agg)$ is the Hopf algebra generated as an algebra by $e_i, f_i, q^{\pm h_i}$ for $0 \leq i \leq r$ with relations
\begin{align*}
[q^{h_i}, q^{h_j}] &= 0 \qquad q^{h_i} e_j q^{-h_j} = q^{(h_i, \alpha_j)} e_j \qquad q^{h_i} f_j q^{-h_j} = q^{-(h_i, \alpha_j)} f_j \qquad [e_i, f_j] = \delta_{ij} \frac{q^{d_i h_i} - q^{-d_i h_i}}{q - q^{-1}} \\
\sum_{k = 0}^{1 - a_{ij}}& (-1)^k \binom{1 - a_{ij}}{k}_{q^{d_i}} e_i^{1 - a_{ij} - k} e_j e_i^k = 0 \qquad \sum_{k = 0}^{1 - a_{ij}} (-1)^k \binom{1 - a_{ij}}{k}_{q^{d_i}} f_i^{1 - a_{ij} - k} f_j f_i^k = 0,
\end{align*}
where we use the notations $[n] = \frac{q^n - q^{-n}}{q - q^{-1}}$, $[n]! = [n] \cdots [1]$, and $\binom{a}{b}_q = \frac{[a]_q!}{[b]_q! [a - b]_q!}$.  The coproduct of $U_q(\agg)$ is
\begin{align*}
\Delta(e_i) &= e_i \otimes 1 + q^{d_i h_i} \otimes e_i \qquad \Delta(f_i) = f_i \otimes q^{-d_i h_i} + 1 \otimes f_i \qquad 
\Delta(q^{h_i}) = q^{h_i} \otimes q^{h_i}.
\end{align*}
The antipode of $U_q(\agg)$ is 
\[
S(e_i) = -  q^{-d_i h_i}e_i \qquad S(f_i) = - f_iq^{d_i h_i} \qquad S(q^{h_i}) = q^{-h_i}
\]
and the counit is 
\[
\eps(e_i) = \eps(f_i) = 0 \qquad \eps(q^h) = 1.
\]
In what follows, we will often use the Sweedler notation
\[
\Delta(x) = \sum_{(x)} x^{(1)} \otimes x^{(2)}.
\]
When the context is clear, we will omit the summation sign, writing $\Delta(x) = x^{(1)} \otimes x^{(2)}$ to denote an implicit summation over the pure tensor summands of the coproduct.

\begin{remark}
This coproduct is the opposite of the one in \cite{FR, ESV} but agrees with those in the bosonizations of \cite{KQS, Kon, Mat}.  Our motivation for using it is to produce results compatible with the results of \cite{Sun:qafftr}, which uses the bosonization and coproduct of \cite{Mat}.
\end{remark}

Let the Hopf-subalgebras $U_q(\abb_+)$ and $U_q(\abb_-)$ of $U_q(\agg)$ be generated by $\{e_i, q^{\pm h_i}\}$ and $\{f_i, q^{\pm h_i}\}$, respectively.  We centrally extend $U_q(\agg)$ to $U_q(\wgg)$ by adding a generator $q^d$ which commutes with $q^{h_i}$ and interacts with $e_i$ and $f_i$ via
\[
[q^d, e_i] = [q^d, f_i] = 0 \text{ for $i \neq 0$} \qquad q^de_0 q^{-d} = q e_0 \qquad q^df_0q^{-d} = q^{-1} f_0
\]
and on which the coproduct, antipode, and counit are
\[
\Delta(q^d) = q^d \otimes q^d \qquad S(q^d) = q^{-d} \qquad \eps(q^d) = 1.
\]
For $z \in \CC^\times$, define the automorphism $D_z := \Ad(z^d) \in \Aut(U_q(\agg))$.  The action of $d$ gives a grading on $U_q(\wgg)$.  In \cite[Section 5]{Dri},the square of the antipode is shown to act by conjugation by an explicit Cartan element.

\begin{lemma}[{\cite[Section 5]{Dri}}] \label{lem:anti-square}
For any $x \in U_q(\wgg)$, we have $S^2(x) = q^{-2\wrho} x q^{2\wrho}$.
\end{lemma}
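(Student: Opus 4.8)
The plan is to use that both sides define algebra automorphisms of $U_q(\wgg)$ and to check that they agree on the algebra generators. Since the antipode $S$ is an algebra anti-automorphism, $S^2$ is an algebra automorphism; likewise $\Ad(q^{2\wrho})^{-1} \colon x \mapsto q^{-2\wrho} x q^{2\wrho}$ is an inner automorphism, where $q^{2\wrho}$ is the group-like element of a suitable completion of $U_q(\wh)$ and conjugation by it is well defined because it scales each weight component. Hence it suffices to verify the asserted identity on $e_i$, $f_i$, $q^{\pm h_i}$, and $q^{\pm d}$ for $0 \le i \le r$.

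First I would compute $S^2$ on generators directly from the antipode formulas. On the Cartan part this is immediate: $S^2(q^{h_i}) = q^{h_i}$ and $S^2(q^d) = q^d$, matching conjugation by $q^{2\wrho}$ since $q^{2\wrho}$ lies in (the completion of) the commutative subalgebra $U_q(\wh)$ — here one notes that the $\Lambda_0$-component of $\wrho$ corresponds under the form to a multiple of $d$, so that $q^d$ indeed commutes with $q^{2\wrho}$. For the Chevalley generators, applying $S$ twice and using that $S$ reverses products, the two minus signs cancel, giving $S^2(e_i) = q^{-d_i h_i} e_i q^{d_i h_i}$ and $S^2(f_i) = q^{-d_i h_i} f_i q^{d_i h_i}$.

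It then remains to evaluate these conjugations as scalars and match them against $\Ad(q^{2\wrho})^{-1}$. Since $(h_i, \alpha_j) = a_{ij}$, the element $d_i h_i$ is the image $t_{\alpha_i}$ of $\alpha_i$ under the form, so $S^2(e_i) = q^{-(\alpha_i,\alpha_i)} e_i$ and $S^2(f_i) = q^{(\alpha_i,\alpha_i)} f_i$; meanwhile $x \mapsto q^{-2\wrho} x q^{2\wrho}$ scales $e_i$ by $q^{-(2\wrho, \alpha_i)}$ and $f_i$ by $q^{(2\wrho, \alpha_i)}$, where for the affine node $i = 0$ the grading relation $q^d e_0 q^{-d} = q e_0$ must be folded in through the $\ch \Lambda_0$ term. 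Matching the scalars thus reduces to the single identity $(2\wrho, \alpha_i) = (\alpha_i, \alpha_i)$ for all $i = 0, \ldots, r$, i.e. $(\wrho, \alpha_i^\vee) = 1$, which is exactly the defining property of the affine Weyl vector.

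I expect the only genuine computation, and hence the main obstacle, to be the verification at the affine node. There I would compute $(\wrho, \alpha_0)$ from $\alpha_0 = \delta - \theta$ and $\wrho = \rho + \ch \Lambda_0$ using $(\Lambda_0, \delta) = 1$, $(\Lambda_0, \hh^*) = (\delta, \hh^*) = 0$, and the definition $\ch = 1 + (\theta, \rho)$, obtaining $(\wrho, \alpha_0) = \ch - (\rho, \theta) = 1 = \tfrac12(\alpha_0, \alpha_0)$; simultaneously I would check on the operator side that the grading shift $q^{-2\ch d} e_0 q^{2\ch d} = q^{-2\ch} e_0$ combines with the finite-type contribution to reproduce $q^{-2} e_0 = S^2(e_0)$. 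Once the affine node is settled, the remaining nodes follow verbatim from the finite-type Weyl vector identity $(\rho, \alpha_i^\vee) = 1$.
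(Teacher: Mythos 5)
Your proposal is correct: checking the identity on the generators $e_i, f_i, q^{\pm h_i}, q^{\pm d}$ suffices because $S^2$ and $\Ad(q^{-2\wrho})$ are both algebra automorphisms, and your scalar computations — $S^2(e_i) = q^{-(\alpha_i,\alpha_i)}e_i$ versus $q^{-(2\wrho,\alpha_i)}e_i$, reduced to $(\wrho,\alpha_i^\vee)=1$, including the affine-node check $(\wrho,\alpha_0) = \ch - (\rho,\theta) = 1$ — all go through with the paper's conventions. The paper gives no proof of its own (it cites \cite[Section 5]{Dri}), and your direct verification is the standard argument for this statement.
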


\subsection{Universal $\cR$-matrix and Drinfeld element for $U_q(\wgg)$}

Define $\Omega = c\otimes d + d \otimes c + \sum_i x_i \otimes x_i$, and let $\Omega^0 = c \otimes d + d \otimes c$ and $\Omega^1 = \sum_i x_i \otimes x_i$.  By the general construction of \cite{Dri}, there is a universal $\cR$-matrix $\cR$ for $U_q(\wgg)$ with
\[
\cR = q^{-\Omega} \cR_0 \text{ and } \cR_0 \in (1 + U_q(\abb_+)_{>0} \hotimes U_q(\abb_-)_{< 0}).
\]
It satisfies 
\[
\Delta^{21}(x) \cR = \cR \Delta(x) \text{ for $x \in U_q(\wgg)$} \qquad (\Delta \otimes 1) \cR = \cR^{13} \cR^{23} \qquad (1 \otimes \Delta) \cR = \cR^{13} \cR^{12}.
\]

\begin{lemma}[{\cite[Proposition 3.1]{Dri}}] \label{lem:r-mat-s}
The universal $\cR$-matrix satisfies:
\begin{itemize}
\item[(a)] $(S \otimes 1) \cR = (1 \otimes S^{-1})\cR = \cR^{-1}$;

\item[(b)] $(S \otimes S) \cR = \cR$.
\end{itemize}
\end{lemma}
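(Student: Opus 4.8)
The plan is to deduce both parts from the three quasitriangularity relations of $\cR$ stated above, together with the antipode and counit axioms, exactly as in the standard argument for quasitriangular Hopf algebras recorded in \cite[Proposition 3.1]{Dri}. The one point requiring care beyond the purely formal algebra is that $\cR = q^{-\Omega}\cR_0$ lives in a completion of $U_q(\wgg) \hotimes U_q(\wgg)$, so I must check that every manipulation below---multiplying $\cR$ against $\cR^{\pm 1}$, and applying $S$, $S^{-1}$, and $\eps$ termwise---makes sense there. This is controlled by the grading: writing $\cR_0 \in 1 + U_q(\abb_+)_{>0} \hotimes U_q(\abb_-)_{<0}$ as a sum over positive degrees, each operation below preserves the weight grading and yields only finite sums in each graded piece, so $\cR^{-1}$ exists and the regroupings of the double sums $\cR^{13}\cR^{23}$ and $\cR^{13}\cR^{12}$ are legitimate.

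Before the main computation I would record the counit identities $(\eps \otimes \id)\cR = (\id \otimes \eps)\cR = 1$. In our setting these are immediate from $\cR = q^{-\Omega}\cR_0$: the counit $\eps$ sends every Cartan exponential to $1$, hence kills $q^{-\Omega}$ on either leg, and it annihilates the strictly positive (resp.\ negative) components of $\cR_0$, leaving only the leading summand $1$. Abstractly the same identities follow by applying $\eps$ to one leg of $(\Delta \otimes \id)\cR = \cR^{13}\cR^{23}$ and cancelling the invertible factor $\cR$.

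For part (a), write $\cR = \sum_i a_i \otimes b_i$ in leg notation. To prove $(S \otimes \id)\cR = \cR^{-1}$ I would apply to $(\Delta \otimes \id)\cR = \cR^{13}\cR^{23}$ the map $x \otimes y \otimes z \mapsto S(x)\,y \otimes z$. The left side becomes $\sum_i S(a_i^{(1)}) a_i^{(2)} \otimes b_i = \sum_i \eps(a_i)\,1 \otimes b_i = 1 \otimes 1$ by the antipode axiom and the counit identity, while the right side becomes $\sum_{i,j} S(a_i) a_j \otimes b_i b_j = \big((S \otimes \id)\cR\big)\cR$; hence $(S \otimes \id)\cR = \cR^{-1}$. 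For the second equality, which uses that $S$ is invertible (a consequence of $S^2 = \Ad(q^{-2\wrho})$ from Lemma \ref{lem:anti-square}), I would instead apply to $(\id \otimes \Delta)\cR = \cR^{13}\cR^{12}$ the map $x \otimes y \otimes z \mapsto x \otimes S^{-1}(z)\,y$ and invoke the $S^{-1}$-form of the antipode axiom $\sum S^{-1}(b^{(2)}) b^{(1)} = \eps(b)\,1$. The left side collapses to $1 \otimes 1$ and the right side becomes $\sum_{i,j} a_i a_j \otimes S^{-1}(b_i) b_j = \big((\id \otimes S^{-1})\cR\big)\cR$, giving $(\id \otimes S^{-1})\cR = \cR^{-1}$.

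Part (b) then follows formally: applying $\id \otimes S$ to the second identity of (a) gives $(\id \otimes S)\cR^{-1} = \cR$, so
\[
(S \otimes S)\cR = (\id \otimes S)\big((S \otimes \id)\cR\big) = (\id \otimes S)\cR^{-1} = \cR.
\]
The genuine obstacle here is not the algebra, which is just the antipode/counit juggling above, but the topological bookkeeping flagged in the first paragraph: one must verify that $\cR^{-1}$ genuinely exists in the completion, that $S$, $S^{-1}$, and $\eps$ may be applied term by term, and that the products $\cR^{13}\cR^{23}$, $\cR^{13}\cR^{12}$ and their images converge and regroup as claimed. Once the weight grading is used to reduce every step to finite sums within each graded component, the computation goes through verbatim.
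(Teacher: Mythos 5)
The paper does not prove this lemma at all: it is quoted directly from \cite[Proposition 3.1]{Dri} (and its Section~5), so there is no in-paper argument to compare against. Your proof is the standard quasitriangular-Hopf-algebra computation and is correct: the counit identities, the application of $m_{12}\circ(S\otimes\id\otimes\id)$ to $(\Delta\otimes 1)\cR=\cR^{13}\cR^{23}$ and of the $S^{-1}$-twisted multiplication to $(1\otimes\Delta)\cR=\cR^{13}\cR^{12}$, and the deduction of (b) from the two halves of (a) all check out, and your attention to the completion and to the invertibility of $S$ (via Lemma \ref{lem:anti-square}) covers the only nontrivial points.
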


\begin{remark}
Note that the sign of $\Omega$ is reversed from that of \cite{ESV} because we use the opposite coproduct.
\end{remark}

In \cite[Section 5]{Dri}, the Drinfeld element $u$ in a completion of $U_q(\wgg)$ is defined by
\[
u = m_{21}\Big((1 \otimes S) \cR\Big)
\]
and shown to satisfy the following properties.

\begin{lemma}[{\cite[Section 5]{Dri}}] \label{lem:drinfeld}
The Drinfeld element $u$ satisfies the following:
\begin{itemize}
\item [(a)] $u^{-1} = m_{21}\Big((1 \otimes S^{-1}) \cR^{-1}\Big)$

\item [(b)] $S^2(x) = u x u^{-1}$;

\item [(c)] $q^{2\wrho} u = u q^{2\wrho}$ and $S(u) q^{-2\wrho} = q^{-2\wrho}S(u)$ are central in different completions of $U_q(\wgg)$;
\end{itemize}
\end{lemma}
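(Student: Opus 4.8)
These three properties are Drinfeld's, and I would reproduce his arguments in the present conventions. Write $\cR = \sum_i a_i \otimes b_i$ (an implicit sum in a suitable completion of $U_q(\wgg)$), so that $u = \sum_i S(b_i)\, a_i$. Throughout I would work in the completion in which $\cR$, $u$, and the elements of Lemma \ref{lem:r-mat-s} live; the only genuinely analytic point is that the infinite sums below converge and may be rearranged, which follows from the weight/grading estimates already underlying the construction of $\cR$.

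The core of the lemma is the conjugation relation $ux = S^2(x)\, u$ for all $x$, which yields (b) once (a) is known. I would first extract the consequences of the intertwining relation alone. Applying $m_{21}\circ(\id\otimes S)$ to both sides of $\Delta^{21}(x)\cR = \cR\Delta(x)$ and using the antipode axiom $\sum_{(x)} S(x^{(1)})\, x^{(2)} = \eps(x)$ gives
\[
\sum_{(x)} S(x^{(2)})\, u\, x^{(1)} = \eps(x)\, u,
\]
and replacing $x$ by $S^{-1}(x)$, with $\Delta\circ S^{-1} = (S^{-1}\otimes S^{-1})\Delta^{21}$, produces the companion identity $\sum_{(x)} x^{(1)}\, u\, S^{-1}(x^{(2)}) = \eps(x)\, u$. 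To pin down the factor of $S^2$ I would then feed in the hexagon axioms $(\Delta \otimes 1)\cR = \cR^{13}\cR^{23}$ and $(1 \otimes \Delta)\cR = \cR^{13}\cR^{12}$ together with Lemma \ref{lem:r-mat-s}(a),(b): these comultiplicativity relations compute $\Delta(u)$ in terms of $\cR_{21}\cR$, and combining this with the two displayed identities yields $u x = S^2(x)\, u$. This is the step I expect to be the main obstacle: the bookkeeping in combining the two ``intertwiner'' identities with the hexagon relations is delicate, and one must check that each rearrangement is legitimate in the completion.

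For (a) I would run the same construction with $\cR^{-1}$ in place of $\cR$ and $S^{-1}$ in place of $S$. By Lemma \ref{lem:r-mat-s}(a) we have $\cR^{-1} = (1 \otimes S^{-1})\cR = \sum_i a_i \otimes S^{-1}(b_i)$, so $v := m_{21}\big((1\otimes S^{-1})\cR^{-1}\big) = \sum_i S^{-2}(b_i)\, a_i$, and the symmetric argument shows $v\, x = S^{-2}(x)\, v$. Consequently $uv$ and $vu$ commute with every $x$ (for instance $u(vx) = u\, S^{-2}(x)\, v = S^2(S^{-2}(x))\, uv = x\, uv$, and likewise for $vu$), and a direct computation with the hexagon axioms shows $uv = vu = 1$, giving invertibility and the asserted formula for $u^{-1}$. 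Then (b) is the rearrangement $S^2(x) = u x u^{-1}$.

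Finally, (c) is formal given (b) and Lemma \ref{lem:anti-square}. Combining the two gives $u x u^{-1} = S^2(x) = q^{-2\wrho}\, x\, q^{2\wrho}$, whence $u x = q^{-2\wrho}\, x\, q^{2\wrho}\, u$ and therefore $q^{2\wrho} u\, x = x\, q^{2\wrho} u$ for all $x$; thus $q^{2\wrho} u$ is central, and since it commutes with $q^{2\wrho}$ in particular we get $u\, q^{2\wrho} = q^{2\wrho}\, u$. For the dual statement, applying the anti-automorphism $S$ to $u x = S^2(x)\, u$ gives $S(x)\, S(u) = S(u)\, S^3(x)$; replacing $x$ by $S^{-1}(y)$ yields $y\, S(u) = S(u)\, S^2(y)$, that is $S(u)^{-1}\, y\, S(u) = S^2(y) = q^{-2\wrho}\, y\, q^{2\wrho}$, so that $q^{2\wrho} S(u)^{-1}$ and hence its inverse $S(u)\, q^{-2\wrho}$ is central, which also forces $S(u)\, q^{-2\wrho} = q^{-2\wrho}\, S(u)$. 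This completes the proof.
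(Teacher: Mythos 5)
The paper does not actually prove this lemma---it is quoted verbatim from \cite[Section 5]{Dri} and used as a black box---so there is no internal proof to compare against; what you have written is a reconstruction of Drinfeld's original argument, and in outline it is the standard and correct one. Two remarks on the details. First, your first displayed identity $\sum_{(x)} S(x^{(2)})\,u\,x^{(1)} = \eps(x)\,u$ is exactly the right pivot, but you overestimate what is needed to finish from there: the hexagon axioms play no role in deriving $ux = S^2(x)u$. Writing $S^2(x)u = \sum S^2(x^{(3)})S(x^{(2)})\,u\,x^{(1)}$ (apply your identity to the last two legs of the triple coproduct), the antihomomorphism property gives $S^2(x^{(3)})S(x^{(2)}) = S\bigl(x^{(2)}S(x^{(3)})\bigr) = \eps(x^{(2)})$, and the counit axiom collapses the sum to $ux$; only coassociativity and the antipode axiom enter. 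The comultiplicativity relations and the formula $\Delta(u) = (\cR^{21}\cR)^{-1}(u\otimes u)$ that you allude to are a separate fact and are not the route to (b); where the hexagon axioms (more precisely the normalization $(\eps\otimes 1)\cR = 1$ they imply, together with Lemma \ref{lem:r-mat-s}) genuinely do get used is in the verification $uv = vu = 1$ for part (a), which you assert but do not carry out---that computation is standard (it is Drinfeld's, reproduced e.g.\ in Kassel) but it is the one place where your sketch leaves real work undone. Second, your treatment of (c) is complete and correct, including the observation that centrality of $q^{2\wrho}u$ forces it to commute with $q^{2\wrho}$ and hence gives the stated commutation, and the passage to $S(u)q^{-2\wrho}$ via applying $S$ to $ux = S^2(x)u$; the only thing you elide is the paper's caveat that the two elements live in different completions, which your blanket convergence disclaimer at the start covers adequately for a result the paper itself only cites.
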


\section{Representations and intertwiners for $U_q(\agg)$ and $U_q(\wgg)$} \label{sec:inter}

In this section we fix conventions for Verma and evaluation modules for $U_q(\agg)$ and $U_q(\wgg)$ and characterize intertwiners between them, which will be the central object of study of this paper.  Throughout the paper, we work with evaluation representations valued in the ring of Laurent polynomials or formal Laurent series.

\subsection{Evaluation modules}

If $V$ is a $U_q(\agg)$-module, for $z \neq 0$ we define the $U_q(\agg)$-module $V(z)$ to be the vector space $V$ with action of $U_q(\agg)$ given by 
\[
\pi_{V(z)}(a) = \pi_V(D_z(a)).
\]
In type A, if $V$ is a $U_q(\sl_r)$-module treated as a $U_q(\asl_r)$-module via evaluation at $1$, then $V(z)$ is the evaluation module at $z$. 

If $V$ is a finite-dimensional or highest weight $U_q(\agg)$-module, define the $U_q(\wgg)$-modules $z^{-\Delta}V[z, z^{-1}]$ and $z^{-\Delta}V((z))$ so that generators of $U_q(\agg)$ act in the same way as on $V(z)$ and $d$ acts by $z \frac{\partial}{\partial z}$ if $V$ is finite-dimensional and by $z \frac{\partial}{\partial z} + d$ if $V$ is highest weight.  Notice that both $z^{-\Delta}V[z, z^{-1}]$ and $z^{-\Delta}V((z))$ are infinite-dimensional as vector spaces over $\CC$.

\subsection{Verma modules}

We denote by $M_{\mu, k}$ the Verma module for $U_q(\agg)$ with highest weight $\mu + k \Lambda_0$ and by $v_{\mu, k} \in M_{\mu, k}$ a canonically chosen highest weight vector.  Define the restricted dual of $M_{\mu, k}$ by
\[
M_{\mu, k}^\vee := \bigoplus_{\tau, a} M_{\mu, k}[\tau + k \Lambda_0 - a \delta]^*,
\]
where the action of $U_q(\agg)$ is given by $(u \cdot \phi)(m) := \phi(S(u) m)$. Define the representation $M_{\mu, k, a}$ to coincide with $M_{\mu, k}$ as a $U_q(\agg)$-representation, but with $U_q(\wgg)$-action given by letting $q^d$ act by $q^a$ on $v_{\mu, k}$.  If $a = 0$, we write $M_{\mu, k}$ for $M_{\mu, k, 0}$. Our convention is chosen to be consistent with the following explicit computation of the action of the Drinfeld element on $M_{\mu, k}$.

\begin{lemma}[{\cite[Section 5]{Dri}}] \label{lem:drinfeld-verma}
The action of $q^{2\wrho} u$ on $M_{\mu, k}$ is given by $q^{(\mu + k \Lambda_0, \mu + k \Lambda_0 + 2 \wrho)}$.
\end{lemma}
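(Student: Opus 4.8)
The plan is to exploit the centrality of $q^{2\wrho}u$ to reduce the claim to the computation of a single eigenvalue on the highest weight vector $v_{\mu,k}$, and then to read that eigenvalue off from the triangular decomposition $\cR = q^{-\Omega}\cR_0$. Since $q^{2\wrho}u$ is central by Lemma \ref{lem:drinfeld}(c), it commutes with the Cartan and hence preserves weight spaces. As $M_{\mu,k}$ is generated by $v_{\mu,k}$ and its top weight space $M_{\mu,k}[\mu + k\Lambda_0]$ is one-dimensional, we get $q^{2\wrho}u\, v_{\mu,k} = \chi\, v_{\mu,k}$ for a scalar $\chi$, and then $q^{2\wrho}u\, m = \chi\, m$ for every $m \in M_{\mu,k}$ by writing $m = X v_{\mu,k}$ and moving the central element past $X$. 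It therefore suffices to compute $u\, v_{\mu,k}$ and $q^{2\wrho} v_{\mu,k}$ separately.

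To compute $u\, v_{\mu,k}$, write $\cR = \sum_i a_i \otimes b_i$, so that $u = m_{21}\big((1\otimes S)\cR\big) = \sum_i S(b_i)\, a_i$. Using $\cR = q^{-\Omega}\cR_0$ with $\cR_0 \in 1 + U_q(\abb_+)_{>0}\hotimes U_q(\abb_-)_{<0}$, every term of $\cR$ beyond the diagonal part $q^{-\Omega}$ has first tensor leg lying in $U_q(\abb_+)_{>0}$, hence is a sum of monomials in the $e_i$ of positive degree. Each such monomial annihilates $v_{\mu,k}$, since the rightmost $e_i$ acting on the highest weight vector already gives zero. Consequently only the $q^{-\Omega}$ term of $\cR$ contributes, and $u\, v_{\mu,k} = m_{21}\big((1\otimes S)q^{-\Omega}\big)\, v_{\mu,k}$.

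It remains to fold $q^{-\Omega}$ through $m_{21}\circ(1\otimes S)$. Writing $\Omega = \Omega^0 + \Omega^1$ with $\Omega^0 = c\otimes d + d\otimes c$ and $\Omega^1 = \sum_j x_j\otimes x_j$, and using that the antipode negates Cartan elements ($S(q^h) = q^{-h}$) together with commutativity of the Cartan, one finds $(1\otimes S)q^{-\Omega} = q^{\Omega}$; applying $m_{21}$, which coincides with multiplication and is an algebra homomorphism on the commutative algebra generated by the $q^h$, produces the Cartan element $q^{\sum_j x_j^2 + 2cd}$. Evaluating on $v_{\mu,k}$, whose weight $\mu + k\Lambda_0$ pairs to $(\mu,\mu)$ with $\sum_j x_j^2$ (orthonormal basis) and is killed by $d$, gives $u\, v_{\mu,k} = q^{(\mu,\mu)}\, v_{\mu,k}$. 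Combining this with $q^{2\wrho} v_{\mu,k} = q^{(\mu + k\Lambda_0,\, 2\wrho)} v_{\mu,k}$ and the identity $(\mu,\mu) = (\mu + k\Lambda_0, \mu + k\Lambda_0)$ (valid since $(\mu,\Lambda_0) = (\Lambda_0,\Lambda_0) = 0$) yields the asserted eigenvalue $q^{(\mu + k\Lambda_0,\, \mu + k\Lambda_0 + 2\wrho)}$.

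The only delicate point is the sign bookkeeping in the folding step: one must verify carefully that $(1\otimes S)$ converts $q^{-\Omega}$ into $q^{+\Omega}$, and that multiplying out the two legs produces precisely $\sum_j x_j^2 + 2cd$ rather than an element with a spurious sign. Once this is pinned down, together with the vanishing of the off-diagonal terms of $\cR$ on the highest weight vector, the remainder is a routine weight computation.
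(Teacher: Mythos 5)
Your proof is correct. The paper does not actually prove this lemma---it is quoted from \cite[Section 5]{Dri}---and your argument (centrality of $q^{2\wrho}u$ reduces everything to the eigenvalue on $v_{\mu,k}$; the off-diagonal part of $\cR = q^{-\Omega}\cR_0$ kills $v_{\mu,k}$ because its first tensor leg has strictly positive weight, which is the cleaner justification than locating a ``rightmost $e_i$''; and $m_{21}\big((1\otimes S)q^{-\Omega}\big) = q^{\sum_j x_j^2 + 2cd}$ evaluates to $q^{(\mu,\mu)} = q^{(\mu+k\Lambda_0,\mu+k\Lambda_0)}$ since $d$ acts by $0$ on $v_{\mu,k}$ and $(\Lambda_0,\Lambda_0)=(\mu,\Lambda_0)=0$) is precisely the standard derivation of Drinfeld's formula in the conventions of this paper.
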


For generic $(\mu, k)$, the Verma module $M_{\mu, k, a}$ is irreducible.  On the other hand, if $\mu\rho + k \Lambda_0$ is dominant integral, then the singular vectors in $M_{\mu, k, a}$ may be determined explicitly.
\begin{prop} \label{prop:verma-sing-vect}
Suppose $\mu + k \Lambda_0$ is dominant integral.  For a reduced decomposition $s_{i_1} \cdots s_{i_l}$ of $w \in \wtilde{W}$, define $\alpha^l = \alpha_{i_l}$, $\alpha^j = (s_{i_l} \cdot s_{i_{j+1}})(\alpha_{i_j})$, and $n_j = 2 \frac{(\mu + k \Lambda_0 + \wrho, \alpha^j)}{(\alpha^j, \alpha^j)}$. Then the vectors
\[
v^{w}_{\mu, k} := \frac{f_{i_1}^{n_{i_1}}}{[n_{i_1}]!} \cdots \frac{f_{i_l}^{n_{i_l}}}{[n_{i_l}]!} v_{\mu, k}
\]
are the only singular vectors in $M_{\mu, k, a}$. 
\end{prop}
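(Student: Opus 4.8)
The plan is to construct the claimed singular vectors explicitly and show they exhaust the singular vectors by a character/weight-counting argument tied to the dominant integral condition. Recall that for a dominant integral highest weight $\mu + k\Lambda_0$, the theory of Verma modules over the quantized affine Kac-Moody algebra has a BGG-type structure governed by the affine Weyl group $\wtilde{W}$ acting via the $\wrho$-shifted ("dot") action. A vector $v \in M_{\mu, k, a}$ is singular precisely when it is annihilated by all raising operators $e_i$ for $0 \le i \le r$ and is a weight vector; equivalently it generates a highest weight submodule. The strategy is to reduce to the rank-one case along a reduced word and then assemble.

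First I would verify that each proposed vector $v^w_{\mu,k}$ is indeed singular. The natural approach is induction on the length $l$ of the reduced decomposition $s_{i_1} \cdots s_{i_l}$. The base case $l=1$ is the classical $U_q(\sl_2)$ computation: for a single simple reflection $s_i$ with $n = 2\frac{(\mu + k\Lambda_0 + \wrho, \alpha_i)}{(\alpha_i, \alpha_i)}$ a nonnegative integer (guaranteed by dominant integrality), the vector $\frac{f_i^{n}}{[n]!} v_{\mu,k}$ is killed by $e_i$ by the standard quantum $\sl_2$ relation $[e_i, f_i^{n}] = [n] f_i^{n-1} \frac{q^{(1-n)d_i h_i} - q^{(n-1)d_i h_i} \cdots}{\cdots}$ evaluated on a highest weight vector, and it is killed by $e_j$ for $j \ne i$ since $e_j$ commutes past $f_i^n$ up to terms that vanish on $v_{\mu,k}$. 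For the inductive step I would use that the definition $\alpha^j = (s_{i_l} \cdots s_{i_{j+1}})(\alpha_{i_j})$ and the numbers $n_j$ are precisely arranged so that applying the next factor $\frac{f_{i_j}^{n_{i_j}}}{[n_{i_j}]!}$ produces the singular vector attached to the longer word, using that after acting by the tail of the word one is again in a highest-weight-like configuration for the relevant $\sl_2$ copy. Here one must track weights carefully: the weight of $v^w_{\mu,k}$ is $w \cdot (\mu + k\Lambda_0) = w(\mu + k\Lambda_0 + \wrho) - \wrho$, and each $\alpha^j$ and $n_j$ encodes the correct shifted reflection.

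Next, to show these are the \emph{only} singular vectors, I would invoke the structure theory: for dominant integral highest weight over an affine quantum group, the submodule generated by singular vectors is controlled by the linkage principle, and the singular vectors correspond bijectively to elements $w \in \wtilde{W}$ via the dot action, with $v^w$ the vector of weight $w \cdot(\mu + k\Lambda_0)$. The key point is that the dominant integral condition forces all the exponents $n_j$ to be nonnegative integers, so the vectors are well-defined, and conversely any singular vector must have weight of the form $w\cdot(\mu + k\Lambda_0)$ by the quantum analogue of the Kac-Kazhdan determinant/Shapovalov form vanishing criterion. I would argue that the weight space at each such $w\cdot(\mu+k\Lambda_0)$ contains a one-dimensional space of singular vectors (spanned by $v^w$), and no singular vectors occur at other weights.

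The main obstacle is the uniqueness half: establishing that there are no \emph{extra} singular vectors beyond the $v^w_{\mu,k}$. In the finite-type setting this follows from the Shapovalov/Jantzen machinery, but in the affine case one must handle the infinite-dimensional weight spaces and the imaginary roots, where the Kac-Kazhdan criterion is more delicate; I expect to need the quantum affine analogue of the linkage principle and a careful analysis showing that the Shapovalov determinant vanishes only along the $\wrho$-shifted reflection hyperplanes for real roots in the dominant integral case. Controlling the contribution of imaginary roots $n\delta$ and confirming they produce no additional singular vectors for the level and weight in question is the technically demanding step, and I would isolate it as the crux of the proof, likely citing the quantum Kac-Kazhdan results to streamline the argument.
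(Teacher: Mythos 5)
The paper does not actually prove this proposition: it is stated without proof, as the standard quantum affine analogue of the BGG/Verma/Kac--Kazhdan description of singular vectors in a Verma module of dominant integral highest weight, and is only used later (in Proposition \ref{prop:int-exist}) to identify the annihilator ideal of the lowest weight vector of $M_{\mu,k}^\vee$. So there is no in-paper argument to compare yours against; what follows is an assessment of your outline on its own terms.

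Your plan is the standard route and is essentially viable, but two points are calibrated wrong. First, the step you isolate as the crux --- controlling the imaginary roots $n\delta$ --- is in fact the easy part here: dominant integrality forces the level $k$ to be a nonnegative integer, so $(\mu + k\Lambda_0 + \wrho, \delta) = k + \ch > 0$ and the weight is non-critical; the imaginary-root factors of the Shapovalov determinant therefore never vanish and contribute no singular vectors. Second, the genuinely thin spots are elsewhere in your uniqueness half. (i) Passing from ``the Shapovalov determinant vanishes at the weight of a singular vector'' to ``that weight lies in $\wtilde{W}\cdot(\mu+k\Lambda_0)$'' is not a one-step application of the Kac--Kazhdan criterion: the criterion only produces a single reflection hyperplane at a time, and you need the strong-linkage induction (iterating reflections down the partial order) to conclude membership in the dot orbit. (ii) You assert that each weight space $M_{\mu,k,a}[w\cdot(\mu+k\Lambda_0)]$ contains a one-dimensional space of singular vectors, but this multiplicity-one statement ($\dim\Hom(M_{w\cdot\lambda}, M_\lambda) \le 1$, or equivalently uniqueness of the singular vector up to scalar) is a separate theorem that your construction does not supply; it needs either the quantum analogue of the BGG embedding theorem or a Jantzen-filtration/character argument, and it is exactly what rules out ``extra'' singular vectors at the linked weights themselves. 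Your existence half (induction along a reduced word, rank-one $U_q(\sl_2)$ computation, nonvanishing from freeness of the Verma module over the lower triangular subalgebra) is fine as sketched.
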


\begin{remark}
We give any highest weight $U_q(\agg)$-module $M$ the structure of a $U_q(\wgg)$-module by imposing that $q^d$ acts by $1$ on the highest weight vector.  This convention differs from that of \cite{FR} but is consistent with our notation for $M_{\mu, k}$ above.
\end{remark}

\subsection{$\cR$-matrices and intertwiners between representations}

We will use the following intertwining property of the universal $\cR$-matrix on evaluation representations of $U_q(\wgg)$.  Let $V_1, \ldots, V_n$ be finite-dimensional $U_q(\agg)$-representations, and let $W$ be a $U_q(\ahh)$-semisimple $U_q(\wgg)$-representation.  Define the tensor products $V$ and $\hV$ of evaluation representations by
\[
V := V_1[z_1^{\pm 1}] \otimes \cdots \otimes V_n[z_n^{\pm 1}] \qquad \text{ and } \qquad \hV := V_1((z_1) \otimes \cdots \otimes V_n((z_n)).
\]

\begin{lemma} \label{lem:r-inter}
The operator $P_{VW}\cR_{VW}$ gives an intertwiner
\[
P_{\hV W}\cR_{VW}: V_1[z_1^{\pm 1}] \otimes \cdots \otimes V_n[z_n^{\pm 1}] \otimes W \to W \otimes V_1((z_1)) \otimes \cdots \otimes V_n((z_n)).
\]
\end{lemma}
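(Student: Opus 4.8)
The plan is to deduce the statement from the abstract intertwining property $\Delta^{21}(x)\cR = \cR\Delta(x)$ of the universal $\cR$-matrix, the only real content being the verification that the action of $\cR_{VW}$ on $V \otimes W$ is well defined with values in the completed target $W \otimes \hV$. First I would reduce to the case $n = 1$: using the identity $(\Delta \otimes 1)\cR = \cR^{13}\cR^{23}$ repeatedly, one factors $\cR_{VW} = \cR_{V_1 W} \cdots \cR_{V_n W}$ acting in the appropriate tensor legs, so that it suffices to analyze a single operator $\cR_{V_i W} \colon V_i[z_i^{\pm 1}] \otimes W \to W \otimes V_i((z_i))$ and then compose.

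Second, the intertwining property is formal. Writing $P_{VW} \colon V \otimes W \to W \otimes V$ for the flip, one checks directly that $P_{VW}\Delta^{21}(x) = \Delta(x)P_{VW}$ for all $x \in U_q(\wgg)$; combined with $\Delta^{21}(x)\cR = \cR\Delta(x)$ this gives $(P_{VW}\cR_{VW})\Delta(x) = \Delta(x)(P_{VW}\cR_{VW})$, so that $P_{VW}\cR_{VW}$ intertwines the $U_q(\wgg)$-actions once it is shown to be well defined.

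Third --- and this is the main point --- I would establish well-definedness by a weight-grading argument. Write $\cR = q^{-\Omega}\cR_0$. The factor $q^{-\Omega}$ acts diagonally: on $V_i \otimes W$ the term $q^{-c \otimes d}$ is trivial since $c$ acts by $0$ on the level-zero module $V_i$, the term $q^{-d \otimes c}$ rescales $z_i^m$ by a fixed scalar power of $q$ (as $c$ acts by the level of $W$), and $q^{-\Omega^1}$ is a scalar on each weight space; none of these changes the $z_i$-degree. All of the infinite behavior therefore comes from $\cR_0 = 1 + \sum_\beta \cR_0^\beta$, the sum being over $\beta$ in the positive cone $\sum_{j = 0}^r \ZZ_{\ge 0} \alpha_j$ with $\cR_0^\beta \in U_q(\abb_+)_\beta \hotimes U_q(\abb_-)_{-\beta}$. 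On $V_i(z_i) \otimes W$ the first leg raises the $V_i$-weight by the finite part $\overline\beta$ of $\beta$ and raises the $z_i$-degree by the $\delta$-degree of $\beta$, while the second leg lowers the $W$-weight by $\beta$. The crucial finiteness is that, because $V_i$ is finite-dimensional, for a fixed homogeneous input $v \otimes w$ only finitely many finite-weight shifts $\overline\beta$ can give a nonzero result; once $\overline\beta$ is fixed, the remaining freedom in $\beta$ is exactly its $\delta$-degree, which ranges over $\ZZ_{\ge 0}$ and is recorded by strictly increasing powers of $z_i$. Hence, grading the output by $z_i$-degree, each graded component is a finite sum, the occurring degrees are bounded below, and the total lies in $W \otimes V_i((z_i))$, precisely as claimed.

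The main obstacle is this last convergence and finiteness bookkeeping: one must set up the grading so that each homogeneous piece of $\cR_0(v \otimes w)$ is a finite sum, using $\dim V_i < \infty$ to bound the admissible finite-weight shifts; confirm that only nonnegative shifts in the $z_i$-degree occur, so that the output genuinely lands in $((z_i))$ rather than in a two-sided completion; and check that passing to the completion preserves the intertwining identity verified formally above.
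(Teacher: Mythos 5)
Your proposal is correct and follows essentially the same route as the paper's (much terser) proof: the triangular form $\cR = q^{-\Omega}\cR_0$ with $\cR_0 \in 1 + U_q(\abb_+)_{>0}\hotimes U_q(\abb_-)_{<0}$ guarantees that $\cR_{VW}$ lands in the Laurent-series completion, and the quasi-cocommutativity $\Delta^{21}(x)\cR = \cR\Delta(x)$ gives the intertwining after composing with the flip. Your additional reduction to $n=1$ via $(\Delta\otimes 1)\cR = \cR^{13}\cR^{23}$ and the explicit finiteness bookkeeping (finitely many relevant finite-weight shifts $\overline{\beta}$ because $\dim V_i < \infty$, with the $\delta$-degree recorded in nonnegative shifts of the $z_i$-power) correctly fill in the convergence details the paper leaves implicit.
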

\begin{proof}
Because $\cR \in q^{-\Omega}\Big(1 + U_q(\abb_+) \hotimes U_q(\abb_-)\Big)$, $\cR_{VW}$ defines a linear map $V \otimes W \to \hV \otimes W$.  The composed map $P_{\hV W}\cR_{VW}$ is then an intertwiner of $U_q(\wgg)$-representations by the property $\Delta^{21}(x) \cR = \cR \Delta(x)$ for $x \in U_q(\wgg)$.
\end{proof}

\subsection{Intertwiners of $U_q(\wgg)$-representations}

For any $U_q(\wgg)$-module $W$ which is $U_q(\ahh)$-semisimple, define the completed tensor product by
\[
M_{\mu, k, a} \hotimes W := \Hom_{\CC}(M_{\mu, k, a}^\vee, W),
\]
where the $U_q(\wgg)$-action is given by $(u \cdot \phi)(m) = u^{(1)} \phi(S(u^{(2)}) m)$. In these terms, elements of $M_{\mu, k, a} \hotimes W$ are sums $\sum_{i = 0}^\infty m_i \otimes w_i$ with $m_i, w_i$ homogeneous and $\lim_{i \to \infty} \deg(m_i) = \infty$. 

A key construction in this paper will be of intertwiners between a Verma module and its completed tensor product with either a finite-dimensional or integrable module.  We begin by characterizing the space of such intertwiners when the weight is either generic or dominant integral. Denote the highest term of an intertwiner $\Phi: M_{\mu_1, k_1, a_1} \to M_{\mu_2, k_2, a_2} \otimes W$ by
\[
\langle \Phi \rangle := \langle v_{\mu_2, k_2}^*, \Phi v_{\mu_1, k_1}\rangle.
\]

\begin{prop} \label{prop:int-exist}
Let $M_{\lambda, k, a}$ and $M_{\mu, k - k', a}$ be Verma modules and $W$ a $U_q(\wgg)$-representation of level $k'$ on which $q^{h_i}$ and $q^d$ act diagonally.  Suppose that either (1) $(\mu, k)$ is generic or (2) $\mu + (k - k') \Lambda_0$ is dominant integral and $W[\lambda - \mu + n_i \alpha_i + k'\Lambda_0] = 0$ for all $i > 0$. We have an isomorphism
\[
\Hom_{U_q(\wgg)}(M_{\lambda, k, a}, M_{\mu, k - k', a} \hotimes W) \simeq W[\lambda - \mu + k' \Lambda_0]
\]
given by $\Phi \mapsto \langle \Phi \rangle$.
\end{prop}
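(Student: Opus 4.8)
The plan is to establish the isomorphism by constructing the inverse map to $\Phi \mapsto \langle \Phi \rangle$ explicitly and checking that it is well-defined precisely under the stated hypotheses. First I would observe that the map $\Phi \mapsto \langle \Phi \rangle = \langle v_{\mu, k-k'}^*, \Phi v_{\lambda, k, a} \rangle$ does indeed land in the weight space $W[\lambda - \mu + k'\Lambda_0]$: since $\Phi$ is an intertwiner and $v_{\lambda,k,a}$ has weight $\lambda + k\Lambda_0 + a\delta$, the image $\Phi v_{\lambda,k,a}$ has that same weight, and projecting onto the highest weight covector of $M_{\mu, k-k', a}$ (of weight $\mu + (k-k')\Lambda_0 + a\delta$) subtracts the $W$-component to the claimed weight. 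To go the other way, given a vector $w \in W[\lambda - \mu + k'\Lambda_0]$, I would attempt to define $\Phi$ by first setting $\Phi v_{\lambda,k,a} := v_{\mu,k-k',a} \otimes w + (\text{lower order terms in the first factor})$ and then extending to all of $M_{\lambda,k,a}$ by the intertwining condition $\Phi(u \cdot v_{\lambda,k,a}) = u \cdot \Phi(v_{\lambda,k,a})$, using that $M_{\lambda,k,a}$ is generated over $U_q(\wgg)$ (in fact over $U_q(\abb_-)$) by its highest weight vector.

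The core of the argument is the verification that this extension is consistent, i.e.\ that the prescription respects the defining relations of the Verma module. Since $M_{\lambda,k,a}$ is a Verma (free over the lower-triangular part modulo the relations coming from the highest weight vector being singular), the only obstruction to extending a map defined on the cyclic generator is that $\Phi$ must send any singular vector that is killed in $M_{\lambda,k,a}$ to something compatible. In the generic case (1), $M_{\lambda,k,a}$ is genuinely free as a $U_q(\abb_-)$-module on $v_{\lambda,k,a}$, so no relations constrain the extension and the map $w \mapsto \Phi$ is immediately well-defined and inverse to $\Phi \mapsto \langle \Phi \rangle$; the completed tensor product $M_{\mu,k-k',a}\hotimes W$ is exactly where the image lives because the degrees of the lower-order terms tend to infinity, matching the definition of $\hotimes$. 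In the dominant integral case (2), $M_{\lambda,k,a}$ is not free: it has singular vectors as described in Proposition~\ref{prop:verma-sing-vect}, generating submodules that must map to zero. The condition $W[\lambda - \mu + n_i\alpha_i + k'\Lambda_0] = 0$ for all $i > 0$ is exactly what forces the would-be image of each simple-root singular vector $f_i^{n_i + 1}$-type relation to vanish in the target weight space, so that $\Phi$ descends consistently.

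The main obstacle I expect is the well-definedness in case (2): one must show that the naive extension of $\Phi$ by the intertwining relation is unaffected by (equivalently, automatically annihilates) the singular vectors of $M_{\lambda,k,a}$, and that this is governed precisely by the vanishing of the listed weight spaces of $W$. Concretely, when one applies $\Phi$ to the relation expressing that $f_i^{n_i+1} v_{\lambda,k,a}$ (for appropriate $n_i$ determined by dominant integrality) is again a highest weight vector for $U_q(\abb_-)$-considerations, the compatibility forces $\Phi$ of that vector to live in $W[\lambda - \mu + n_i\alpha_i + k'\Lambda_0]$, which the hypothesis sets to zero; this is the step requiring genuine care, since one must track how the coproduct distributes $f_i^{n_i+1}$ across the two tensor factors and confirm that no nonzero contribution survives. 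I would then note that both maps are manifestly linear and mutually inverse, completing the identification with $W[\lambda - \mu + k'\Lambda_0]$.
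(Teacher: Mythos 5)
Your construction locates the obstruction in the wrong Verma module. The hypothesis in case (2) is that $\mu + (k-k')\Lambda_0$ --- the highest weight of the \emph{target} Verma $M_{\mu,k-k',a}$ --- is dominant integral; nothing is assumed about $\lambda + k\Lambda_0$, so the source $M_{\lambda,k,a}$ need not have any singular vectors at all, and the integers $n_i$ in the statement are attached to $\mu + (k-k')\Lambda_0$ via Proposition \ref{prop:verma-sing-vect}, not to $\lambda$. More fundamentally, singular vectors of the source are never an obstruction to maps \emph{out} of it: $M_{\lambda,k,a}$ is induced from $U_q(\abb)$, hence free on $v_{\lambda,k,a}$ over the subalgebra generated by the $f_i$, so by Frobenius reciprocity an intertwiner out of it is exactly a choice of vector of weight $\lambda + k\Lambda_0 + a\delta$ in the target that is annihilated by all $e_i$ and has the correct $q^d$-eigenvalue. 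There are no relations of the source Verma to respect beyond this, so the ``consistency check against singular vectors of $M_{\lambda,k,a}$'' that you identify as the core difficulty is a non-issue.

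The actual content of the proposition, which your proposal leaves unaddressed, is why the leading term $v_{\mu,k-k',a}\otimes w$ can be completed by lower-order terms to a singular vector of $M_{\mu,k-k',a}\hotimes W$, and when this fails. The paper handles this by unwinding the definition $M_{\mu,k-k',a}\hotimes W = \Hom_{\CC}(M_{\mu,k-k'}^\vee, W)$ and applying the tensor--hom adjunction, which identifies the space of intertwiners with $\{w\in W[\lambda-\mu+k'\Lambda_0] : I_{\mu,k-k'}\,w = 0\}$, where $I_{\mu,k-k'}$ is the annihilator of the lowest weight vector of $M_{\mu,k-k'}^\vee$. In case (1) this imposes no condition; in case (2), Proposition \ref{prop:verma-sing-vect} applied to the \emph{target} shows that $I_{\mu,k-k'}$ is generated by the $e_i^{n_i}$, and the condition $e_i^{n_i}w=0$ holds automatically because $e_i^{n_i}w$ lands in $W[\lambda-\mu+n_i\alpha_i+k'\Lambda_0]=0$. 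That is the mechanism your hypothesis-(2) discussion is reaching for, but it runs through the restricted dual of the target Verma rather than through singular vectors of the source. You also omit the final step pinning down the grading, namely that compatibility of the $q^d$-action forces $w$ to lie in the degree-zero part of $W$, though that is minor by comparison.
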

\begin{proof}
The space of $U_q(\agg)$-intertwiners $M_{\lambda, k, a} \to M_{\mu, k - k', a} \hotimes W$ is given by
\begin{align*}
\Hom_{U_q(\agg)}(M_{\lambda, k}, M_{\mu, k - k'} \hotimes W) &\simeq \Hom_{U_q(\agg)}(\Ind^{U_q(\agg)}_{U_q(\abb)} \CC_{\lambda, k}, \Hom_{\CC}(M_{\mu, k - k'}^\vee, W))\\
&\simeq \Hom_{U_q(\abb)}(\CC_{\lambda, k}, \Hom_{\CC}(M_{\mu, k - k'}^\vee, W))\\
&\simeq \Hom_{U_q(\abb)}(\CC_{\lambda, k} \otimes_k M_{\mu, k - k'}^\vee, W)\\
&\simeq \{v \in W[\lambda - \mu + k' \Lambda_0] \mid I_{\mu, k - k'}v = 0\},
\end{align*}
where $I_{\mu, k - k'} := \{u \in U_q(\agg) \mid u \cdot v_{-\mu, -k}^* = 0\}$ is the annihilator ideal of the lowest weight vector of $M_{\mu, k - k'}^\vee$.  By Proposition \ref{prop:verma-sing-vect}, the $U_q(\abb_+)$-submodule of $M_{\mu, k}^\vee$ generated by $v_{-\mu, -k + k'}^*$ has relations 
\[
e_i^{n_i} v_{-\mu, -k + k'}^* = 0
\]
so that $I_{\mu, k - k'}$ is generated by $e_i^{n_i}$.  The given condition ensures any element of $W[\lambda - \mu + k'\Lambda_0]$ yields a map of $U_q(\agg)$-modules in both cases (1) and (2).  Computing the action of $q^d$ on the source and target and recalling our convention that $q^d$ acts by $q^a$ on the highest-weight vector shows that a valid $v$ must lie in the degree $0$ part of $W$, completing the proof.
\end{proof}

\begin{remark}
In what follows, we will apply Proposition \ref{prop:int-exist} for representations $W$ which are either integrable modules or tensor products of evaluation modules associated to finite-dimensional $U_q(\agg)$-modules.
\end{remark}

For a $U_q(\ahh)$-semisimple representation $V$ of level $k'$ which is either highest weight or finite-dimensional, $v\in V[\tau]$, and $(\mu, k)$ so that the conditions of Proposition \ref{prop:int-exist} hold, denote by 
\[
\Phi_{\mu, k, a}^v(z): M_{\mu, k, a} \to M_{\mu - \tau, k - k', a} \hotimes V[z, z^{-1}]
\]
the unique corresponding $U_q(\wgg)$-intertwiner.  Each $z$-coefficient of $\Phi_{\mu, k, a}^{v}(z)$ lies in the tensor product $M_{\mu - \tau, k - k', a} \otimes V$ without completion.  Similarly, if $W$ is a $U_q(\ahh)$-semisimple $U_q(\wgg)$-representation of level $k_W$, and $w \in W[\tau + k_W \Lambda_0]$, denote by 
\[
\Phi_{\mu, k, a}^w: M_{\mu, k, a} \to M_{\mu - \tau, k - k_W, a} \hotimes W
\]
the corresponding intertwiner given by Proposition \ref{prop:int-exist}.  For notational convenience, we will sometimes denote this by $\Phi_{\mu, k, a}^w(1) := \Phi_{\mu, k, a}^w$.  For $i = 1, \ldots, n$, let $V_i$ be a $U_q(\agg)$-representation of level $k_i'$ which is either finite-dimensional or a highest weight $U_q(\wgg)$-representation.  For $v_i \in V_i[\tau_i + k_i' \Lambda_0]$, define the iterated intertwiner
\[
\Phi_{\mu, k, a}^{v_1, \ldots, v_n}(z_1, \ldots, z_n): M_{\mu, k, a} \to M_{\mu - \tau_1 - \cdots - \tau_n, k - k_1' - \cdots - k_n', a} \hotimes V_1[z_1^{\pm 1}] \hotimes \cdots \hotimes V_n[z_n^{\pm 1}]
\]
by the composition
\begin{equation} \label{eq:comp-int-def}
\Phi_{\mu, k, a}^{v_1, \ldots, v_n}(z_1, \ldots, z_n) = \Phi_{\mu - \tau_2 - \cdots - \tau_n, k - k_2' - \cdots - k_n', a}^{v_1}(z_1) \circ \cdots \circ \Phi_{\mu, k, a}^{v_n}(z_n),
\end{equation}
where we adopt the convention that $z_i \equiv 1$ and $V_i[z_i^{\pm 1}] \equiv V_i$ if $V_i$ is a highest weight $U_q(\wgg)$-representation.  If all $V_i$ are finite-dimensional, define also the universal intertwiner
\begin{equation} \label{eq:uni-int-def}
\Phi^{V_1, \ldots, V_n}_{\mu, k, a}(z_1, \ldots, z_n) := \sum_{v_1, \ldots, v_n} \Phi^{v_1, \ldots, v_n}_{\mu, k, a}(z_1, \ldots, z_n) \otimes v_n^* \otimes \cdots \otimes v_1^*,
\end{equation}
where in the sum $\{v_i\}$ and $\{v_i^*\}$ range over dual bases of $V_1, \ldots, V_n$ and $V_1^*, \ldots, V_n^*$.  Finally, denote the single-step intertwiner associated to $v_1 \otimes \cdots \otimes v_n \in V_1[z_1^{\pm 1}] \otimes \cdots \otimes V_n[z_n^{\pm 1}]$ by Proposition \ref{prop:int-exist} by
\begin{equation} \label{eq:big-int-def}
\wPhi^{v_1 \otimes \cdots \otimes v_n}_{\mu, k, a}(z_1, \ldots, z_n): M_{\mu, k, a} \to M_{\mu - \tau_1 - \cdots - \tau_n, k - k_1' - \cdots - k_n', a} \hotimes V_1[z_1^{\pm 1}] \otimes \cdots \otimes V_n[z_n^{\pm 1}],
\end{equation}
with the same convention that $z_i \equiv 1$ and $V_i[z_i^{\pm 1}] \equiv V_i$ if $V_i$ is a highest weight $U_q(\wgg)$-module.  If all $V_i$ are finite-dimensional, let its universal version be 
\begin{equation} \label{eq:uni-big-int-def}
\wPhi^{V_1, \ldots, V_n}_{\mu, k, a}(z_1, \ldots, z_n):= \sum_{v_1, \ldots, v_n} \wPhi^{v_1 \otimes \cdots \otimes v_n}_{\mu, k, a}(z_1, \ldots, z_n) \otimes v_n^* \otimes \cdots \otimes v_1^*.
\end{equation}
Notice that $\Phi^{v}_{\mu, k, a}(z) = \wPhi^v_{\mu, k, a}(z)$ and $\Phi^V_{\mu, k, a}(z) = \wPhi^V_{\mu, k, a}(z)$.  As maps of $U_q(\agg)$-modules, these intertwiners are independent of $a$; we denote by $\Phi^{v_1, \ldots, v_n}_{\mu, k}(z_1, \ldots, z_n)$, $\Phi^{V_1, \ldots, V_n}_{\mu, k}(z_1, \ldots, z_n)$, $\wPhi^{v_1, \ldots, v_n}_{\mu, k}(z_1, \ldots, z_n)$, and $\wPhi^{V_1, \ldots, V_n}_{\mu, k}(z_1, \ldots, z_n)$ the corresponding intertwiners with $a = 0$.

\begin{remark}
As defined, the composed intertwiners $\Phi_{\mu, k, a}^{v_1, \ldots, v_n}(z_1, \ldots, z_n)$ are formal series in $z_1, \ldots, z_n$.  It was shown in \cite{EFK, FR} that the matrix elements of these formal series converge for $z_1 \mgg \cdots \mgg z_n$ and admit extension to meromorphic functions. 
\end{remark}

\begin{remark}
For finite-dimensional $V_1, \ldots, V_n$, the intertwiners $\Phi_{\mu, k}^{v_1,\ldots, v_n}(z_1, \ldots, z_n)$ appeared in \cite{EFK, FR} with Weyl modules in the place of Verma modules.  The two constructions coincide for generic weight.
\end{remark}

\section{Fusion and exchange operators and ABRR equation} \label{sec:fus-ex}

In this section, we introduce the fusion and exchange operators as operators on representations via intertwiners and as universal elements via the ABRR equation.  After characterizing their basic properties, we give normalizations of the parameters for which evaluation of the universal operators in tensor products of representations coincides with the construction via intertwining operators.

\subsection{Dynamical notation} \label{sec:dyn}

Throughout the following sections, we will use dynamical notation to express the action of certain operators on tensor products of $U_q(\agg)$-modules.  Suppose that $f(\mu, k): \ahh^* \to U_q(\wgg)$ is a function and $V_1 \otimes \cdots V_n \otimes W_1^* \otimes W_m^*$ a tensor product of $U_q(\agg)$-representations.  We denote by $f((\mu, k) + a h^{(j)} + b h^{(*l)})$ the element of $\End(V_1 \otimes \cdots V_n \otimes W_1^* \otimes W_m^*)$ acting by
\[
f((\mu, k) + a h^{(j)} + b h^{(*l)})(v_1 \otimes \cdots \otimes v_n \otimes w_1^* \otimes \cdots \otimes w_m^*) := f((\mu, k) + a \mu_j + b\nu_l)(v_1 \otimes \cdots \otimes v_n \otimes w_1^* \otimes \cdots \otimes w_m^*)
\]
for $v_j \in V_j[\mu_j]$ and $w_l^* \in W_l^*[\nu_l]$, where we use $(\mu, k)$ to denote the element $\mu + k \Lambda_0 \in \ahh^*$. If $\mu_j, \nu_l \in \hh^*$, we will also denote this by the notation $f(\mu + a h^{(j)} + b h^{(*l)}, k)$. We denote by $f((\mu, k) + a \whh^{(j)})$ the element which acts by
\[
f_\tau((\mu, k) + a \whh^{(j)})(v_1 \otimes \cdots \otimes v_n \otimes w_1^* \otimes \cdots \otimes w_m^*) := f_\tau((\mu, k) + a \mu_j + a\tau)(v_1 \otimes \cdots \otimes v_n \otimes w_1^* \otimes \cdots \otimes w_m^*),
\]
where $f_\tau$ is the part of $f$ which shifts the weight in $V_j$ by $\tau$.

\subsection{Fusion and exchange operators in representations}

Let $V_1$ and $V_2$ be $U_q(\agg)$-representations of level $k_1$ and $k_2$ which are either finite-dimensional or highest weight $U_q(\wgg)$-representations.  As before, if $V_i$ is a highest weight $U_q(\wgg)$-representation, let $z_i \equiv 1$ and interpret $V_i[z_i^{\pm 1}] \equiv V_i$.  Suppose also that at most one of $V_i$ is highest weight.  The fusion operator $J_{V_1, V_2}(z_1, z_2; \mu, k): V_1[z_1^{\pm 1}]\otimes V_2[z_2^{\pm 2}] \to V_1[z_1^{\pm 1}] \otimes V_2[z_2^{\pm 1}]$ is defined by
\[
J_{V_1, V_2}(z_1, z_2; \mu, k)(v_1 \otimes v_2) := \Big\langle\Phi^{v_1, v_2}_{\mu, k}(z_1, z_2) v_{\mu, k}, v_{(\mu, k) - \wt(v_2) - \wt(v_1)}^*\Big\rangle
\]
on homogeneous $v_1 \otimes v_2 \in V_1 \otimes V_2$, where $v_{(\mu, k) - \wt(v_2) - \wt(v_1)}^*$ is the dual vector to the highest weight vector $v_{(\mu, k) - \wt(v_2) - \wt(v_1)}$ for $M_{(\mu, k) - \wt(v_2) - \wt(v_1)}$, and $\wt(v_i)$ denotes the $\ahh^*$-weight of $v_i$.  As defined, it is a formal series in $z_1/z_2$, and it was shown in \cite{EFK} that it converges to a meromorphic function on $z_1 \mgg z_2$ if $V_1$ and $V_2$ are finite-dimensional.  

For $U_q(\agg)$ representations $V_i$ of level $k_i$ which are either finite-dimensional or highest weight with at most one highest weight, define the iterated fusion operator by 
\[
J_{V_1, \ldots, V_n}(z_1, \ldots, z_n; \mu, k)(v_1 \otimes \cdots \otimes v_n)  := \Big \langle \Phi^{v_1, \ldots, v_n}_{\mu, k}(z_1, \ldots, z_n) v_{\mu, k}, v_{(\mu, k) - \wt(v_1) - \cdots - \wt(v_n)}^*\Big\rangle. 
\]
Define also its multicomponent version by 
\begin{multline*}
\wJ_{V_1, \ldots, V_i; V_{i+1}, \ldots, V_n}(z_1, \ldots, z_i; z_{i+1}, \ldots, z_n; \mu, k)(v_1 \otimes v_2 \otimes \cdots \otimes v_n)\\ :=\Big\langle \wPhi^{v_1 \otimes \cdots \otimes v_i}_{(\mu, k) - \wt(v_{i+1}) - \cdots - \wt(v_n)}(z_1, \ldots, z_i) \wPhi^{v_{i+1} \otimes \cdots \otimes v_n}_{\mu, k}(z_{i+1}, \ldots, z_n) v_{\mu, k}, v_{(\mu, k) - \wt(v_1) - \cdots - \wt(v_n)}^*\Big\rangle,
\end{multline*}
where we note that $\wJ_{V_1; V_2}(z_1; z_2; \mu, k) = J_{V_1, V_2}(z_1, z_2; \mu, k)$.  We may relate the two types of intertwiners via the multicomponent fusion operator.

\begin{lemma} \label{lem:fus-def}
For homogeneous vectors $v_1, \ldots, v_n$, we have that 
\[
\wPhi^{J_{V_1, \ldots, V_n}(z_1, \ldots, z_n; \mu, k)(v_1 \otimes  \cdots \otimes v_n)}_{\mu, k}(z_1, \ldots, z_n) = \Phi^{v_1, \ldots, v_n}_{\mu, k}(z_1, \ldots, z_n).
\]
\end{lemma}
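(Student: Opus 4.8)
The plan is to unwind both sides of the claimed identity using the defining relations of the intertwiners and the fusion operator. The key observation is that both sides are $U_q(\wgg)$-intertwiners from $M_{\mu,k}$ into $M_{\mu - \tau_1 - \cdots - \tau_n, k - k_1' - \cdots - k_n'} \hotimes V_1[z_1^{\pm 1}] \otimes \cdots \otimes V_n[z_n^{\pm 1}]$, where the target uses the \emph{single} tensor-product representation $V_1[z_1^{\pm 1}] \otimes \cdots \otimes V_n[z_n^{\pm 1}]$ on which $\wPhi$ is built via Proposition~\ref{prop:int-exist}. By that proposition, such an intertwiner is uniquely determined by its highest term $\langle \Phi \rangle$, namely the $V_1 \otimes \cdots \otimes V_n$-component obtained by pairing $\Phi v_{\mu,k}$ against $v^*_{(\mu,k) - \wt(v_1) - \cdots - \wt(v_n)}$. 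So it suffices to check that the two intertwiners have equal highest terms.

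\medskip

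\emph{First} I would compute the highest term of the right-hand side. By definition \eqref{eq:comp-int-def}, $\Phi^{v_1, \ldots, v_n}_{\mu,k}(z_1, \ldots, z_n)$ is the composition of single-step intertwiners $\Phi^{v_i}_{\mu, k}(z_i)$, and its highest term is precisely the value of the iterated fusion operator $J_{V_1, \ldots, V_n}(z_1, \ldots, z_n; \mu, k)(v_1 \otimes \cdots \otimes v_n)$, by the very definition of the iterated fusion operator. \emph{Second}, for the left-hand side, I would use the fact that $\wPhi^w_{\mu,k}(z_1, \ldots, z_n)$ is, by Proposition~\ref{prop:int-exist}, the intertwiner whose highest term is exactly the vector $w \in V_1[z_1^{\pm 1}] \otimes \cdots \otimes V_n[z_n^{\pm 1}]$ that labels it. Hence the highest term of $\wPhi^{J_{V_1, \ldots, V_n}(z_1, \ldots, z_n; \mu, k)(v_1 \otimes \cdots \otimes v_n)}_{\mu,k}(z_1, \ldots, z_n)$ is precisely $J_{V_1, \ldots, V_n}(z_1, \ldots, z_n; \mu, k)(v_1 \otimes \cdots \otimes v_n)$.

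\medskip

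Since both highest terms agree, and since the uniqueness clause of Proposition~\ref{prop:int-exist} guarantees that an intertwiner into $M_{\mu', k'} \hotimes V_1[z_1^{\pm 1}] \otimes \cdots \otimes V_n[z_n^{\pm 1}]$ is determined by its highest term, the two intertwiners coincide. The only genuine content is that the highest term of the \emph{composed} (multi-step) intertwiner $\Phi^{v_1, \ldots, v_n}$ equals the fusion operator applied to $v_1 \otimes \cdots \otimes v_n$, which is essentially the definition of $J$, so the proof reduces to matching the two definitions. I expect the \textbf{main obstacle} to be purely bookkeeping: one must verify that the applicability hypotheses of Proposition~\ref{prop:int-exist} hold uniformly along the composition (so that each highest term is well-defined and the uniqueness applies on the nose), and that the weight shifts $\wt(v_i)$ and levels $k_i'$ bookkeep correctly through the chain of Verma modules $M_{\mu - \tau_2 - \cdots - \tau_n, \ldots} \to \cdots$. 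Once the highest-term identity is in place, extending it from the highest weight vector $v_{\mu,k}$ to all of $M_{\mu,k}$ is automatic by the intertwining property.
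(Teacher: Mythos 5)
Your proposal is correct and follows exactly the paper's argument: both sides are intertwiners with the same highest term $J_{V_1,\ldots,V_n}(z_1,\ldots,z_n;\mu,k)(v_1\otimes\cdots\otimes v_n)$, so they coincide by the uniqueness clause of Proposition~\ref{prop:int-exist}. The paper's proof is a two-line version of the same reasoning.
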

\begin{proof}
The two intertwiners both have highest term
\[
J_{V_1, \ldots, V_n}(z_1, \ldots, z_n; \mu, k)(v_1 \otimes \cdots \otimes v_n),
\]
hence they coincide by Proposition \ref{prop:int-exist}.
\end{proof}

\begin{lemma} \label{lem:iter-fus}
The iterated fusion operator satisfies
\begin{multline*}
J_{V_1, \ldots, V_n}(z_1, \ldots, z_n; \mu, k)\\
 = \wJ_{V_1; V_2 \otimes \cdots \otimes V_n}(z_1; z_2, \ldots, z_n; \mu, k) \wJ_{V_2; V_3 \otimes \cdots \otimes V_n}(z_2; z_3, \ldots, z_n; \mu, k) \cdots \wJ_{V_{n - 1}; V_n}(z_{n - 1}; z_n; \mu, k).
\end{multline*}
\end{lemma}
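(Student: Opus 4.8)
The plan is to induct on $n$, leveraging the recursive definition \eqref{eq:comp-int-def} of the composed intertwiner together with Lemma \ref{lem:fus-def}. Throughout, I read each factor $\wJ_{V_j; V_{j+1} \otimes \cdots \otimes V_n}(z_j; z_{j+1}, \ldots, z_n; \mu, k)$ on the right-hand side as an operator on $V_1[z_1^{\pm 1}] \otimes \cdots \otimes V_n[z_n^{\pm 1}]$ that acts by the multicomponent fusion operator on the tensor factors $V_j, \ldots, V_n$ and by the identity on $V_1, \ldots, V_{j-1}$; without this convention the displayed product would not make sense as an operator identity. The base case $n = 2$ is precisely the remark $\wJ_{V_1; V_2}(z_1; z_2; \mu, k) = J_{V_1, V_2}(z_1, z_2; \mu, k)$ recorded immediately after the definition of the multicomponent fusion operator.

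For the inductive step I first peel off the outermost single intertwiner in \eqref{eq:comp-int-def}, writing
\[
\Phi^{v_1, \ldots, v_n}_{\mu, k}(z_1, \ldots, z_n) = \Phi^{v_1}_{(\mu, k) - \wt(v_2) - \cdots - \wt(v_n)}(z_1) \circ \Phi^{v_2, \ldots, v_n}_{\mu, k}(z_2, \ldots, z_n).
\]
Set $w := J_{V_2, \ldots, V_n}(z_2, \ldots, z_n; \mu, k)(v_2 \otimes \cdots \otimes v_n)$; this vector is homogeneous of weight $\wt(v_2) + \cdots + \wt(v_n)$ because the highest-term extraction defining the fusion operator preserves total weight. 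Lemma \ref{lem:fus-def} applied to $V_2, \ldots, V_n$ then replaces the tail by the single-step intertwiner, $\wPhi^{w}_{\mu, k}(z_2, \ldots, z_n) = \Phi^{v_2, \ldots, v_n}_{\mu, k}(z_2, \ldots, z_n)$, and since $\Phi^{v_1} = \wPhi^{v_1}$ for single vectors the composite $\Phi^{v_1}_{(\mu, k) - \wt(w)}(z_1) \circ \wPhi^{w}_{\mu, k}(z_2, \ldots, z_n)$ is exactly the one whose highest term defines $\wJ_{V_1; V_2 \otimes \cdots \otimes V_n}(v_1 \otimes w)$, where I have extended $\wJ_{V_1; V_2 \otimes \cdots \otimes V_n}$ linearly in its second block to accept the (generally non-pure) tensor $w$. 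Passing to highest terms yields the one-step recursion
\[
J_{V_1, \ldots, V_n} = \wJ_{V_1; V_2 \otimes \cdots \otimes V_n} \circ (\id_{V_1} \otimes J_{V_2, \ldots, V_n}).
\]

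To conclude, I expand $J_{V_2, \ldots, V_n}$ by the inductive hypothesis and note that tensoring each of its factors $\wJ_{V_j; V_{j+1} \otimes \cdots \otimes V_n}$ with $\id_{V_1}$ reproduces the same operator on the full tensor product under the convention above, which assembles the recursion into the asserted product. The step I expect to require the most care is the weight-and-level bookkeeping in the dynamical shifts: I must check that $(\mu, k) - \wt(w)$ agrees with the shifted argument $\mu - \wt(v_2) - \cdots - \wt(v_n)$ (together with the correspondingly lowered level) appearing in \eqref{eq:comp-int-def}, and that fusion genuinely preserves total weight so that $w$ is homogeneous and the substitution into $\wJ_{V_1; V_2 \otimes \cdots \otimes V_n}$ is legitimate. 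Once these shifts are aligned everything is formal, the identity holding as an equality of formal series in the ratios $z_i/z_j$ and hence of their meromorphic continuations.
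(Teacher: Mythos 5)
Your proof is correct and takes essentially the same route as the paper's: the paper's one-line argument simply observes that both sides express the highest term of the composed intertwiner $\Phi^{v_1,\ldots,v_n}_{\mu,k}(z_1,\ldots,z_n)$, and your induction via \eqref{eq:comp-int-def} and Lemma \ref{lem:fus-def} is exactly the unwinding of that observation. The weight bookkeeping you flag does go through, since the fusion operator preserves total $\ahh^*$-weight, so $w$ is homogeneous of weight $\wt(v_2)+\cdots+\wt(v_n)$ and the dynamical shifts match.
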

\begin{proof}
These are two different ways of expressing the highest term of the intertwiner $\Phi^{v_1, \ldots, v_n}_{\mu, k}(z_1, \ldots, z_n)$, hence they are equal.
\end{proof}

Let $V_1, V_2$ be $U_q(\agg)$-representations which are either finite-dimensional or highest weight $U_q(\wgg)$-representations, with at most one being highest weight.  The exchange operator $R_{V_1V_2}(z_1, z_2; \mu, k): V_1[z_1^{\pm 1}] \otimes V_2[z_2^{\pm 1}] \to V_1((z_1)) \otimes V_2((z_2^{-1}))$ is defined as
\[
R_{V_1V_2}(z_1, z_2; \mu, k) := J_{V_1V_2}(z_1, z_2; \mu, k)^{-1} \cR^{21}_{V_1 V_2} J^{21}_{V_1V_2}(z_2, z_1; \mu, k).
\]

\begin{remark}
Both the fusion and exchange operators depend only on the $U_q(\agg)$-structure of the Verma module $M_{\mu, k, a}$, meaning that their definition is independent of the choice of normalization for the grading.  Therefore, their value remains the same if $M_{\mu, k}$, $\Phi^{v_1, \ldots, v_n}_{\mu, k}(z_1, \ldots, z_n)$, and $\wPhi^{v_1, \ldots, v_n}_{\mu, k}(z_1, \ldots, z_n)$ are replaced by $M_{\mu, k, a}$, $\Phi^{v_1, \ldots, v_n}_{\mu, k, a}(z_1, \ldots, z_n)$, and $\wPhi^{v_1, \ldots, v_n}_{\mu, k, a}(z_1, \ldots, z_n)$ in their definitions.
\end{remark}

\subsection{Universal fusion operators} \label{sec:fus-op}

In \cite{ES}, a universal fusion operator $\cJ(\mu, k)$ living in a completion of $U_q(\wgg) \hotimes U_q(\wgg)$ under the principal grading is defined; when evaluated in finite dimensional representations, $\cJ(\mu, k)$ yields the previously defined fusion operators.  We modify this definition by using the ABRR equation for the opposite coproduct and using the standard grading instead of the principal grading.  For this, we define the coefficient ring
\[
\AA_{\mu, k} := \CC[[q^{-2(\mu, \alpha_1)}, \ldots, q^{-2(\mu, \alpha_r)}, q^{-2k + 2(\mu, \theta)}]]
\]
and work formally over $\AA_{\mu, k}$. We then have the following analogue of \cite[Theorem 8.1]{ES}.

\begin{prop} \label{prop:es-abrr}
There exists a unique element $\cJ(\mu, k) \in 1 + (U_q(\hat{\bb}_-)_{< 0} \hat{\otimes} U_q(\hat{\bb}_+)_{>0})^{\hat{\hh}} \otimes \AA_{\mu, k}$ satisfying the ABRR equation
\begin{equation} \label{eq:abrr}
\cR^{21}q_1^{2\mu + 2kd} \cJ(\mu, k) = \cJ(\mu, k) q_1^{2\mu + 2kd} q^{-\Omega}.
\end{equation}
Moreover, the universal fusion operator $\cJ(\mu, k)$ satisfies
\begin{equation} \label{eq:ufus-cocycle}
\cJ^{12, 3}(\mu, k) \cJ^{12}((\mu, k) - h^{(3)}/2) = \cJ^{1, 23}(\mu, k) \cJ^{23}((\mu, k) + h^{(1)}/2).
\end{equation}
\end{prop}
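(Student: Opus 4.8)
The plan is to establish existence and uniqueness of $\cJ(\mu,k)$ as a formal power series solution to the ABRR equation \eqref{eq:abrr}, and then derive the cocycle identity \eqref{eq:ufus-cocycle} as a formal consequence. For the first part, I would rewrite \eqref{eq:abrr} as a fixed-point equation. Writing $\cR^{21} = q^{-\Omega}\cR_0^{21}$ and conjugating, the equation becomes
\[
\cJ(\mu,k) = \Ad\!\big(q_1^{-2\mu - 2kd}\big)\!\big(q^{-\Omega}\big)^{-1} \cR_0^{21}\, \Ad\!\big(q_1^{-2\mu-2kd}\big)\big(q^{-\Omega}\big)\, \cJ(\mu,k),
\]
so that $\cJ$ is a fixed point of an operator built from $\cR_0^{21} - 1 \in U_q(\abb_-)_{<0}\hotimes U_q(\abb_+)_{>0}$. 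The key point is that conjugation by $q_1^{2\mu+2kd}$ acts on each homogeneous component of $\cR_0^{21}-1$ by a scalar which is a monomial in $q^{-2(\mu,\alpha_i)}$ and $q^{-2k+2(\mu,\theta)}$; these are precisely the generators of $\AA_{\mu,k}$. This makes the fixed-point map contracting in the $\hh^*$-grading filtration, so that iterating produces a unique solution in $1 + (U_q(\abb_-)_{<0}\hotimes U_q(\abb_+)_{>0})^{\ahh}\otimes\AA_{\mu,k}$, solved degree by degree. The $\ahh$-invariance and the triangular shape are preserved at each step because $\Omega^1$ and the grading operators commute with the Cartan, and the intertwining property $\Delta^{21}(x)\cR = \cR\Delta(x)$ forces the weight-zero condition.

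For the cocycle identity \eqref{eq:ufus-cocycle}, my approach is to apply the coproduct to the ABRR equation and use the hexagon relations $(\Delta\otimes 1)\cR = \cR^{13}\cR^{23}$ and $(1\otimes\Delta)\cR = \cR^{13}\cR^{12}$ together with the quasi-triangularity of $\cR^{21}$. Concretely, I would derive two three-fold ABRR-type equations on $U_q(\wgg)^{\hotimes 3}$: one satisfied by $\cJ^{12,3}(\mu,k)\,\cJ^{12}((\mu,k)-h^{(3)}/2)$ and one by $\cJ^{1,23}(\mu,k)\,\cJ^{23}((\mu,k)+h^{(1)}/2)$, each being a fixed-point equation for the same contracting operator on $U_q(\wgg)^{\hotimes 3}\otimes\AA_{\mu,k}$. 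The dynamical shifts $-h^{(3)}/2$ and $+h^{(1)}/2$ are exactly what is needed to match the conjugation by $q^{2\mu+2kd}$ in the first tensor factor after applying $(\Delta\otimes 1)$ or $(1\otimes\Delta)$; the careful bookkeeping of these half-shifts against $\Omega = \Omega^0 + \Omega^1$ is where the dynamical notation of Section \ref{sec:dyn} does the work. Since both sides solve the same three-fold equation with the same leading term $1$, uniqueness from the first part forces them to coincide.

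I expect the main obstacle to be the sign and shift bookkeeping in the three-fold ABRR equation, which differs from \cite[Theorem 8.1]{ES} precisely because we use the opposite coproduct and the standard grading. The reversed sign of $\Omega$ (noted in the remark after Lemma \ref{lem:r-mat-s}) propagates through the conjugations, and one must verify that the central and derivation terms $\Omega^0 = c\otimes d + d\otimes c$ contribute consistently when $\cR$ is evaluated against the level and degree gradings. Establishing that the relevant operator is genuinely contracting with respect to the topology defined by $\AA_{\mu,k}$ — rather than the principal-grading completion of \cite{ES} — is the step requiring the most care, since convergence of the formal iteration must be checked against the standard grading.
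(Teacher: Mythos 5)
Your treatment of existence and uniqueness is essentially the paper's argument: both solve \eqref{eq:abrr} recursively using the strict triangularity of $\cR_0^{21}-1$, with conjugation by $q_1^{2\mu+2kd}$ producing monomials in the generators of $\AA_{\mu,k}$. (The paper organizes the recursion by the power of $q^{-2k}$ and the standard degree in the first factor, reducing the base case to the finite-type ABRR equation of \cite{ABRR}; your contraction formulation is an acceptable repackaging of the same idea.)

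The cocycle identity, however, has a genuine gap. You assert that the two sides of \eqref{eq:ufus-cocycle} each satisfy ``the same'' three-fold fixed-point equation and then invoke uniqueness with leading term $1$. Neither half of this is available as stated. Applying $(1\otimes\Delta)$ to \eqref{eq:abrr} shows that $\cJ^{1,23}(\mu,k)\,\cJ^{23}((\mu,k)+h^{(1)}/2)$ is a fixed point of the operator $A_LX=\cR^{21}\cR^{31}q_1^{2\mu+2kd}Xq_1^{-2\mu-2kd}q^{\Omega_{12}}q^{\Omega_{13}}$, whereas applying $(\Delta\otimes 1)$ shows that $\cJ^{12,3}(\mu,k)\,\cJ^{12}((\mu,k)-h^{(3)}/2)$ is a fixed point of a genuinely different operator $A_R$, built from $\cR^{32}\cR^{31}$ and conjugation by $q_3^{-2\mu-2kd}$. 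Moreover a single such equation is not contracting in the full grading: $A_LX=X$ constrains only the dependence of $X$ on the first tensor factor, so its solutions form a coset under right multiplication by arbitrary weight-zero elements of the last two factors, and ``leading term $1$'' does not select a unique one --- the degree-zero part of each side in the first factor is $\cJ^{23}((\mu,k)+h^{(1)}/2)$, not $1$. The paper supplies precisely the missing structure: it proves $[A_L,A_R]=0$ via the Yang--Baxter equation, uses this commutativity (together with a comparison of degree-zero parts under the principal grading, which requires invoking the two-fold ABRR equation a second time) to upgrade each side to a simultaneous fixed point of both operators, and only then concludes equality from uniqueness of joint fixed points with prescribed degree-zero part. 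Without the commuting pair and the degree-zero comparison, the uniqueness you appeal to does not apply, so your argument for \eqref{eq:ufus-cocycle} does not close.
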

\begin{proof}
Write $\cJ(\mu, k) = \sum_{i, j \geq 0} \cJ_{i, j}(\mu) q^{-2ki}$, where $\cJ_{i, j}(\mu)$ consists of terms with degree $-j$ in the first tensor component with respect to the standard grading, and write $\cR = \sum_{l \geq 0} \cR_l$, where $\cR_l$ consists of terms of degree $l$ in the first tensor factor and $\cR_0 = \cR_\trig$.  The ABRR equation (\ref{eq:abrr}) may be rewritten as
\[
\sum_{i, j, l} \cR^{21}_l q_1^{2\mu} \cJ_{i, j}(\mu) q_1^{-2\mu} q^\Omega q^{-2k(i + j)} = \sum_{i, j} \cJ_{i, j}(\mu) q^{-2ki}.
\]
Matching degree $j$ coefficients of $q^{-2k}$, the ABRR equation is equivalent to 
\[
\cJ_{i, j}(\mu) = \sum_{a = 0}^{\min\{i, j\}} \cR^{21}_{j - a} q_1^{2\mu} \cJ_{i - a, a}(\mu) q_1^{-2\mu} q^\Omega.
\]
For $j = 0$, this yields 
\[
\cJ_{i, 0}(\mu) = \cR^{21}_\trig q_1^{2\mu} \cJ_{i, 0}(\mu) q_1^{-2\mu} q^\Omega,
\]
which is the ABRR equation for $U_q(\gg)$.  By the existence and uniqueness of solutions to the ABRR equation for $U_q(\gg)$ given by \cite[Proposition 1]{ABRR}, we find that $\cJ_{0, 0}(\mu) = \cJ_\trig(\mu)$ and $\cJ_{i, 0}(\mu) = 0$ for $i > 0$.  For $j > 0$, we have a recursion relation expressing $\cJ_{i, j}(\mu)$ in terms of elements with smaller $i$ or $j$, yielding existence and uniqueness.

To show (\ref{eq:ufus-cocycle}), define the operators
\begin{align*}
A_L X &= \cR^{21} \cR^{31} q_1^{2\mu} X q_1^{-2\mu} q^{\Omega_{12}} q^{\Omega_{13}}\\
A_R X &= \cR^{32} \cR^{31} q_3^{-2\mu} X q_3^{2\mu} q^{\Omega_{12}} q^{\Omega_{23}}.
\end{align*}
First, we claim that $A_L$ and $A_R$ commute.  This is equivalent to the identity
\[
\cR^{21} \cR^{31} q_1^{2\mu + 2kd} \cR^{32} \cR^{31} q_3^{-2\mu - 2kd} = \cR^{32}\cR^{31} q_3^{-2 \mu - 2kd} \cR^{21} \cR^{31} q_1^{2\mu + 2kd},
\]
which follows from the Yang-Baxter equation after canceling $\cR^{31}$ and the factors of $q_1^{2\mu + 2kd}$ and $q_3^{-2\mu - 2kd}$.

Now, we claim that both sides are solutions to $A_L X = X$ and $A_R X = X$ which agree in degree $0$ terms under the principal grading in the first and third components.  Because such solutions are unique, they must be equal.  Notice first that
\begin{align*}
A_L \cJ^{1, 23}(\mu, k) \cJ^{23}((\mu, k) + h^{(1)}/2)
&= \cR^{23, 1} \Ad(q_1^{2 \mu + 2kd}) \cJ^{1, 23}(\mu, k) \cJ^{23}((\mu, k) + h^{(1)}/2) q^{\Omega_{1, 23}}\\
&= \cJ^{1, 23}(\mu, k) q^{-\Omega_{1, 23}} \cJ^{23}((\mu, k) + h^{(1)}/2) q^{\Omega_{1, 23}}\\
&= \cJ^{1, 23}(\mu, k) \cJ^{23}((\mu, k) + h^{(1)}/2).
\end{align*}
Because $A_L$ and $A_R$ commute, $X = A_R \cJ^{1, 23}(\mu, k) \cJ^{23}((\mu, k) + h^{(1)}/2)$ satisfies $A_LX = X$, so to check that $X = \cJ^{1, 23}(\mu, k) \cJ^{23}((\mu, k) + h^{(1)}/2)$, it suffices to note that the degree zero term of the first component of $\cJ^{1, 23}(\mu, k) \cJ^{23}((\mu, k) + h^{(1)}/2)$ under the principal grading is $\cJ^{23}((\mu, k) + h^{(1)}/2)$, the degree zero term of the first component of $X$ under the principal grading is 
\begin{align*}
\cR^{32} q^{-\Omega_{31}} q_3^{-2\mu - 2kd} \cJ^{23}((\mu, k) + h^{(1)}/2) q_3^{2\mu + 2kd} q^{\Omega_{13}} q^{\Omega_{23}} &= \cR^{23} \Ad(q_3^{-2\mu - 2kd - h^{(1)}})\cJ^{23}((\mu, k) + h^{(1)}/2) q^{\Omega_{23}}\\
& = \cR^{23} \Ad(q_2^{2\mu + 2kd + h^{(1)}})\cJ^{23}((\mu, k) + h^{(1)}/2) q^{\Omega_{23}},
\end{align*}
and that they are equal by the ABRR equation.  The computation for $\cJ^{12, 3}(\mu, k) \cJ^{12}((\mu, k) - h^{(3)}/2)$ is similar.
\end{proof}
We require also a renormalization of $\cJ(\mu, k)$ which will have good convergence properties when evaluated on $U_q(\wgg)$-representations which are locally nilpotent with respect to the induced $U_q(\gg)$-action.  Define the renormalized universal fusion operator $\cL(\mu, k) \in 1 + (U_q(\abb_-)_{<0} \hotimes U_q(\abb_+)_{> 0})^{\ahh} \otimes \AA_{\mu, k}$ by 
\[
\cL(\mu, k) := (\cR^{21})^{-1} \cJ(\mu, k).
\]
This renormalized operator satisfies the following formal properties.

\begin{prop} \label{prop:renorm-abrr}
The following hold for $\cL(\mu, k)$:
\begin{itemize}
\item[(a)] the element $\cL(\mu, k)$ satisfies the ABRR equation
\begin{equation} \label{eq:renorm-abrr}
q_1^{2\mu + 2kd} \cR^{21} \cL(\mu, k) = \cL(\mu, k) q_1^{2\mu + 2kd} q^{-\Omega};
\end{equation}

\item[(b)] each coefficient of $\cL(\mu, k)$ as a power series in $q^{-2k}$ has finite degree in the standard grading;

\item[(c)] the element $\cL(\mu, k)$ satisfies the shifted $2$-cocycle relation
\begin{equation} \label{eq:renorm-cocycle}
\cL^{21, 3}(\mu, k) \cL^{12}((\mu, k) - h^{(3)}/2) = \cL^{1, 32}(\mu, k)\cL^{23}((\mu, k) + h^{(1)}/2).
\end{equation}
\end{itemize}
\end{prop}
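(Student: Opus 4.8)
The plan is to derive all three statements from the corresponding facts about $\cJ(\mu, k)$ in Proposition \ref{prop:es-abrr}, using the defining relation $\cJ(\mu, k) = \cR^{21}\cL(\mu, k)$.

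For part (a), I would simply substitute this relation into the ABRR equation \eqref{eq:abrr} for $\cJ(\mu, k)$. Writing $\cJ(\mu, k) = \cR^{21}\cL(\mu, k)$ turns \eqref{eq:abrr} into $\cR^{21} q_1^{2\mu + 2kd}\cR^{21}\cL(\mu, k) = \cR^{21}\cL(\mu, k) q_1^{2\mu + 2kd} q^{-\Omega}$, and since $\cR^{21}$ is invertible we cancel it on the left to obtain exactly \eqref{eq:renorm-abrr}. This is immediate and requires no further input.

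For part (b), I would exploit the recursion established in the proof of Proposition \ref{prop:es-abrr}. Expanding $\cJ(\mu, k) = \sum_{i, j \geq 0}\cJ_{i,j}(\mu) q^{-2ki}$ with $\cJ_{i,j}(\mu)$ of degree $-j$ in the first tensor factor, the recursion $\cJ_{i,j}(\mu) = \sum_{a = 0}^{\min\{i, j\}}\cR^{21}_{j - a} q_1^{2\mu}\cJ_{i - a, a}(\mu) q_1^{-2\mu} q^\Omega$ lets me compute, for each fixed order $i$ in $q^{-2k}$, the full sum over degrees. Interchanging the order of summation and reindexing $m = j - a$ gives $\sum_{j \geq 0}\cJ_{i,j}(\mu) = \cR^{21}\sum_{a = 0}^{i} q_1^{2\mu}\cJ_{i - a, a}(\mu) q_1^{-2\mu} q^\Omega$, where I have used $\sum_{m \geq 0}\cR^{21}_m = \cR^{21}$. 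Since $(\cR^{21})^{-1}$ carries no power of $q^{-2k}$, the order-$q^{-2ki}$ coefficient of $\cL(\mu, k) = (\cR^{21})^{-1}\cJ(\mu, k)$ is then precisely $\sum_{a = 0}^{i} q_1^{2\mu}\cJ_{i - a, a}(\mu) q_1^{-2\mu} q^\Omega$, a finite sum of terms of degree $-a$ with $0 \leq a \leq i$, hence of bounded degree. This is exactly the assertion of (b); conceptually, the prefactor $(\cR^{21})^{-1}$ collapses the single infinite tower $\sum_j \cJ_{i,j}(\mu)$ in the degree direction that obstructs finiteness for $\cJ(\mu, k)$ itself.

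For part (c), which I expect to be the main obstacle, I would deduce the shifted cocycle \eqref{eq:renorm-cocycle} from the cocycle \eqref{eq:ufus-cocycle} for $\cJ(\mu, k)$. Substituting $\cJ = \cR^{21}\cL$ into each factor of \eqref{eq:ufus-cocycle} and applying $(\Delta \otimes 1)$ and $(1 \otimes \Delta)$ to the prefactors, the quasitriangularity identities $(\Delta \otimes 1)\cR = \cR^{13}\cR^{23}$ and $(1 \otimes \Delta)\cR = \cR^{13}\cR^{12}$ produce the reorganized $\cR$-matrix strings, while the intertwining relation $\Delta^{21}(x)\cR = \cR\Delta(x)$ converts $(\Delta \otimes 1)\cL$ into $\cL^{21, 3}$ and $(1 \otimes \Delta)\cL$ into $\cL^{1, 32}$, with the Yang--Baxter equation used to move the remaining $\cR$-factors through. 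The delicate part is the bookkeeping of coproduct components and tensor-leg placements, in particular verifying that the twisted placements $21$ and $32$ in \eqref{eq:renorm-cocycle} are exactly those forced by coproducting the $\cR^{21}$ prefactor. A cleaner and more robust alternative, which I would adopt if the direct manipulation becomes unwieldy, is to mirror the fixed-point argument in the proof of Proposition \ref{prop:es-abrr}: using the renormalized ABRR equation \eqref{eq:renorm-abrr} from part (a) in the $(1,2)$ and $(2,3)$ tensor factors, define two operators whose common fixed points are unique given their lowest-degree terms, check that they commute via the Yang--Baxter equation, and verify that both sides of \eqref{eq:renorm-cocycle} are common fixed points agreeing in lowest degree.
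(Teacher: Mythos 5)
Your proposal is correct and follows essentially the same route as the paper: all three parts are derived from Proposition \ref{prop:es-abrr} via $\cJ(\mu,k) = \cR^{21}\cL(\mu,k)$, with (a) by direct cancellation, (b) by degree-counting in the ABRR recursion, and (c) by substituting into the cocycle \eqref{eq:ufus-cocycle}, commuting the $\cR$-prefactors through with the intertwining property and quasitriangularity, and matching them via Yang--Baxter. The only real difference is in (b), where you resum the recursion for $\cJ$ to peel off $\cR^{21}$ and obtain the explicit finite expression $\sum_{a=0}^{i} q_1^{2\mu}\cJ_{i-a,a}(\mu)q_1^{-2\mu}q^{\Omega}$ for the order-$q^{-2ki}$ coefficient of $\cL(\mu,k)$, whereas the paper derives a recursion directly for the $\cL_{i,j}$ from \eqref{eq:renorm-abrr} and inducts; the two are equivalent, and yours is slightly more explicit.
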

\begin{proof}
Claim (a) follows directly from the ABRR equation (\ref{eq:abrr}) for $\cJ(\mu, k)$.  For (b), let the power series expansion of $\cL(\mu, k)$ be 
\[
\cL(\mu, k) = \sum_{i, j \geq 0} \cL_{i, j}(\mu) q^{-2ki},
\]
where $\cL_{i, j}$ has terms of degree $-j$ in the first tensor factor, and write a series expansion
\[
\cR = \sum_{l \geq 0} \cR_l,
\]
where $\cR_l$ consists of terms of degree $l$ in the first tensor factor and $\cR_0 = \cR_\text{trig}$, the universal $\cR$-matrix of $U_q(\gg)$.  The (renormalized) ABRR equation (\ref{eq:renorm-abrr}) yields
\begin{align*}
\sum_{i, j\geq 0} \cL_{i, j}(\mu) q^{-2ki} q^{-\Omega} &= \sum_{i, j, l \geq 0} q_1^{2\mu + 2kd} \cR^{21}_l \cL_{i, j}(\mu) q_1^{-2\mu - 2kd} q^{-2ki}\\
&= \sum_{i, j, l \geq 0} q_1^{2\mu} \cR^{21}_l \cL_{i, j}(\mu) q_1^{-2\mu} q^{-2k(i + j + l)}.
\end{align*}
This implies that the constant term $\cL_0(\mu)$ satisfies 
\[
\cL_0(\mu) q^{-\Omega} = q_1^{2\mu}\cR^{21}_{\trig} \cL_0(\mu) q_1^{-2\mu}
\]
and is therefore equal to $(\cR^{21}_\trig)^{-1} \cJ_\trig(\mu)$, the analogous quantity for $U_q(\gg)$.  For the higher terms, matching coefficients yields
\[
\sum_{j \geq 0} \cL_{i, j}(\mu) q^{-\Omega} = \sum_{a + j + l = i} q_1^{2\mu} \cR^{21}_l \cL_{a, j}(\mu) q_1^{-2\mu}
\]
and therefore that
\[
\sum_{j \geq 0} \cL_{i, j}(\mu) q^{-\Omega} - q_1^{2\mu} \cR^{21}_0 \cL_{i, 0}(\mu) q^{-2\mu} = \sum_{\substack{a + j + l = i\\ a < i}} q_1^{2\mu} \cR^{21}_l \cL_{a, j}(\mu) q_1^{-2\mu}.
\]
Induction on the power of $q^{-2k}$ yields (b).  For (c), the $2$-cocycle relation (\ref{eq:ufus-cocycle}) for the fusion operator implies that 
\[
\cJ^{12, 3}(\mu, k) \cR^{21} \cL^{12}((\mu, k) - h^{(3)}/2) = \cJ^{1, 23}(\mu, k) \cR^{32} \cL^{23}((\mu, k) + h^{(1)}/2).
\]
Noting the relations $\cJ^{12, 3}(\mu, k) \cR^{21} = \cR^{21} \cJ^{21, 3}(\mu, k)$ and $\cJ^{1, 23}(\mu, k) \cR^{32} = \cR^{32} \cJ^{1, 32}(\mu, k)$ transforms this into
\[
\cR^{21} \cR^{3, 21} \cL^{21, 3}(\mu, k) \cL^{12}((\mu, k) - h^{(3)}/2) = \cR^{32, 1} \cR^{32}\cJ^{1, 32}(\mu, k) \cL^{23}((\mu, k) + h^{(1)}/2).
\]
The result follows from noting that $\cR^{21} \cR^{3, 21} = \cR^{32, 1} \cR^{32}$ by the Yang-Baxter equation.
\end{proof}
\begin{remark}
A consequence of Proposition \ref{prop:renorm-abrr}(b) is that $\cL(\mu, k)$ may be evaluated on the tensor product of any two representations which are locally nilpotent with respect to the action of $U_q(\gg)$.
\end{remark}

Define the shifted universal fusion operators
\[
\JJ(\mu, k) := \cJ\Big((\mu, k) - h^{(1)}/2 - h^{(2)}/2\Big) \qquad \text{ and } \qquad \LL(\mu, k) := \cL\Big((\mu, k) - h^{(1)}/2 - h^{(2)}/2\Big).
\]
For finite-dimensional $U_q(\agg)$-representations $V_1, \ldots, V_n$, consider the corresponding evaluation representations
\[
V := V_1[z_1^{\pm 1}] \otimes \cdots \otimes V_n[z_n^{\pm 1}] \qquad \text{ and } \qquad \hV := V_1((z_1) \otimes \cdots \otimes V_n((z_n)).
\]
If $W$ is a highest weight $U_q(\wgg)$-representation, then evaluation of $\JJ(\mu, k)$ gives linear maps $V \otimes W \to V \otimes W$ and $W \otimes V \to W \otimes \hV$, which we denote by $\JJ_{VW}(z_1, \ldots, z_n; 1; \mu, k)$ and $\JJ_{WV}(1; z_1, \ldots, z_n; \mu, k)$.  Evaluation of $\LL(\mu, k)$ gives linear maps $V \otimes W \to V \otimes W$ and $W \otimes V \to W \otimes V$, which we denote by $\LL_{VW}(z_1, \ldots, z_n; 1; \mu, k)$ and $\LL_{WV}(1; z_1, \ldots, z_n; \mu, k)$.

\subsection{Universal exchange operators} \label{sec:ex-op-def}

Define the universal exchange operator in $U_q(\wgg) \hotimes U_q(\wgg)$ by
\begin{equation} \label{eq:def-ex}
\RR(\mu, k) := \JJ(\mu, k)^{-1} \cR^{21} \JJ^{21}(\mu, k)
\end{equation}
In terms of the renormalized universal fusion operator, we have 
\begin{equation} \label{eq:renorm-ex}
\RR(\mu, k) = \LL(\mu, k)^{-1} \cR \LL^{21}(\mu, k).
\end{equation}
Evaluation of $\RR(\mu, k)$ gives linear maps $V \otimes W \to \hV \otimes W$ and $W \otimes V \to W \otimes \hV$, which we denote by $\RR_{VW}(z_1, \ldots, z_n; 1; \mu, k)$ and $\RR_{WV}(1; z_1, \ldots, z_n; \mu, k)$. 

\begin{prop} \label{prop:qdybe}
The universal exchange operator satisfies the quantum dynamical Yang-Baxter equation 
\[
\RR^{23}(\mu, k) \RR^{13}((\mu, k) - h^{(2)}) \RR^{12}(\mu, k) = \RR^{12}((\mu, k) - h^{(3)})\RR^{13}(\mu, k)\RR^{23}((\mu, k) - h^{(1)}).
\]
\end{prop}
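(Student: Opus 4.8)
The plan is to reduce the quantum dynamical Yang--Baxter equation for $\RR(\mu, k)$ to the ordinary quantum Yang--Baxter equation $\cR^{12}\cR^{13}\cR^{23} = \cR^{23}\cR^{13}\cR^{12}$ for the universal $\cR$-matrix, the entire dynamical dependence being carried by the fusion operator through the definition \eqref{eq:def-ex}. The QYBE itself follows formally from the quasitriangularity relations $(\Delta \otimes 1)\cR = \cR^{13}\cR^{23}$, $(1 \otimes \Delta)\cR = \cR^{13}\cR^{12}$, and $\Delta^{21}(x)\cR = \cR\Delta(x)$ recorded in Section \ref{sec:qa}, so the only genuinely new input is the shifted $2$-cocycle property of $\JJ$ coming from Proposition \ref{prop:es-abrr}.

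First I would translate the cocycle relation \eqref{eq:ufus-cocycle} for $\cJ$ into a statement about $\JJ(\mu, k) = \cJ((\mu, k) - h^{(1)}/2 - h^{(2)}/2)$. Evaluating \eqref{eq:ufus-cocycle} at the argument $(\mu, k) - h^{(1)}/2 - h^{(2)}/2 - h^{(3)}/2$ and absorbing the half-shifts into the definition of $\JJ$ on each pair of tensor factors gives the shifted cocycle identity
\[
\JJ^{12, 3}(\mu, k)\, \JJ^{12}\big((\mu, k) - h^{(3)}\big) = \JJ^{1, 23}(\mu, k)\, \JJ^{23}(\mu, k),
\]
together with the identities obtained by permuting the tensor factors, which are needed to handle the terms $\JJ^{21}, \JJ^{31}, \JJ^{32}$ appearing in $\RR^{12}, \RR^{13}, \RR^{23}$. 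Each side of such an identity defines a common threefold fusion operator that will serve as the conjugating element.

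Next I would substitute $\RR^{ij}(\mu, k) = \JJ^{ij}(\mu, k)^{-1}\cR^{ji}\JJ^{ji}(\mu, k)$, with the prescribed dynamical shifts, into both sides of the asserted equation. Using the cocycle relations to merge adjacent fusion operators and the intertwining relation $\Delta^{21}(x)\cR = \cR\Delta(x)$ to transport $\cR$-matrices across the Cartan-valued shifts, both sides should telescope into a single threefold fusion operator and its inverse sandwiching a bare product of $\cR$-matrices: the left-hand side yields $\cR^{32}\cR^{31}\cR^{21}$ and the right-hand side $\cR^{21}\cR^{31}\cR^{32}$. These agree by the QYBE relabeled under the permutation $(13)$, which finishes the proof.

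I expect the main obstacle to be the precise tracking of the dynamical weight shifts. The shifts $-h^{(2)}, -h^{(1)}, -h^{(3)}$ in the statement are exactly those forced when the fusion operators---which lie in the weight-zero subspace $(\,\cdot\,)^{\ahh}$ of Proposition \ref{prop:es-abrr} and hence commute appropriately with Cartan elements---are moved past the weight-changing $\cR$-matrices, and one must check that these shifts, the half-shifts built into $\JJ$, and the shifts produced by each application of the cocycle all cancel consistently. The reduction to the QYBE is immediate once the shifts are aligned, so essentially all the content lies in this bookkeeping; the argument is the affine counterpart of the finite-type computation of Etingof--Varchenko, with the universal $\cR$-matrix and cocycle of $U_q(\wgg)$ replacing their finite-dimensional analogues.
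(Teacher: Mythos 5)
Your proposal is correct and follows essentially the same route as the paper: derive the shifted cocycle identity for $\JJ$ from Proposition \ref{prop:es-abrr} (your displayed identity $\JJ^{12,3}(\mu,k)\,\JJ^{12}((\mu,k)-h^{(3)}) = \JJ^{1,23}(\mu,k)\,\JJ^{23}(\mu,k)$ and its permuted versions are exactly the relations the paper substitutes), telescope both sides of the QDYBE into a common threefold fusion conjugation around $\cR^{32}\cR^{31}\cR^{21}$ and $\cR^{21}\cR^{31}\cR^{32}$ respectively, and conclude by the ordinary quantum Yang--Baxter equation.
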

\begin{proof}
By Proposition \ref{prop:es-abrr}, we obtain that 
\begin{align*}
\JJ^{13}((\mu, k) - h^{(2)})^{-1} &= \JJ^{32}(\mu, k)^{-1} \JJ^{1, 32}(\mu, k)^{-1} \JJ^{13, 2}(\mu, k)\\
\JJ^{31}((\mu, k) - h^{(2)}) &= \JJ^{31, 2}(\mu, k)^{-1} \JJ^{3, 12}(\mu, k) \JJ^{12}(\mu, k).
\end{align*}
Substituting these relations into the definition of the universal exchange operator, we find that
\begin{align*}
\RR^{23}(\mu, k) &\RR^{13}((\mu, k) - h^{(2)}) \RR^{12}(\mu, k) \\
&= \JJ^{23}(\mu, k)^{-1} \cR^{32} \JJ^{32}(\mu, k) \JJ^{13}((\mu, k) - h^{(2)})^{-1} \cR^{31} \JJ^{31}((\mu, k) - h^{(2)}) \JJ^{12}(\mu, k)^{-1} \cR^{21} \JJ^{21}(\mu, k) \\
&= \JJ^{23}(\mu, k)^{-1} \cR^{32} \JJ^{1, 32}(\mu, k)^{-1} \JJ^{13, 2}(\mu, k) \cR^{31} \JJ^{31, 2}(\mu, k)^{-1} \JJ^{3, 12}(\mu, k) \cR^{21} \JJ^{21}(\mu, k) \\
&= \JJ^{23}(\mu, k)^{-1} \JJ^{1, 23}(\mu, k)^{-1} \cR^{32} \cR^{31} \cR^{21} \JJ^{3, 21}(\mu, k) \JJ^{21}(\mu, k)\\
&= \JJ^{23}(\mu, k)^{-1} \JJ^{1, 23}(\mu, k)^{-1} \cR^{21} \cR^{31} \cR^{32} \JJ^{3, 21}(\mu, k) \JJ^{21}(\mu, k).
\end{align*}
On the other hand, using the relations
\begin{align*}
\JJ^{12}((\mu, k) - h^{(3)})^{-1} &= \JJ^{23}(\mu, k)^{-1} \JJ^{1, 23}(\mu, k)^{-1} \JJ^{12, 3}(\mu, k)\\
\JJ^{21}((\mu, k) - h^{(3)}) &= \JJ^{21, 3}(\mu, k)^{-1} \JJ^{2, 13}(\mu, k) \JJ^{13}(\mu, k)\\
\JJ^{23}((\mu, k) - h^{(1)})^{-1} &= \JJ^{31}(\mu, k)^{-1} \JJ^{2, 31}(\mu, k)^{-1} \JJ^{23, 1}(\mu, k)\\
\JJ^{32}((\mu, k) - h^{(1)}) &= \JJ^{32, 1}(\mu, k)^{-1} \JJ^{3, 21}(\mu, k) \JJ^{21}(\mu, k)
\end{align*}
from Proposition \ref{prop:es-abrr}, we obtain
\begin{align*}
&\RR^{12}((\mu, k) - h^{(3)})\RR^{13}(\mu, k)\RR^{23}((\mu, k) - h^{(1)}) \\
&= \JJ^{12}((\mu, k) - h^{(3)})^{-1} \cR^{21} \JJ^{21}((\mu, k) - h^{(3)}) \JJ^{13}(\mu, k)^{-1} \cR^{31} \JJ^{31}(\mu, k) \JJ^{23}((\mu, k) - h^{(1)})^{-1} \cR^{32} \JJ^{32}((\mu, k) - h^{(1)})\\
&= \JJ^{23}(\mu, k)^{-1} \JJ^{1, 23}(\mu, k)^{-1} \JJ^{12, 3}(\mu, k) \cR^{21} \JJ^{21, 3}(\mu, k)^{-1}\\
&\phantom{====} \JJ^{2, 13}(\mu, k) \cR^{31} \JJ^{2, 31}(\mu, k)^{-1} \JJ^{23, 1}(\mu, k) \cR^{32} \JJ^{32, 1}(\mu, k)^{-1} \JJ^{3, 21}(\mu, k) \JJ^{21}(\mu, k)\\
&= \JJ^{23}(\mu, k)^{-1} \JJ^{1, 23}(\mu, k)^{-1} \cR^{21} \cR^{31} \cR^{32} \JJ^{3, 21}(\mu, k) \JJ^{21}(\mu, k),
\end{align*}
which yields the desired.
\end{proof}

\subsection{Evaluation of universal fusion operators}

Let $V_1, \ldots, V_n$ be $U_q(\agg)$-representations which are either finite-dimensional or highest weight $U_q(\wgg)$-representations.  Let $z_i$ be a variable if $V_i$ is finite-dimensional or $1$ otherwise, and let $V_i[z_i^{\pm 1}]$ be an evaluation representation of $U_q(\wgg)$ if $V_i$ is finite-dimensional or $V_i$ itself otherwise.  We now relate the multicomponent fusion operators $\wJ_{V_1, \ldots, V_i; V_{i+1}, \ldots, V_n}(z_1, \ldots, z_i; z_{i+1}, \ldots, z_n; \mu, k)$ to the shifted universal fusion operator $\JJ(\mu + \rho, k + \ch)$ evaluated in a representation.  Define the representations
\[
W_1 = V_1[z_1^{\pm 1}] \otimes \cdots \otimes V_i[z_i^{\pm 1}] \qquad \text{ and } \qquad W_2 = V_{i+1}[z_{i+1}^{\pm 1}] \otimes \cdots \otimes V_n[z_n^{\pm 1}],
\]
and denote by $\JJ_{W_1 W_2}(z_1, \ldots, z_i; z_{i+1}, \ldots, z_n; \mu + \rho, k + \ch)$ the evaluation of $\JJ(\mu, k)$ on $W_1 \otimes W_2$.

\begin{prop} \label{prop:fus-eval}
Suppose that at most one $V_i$ is a highest weight $U_q(\wgg)$-representation.  We have that
\[
\JJ_{W_1 W_2}(z_1, \ldots, z_i; z_{i+1}, \ldots, z_n; \mu + \rho, k + \ch) = \wJ_{V_1, \ldots, V_i; V_{i+1}, \ldots, V_n}(z_1, \ldots, z_i; z_{i+1}, \ldots, z_n; \mu, k).
\] 
\end{prop}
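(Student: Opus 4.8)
The plan is to observe that the multicomponent fusion operator is nothing but an ordinary two-component one, and then to characterize both sides as the unique solution of an evaluated ABRR equation. Unwinding the definition of $\wJ_{V_1, \ldots, V_i; V_{i+1}, \ldots, V_n}$, its value on $v_1 \otimes \cdots \otimes v_n$ is the highest term of the composition $\wPhi^{v_1 \otimes \cdots \otimes v_i}_{(\mu, k) - \wt(v_{i+1}) - \cdots - \wt(v_n)} \circ \wPhi^{v_{i+1} \otimes \cdots \otimes v_n}_{\mu, k}$. But the single-step intertwiner $\wPhi^{v_1 \otimes \cdots \otimes v_i}$ into the tensor product $W_1$ is exactly the intertwiner associated by Proposition \ref{prop:int-exist} to the vector $w_1 = v_1 \otimes \cdots \otimes v_i \in W_1$ when $W_1$ is regarded as a single representation, and similarly for $W_2$. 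Hence $\wJ_{V_1, \ldots, V_i; V_{i+1}, \ldots, V_n}(\ldots; \mu, k) = J_{W_1, W_2}(\ldots; \mu, k)$, the two-component fusion operator of $W_1$ and $W_2$. Proposition \ref{prop:int-exist} provides these intertwiners for generic $(\mu, k)$, and the hypothesis that at most one $V_i$ is highest weight is precisely what makes the intertwiner compositions and the evaluation of the universal operator (via $\cL(\mu, k)$, per the remark following Proposition \ref{prop:renorm-abrr}) well-defined. It therefore suffices to prove $\JJ_{A B}(\ldots; \mu + \rho, k + \ch) = J_{A, B}(\ldots; \mu, k)$ for two representations $A$ and $B$.

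\textbf{The two-component identity via ABRR and uniqueness.} By Proposition \ref{prop:es-abrr}, the universal operator $\cJ$ is the unique normalized element solving the ABRR equation (\ref{eq:abrr}), and the solution is built recursively order-by-order in the standard grading and in powers of $q^{-2k}$. Since (\ref{eq:abrr}) is an identity in $U_q(\wgg) \hotimes U_q(\wgg)$, its evaluation on $A \otimes B$ holds as well, and the same recursion shows that the evaluated equation has a unique solution of the form $\id + (\text{lower order})$. Thus it is enough to check that $J_{A, B}(\mu, k)$ solves the evaluated ABRR equation for $\JJ(\mu + \rho, k + \ch) = \cJ((\mu + \rho, k + \ch) - h^{(1)}/2 - h^{(2)}/2)$ with leading term $\id$; the latter is immediate, since the highest term of the composed intertwiner $\Phi^{a, b}_{\mu, k}$ is $a \otimes b$, so $J_{A, B}(\mu, k) = \id + (\text{lower})$.

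\textbf{Deriving the intertwiner-side ABRR equation.} This is the heart of the argument. Following the method of \cite{ABRR} and \cite{ES}, I would use the intertwining relation of $\Phi^b_{\mu, k}$ together with the triangular decomposition $\cR = q^{-\Omega} \cR_0$ with $\cR_0 \in 1 + U_q(\abb_+)_{> 0} \hotimes U_q(\abb_-)_{< 0}$. Applying $\cR^{21}_{A B}$ and the Cartan factor $q_1^{2\mu + 2kd}$ to $J_{A, B}(\mu, k)(a \otimes b)$, and using that the positive part of $\cR_0$ annihilates the highest-weight covector of the dual Verma module, expresses the result back in terms of $J_{A, B}(\mu, k)$, exhibiting it as a fixed point of the contracting operator of (\ref{eq:abrr}). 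The appearance of $\wrho = \rho + \ch \Lambda_0$ in the shifted argument $(\mu + \rho, k + \ch)$ is exactly the $\wrho$ governing the action of $q^{2\wrho}$ and the Drinfeld eigenvalue on $M_{\mu, k}$ (Lemma \ref{lem:drinfeld-verma}), while the symmetrizing shift by $h^{(1)}/2 + h^{(2)}/2$ reflects the weight-dependence of the Verma structure entering the two intertwiners. Once this equation is identified with the evaluated (\ref{eq:abrr}), uniqueness forces $J_{A, B}(\mu, k) = \JJ_{A B}(\ldots; \mu + \rho, k + \ch)$.

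\textbf{Main obstacle.} I expect the difficulty to lie entirely in this last step, and to be twofold. First, the bookkeeping of conventions in the opposite-coproduct, standard-grading setting — the reversed sign of $\Omega$, the $\wrho$-shift from Lemma \ref{lem:drinfeld-verma}, and the symmetrizing shift by $h^{(1)}/2 + h^{(2)}/2$ — must be carried out precisely, so that the intertwiner-side equation becomes \emph{literally} the evaluated (\ref{eq:abrr}) rather than merely an equation of the same shape. Second, because the affine Verma module is infinite-dimensional, each step of the recursion must be controlled degree-by-degree in the standard grading and order-by-order in $q^{-2k}$, so that the uniqueness statement of Proposition \ref{prop:es-abrr} genuinely transfers from the universal level to the evaluated level; the at-most-one-highest-weight hypothesis is what keeps these evaluations convergent.
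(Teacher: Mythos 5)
Your proposal is correct and follows essentially the same route as the paper: both identify the intertwiner-defined (multicomponent) fusion operator as a solution of the ABRR equation (\ref{eq:abrr}) evaluated on $W_1 \otimes W_2$ at the shifted parameter $(\mu+\rho, k+\ch)$, and conclude by uniqueness of such solutions. The step you flag as the main obstacle is indeed where all the work lies — the paper carries it out by inserting the Drinfeld element $u$ between the two single-step intertwiners and computing the resulting matrix element in two ways (once via the centrality and Verma eigenvalue of $q^{2\wrho}u$ from Lemmas \ref{lem:drinfeld} and \ref{lem:drinfeld-verma}, once via the expansion $u = \sum_i S(b_i)a_i$ and the triangularity and antipode identities for $\cR$) — which is precisely the ABRR/Etingof--Schiffmann mechanism you describe, and is the source of the $\wrho$-shift exactly as you predict.
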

\begin{proof}
By Lemmas \ref{lem:drinfeld} and \ref{lem:drinfeld-verma}, the element $C = e^{2\wrho} u$ is central and satisfies $C|_{M_{\lambda, k}} = q^{(\lambda + k\Lambda_0, \lambda + k\Lambda_0 + 2\wrho)} \cdot \id$.  Consider a series expansion
\[
\cR = \sum_i a_i \otimes b_i
\]
so that $u = \sum_i S(b_i) a_i$.  Choose homogeneous vectors $w_1 \in W_1$ and $w_2 \in W_2$, and define the quantity
\begin{equation} \label{eq:f-def}
L = \Big\langle v_{(\mu, k) - \wt(w_1) - \wt(w_2)}^* , \wPhi^{w_1}_{(\mu, k) - \wt(w_1)}(z_1, \ldots, z_i) \circ u|_{M_{(\mu, k) - \wt(w_2)}} \circ \wPhi^{w_2}_{\mu, k}(z_{i+1}, \ldots, z_n) v_{\mu, k} \Big\rangle,
\end{equation}
where $\wt(w_i)$ denotes the weight of $w_i$. We compute $L$ in two different ways.  First, computing the action of $C$ on $M_{\mu, k}$, we find for weight vectors $v \in V$ and $w \in W$ that  
\begin{align*}
L & = q^{(\mu + k \Lambda_0 - \wt(w_2), \mu + k \Lambda_0 - \wt(w_2) + 2 \wrho)}\\
&\phantom{=========} \Big\langle v_{(\mu, k) - \wt(w_1) - \wt(w_2)}^* , \wPhi^{w_1}_{(\mu, k) - \wt(w_1)}(z_1, \ldots, z_i) \circ q^{2\wrho}|_{W_2} \circ \wPhi^{w_2}_{\mu, k}(z_{i+1}, \ldots, z_n) q^{-2\wrho}v_{\mu, k} \Big\rangle\\
&= q^{(\mu + k \Lambda_0 - \wt(w_2), \mu + k \Lambda_0 - \wt(w_2) + 2 \wrho) - 2(\mu + k \Lambda_0, \wrho)} q^{2\wrho}|_{W_2}\\
&\phantom{=========} \wJ_{V_1, \ldots, V_i; V_{i+1}, \ldots, V_n}(z_1, \ldots, z_i; z_{i+1}, \ldots, z_n; \mu, k)(w_1 \otimes w_2)\\
&= q^{(\mu + k \Lambda_0 - \wt(w_2), \mu + k \Lambda_0 - \wt(w_2)) - 2(\wt(w_2), \wrho)} q^{2\wrho}|_{W_2}\\
&\phantom{=========} \wJ_{V_1, \ldots, V_i; V_{i+1}, \ldots, V_n}(z_1, \ldots, z_i; z_{i+1}, \ldots, z_n; \mu, k)(w_1 \otimes w_2).
\end{align*}
For the second way, we have
\[
\Delta_2(S_2(\cR)) = (1 \otimes S \otimes S) \Delta_2^{21}(\cR) = (1 \otimes S \otimes S)(\cR^{12}\cR^{13}),
\]
which implies that 
\[
\sum_i a_i \otimes S(b_i)_{(1)} \otimes S(b_i)_{(2)} = \sum_{i, j} a_ia_j \otimes S(b_i) \otimes S(b_j).
\]
We conclude that 
\begin{align*}
L &= \sum_i \Big\langle v_{(\mu, k) - \wt(w_1) - \wt(w_2)}^* , \wPhi^{w_1}_{(\mu, k) - \wt(w_1)}(z_1, \ldots, z_i) \circ S(b_i) a_i \circ \wPhi^{w_2}_{\mu, k}(z_{i+1}, \ldots, z_n) v_{\mu, k} \Big\rangle\\
&= \sum_{i, j} \Big\langle v_{(\mu, k) - \wt(w_1) - \wt(w_2)}^* , (S(b_i) \otimes S(b_j) \otimes 1) \wPhi^{w_1}_{(\mu, k) - \wt(w_1)}(z_1, \ldots, z_i) \circ a_ia_j \circ \wPhi^{w_2}_{\mu, k}(z_{i+1}, \ldots, z_n) v_{\mu, k} \Big\rangle.
\end{align*}
Recall now that $\cR \in q^{-\Omega} (1 + U_q(\hat{\bb}_+)_{> 0} \hat{\otimes} U_q(\hat{\bb}_-)_{>0})$, meaning that all $i$-indexed terms aside from the ones corresponding to $q^{-\Omega}$ are zero in the sum.  This implies that
\begin{align*}
L &= \sum_{j} S(b_j)|_{W_1} \Big\langle v_{(\mu, k) - \wt(w_1) - \wt(v_2)}^*, \\
&\phantom{========} ((m_{31} \otimes 1)(\Delta_1 \otimes S)(q^{-\Omega})) \wPhi^{w_1}_{(\mu, k) - \wt(w_2)}(z_1, \ldots, z_i) a_j\wPhi^{w_2}_{\mu, k}(z_{i+1}, \ldots, z_n) v_{\mu, k} \Big\rangle\\
&= q^{(\mu - \wt(w_1) - \wt(w_2))^2} \sum_{j} (S(b_j) q^{\mu - \wt(w_1) - \wt(w_2)})|_{W_1}\\
&\phantom{========} \Big\langle v_{(\mu, k) - \wt(w_1) - \wt(w_2)}^*, \wPhi^{w_1}_{(\mu, k) - \wt(w_1)}(z_1, \ldots, z_i)a_j\wPhi^{w_2}_{\mu, k}(z_{i+1}, \ldots, z_n) v_{\mu, k} \Big\rangle,
\end{align*}
where we note that 
\[
(m_{31} \otimes 1) \circ \Big((\Delta \otimes S)q^{-\Omega}\Big) = q_1^{\sum_i x_i^2 + 2 cd} q^{\Omega}.
\]
Observe now that 
\[
S_3(\Delta_1(\cR)) = S_3(\cR^{13} \cR^{23}) = \sum_{i, j} a_i \otimes a_j \otimes S(b_j) S(b_i) = \Big(\sum_j 1 \otimes a_j \otimes S(b_j)\Big)\Big(\sum_i a_i \otimes 1 \otimes S(b_i)\Big),
\]
which we may rearrange to obtain
\[
\sum_i a_i \otimes 1 \otimes S(b_i) = \Big(\sum_j 1 \otimes a_j \otimes S(b_j)\Big)^{-1} S_3(\Delta_1(\cR))).
\]
This implies that 
\begin{align*}
&\sum_{j} (S(b_j) q^{\mu - \wt(w_1) - \wt(w_2)})|_{W_1} \Big\langle v_{(\mu, k) - \wt(w_1) - \wt(w_2)}^*, \wPhi^{w_1}_{(\mu, k) - \wt(w_2)}(z_1, \ldots, z_i) a_j\wPhi^{w_2}_{\mu, k}(z_{i+1}, \ldots, z_n) v_{\mu, k} \Big\rangle\\
&= \Big(\sum_j S(b_j) \otimes a_j\Big)^{-1} \sum_j S(b_j)|_{W_1} q_{W_1}^{\mu - \wt(w_1) - \wt(w_2)} \\
&\phantom{===========} \left\langle v_{(\mu, k) - \wt(w_1) - \wt(w_2)}^*, \wPhi^{w_1}_{(\mu, k) - \wt(w_2)}(z_1, \ldots, z_i) \wPhi^{w_2}_{\mu, k}(z_{i + 1}, \ldots, z_n)a_jv_{\mu, k}\right\rangle\\
&= \Big(\sum_j S(b_j) \otimes a_j\Big)^{-1} q_{W_1}^{2\mu - \wt(w_1) - \wt(w_2)}\\
&\phantom{===========} \left\langle v_{(\mu, k) - \wt(w_1) - \wt(w_2)}^*, \wPhi^{w_1}_{(\mu, k) - \wt(w_2)}(z_1, \ldots, z_i) \wPhi^{w_2}_{\mu, k}(z_{i+1}, \ldots, z_n)v_{\mu, k}\right\rangle\\
&= \Big(\sum_j S(b_j) \otimes a_j \Big)^{-1} q_{W_1}^{2\mu - \wt(w_1) - \wt(w_2)} \wJ_{V_1, \ldots, V_i; V_{i+1}, \ldots, V_n}(z_1, \ldots, z_i; z_{i+1}, \ldots, z_n; \mu, k)(w_1 \otimes w_2),
\end{align*}
where in the second equality we notice that all terms in the sum over $j$ aside from those of $q^{-\Omega}$ vanish.  Now by Lemma \ref{lem:r-mat-s} we have that
\[
S_2(\cR) = q_2^{-2\wrho} \cR^{-1} q_2^{2\wrho},
\]
meaning that 
\[
\Big(\sum_j S(b_j) \otimes a_j \Big)^{-1} = q_{W_1}^{-2\wrho} \cR^{21}_{W_1 W_2} q_{W_1}^{2\wrho}.
\]
We conclude that 
\begin{multline*}
L = q^{(\mu - \wt(w_1) - \wt(w_2))^2} q_{W_1}^{-2\wrho} \cR^{21}_{W_1 W_2} q_{W_1}^{2\mu + 2\wrho - \wt(w_1) - \wt(w_2)}\\ \wJ_{V_1, \ldots, V_i; V_{i+1}, \ldots, V_n}(z_1, \ldots, z_i; z_{i+1}, \ldots, z_n; \mu, k) (w_1 \otimes w_2).
\end{multline*}
Combined with our previous expression, we find that 
\begin{multline*}
\cR^{21}_{W_1W_2} q_{W_1}^{2\mu + 2\wrho - \wt(w_1) - \wt(w_2)} \wJ_{V_1, \ldots, V_i; V_{i+1}, \ldots, V_n}(z_1, \ldots, z_i; z_{i+1}, \ldots, z_n; \mu, k) (w_1 \otimes w_2)\\
= q^{(\mu + k \Lambda_0 - \wt(w_2), \mu + k \Lambda_0 - \wt(w_2)) - 2(\wt(w_2), \wrho) - (\mu - \wt(w_2) - \wt(w_1))^2} q_{W_1}^{2\wrho} q^{2\wrho}_{W_2}\\ \wJ_{V_1, \ldots, V_i; V_{i+1}, \ldots, V_n}(z_1, \ldots, z_i; z_{i+1}, \ldots, z_n; \mu, k)(w_1 \otimes w_2)\\
= \wJ_{V_1, \ldots, V_i; V_{i+1}, \ldots, V_n}(z_1, \ldots, z_i; z_{i+1}, \ldots, z_n; \mu, k) q^{2\mu + 2 \wrho - \wt(w_1) - \wt(w_2)}_{W_1} q^{-\Omega} (w_1 \otimes w_2).
\end{multline*}
This coincides with the evaluation of the ABRR equation 
\[
\cR^{21}_{W_1 W_2} q_{W_1}^{2\mu + 2\wrho - \wt(w_1) - \wt(w_2)}\JJ(\mu + \rho, k + \ch) = \JJ(\mu + \rho, k + \ch) q_{W_1}^{2\mu + 2\wrho - \wt(w_1) - \wt(w_2)} q^{-\Omega}
\]
for $\JJ(\mu + \rho, k + \ch)$ in $W_1 \otimes W_2$.  Since the solution of the ABRR equation in $\End(W_1 \otimes W_2)$ is unique, we conclude the desired equality
\[
\wJ_{V_1, \ldots, V_i; V_{i+1}, \ldots, V_n}(z_1, \ldots, z_i; z_{i+1}, \ldots, z_n; \mu, k) = \JJ_{W_1W_2}(z_1, \ldots, z_i; z_{i+1}, \ldots, z_n; \mu + \rho, k + \ch). \qedhere
\]
\end{proof}

\begin{corr} \label{corr:fus-factor}
We have that
\begin{multline*}
J_{V_1, \ldots, V_n}(z_1, \ldots, z_n; \mu, k)\\
 = \JJ_{V_1[z_1^{\pm 1}], V_2[z_2^{\pm 1}] \otimes \cdots \otimes V_n[z_n^{\pm 1}]}(z_1; z_2, \ldots, z_n; \mu + \rho, k + \ch) \cdots \JJ_{V_{n - 1}[z_{n-1}^{\pm 1}], V_n[z_n^{\pm 1}]}(z_{n-1}; z_n; \mu + \rho, k + \ch).
\end{multline*}
\end{corr}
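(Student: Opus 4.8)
The plan is to obtain this factorization by directly combining the two preceding results: Lemma~\ref{lem:iter-fus}, which expresses the iterated fusion operator as an ordered product of multicomponent fusion operators, and Proposition~\ref{prop:fus-eval}, which identifies each multicomponent fusion operator at $(\mu, k)$ with an evaluation of the shifted universal operator $\JJ(\mu + \rho, k + \ch)$. No new computation is needed; the content is entirely in assembling these.

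First I would apply Lemma~\ref{lem:iter-fus} to write, as operators on $V_1[z_1^{\pm 1}] \otimes \cdots \otimes V_n[z_n^{\pm 1}]$,
\[
J_{V_1, \ldots, V_n}(z_1, \ldots, z_n; \mu, k) = \wJ_{V_1; V_2 \otimes \cdots \otimes V_n}(z_1; z_2, \ldots, z_n; \mu, k) \cdots \wJ_{V_{n-1}; V_n}(z_{n-1}; z_n; \mu, k),
\]
where the factors are ordered left to right in increasing $j$. Then, for each fixed $j$, I would invoke Proposition~\ref{prop:fus-eval} applied to the ordered subcollection $V_j, V_{j+1}, \ldots, V_n$ with the two-component split taken after the first factor, so that $W_1 = V_j[z_j^{\pm 1}]$ is a single evaluation representation and $W_2 = V_{j+1}[z_{j+1}^{\pm 1}] \otimes \cdots \otimes V_n[z_n^{\pm 1}]$. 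This yields the identification
\[
\wJ_{V_j; V_{j+1} \otimes \cdots \otimes V_n}(z_j; z_{j+1}, \ldots, z_n; \mu, k) = \JJ_{V_j[z_j^{\pm 1}], V_{j+1}[z_{j+1}^{\pm 1}] \otimes \cdots \otimes V_n[z_n^{\pm 1}]}(z_j; z_{j+1}, \ldots, z_n; \mu + \rho, k + \ch).
\]
Substituting these into the product from Lemma~\ref{lem:iter-fus} gives precisely the claimed factorization, with the shift $(\mu, k) \mapsto (\mu + \rho, k + \ch)$ appearing uniformly in every factor and the left-to-right ordering preserved.

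I do not anticipate a genuine obstacle here, since the argument is formal once the two cited results are in hand; the only points needing care are bookkeeping ones. Specifically, I would check that the hypothesis of Proposition~\ref{prop:fus-eval} that at most one representation be highest weight is satisfied for each subcollection $V_j, \ldots, V_n$ — it is inherited from the standing assumption on $V_1, \ldots, V_n$ and is automatic when all the $V_i$ are finite-dimensional — and that applying a proposition stated for a collection indexed from $1$ to a shifted collection indexed from $j$ respects both the ordering of tensor factors and the assignment of the spectral parameters $z_j, \ldots, z_n$. With these verifications the substitution is immediate.
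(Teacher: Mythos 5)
Your proof is correct and matches the paper's argument exactly: the paper's one-line proof is precisely "repeatedly apply Proposition \ref{prop:fus-eval} and Lemma \ref{lem:iter-fus}," which is the factorization-then-evaluation strategy you spell out. Your additional bookkeeping checks (the highest-weight hypothesis and the index shift) are sensible and consistent with the paper's conventions.
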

\begin{proof}
This follows by repeatedly applying Proposition \ref{prop:fus-eval} and Lemma \ref{lem:iter-fus}.
\end{proof}

\subsection{Adjoints of fusion and exchange operators}

In what follows, we will often consider adjoints of fusion and exchange operators evaluated in representations.  For vector spaces $W_1, \ldots, W_m$ and an operator 
\[
T \in \End(W_1) \otimes \End(W_m) \otimes \CC((z_2/z_1, \ldots, z_n/z_{n-1})),
\]
we denote by 
\begin{align*}
T^{*{W_i}} &\in \End(W_1) \otimes \cdots \otimes \End(W_i^*) \otimes \cdots \End(W_m) \otimes \CC((z_2/z_1, \ldots, z_n/z_{n-1}))\\
T^{*{W_i} *{W_j}} &\in \End(W_1) \otimes \cdots \otimes \End(W_i^*) \otimes \cdots \otimes \End(W_j^*) \otimes \cdots \End(W_m) \otimes \CC((z_1/z_2, \ldots, z_{n-1}/z_n))
\end{align*}
the operators with adjoints taken in the corresponding vector spaces.  We will use the notation $T^* := T^{*{W_1} \cdots *{W_m}}$ for the case where adjoints are taken in all vector spaces.  For example, the adjoint of the iterated fusion operator is denoted by 
\[
J_{V_1, \ldots, V_n}(z_1, \ldots, z_n; \mu, k)^* \in \End(V_1^*) \otimes \cdots \otimes \End(V_n^*) \otimes \CC((z_1/z_2, \ldots, z_{n-1}/z_n))
\]
and the adjoint in $V$ of the universal exchange operator evaluated in $W \otimes V$ is denoted by 
\[
\RR_{WV}(1; z_1, \ldots, z_n; \mu, k)^{*V} \in \End(W) \otimes \End(V^*) \otimes \CC((z_1/z_2,\ldots, z_{n_1}/z_n)).
\]

\section{Normalized trace functions} \label{sec:tr}

In this section, we define the trace function for $U_q(\agg)$ and give it a normalization under which it will satisfy four systems of $q$-difference equations.  We note that the parameter shifts in the normalization differ from those of \cite{ESV} due to our different choice of coproduct.

\subsection{Unnormalized traces}

Let $V_1, \ldots, V_n$ be finite dimensional $U_q(\agg)$-representations.  For $v_i \in V_i[\tau_i]$, define the trace function $\Psi^{v_1, \ldots, v_n}(z_1, \ldots, z_n; \lambda, \omega, \mu, k)$ by
\[
\Psi^{v_1,\ldots, v_n}(z_1, \ldots, z_n; \lambda, \omega, \mu, k) := \Tr|_{M_{\mu, k}}\Big(\Phi^{v_1, \ldots, v_n}_{\mu, k}(z_1, \ldots, z_n)q^{2\lambda + 2 \omega d}\Big).
\]
Define the universal trace function $\Psi^{V_1,\ldots, V_n}(z_1, \ldots, z_n; \lambda, \omega, \mu, k)$ with values in 
\[
q^{2(\lambda, \mu)} \AA_{\mu, k} \otimes \AA_{\lambda, \omega} \otimes \CC((z_1/z_2, \ldots, z_{n-1}/z_n)) \otimes (V_1 \otimes \cdots \otimes V_n)[0] \otimes (V_n^* \otimes \cdots \otimes V_1^*)[0]
\]
by the expression
\[
\Psi^{V_1, \ldots, V_n}(z_1, \ldots, z_n; \lambda, \omega, \mu, k) := \sum_{v_i \in V_i} \Psi^{v_1,\ldots, v_n}(z_1, \ldots, z_n; \lambda, \omega, \mu, k) \otimes (v_n^* \otimes \cdots \otimes v_1^*),
\]
where $v_i$ and $v_i^*$ range over dual bases of $V_i$.  Define also the single-step versions
\[
\wPsi^{v_1 \otimes \cdots \otimes v_n}(z_1, \ldots, z_n; \lambda, \omega, \mu, k) := \Tr|_{M_{\mu, k}}\Big(\wPhi^{v_1 \otimes \cdots \otimes v_n}_{\mu, k}(z_1, \ldots, z_n)q^{2\lambda + 2 \omega d}\Big)
\]
and
\[
\wPsi^{V_1, \ldots, V_n}(z_1, \ldots, z_n; \lambda, \omega, \mu, k) := \Tr|_{M_{\mu, k}}\Big(\wPhi^{V_1, \ldots, V_n}_{\mu, k}(z_1, \ldots, z_n)q^{2\lambda + 2 \omega d}\Big).
\]
The two versions of the universal trace function are related by the adjoint of the iterated fusion operator.

\begin{lemma} \label{lem:trace-adj}
We have that 
\[
\Psi^{V_1, \ldots, V_n}(z_1, \ldots, z_n; \lambda, \omega, \mu, k) = J_{V_1, \ldots, V_n}(z_1, \ldots, z_n; \mu, k)^* \wPsi^{V_1, \ldots, V_n}(z_1, \ldots, z_n; \lambda, \omega, \mu, k).
\]
\end{lemma}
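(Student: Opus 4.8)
The plan is to reduce the statement to the comparison between iterated and single-step intertwiners furnished by Lemma \ref{lem:fus-def}, and then to recognize the resulting index sum as the action of the adjoint fusion operator. To set up the bookkeeping, I would fix dual bases $\{e^{(i)}_\alpha\}$ of $V_i$ and $\{e^{\alpha, (i)}\}$ of $V_i^*$, abbreviate $J := J_{V_1, \ldots, V_n}(z_1, \ldots, z_n; \mu, k)$, and record its matrix entries by $J\,(e^{(1)}_{\alpha_1} \otimes \cdots \otimes e^{(n)}_{\alpha_n}) = \sum_{\beta} J^{\beta_1 \cdots \beta_n}_{\alpha_1 \cdots \alpha_n}\, e^{(1)}_{\beta_1} \otimes \cdots \otimes e^{(n)}_{\beta_n}$, all identities being read coefficientwise as formal series in $z_1/z_2, \ldots, z_{n-1}/z_n$ so that no convergence question intervenes.

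The central input is Lemma \ref{lem:fus-def}, which gives $\Phi^{v_1, \ldots, v_n}_{\mu, k}(z_1, \ldots, z_n) = \wPhi^{J(v_1 \otimes \cdots \otimes v_n)}_{\mu, k}(z_1, \ldots, z_n)$ for homogeneous $v_i$. Composing on the right with $q^{2\lambda + 2\omega d}$ and taking the trace over $M_{\mu, k}$ turns this into the scalar identity $\Psi^{v_1, \ldots, v_n} = \wPsi^{J(v_1 \otimes \cdots \otimes v_n)}$. Because $\wPhi^w$, and hence $\wPsi^w$, is linear in the input vector $w$ by the isomorphism of Proposition \ref{prop:int-exist}, I may expand $J(v_1 \otimes \cdots \otimes v_n)$ in the chosen basis to obtain
\[
\Psi^{e^{(1)}_{\alpha_1}, \ldots, e^{(n)}_{\alpha_n}} = \sum_{\beta_1, \ldots, \beta_n} J^{\beta_1 \cdots \beta_n}_{\alpha_1 \cdots \alpha_n}\, \wPsi^{e^{(1)}_{\beta_1} \otimes \cdots \otimes e^{(n)}_{\beta_n}}.
\]

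Summing this against the covectors $e^{\alpha_n, (n)} \otimes \cdots \otimes e^{\alpha_1, (1)}$ as in the definition of $\Psi^{V_1, \ldots, V_n}$, I would then recognize the matrix entries $J^{\beta_1 \cdots \beta_n}_{\alpha_1 \cdots \alpha_n}$ as reassembling, by the definition of the transpose, into the action of $J^*$ on the $V^*$-factors of $\wPsi^{V_1, \ldots, V_n}$. Explicitly, $J^*$ sends $e^{\beta_n, (n)} \otimes \cdots \otimes e^{\beta_1, (1)}$ to $\sum_{\alpha} J^{\beta_1 \cdots \beta_n}_{\alpha_1 \cdots \alpha_n}\, e^{\alpha_n, (n)} \otimes \cdots \otimes e^{\alpha_1, (1)}$, so substituting the displayed identity into $\Psi^{V_1, \ldots, V_n} = \sum_\alpha \Psi^{e^{(1)}_{\alpha_1}, \ldots, e^{(n)}_{\alpha_n}} \otimes (e^{\alpha_n, (n)} \otimes \cdots \otimes e^{\alpha_1, (1)})$ and collecting terms reproduces exactly $J^*\, \wPsi^{V_1, \ldots, V_n}$, which is the claim.

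The only genuine subtlety, and the step I would treat most carefully, is the matching of conventions for the adjoint. Here $J^*$ takes adjoints in every factor (per the convention $T^* = T^{*V_1 \cdots *V_n}$), whereas the covector factors in both $\Psi^{V_1, \ldots, V_n}$ and $\wPsi^{V_1, \ldots, V_n}$ occur in the reversed order $V_n^* \otimes \cdots \otimes V_1^*$. I would verify that $J^*$, being an element of $\End(V_1^*) \otimes \cdots \otimes \End(V_n^*)$ acting slotwise, operates on each $V_i^*$-component independently of its left-to-right position, so that reversing the order of the tensor factors relabels the slots but leaves the matrix entries $J^{\beta_1 \cdots \beta_n}_{\alpha_1 \cdots \alpha_n}$ unchanged. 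Once this compatibility is checked, the remainder is a routine reindexing, and the equality of the two expressions is immediate.
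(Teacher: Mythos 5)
Your proposal is correct and is essentially the paper's argument: the paper's proof consists of the single sentence that the lemma is a consequence of Lemma \ref{lem:fus-def}, and your write-up simply makes explicit the basis expansion and the adjoint/slot-reversal bookkeeping that this one-line proof leaves implicit.
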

\begin{proof}
This is a consequence of Lemma \ref{lem:fus-def}.
\end{proof}

\subsection{Normalization factors} \label{sec:fus-col}

To obtain interesting difference equations on traces, it will be convenient to introduce the normalization factor 
\[
\QQ(\mu, k) := m_{21}\Big(S_2(\LL(\mu, k))\Big).
\]
Consider the series expansions
\begin{multline} \label{eq:ser-exp}
\cR = \sum_i a_i \otimes b_i, \qquad \cR^{-1} = \sum_i a_i' \otimes b_i', \qquad \cJ(\mu, k) = \sum_i c_i \otimes d_i(\mu, k)\\
\cJ(\mu, k)^{-1} = \sum_i c_i' \otimes d_i'(\mu, k), \qquad \cL(\mu, k) = \sum_i e_i \otimes f_i(\mu, k), \qquad \cL(\mu, k)^{-1} = \sum_i e_i' \otimes f_i'(\mu, k).
\end{multline}
Notice that 
\[
\QQ(\mu, k) = \sum_i S(f_i(\mu, k)) e_i \qquad \text{ and } \qquad S(\QQ(\mu, k)) = \sum_i S(e_i) S^2(f_i(\mu, k)).
\]
Recall by Proposition \ref{prop:renorm-abrr} that $\LL(\mu, k)$ is a power series in $q^{-2k}$ whose coefficients have finite degree in the standard grading, which implies that $\QQ(\mu, k)$ may be evaluated in any representation which is locally nilpotent with respect to the action of $U_q(\gg)$.

\begin{remark}
As noted in \cite[Remark 2]{ESV}, the definition of $\QQ(\mu, k)$ differs from that used in \cite{EV, ES} and is related to the definition of \cite{ES} by viewing $\QQ(\mu, k)$ as a renormalization of the product $S(u)^{-1} \QQ^{\text{\cite{ES}}}(\mu, k)$ which involves divergent sums.
\end{remark}

\subsection{Weyl denominator} \label{sec:wd-sec}

Define the Weyl denominator $\delta_q(\lambda, \omega)$ by 
\[
\delta_q(\lambda, \omega) := \Tr|_{M_{-\wrho}}(q^{2 \lambda + 2 \omega d})^{-1}.
\]
In \cite[Lemma 3.3]{ES3}, an explicit product formula is given for $\delta_q(\lambda, \omega)$.  In our notation, it is given by 
\begin{equation} \label{eq:delta-val}
\delta_q(\lambda, \omega) = e^{(\wrho, 2 \lambda + 2 \omega d)} \prod_{\alpha > 0} (1 - e^{-(\alpha, 2\lambda + 2 \omega d)})^{-1},
\end{equation}
where the product is over positive roots $\alpha$.

\subsection{Combined fusion operator}

Define the combined fusion operator by 
\[
\JJ^{1 \cdots n}(\mu, k) := \JJ^{1, 2\cdots n}(\mu, k) \cdots \JJ^{n - 1, n}(\mu, k)
\]
and denote its evaluation in $V = V_1[z_1^{\pm 1}] \otimes \cdots \otimes V_n[z_n^{\pm 1}]$ by $\JJ^{1 \cdots n}_V(z_1, \ldots, z_n; \mu, k)$.  These combined fusion operators commute with exchange operators in the following way.

\begin{lemma} \label{lem:c-fuse}
We have that 
\[
\RR^{0, 1\cdots n}(\mu, k)\JJ^{1 \cdots n}((\mu, k) - h^{(0)}) = \JJ^{1 \cdots n}(\mu, k) \RR^{01}((\mu, k) - h^{(2 \cdots n)}) \cdots \RR^{0n}(\mu, k).
\]
\end{lemma}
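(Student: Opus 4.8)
The plan is to prove the identity by induction on $n$, reducing the general case to a single three-object identity that is then established by the same $\cR$-matrix manipulations used in the proof of Proposition \ref{prop:qdybe}. For $n = 1$ the combined fusion operator $\JJ^{1}$ is the empty product, so both sides collapse to $\RR^{01}(\mu, k)$ and there is nothing to prove.

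For the inductive step I would peel off the first tensor factor by writing $\JJ^{1\cdots n}(\mu, k) = \JJ^{1, 2\cdots n}(\mu, k)\,\JJ^{2\cdots n}(\mu, k)$, where $\JJ^{2\cdots n}$ is the combined fusion operator on the block $2\cdots n$, and applying the inductive hypothesis to that block in the form
\[
\JJ^{2\cdots n}(\mu, k)\,\RR^{02}\big((\mu, k)-h^{(3\cdots n)}\big)\cdots\RR^{0n}(\mu, k) = \RR^{0, 2\cdots n}(\mu, k)\,\JJ^{2\cdots n}\big((\mu, k)-h^{(0)}\big).
\]
The crucial observation is that $\JJ^{2\cdots n}(\mu, k)$ commutes with the total Cartan action $h^{(2\cdots n)}$ on its block, since each of its factors lies in the $\ahh$-invariant part of $U_q(\wgg)^{\hotimes 2}$; hence $\JJ^{2\cdots n}(\mu, k)$ commutes with $\RR^{01}\big((\mu, k)-h^{(2\cdots n)}\big)$, which acts on the disjoint slots $\{0,1\}$ and depends on the block only through its total weight. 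Carrying these two facts through the right-hand side of the lemma and cancelling the common right factor $\JJ^{2\cdots n}((\mu, k)-h^{(0)})$ reduces the claim to the three-object identity
\begin{equation} \label{eq:cfuse-key}
\RR^{0, 12}(\mu, k)\,\JJ^{12}\big((\mu, k)-h^{(0)}\big) = \JJ^{12}(\mu, k)\,\RR^{01}\big((\mu, k)-h^{(2)}\big)\,\RR^{02}(\mu, k),
\end{equation}
read in $U_q(\wgg)^{\hotimes 3}$ with the slot labelled $2$ allowed to be the combined block $2\cdots n$ by coassociativity of $\Delta$.

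To prove \eqref{eq:cfuse-key} I would expand every exchange operator through its definition $\RR = \JJ^{-1}\cR^{21}\JJ^{21}$, use the coproduct identity $\cR^{12, 0} = \cR^{10}\cR^{20}$ coming from $(\Delta\otimes 1)\cR = \cR^{13}\cR^{23}$ to split the composite $\cR$-matrix, and then repeatedly apply the shifted $2$-cocycle relation \eqref{eq:ufus-cocycle}. Substituting $(\mu, k)\mapsto(\mu, k)-h^{(1)}/2-h^{(2)}/2-h^{(3)}/2$ into \eqref{eq:ufus-cocycle} yields the cocycle relation for the shifted operators, namely $\JJ^{12, 3}(\mu, k)\,\JJ^{12}((\mu, k)-h^{(3)}) = \JJ^{1, 23}(\mu, k)\,\JJ^{23}(\mu, k)$, which is the form already exploited in the proofs of Propositions \ref{prop:renorm-abrr} and \ref{prop:qdybe}. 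Combining this with the intertwining relations $\cJ^{12, 3}(\mu, k)\cR^{21} = \cR^{21}\cJ^{21, 3}(\mu, k)$ and $\cJ^{1, 23}(\mu, k)\cR^{32} = \cR^{32}\cJ^{1, 32}(\mu, k)$ noted in the proof of Proposition \ref{prop:renorm-abrr}, together with the Yang--Baxter equation, lets me move the two factors $\cR^{10}$ and $\cR^{20}$ past the fusion operators and bring both sides of \eqref{eq:cfuse-key} to the same normal form; this is precisely the bookkeeping carried out in Proposition \ref{prop:qdybe}.

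The main obstacle is exactly this bookkeeping. The dynamical shifts such as $-h^{(2)}$ are not global, and the composite factors $\cR^{10}, \cR^{20}$ do not preserve the individual slot weights (only the total weight), so each time one commutes an $\cR$-factor past a shifted fusion operator the shift must be tracked carefully and the cocycle relation applied in the correct positional form. Organizing these moves so that the two sides of \eqref{eq:cfuse-key} visibly coincide, rather than accumulating spurious weight shifts, is the delicate part of the argument; everything else is a direct consequence of the cocycle property, the intertwining relations for $\cJ$, and the coassociativity of $\cR$.
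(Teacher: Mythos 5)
Your proposal is correct and follows essentially the same route as the paper: an induction that reduces the lemma to the single three-object identity $\RR^{0,12}(\mu,k)\,\JJ^{12}((\mu,k)-h^{(0)}) = \JJ^{12}(\mu,k)\,\RR^{01}((\mu,k)-h^{(2)})\,\RR^{02}(\mu,k)$, proved by expanding the exchange operators, splitting $\cR^{12,0}=\cR^{10}\cR^{20}$, and applying the shifted cocycle and intertwining relations (the Yang--Baxter equation you invoke is not actually needed here, since $\cR^{10}$ and $\cR^{20}$ never have to be commuted past one another). The only cosmetic difference is that you peel off the first factor and use the weight-zero property of $\JJ^{2\cdots n}$ to commute it past $\RR^{01}((\mu,k)-h^{(2\cdots n)})$, whereas the paper groups the last two factors into a single object before applying the inductive hypothesis; both reductions land on the same key identity.
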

\begin{proof}
We induct on $n$.  The base case $n = 1$ is trivial.  For the inductive step, consider $V_{n - 1} \otimes V_n$ as one representation to obtain by the inductive hypothesis that
\begin{align*}
\RR^{0, 1\cdots n}&(\mu, k)\JJ^{1 \cdots n}((\mu, k) - h^{(0)})\\
 &= \RR^{0, 1 \cdots n}(\mu, k) \JJ^{1, 2\cdots n}((\mu, k) - h^{(0)}) \cdots \JJ^{n - 2, (n - 1) \cdots n}((\mu, k) - h^{(0)}) \JJ^{n - 1, n}((\mu, k) - h^{(0)})\\
&= \JJ^{1, 2\cdots n}(\mu, k) \cdots \JJ^{n - 2, (n - 1) \cdots n}(\mu, k) \RR^{01}((\mu, k) - h^{(2 \cdots n)}) \cdots \RR^{0, n-1,n}(\mu, k)\JJ^{n - 1, n}((\mu, k) - h^{(0)}).
\end{align*}
Notice now that 
\begin{align*}
\RR^{0, 12}(\mu, k)\JJ^{12}((\mu, k) - h^{(0)}) &= \JJ^{0, 12}(\mu, k)^{-1} \cR^{12, 0} \JJ^{12, 0}(\mu, k) \JJ^{12}((\mu, k) - h^{(0)})\\
&= \JJ^{0, 12}(\mu, k)^{-1} \cR^{12, 0} \JJ^{1, 20}(\mu, k) \JJ^{20}(\mu, k)\\
&= \JJ^{12}(\mu, k) \JJ^{01}((\mu, k) - h^{(2)})^{-1} \JJ^{01, 2}(\mu, k)^{-1} \cR^{10} \cR^{20} \JJ^{1, 20}(\mu, k) \JJ^{20}(\mu, k)\\
&= \JJ^{12}(\mu, k) \JJ^{01}((\mu, k) - h^{(2)})^{-1}\cR^{10} \JJ^{10, 2}(\mu, k)^{-1} \JJ^{1, 02}(\mu, k) \cR^{20}  \JJ^{20}(\mu, k)\\
&= \JJ^{12}(\mu, k) \JJ^{01}((\mu, k) - h^{(2)})^{-1}\cR^{10} \JJ^{10}((\mu, k) - h^{(2)}) \JJ^{02}(\mu, k)^{-1} \cR^{20}  \JJ^{20}(\mu, k).
\end{align*}
We conclude that 
\[
\RR^{0, 1\cdots n}(\mu, k)\JJ^{1 \cdots n}((\mu, k) - h^{(0)}) = \JJ^{1 \cdots n}(\mu, k) \RR^{01}((\mu, k) - h^{(2 \cdots n)}) \cdots \RR^{0, n-1}((\mu, k) - h^{(n)}) \RR^{0n}(\mu, k). \qedhere
\]
\end{proof}

\subsection{Normalized trace function}

We now put together the normalization factors to define the desired normalized trace function as
\begin{multline} \label{eq:norm-tr}
F^{V_1, \ldots, V_n}(z_1, \ldots, z_n; \lambda, \omega, \mu, k) := \QQ_{V_n^*}((\mu, k) - h^{(* 1 \cdots * n)})^{-1} \cdots \QQ_{V_1^*}((\mu, k) - h^{(*1)})^{-1} \\
 \JJ^{1 \cdots n}_{V_1[z_1^{\pm 1}] \otimes \cdots \otimes V_n[z_n^{\pm 1}]}(z_1, \ldots, z_n; \lambda, \omega)^{-1} \Psi^{V_1, \ldots, V_n}(z_1, \ldots, z_n; \lambda, \omega, \mu - \rho, k - h) \delta_q(\lambda, \omega).
\end{multline}

\section{Macdonald-Ruijsenaars equations} \label{sec:mr}

In this section, we prove the Macdonald-Ruijsenaars equations for $F^{V_1, \ldots, V_n}(z_1, \ldots, z_n; \lambda, \omega, \mu, k)$ by explicitly computing the radial part of an explicit central element in a completion of $U_q(\wgg)$.  The bulk of this section is devoted to this computation.

\subsection{The statement}

Let $W$ be an integrable lowest weight $U_q(\wgg)$-module of non-positive integer level $k_W$, and let $V_1, \ldots, V_n$ be finite-dimensional $U_q(\agg)$-modules. Define the difference operator 
\[
\DD_W(\omega, k) := \sum_{\nu \in \hh^*, a \in \CC} \Tr|_{W[\nu + a \delta + k_W \Lambda_0]}\Big(\RR_{WV_1}(1, z_1; (\lambda, \omega) - h^{(2\cdots n)}) \cdots \RR_{W V_n}(1, z_n; \lambda, \omega)\Big) q^{-2ka} T^{\lambda, \omega}_{\nu, k_W},
\]
where $T^{\lambda, \omega}_{\nu, k_W} f(\lambda, \omega) = f(\lambda - \nu, \omega - k_W)$. Let this operator act on functions valued in 
\[
(V_1[z_1^{\pm}] \otimes \cdots \otimes V_n[z_n^{\pm} 1]) \otimes (V_n^* \otimes \cdots \otimes V_1^*).
\]
The Macdonald-Ruijsenaars equations, whose proof occupies the remainder of this section, state that $\DD_W$ is diagonalized on the normalized trace functions.

\begin{theorem}[Macdonald-Ruijsenaars equation] \label{thm:mr}
For any integrable lowest weight representation $W$ of non-positive integer level $\eta$, we have
\[
\DD_W(\omega, k) F^{V_1, \ldots, V_n}(z_1, \ldots, z_n; \lambda, \omega, \mu, k) = \chi_W(q^{-2\mu - 2kd}) F^{V_1, \ldots, V_n}(z_1, \ldots, z_n; \lambda, \omega, \mu, k),
\]
where $\chi_W$ is the character of $W$.
\end{theorem}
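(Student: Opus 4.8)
The plan is to realize $\DD_W$ as the radial part of a central element of $U_q(\wgg)$ built from $W$, and to compute one trace in two ways. First I would construct a central element $C_W$ in a suitable completion of $U_q(\wgg)$ as a twisted quantum trace over $W$ of a product of $\cR$-matrices, with the twist by $q^{2\wrho}|_W$ inserted so that, by the computation of the Drinfeld element on Verma modules (Lemmas \ref{lem:drinfeld} and \ref{lem:drinfeld-verma}), $C_W$ acts on $M_{\mu', k'}$ as a scalar equal to the character of $W$ evaluated at $q^{-2\mu' - 2\wrho}$. Since $\wrho = \rho + \ch\Lambda_0$, this built-in shift is exactly the one needed to cancel the $(\mu - \rho, k - h)$ appearing in the definition (\ref{eq:norm-tr}) of $F$, so that the scalar on $F^{V_1, \ldots, V_n}$ comes out as $\chi_W(q^{-2\mu - 2kd})$. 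That $C_W$ lands in an appropriate completion, i.e.\ that the trace over $W$ converges, is where integrability and non-positivity of the level $k_W$ are used.

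Next I would insert $C_W$ into the unnormalized trace defining $\Psi^{V_1, \ldots, V_n}$. Because $C_W$ is central it commutes with the iterated intertwiner $\Phi^{V_1, \ldots, V_n}_{\mu - \rho, k - h}$ and acts by the above scalar on $M_{\mu - \rho, k - h}$; this immediately yields the eigenvalue side $\chi_W(q^{-2\mu - 2kd})\,\Psi^{V_1, \ldots, V_n}(z_1, \ldots, z_n; \lambda, \omega, \mu - \rho, k - h)$.

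The substance of the proof is the second evaluation of the same trace. Writing $C_W$ as a trace over $W$, I would use cyclicity of the trace together with the intertwining property of $P_{\hV W}\cR_{VW}$ from Lemma \ref{lem:r-inter} to commute the $W$-tensor factor successively past each single intertwiner $\wPhi^{V_i}$. Each passage converts a bare $\cR$-matrix into a factor which, after absorbing the fusion-operator normalizations, becomes an exchange operator $\RR_{WV_i}$ carrying the correct dynamical shift $(\lambda, \omega) - h^{(\cdots)}$; meanwhile the diagonal insertion $q^{2\lambda + 2\omega d}$ combined with the grading of $W$ produces the shift operator $T^{\lambda, \omega}_{\nu, k_W}$ and the scalar $q^{-2ka}$, and summing over the weight spaces $W[\nu + a\delta + k_W\Lambda_0]$ assembles the result into $\DD_W$. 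The crucial structural input for turning the product of $\cR$-matrices that arises naturally into the product of exchange operators in the definition of $\DD_W$ is Lemma \ref{lem:c-fuse}, which commutes the combined fusion operator $\JJ^{1 \cdots n}$ past $\RR^{0, 1 \cdots n}$; this is precisely what lets the computation descend from $\wPsi$ (related to $\Psi$ via Lemma \ref{lem:trace-adj}) down to the normalized function $F$, with the factors $\QQ$, $\JJ^{1 \cdots n}$, and $\delta_q$ matching correctly, and with Proposition \ref{prop:fus-eval} supplying the $\rho, \ch$ shift that reconciles the two parameter conventions.

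The main obstacle I expect is the bookkeeping in this third step: tracking the $q^{2\wrho}$ twists, the nested dynamical shifts $h^{(2 \cdots n)}, h^{(3 \cdots n)}, \ldots$ in the arguments of the exchange operators, and the interaction of the combined fusion operators with the exchange operators so that the normalizations collapse into $\DD_W$ acting on $F$ rather than merely on $\Psi$. A secondary technical point is justifying the interchange of the infinite sum over $W$-weight spaces with the trace over $M_{\mu, k}$ and the resulting absolute convergence, which again rests on $W$ being integrable of non-positive level.
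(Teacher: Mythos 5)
Your overall strategy is the paper's: realize the equation as the radial part of the central element $C_W = (\id \otimes \Tr|_W)(\cR^{21}\cR)(1 \otimes q^{-2\wrho})$, compute the trace with $C_W$ inserted in two ways, and use Lemma \ref{lem:c-fuse}, Lemma \ref{lem:trace-adj}, and Proposition \ref{prop:fus-eval} to descend from $\wPsi$ to $F$. Your first two steps (centrality, the eigenvalue $\chi_W(q^{-2\mu'-2kd-2\wrho})$, and the cancellation of the $\wrho$-shift against the $(\mu - \rho, k - \ch)$ shift in the definition of $F$) match the paper.

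However, there is a genuine gap in your third step, which you dismiss as bookkeeping. When you move the $W$-factor around the trace, the operator that emerges inside $\Tr|_{W[\nu + a\delta + k_W\Lambda_0]}$ is not the bare product of exchange operators appearing in $\DD_W$: it carries an extra factor
\[
\GG(\lambda, \omega) = \QQ((\lambda, \omega) - h^{(1)})^{-1}\, S(\QQ(\lambda, \omega))\, q^{-2\rho},
\]
built from the collapsed fusion operator $\QQ$ and the antipode (this is the content of Proposition \ref{prop:trace-coeff}, via Lemmas \ref{lem:z-val}, \ref{lem:x-val}, and \ref{lem:rs-sum}). Since the $\QQ$-normalizations in the definition (\ref{eq:norm-tr}) of $F$ live in the $(\mu, k)$ variables, they cannot absorb $\GG(\lambda,\omega)$; instead one must prove the identity $\GG(\lambda, \omega) = \delta_q((\lambda, \omega) - h^{(1)})/\delta_q(\lambda, \omega)$ so that the factor is absorbed by the Weyl denominator $\delta_q(\lambda,\omega)$ in $F$ and turns into the shift $\delta_q(\lambda - \nu, \omega - k_W)/\delta_q(\lambda,\omega)$ compatible with $T^{\lambda,\omega}_{\nu, k_W}$. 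Establishing this identity occupies roughly half of Section \ref{sec:mr}: one needs the coproduct identity for $\QQ$ and $\GG$ (Lemmas \ref{lem:q-coprod} and \ref{lem:g-coprod}), the zero-curvature relation for the eigenvalues of $\GG$ on integrable modules (Lemma \ref{lem:zero-curve}), a formal $\hbar$-expansion argument showing $\GG$ is determined by those eigenvalues (Lemmas \ref{lem:eta-quot} and \ref{lem:g-form}), and finally an evaluation of the radial-part formula at $V = \CC$ to pin down the answer (Lemma \ref{lem:g-comp}). Without this ingredient your outline produces a difference operator that differs from $\DD_W$ by an unidentified conjugation-and-scalar factor, and the eigenvalue statement for $F$ does not follow. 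A secondary, smaller discrepancy: the paper does not commute $W$ past the single intertwiners one at a time, but works with the one-step intertwiner $\wPhi^{V_1,\ldots,V_n}$ and characterizes the resulting quantities $Z_V$ and $X_V$ as unique solutions of ABRR-type equations, splitting $\RR_{WV}$ into $\RR_{WV_1}\cdots\RR_{WV_n}$ only at the very end via Lemma \ref{lem:c-fuse}; your factor-by-factor variant would need its own justification of the dynamical shifts but is not in itself a gap.
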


\subsection{Difference equations from radial parts of central elements}

The proof of Theorem \ref{thm:mr} is based on the computation of radial parts of certain central elements of $U_q(\agg)$.  Their existence for quantum affine algebras was shown in \cite{Eti}. Let $W$ be an integrable lowest weight $U_q(\wgg)$-representation of non-positive integer level $k_W$. Consider the element
\[
C_W := (\id \otimes \Tr|_W)(\cR^{21} \cR)(1 \otimes q^{-2 \wrho}).
\]
It lies in a certain completion of $U_q(\wgg)$, and its properties were characterized in \cite{Eti}.

\begin{prop}[{\cite[Theorem 2]{Eti}}] \label{prop:eti-cent}
The following is true of the elements $C_W$:
\begin{itemize}
\item[(a)] $C_W$ is central in (a completion of) $U_q(\wgg)$;

\item[(b)] $C_W$ acts by $\chi_W(q^{-2\mu - 2kd - 2 \wrho})$ on $M_{\mu, k}$.
\end{itemize}
\end{prop}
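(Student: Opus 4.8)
The plan is to prove (a) by the standard transfer-matrix argument and (b) by evaluating $C_W$ on a highest weight vector and exploiting the triangularity of $\cR$. For (a), I would first record that $\cR^{21}\cR$ commutes with the coproduct. From the intertwining relation $\Delta^{21}(x)\cR = \cR\Delta(x)$ one has $\cR\Delta(x)=\Delta^{21}(x)\cR$, and applying the flip of tensor factors gives $\cR^{21}\Delta^{21}(x)=\Delta(x)\cR^{21}$; chaining these yields
\[
\cR^{21}\cR\,\Delta(x) = \cR^{21}\Delta^{21}(x)\cR = \Delta(x)\,\cR^{21}\cR
\]
for every $x\in U_q(\wgg)$. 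It then remains to apply the partial quantum trace $\Tr^q_W(-):=(\id\otimes\Tr|_W)\big((-)(1\otimes q^{-2\wrho})\big)$ in the second factor, so that $C_W = (\id\otimes\Tr^q_W)(\cR^{21}\cR)$.

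The mechanism that converts the coproduct commutation into genuine centrality is the \emph{ad-invariance} of the quantum trace: for any $a\in U_q(\wgg)$ and operator $Y$ on $W$,
\[
\Tr^q_W\Big(\textstyle\sum_{(a)} a^{(1)}\,Y\,S(a^{(2)})\Big) = \eps(a)\,\Tr^q_W(Y).
\]
This follows from Lemma \ref{lem:anti-square}, which gives $q^{2\wrho}\,S(a^{(2)})\,q^{-2\wrho} = S^{-1}(a^{(2)})$, combined with cyclicity of the trace and the antipode identity $\sum S^{-1}(a^{(2)})a^{(1)}=\eps(a)$. Feeding the decomposition $x\otimes 1 = \sum_{(x)} x^{(1)}\otimes x^{(2)}S(x^{(3)})$ through the commutation $\Delta(x^{(1)})(\cR^{21}\cR)=(\cR^{21}\cR)\Delta(x^{(1)})$ and then applying $\id\otimes\Tr^q_W$, the ad-invariance collapses the $W$-leg and produces $x\,C_W = C_W\,x$. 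I expect the main obstacle to be analytic rather than algebraic: all of the above is formally valid in $U_q(\wgg)^{\hotimes 2}$, but one must check that $\Tr^q_W$ converges and that $C_W$ lives in the appropriate graded completion of $U_q(\wgg)$ in which cyclicity and the coproduct identity may be legitimately invoked. This is exactly where the hypotheses that $W$ is integrable lowest weight of non-positive integer level enter — they force the weight spaces to be finite-dimensional and the weights to be bounded in the direction making $q^{-2\wrho}$ (with $|q|<1$) summable.

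For (b), since $C_W$ is central and $M_{\mu,k}$ is generated by $v_{\mu,k}$, it acts by the scalar $\langle v_{\mu,k}^*, C_W v_{\mu,k}\rangle$, so it suffices to compute the diagonal matrix element of $\cR^{21}\cR$ at $v_{\mu,k}$ as an operator on $W$. Writing $\cR = q^{-\Omega}\cR_0$ with $\cR_0\in 1 + U_q(\abb_+)_{>0}\hotimes U_q(\abb_-)_{<0}$ and using $e_i v_{\mu,k}=0$, the upper-triangular part of $\cR$ annihilates $v_{\mu,k}$, so $\cR(v_{\mu,k}\otimes w)=q^{-\Omega}(v_{\mu,k}\otimes w)$; dually, in $\cR^{21}=q^{-\Omega}\cR_0^{21}$ the lowering operators in the first slot push $v_{\mu,k}$ off the highest weight line, so only the leading term contributes to the $v_{\mu,k}$-component. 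Hence the diagonal matrix element is the restriction of $q^{-2\Omega}$ to $v_{\mu,k}\otimes W$. Evaluating $\Omega = c\otimes d + d\otimes c + \sum_i x_i\otimes x_i$ on $v_{\mu,k}$ (of weight $\mu+k\Lambda_0$ and degree $0$) makes $q^{-2\Omega}$ act by $q^{-2(\mu,\nu)-2ka}$ on the weight space $W[\nu+a\delta+k_W\Lambda_0]$, and folding in the remaining $q^{-2\wrho}$, which acts by $q^{-2(\rho,\nu)-2\ch a}$ there, yields
\[
\langle v_{\mu,k}^*, C_W v_{\mu,k}\rangle = \sum_{w} q^{-2(\mu+\rho,\nu)-2(k+\ch)a} = \chi_W\big(q^{-2\mu-2kd-2\wrho}\big),
\]
the sum running over a weight basis of $W$, with convergence guaranteed by the same hypotheses as in (a). This is the claimed eigenvalue.
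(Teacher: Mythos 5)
The paper does not prove this proposition at all: it is imported verbatim from \cite[Theorem 2]{Eti}, with only a remark recording the sign change in the character coming from the opposite coproduct and antipode. Your write-up supplies the standard transfer-matrix proof that underlies the cited result, and it is correct in the paper's conventions. The two ingredients you isolate are the right ones: the commutation $\cR^{21}\cR\,\Delta(x)=\Delta(x)\,\cR^{21}\cR$, and the ad-invariance of the quantum trace $\Tr^q_W$, which indeed follows from Lemma \ref{lem:anti-square} via $q^{2\wrho}S(a^{(2)})q^{-2\wrho}=S^{-1}(a^{(2)})$ together with $\sum S^{-1}(a^{(2)})a^{(1)}=\eps(a)$ and cyclicity. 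One small caution: the Sweedler bookkeeping works cleanly if you start from $C_W x=(\id\otimes\Tr^q_W)\big((\cR^{21}\cR)(x\otimes 1)\big)$, insert $x\otimes 1=\sum x^{(1)}\otimes x^{(2)}S(x^{(3)})$, and move the first two legs leftward through $\cR^{21}\cR$ before tracing; if instead you collapse $x^{(2)}S(x^{(3)})$ before the element $B_t$ of the second leg is interposed, the identity degenerates to a tautology. Part (b) is exactly the triangularity computation one expects: only $q^{-2\Omega}$ survives in the diagonal matrix element at $v_{\mu,k}$, and your evaluation of $\Omega$ and of $q^{-2\wrho}$ on $W[\nu+a\delta+k_W\Lambda_0]$ reproduces $\chi_W(q^{-2\mu-2kd-2\wrho})$, consistent with the paper's sign convention. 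The convergence issues you flag are real and are precisely what the completion in the statement and the integrability/level hypotheses on $W$ are there to handle.
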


\begin{remark}
The action given in Proposition \ref{prop:eti-cent}(b) has a change in sign in the character because our coproduct and antipode are different from that of \cite{Eti}.
\end{remark}

\begin{prop}[{\cite[Theorem 5]{Eti}}] \label{prop:eti-diff}
For any $W$, there is a unique difference operator $\MM_W$ in $\lambda$ so that 
\[
\Tr|_{M_{\mu, k}}\Big(\wPhi_{\mu, k}^{V_1, \ldots, V_n}(z_1, \ldots, z_n) C_W q^{2\lambda + 2 \omega d}\Big) = \MM_W \Tr|_{M_{\mu, k}}\Big(\wPhi_{\mu, k}^{V_1, \ldots, V_n}(z_1, \ldots, z_n) q^{2\lambda + 2 \omega d}\Big).
\]
\end{prop}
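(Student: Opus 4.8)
The plan is to establish \emph{existence} by directly carrying out the radial-part computation and \emph{uniqueness} by a separate linear-independence argument. For existence, I would first substitute the defining expression $C_W = (\id \otimes \Tr|_W)(\cR^{21} \cR)(1 \otimes q^{-2\wrho})$ and fold the auxiliary trace over $W$ into the Verma trace, rewriting the left-hand side as a partial trace over $M_{\mu,k} \otimes W$ of the composite $\big(\wPhi^{V_1, \ldots, V_n}_{\mu,k}(z_1, \ldots, z_n) \otimes \id_W\big)\,\cR^{21}_{M_{\mu,k} W}\cR_{M_{\mu,k} W}\,\big(q^{2\lambda + 2\omega d} \otimes q^{-2\wrho}\big)$, where the two factors $\cR^{21}_{M_{\mu,k} W}$ and $\cR_{M_{\mu,k} W}$ act between the source Verma module and the auxiliary space $W$. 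Convergence of the $W$-trace is guaranteed by integrability of $W$ and its non-positive integer level, and the intertwiner matrix elements converge for $z_1 \mgg \cdots \mgg z_n$.

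The heart of the argument is to commute the pair $\cR^{21}_{M_{\mu,k} W}\cR_{M_{\mu,k} W}$ past the iterated intertwiner $\wPhi$. Using the intertwining property $\Delta^{21}(x)\cR = \cR\Delta(x)$ together with Lemma \ref{lem:r-inter}, each $\cR$-matrix between $M_{\mu,k}$ and $W$ is transported across $\wPhi$ and resolves into $\cR$-matrices between $W$ and each evaluation factor $V_i[z_i^{\pm 1}]$, together with a residual $\cR$-matrix between $W$ and the target copy of the Verma module. I would organize these factors using the exchange-operator formalism of Section \ref{sec:ex-op-def} and the commutation relation of Lemma \ref{lem:c-fuse}, so that the $W$--$V_i$ contributions assemble into an ordered product of exchange operators $\RR_{WV_i}$ of the shape occurring in $\DD_W$; cyclicity of the partial trace over $M_{\mu,k}$ then cancels the residual $W$--Verma factor against $q^{2\lambda + 2\omega d}$.

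Finally I would decompose $\Tr|_W = \sum_{\nu \in \hh^*,\, a \in \CC} \Tr|_{W[\nu + a\delta + k_W\Lambda_0]}$ over weight spaces. For a fixed weight space, the factor $q^{-\Omega}$ inside the $\cR$-matrices, when commuted past $q^{2\lambda + 2\omega d}$, multiplies the Verma trace by the weight of that space and simultaneously replaces $(\lambda, \omega)$ by $(\lambda - \nu, \omega - k_W)$; this is precisely the shift operator $T^{\lambda, \omega}_{\nu, k_W}$, while $q^{-2\wrho}$ and the level $k_W$ produce the scalar $q^{-2ka}$ weighting. Because $W$ is lowest weight, the weights $\nu$ range over a cone bounded below, so the outcome is a well-defined (grading-convergent) difference operator $\MM_W = \sum_{\nu, a} A_{\nu,a}(\lambda, \omega)\, T^{\lambda, \omega}_{\nu, k_W}$ in $\lambda$ whose shifts also move $\omega$ by $k_W$, with coefficients $A_{\nu,a}$ acting on $(V_1 \otimes \cdots \otimes V_n) \otimes (V_n^* \otimes \cdots \otimes V_1^*)$ and independent of $(\mu, k)$. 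Applied to $\Tr|_{M_{\mu,k}}(\wPhi q^{2\lambda + 2\omega d})$, this reproduces the left-hand side, proving existence.

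For uniqueness, I would exploit that the identity must hold for all highest weights $(\mu, k)$, since $\MM_W$ does not depend on them. The trace $\Tr|_{M_{\mu,k}}(\wPhi q^{2\lambda + 2\omega d})$ has a weight expansion with leading exponential $q^{2(\lambda, \mu)}$ corrected by terms $q^{-2(\lambda, \beta)}$ with $\beta$ in the positive cone, and a distinct shift $T^{\lambda, \omega}_{\nu, k_W}$ with $\delta$-degree $a$ rescales this leading term by the factor $q^{-2(\nu, \mu)}q^{-2ka}$. Since the characters $(\mu, k) \mapsto q^{-2(\nu, \mu)}q^{-2ka}$ are pairwise distinct for distinct $(\nu, a)$, comparing the $(\mu, k)$-dependence of both sides recovers each coefficient $A_{\nu,a}(\lambda, \omega)$ uniquely, so $\MM_W$ is determined. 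The main obstacle is the bookkeeping in the middle step: rigorously commuting the product of two $\cR$-matrices through the iterated intertwiner and verifying that the surviving factors reassemble into correctly ordered exchange operators with a single weight shift, while controlling the completed tensor products and the convergence of the auxiliary trace.
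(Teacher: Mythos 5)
Your overall strategy---substitute the definition of $C_W$, fold the auxiliary trace over $W$ into the Verma trace, and read off a difference operator from the weight decomposition of $W$---is the same radial-part computation the paper relies on: the paper formally cites \cite[Theorem 5]{Eti} for this proposition, but then carries out precisely this computation in Lemma \ref{lem:trace-rmat} and Proposition \ref{prop:trace-coeff}. Your uniqueness argument via the linear independence of the characters $\mu \mapsto q^{-2(\nu,\mu)}$ is sound and mirrors the technique of Lemma \ref{lem:form-sol}; note only that the coefficients of $\MM_W$ are not independent of $k$ (the weight spaces $W[\nu + a\delta + k_W\Lambda_0]$ with different $\delta$-degrees $a$ all feed the same shift $T^{\lambda,\omega}_{\nu,k_W}$ and carry the factor $q^{-2ka}$), so uniqueness should be phrased as independence of $\mu$ only, with the $a$-sum absorbed into the coefficient attached to each $\nu$.

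The step that would fail as written is the claim that ``cyclicity of the partial trace over $M_{\mu,k}$ then cancels the residual $W$--Verma factor against $q^{2\lambda+2\omega d}$.'' It does not cancel. Moving the residual factors $\cR^{20}$ and $(\cR^{03})^{-1}$ around the trace, combined with the intertwining property of $\wPhi$, produces fixed-point equations of ABRR type for the quantities $Z_V$ and $X_V$ (Lemmas \ref{lem:z-val} and \ref{lem:x-val}), and the unique solutions of those equations introduce the universal fusion operators $\cJ(\lambda,\omega)$ evaluated against the auxiliary space. These fusion operators are exactly what dress the bare $\cR$-matrices between $W$ and the $V_i$ into the \emph{dynamical} exchange operators $\RR_{WV_i}(1, z_i; \lambda, \omega)$ appearing in the coefficients, and they also generate the extra factor $\GG(\lambda,\omega) = \QQ((\lambda,\omega)-h^{(1)})^{-1}S(\QQ(\lambda,\omega))q^{-2\rho}$ of Proposition \ref{prop:trace-coeff}, which must then be identified with a ratio of Weyl denominators. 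If the residual factors simply cancelled, you would land on coefficients with no $(\lambda,\omega)$-dependence beyond the shifts, and the resulting identity would be false, so the existence half of your argument would not close. To repair it, replace ``cancellation'' by the uniqueness-of-solutions argument for these ABRR-type equations; that is where essentially all of the work of Section \ref{sec:mr-comps} lies.
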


\begin{remark}
The result of \cite[Theorem 5]{Eti} does not have a spectral parameter and is stated for a single evaluation representation, but the statement of Proposition \ref{prop:eti-diff} follows from the same argument.
\end{remark}

\begin{prop} \label{prop:rad-diag}
The operators $\MM_W$ satisfy:
\begin{itemize}
\item[(a)] for $W, W'$, we have $[\MM_W, \MM_{W'}] = 0$;

\item[(b)] we have 
\[
\MM_W \wPsi^{V_1, \ldots, V_n}(z_1, \ldots, z_n; \lambda, \omega, \mu, k) = \chi_W(q^{-2 \mu - 2kd - 2\wrho}) \wPsi^{V_1, \ldots, V_n}(z_1, \ldots, z_n; \lambda, \omega, \mu, k).
\]
\end{itemize}
\end{prop}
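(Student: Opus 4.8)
The plan is to deduce both claims from the centrality of the elements $C_W$ (Proposition~\ref{prop:eti-cent}(a)), their scalar action on Verma modules (Proposition~\ref{prop:eti-cent}(b)), and the defining relation for $\MM_W$ given in Proposition~\ref{prop:eti-diff}. I would treat (b) first, since it is essentially immediate, and then bootstrap (a) from it together with the fact that central elements commute.

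For (b), the defining property of $\MM_W$ reads
\[
\MM_W\, \wPsi^{V_1,\ldots,V_n}(z_1,\ldots,z_n;\lambda,\omega,\mu,k) = \Tr|_{M_{\mu,k}}\Big(\wPhi^{V_1,\ldots,V_n}_{\mu,k}(z_1,\ldots,z_n)\, C_W\, q^{2\lambda+2\omega d}\Big).
\]
Since $C_W$ is central in a completion of $U_q(\wgg)$ and $M_{\mu,k}$ is generated by its highest weight vector, $C_W$ acts on all of $M_{\mu,k}$ by the single scalar $\chi_W(q^{-2\mu-2kd-2\wrho})$ of Proposition~\ref{prop:eti-cent}(b). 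Pulling this scalar out of the trace yields (b) at once. I note that the eigenvalue depends on $(\mu,k)$ but not on the variables $(\lambda,\omega)$ on which $\MM_W$ acts.

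For (a), the key step is a two-step version of the radial-part relation: for a second central element $C_{W'}$, I would show
\[
\Tr|_{M_{\mu,k}}\Big(\wPhi^{V_1,\ldots,V_n}_{\mu,k}\, C_W\, C_{W'}\, q^{2\lambda+2\omega d}\Big) = \MM_W\, \Tr|_{M_{\mu,k}}\Big(\wPhi^{V_1,\ldots,V_n}_{\mu,k}\, C_{W'}\, q^{2\lambda+2\omega d}\Big).
\]
This should hold because the computation producing $\MM_W$ from the insertion of $C_W$ uses only cyclicity of the trace and the intertwining relation $\Delta^{21}(x)\cR = \cR\Delta(x)$; being central, $C_{W'}$ commutes with every element appearing there and is simply carried along. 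Granting this and using $\MM_{W'}\wPsi^{V_1,\ldots,V_n} = \Tr(\wPhi^{V_1,\ldots,V_n}_{\mu,k} C_{W'} q^{2\lambda+2\omega d})$, I obtain $\MM_W\MM_{W'}\wPsi^{V_1,\ldots,V_n} = \Tr(\wPhi^{V_1,\ldots,V_n}_{\mu,k} C_W C_{W'} q^{2\lambda+2\omega d})$, and symmetrically for the reverse order. Since $C_W$ and $C_{W'}$ are both central they commute, so the two expressions agree and $[\MM_W,\MM_{W'}]\,\wPsi^{V_1,\ldots,V_n}=0$. To upgrade this to the operator identity, I would use that the difference operators $\MM_W$ act in $(\lambda,\omega)$ and, by their construction in Proposition~\ref{prop:eti-diff}, do not depend on $(\mu,k)$, whereas the vanishing of the commutator on $\wPsi^{V_1,\ldots,V_n}(\cdot;\mu,k)$ holds for every generic $(\mu,k)$; a genericity argument, that these trace functions span a sufficiently large space of functions of $(\lambda,\omega)$ as $(\mu,k)$ varies (visible from their leading asymptotic terms), then forces $[\MM_W,\MM_{W'}]=0$ identically.

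I expect the main obstacle to lie in (a), specifically in justifying the two-step radial-part relation: one must verify that inserting the completed element $C_{W'}$, which is itself built from $\cR^{21}\cR$ and a trace over $W'$, genuinely does not disturb the cyclicity-and-$\cR$-matrix manipulations that define $\MM_W$, and then that the resulting eigenfunction identity can be promoted to an identity of difference operators through the genericity argument. Part (b), by contrast, is a direct consequence of centrality and requires no such care.
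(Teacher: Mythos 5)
Your proposal is correct, and it supplies the argument that the paper leaves implicit (the proposition is stated without proof, resting on Propositions \ref{prop:eti-cent} and \ref{prop:eti-diff}). Part (b) is exactly as you say: insert the scalar action of $C_W$ from Proposition \ref{prop:eti-cent}(b) into the defining relation of Proposition \ref{prop:eti-diff}. One simplification for part (a): the ``two-step radial-part relation'' you flag as the main obstacle requires no re-examination of the cyclicity-and-$\cR$-matrix manipulations, because $C_{W'}$ acts on $M_{\mu,k}$ by the scalar $\chi_{W'}(q^{-2\mu-2kd-2\wrho})$, so
\[
\Tr|_{M_{\mu,k}}\Big(\wPhi^{V_1,\ldots,V_n}_{\mu,k}\,C_W C_{W'}\,q^{2\lambda+2\omega d}\Big)
=\chi_{W'}(q^{-2\mu-2kd-2\wrho})\,\MM_W\,\wPsi^{V_1,\ldots,V_n}
=\MM_W\,\Tr|_{M_{\mu,k}}\Big(\wPhi^{V_1,\ldots,V_n}_{\mu,k}\,C_{W'}\,q^{2\lambda+2\omega d}\Big),
\]
where the last step uses only that $\MM_W$ is a difference operator in $(\lambda,\omega)$ and hence commutes with scalars depending on $(\mu,k)$ alone. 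The final upgrade from $[\MM_W,\MM_{W'}]\wPsi=0$ to the operator identity is the same genericity-of-leading-exponentials argument that underlies the uniqueness assertion in Proposition \ref{prop:eti-diff} itself (and reappears in Lemma \ref{lem:form-sol}), so your appeal to it is consistent with the framework already in use.
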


To prove Theorem \ref{thm:mr}, we now identify the operators $\MM_W$ and $\DD_W$ via the following steps:
\begin{enumerate}
\item[1.] compute universal versions of the operators $\MM_W$;

\item[2.] express $\MM_W$ in terms of an unknown coefficient $\GG(\lambda, \omega)$ by evaluating this universal computation;

\item[3.] characterize $\GG(\lambda, \omega)$ as the solution to a coproduct identity;

\item[4.] solve the coproduct identity;

\item[5.] conclude the Macdonald-Ruijsenaars equations.
\end{enumerate}
We carry out these steps in the following subsections.

\subsection{Computing universal versions of the operators $\MM_W$} 

In this subsection, we express the trace function 
\[
\Tr|_{M_{\mu, k}}\Big(\wPhi_{\mu, k}^{V_1, \ldots, V_n}(z_1, \ldots, z_n) C_W q^{2\lambda + 2 \omega d}\Big)
\]
in terms of the evaluation of a certain universal expression involving fusion matrices.  Define the representations
\[
V := V_1[z_1^{\pm 1}] \otimes \cdots \otimes V_n[z_n^{\pm 1}] \qquad \text{ and } \qquad V^* := V_n^* \otimes \cdots \otimes V_1^*.
\]
Label tensor factors of the tensor product $M_{\mu, k} \otimes V \otimes V^* \otimes U_q(\wgg) \otimes U_q(\wgg)$ by $0$, $1$, $1*$, $2$, and $3$ in that order.

\begin{lemma} \label{lem:trace-rmat}
The trace function is given by 
\begin{multline*}
\Tr|_{M_{\mu, k}}\Big(\wPhi_{\mu, k}^{V_1, \ldots, V_n}(z_1, \ldots, z_n) C_W q^{2\lambda + 2 \omega d}\Big)\\ = \Tr|_{W} \circ m_{23}\Big(S_3 \Big(\Tr|_{M_{\mu, k}}\Big(\wPhi_{\mu, k}^{V_1, \ldots, V_n}(z_1, \ldots, z_n) \cR^{20} (\cR^{03})^{-1} q_0^{2\lambda + 2 \omega d}\Big) \Big)q_3^{-2\wrho}\Big).
\end{multline*}
\end{lemma}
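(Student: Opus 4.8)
The plan is to unfold the single trace over $W$ hidden in the central element $C_W$ into the two auxiliary tensor factors $2$ and $3$, using only the antipode properties of the universal $\cR$-matrix from Lemma \ref{lem:r-mat-s}. Everything here is a formal manipulation: the intertwiner $\wPhi^{V_1, \ldots, V_n}_{\mu, k}$ maps into factors $0$, $1$, $1*$ and never touches factors $2$, $3$, so the auxiliary legs commute freely with it and with the trace over factor $0$.

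First I would write $\cR = \sum_i a_i \otimes b_i$ and substitute into the definition to get $C_W = \sum_{k,l} b_l a_k\,\Tr|_W(a_l b_k q^{-2\wrho})$ acting on factor $0$; here the physical product $b_l a_k$ comes from $\cR^{21}\cR$ and the scalar $\Tr|_W(a_l b_k q^{-2\wrho})$ from $(\id \otimes \Tr|_W)$. Inserting this into the left-hand side and pulling the scalar coefficients out of $\Tr|_{M_{\mu,k}}$ yields
\[
\Tr|_{M_{\mu,k}}\!\big(\wPhi\, C_W\, q^{2\lambda+2\omega d}\big) = \sum_{k,l} \Tr|_W(a_l b_k q^{-2\wrho})\,\Tr|_{M_{\mu,k}}\!\big(\wPhi\, (b_l a_k)_0\, q_0^{2\lambda+2\omega d}\big).
\]

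On the right-hand side I would expand $\cR^{20} = \sum_i (a_i)_2 (b_i)_0$ and $(\cR^{03})^{-1} = \sum_j (a_j')_0 (b_j')_3$ and pull the inert factor-$2$ and factor-$3$ legs out of $\Tr|_{M_{\mu,k}}$. Applying $S_3$, multiplying by $q_3^{-2\wrho}$, and then merging factors $2$ and $3$ by $m_{23}$ (in the order $m_{23}(x_2 \otimes y_3) = xy$, consistent with the definition of the Drinfeld element $u$) before tracing over $W$ gives
\[
\sum_{i,j}\Tr|_W\!\big(a_i\, S(b_j')\, q^{-2\wrho}\big)\,\Tr|_{M_{\mu,k}}\!\big(\wPhi\, (b_i a_j')_0\, q_0^{2\lambda+2\omega d}\big).
\]
The two expressions are then identified using the identity $(\id \otimes S)\cR^{-1} = \cR$, which follows from Lemma \ref{lem:r-mat-s}(a) by applying $\id \otimes S$ to $(1 \otimes S^{-1})\cR = \cR^{-1}$. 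This says $\sum_j a_j' \otimes S(b_j') = \sum_k a_k \otimes b_k$, so the correlated pair $(a_j')_0$ and $(S(b_j'))_W$ may be replaced by $(a_k)_0$ and $(b_k)_W$; in other words, the antipode $S_3$ is exactly what converts $(\cR^{03})^{-1}$ back into the second $\cR$ of $C_W$, while $\cR^{20}$ already reproduces $\cR^{21}$ since both place the $a$-leg in the $W$-direction. After this replacement the right-hand side becomes $\sum_{i,k}\Tr|_W(a_i b_k q^{-2\wrho})\,\Tr|_{M_{\mu,k}}(\wPhi\,(b_i a_k)_0\,q_0^{2\lambda+2\omega d})$, which matches the left-hand side after relabeling $i \to l$.

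I expect the only real difficulty to be bookkeeping rather than conceptual: one must track which leg of each $\cR$ sits in factor $0$ versus the $W$-direction across the flip in $\cR^{21}$, and check that the product order forced by $m_{23}$ together with the placement of $q^{-2\wrho}$ reproduces $\Tr|_W(a_l b_k q^{-2\wrho})$ with the factors in the correct order rather than the reversed one. Convergence of the $W$-traces is not an issue at this stage, as $W$ is integrable lowest weight of non-positive level and the relevant completions are already justified in the references underlying $C_W$.
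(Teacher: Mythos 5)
Your proposal is correct and follows essentially the same route as the paper: the paper recognizes the left-hand side as the trace over $M_{\mu,k}\otimes W$ of the composition with $C_W$, rewrites $\cR^{03}=S_3\big((\cR^{03})^{-1}\big)$ via Lemma \ref{lem:r-mat-s}(a), and interchanges the two traces while pulling $S_3$ and the auxiliary legs out of $\Tr|_{M_{\mu,k}}$ --- which is exactly your component-wise verification written without indices. The bookkeeping you flag (which leg of each $\cR$ sits in factor $0$, and the product order forced by $m_{23}$ together with the placement of $q^{-2\wrho}$) works out as you describe.
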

\begin{proof}
By definition the trace function is given by the trace over $M_{\mu, k} \otimes W$ of the composition
\[
M_{\mu, k} \otimes W \overset{q^{2\lambda + 2 \omega d} \otimes q^{-2 \wrho}} \to M_{\mu, k} \otimes W \overset{\cR^{20} \cR^{02}} \to M_{\mu, k} \otimes W \overset{\wPhi_{\mu, k}^{V_1, \ldots, V_n}(z_1, \ldots, z_n)} \to M_{\mu, k} \otimes V(z) \otimes W.
\]
The composition above is given by 
\[
\pi_{W, 3} \circ m_{23} \circ \Big(\wPhi_{\mu, k}^{V_1, \ldots, V_n}(z_1, \ldots, z_n) \cR^{20} \cR^{03} q_3^{-2\wrho} q_0^{2\lambda + 2 \omega d}\Big),
\]
so we conclude that 
\begin{align*}
\Tr|_{M_{\mu, k}}&\Big(\wPhi_{\mu, k}^{V_1, \ldots, V_n}(z_1, \ldots, z_n) C_W q^{2\lambda + 2 \omega d}\Big)\\
 &= \Tr|_{M_{\mu, k}} \Tr|_W \Big(m_{23} \circ \Big(\wPhi_{\mu, 1}^{V_1, \ldots, V_n}(z_1, \ldots, z_n) \cR^{20} \cR^{03} q_3^{-2\wrho} q_0^{2\lambda + 2 \omega d}\Big)\Big)\\
&= \Tr|_{M_{\mu, k}} \Tr|_W \Big(m_{23} \circ \Big(\wPhi_{\mu, k}^{V_1, \ldots, V_n}(z_1, \ldots, z_n) \cR^{20} S_3((\cR^{03})^{-1}) q_3^{-2\wrho} q_0^{2\lambda + 2 \omega d}\Big)\Big)\\
&= \Tr|_{W} \circ m_{23}\Big(S_3 \Big(\Tr|_{M_{\mu, k}}\Big(\wPhi_{\mu, k}^{V_1, \ldots, V_n}(z_1, \ldots, z_n) \cR^{20} (\cR^{03})^{-1} q_0^{2\lambda + 2 \omega d}\Big) \Big)q_3^{-2\wrho}\Big). \qedhere
\end{align*}
\end{proof}

We will compute universal versions of the outcome of Lemma \ref{lem:trace-rmat} in two steps.  Define the quantities
\begin{align*}
Z_V(z_1, \ldots, z_n; \lambda, \omega, \mu, k) &:= \Tr|_{M_{\mu, k}}\Big(\wPhi_{\mu, k}^{V_1, \ldots, V_n}(z_1, \ldots, z_n) \cR^{20} q_0^{2\lambda + 2 \omega d}\Big) \\
X_V(z_1, \ldots, z_n; \lambda, \omega, \mu, k) &:= \Tr|_{M_{\mu, k}}\Big(\wPhi_{\mu, k}^{V_1, \ldots, V_n}(z_1, \ldots, z_n) \cR^{20} (\cR^{03})^{-1} q_0^{2\lambda + 2 \omega d}\Big),
\end{align*}
and notice that $X_V(z_1, \ldots, z_n; \lambda, \omega, \mu, k) = Z_V(z_1, \ldots, z_n; (\lambda, \omega) - h^{(3)}/2, \mu, k) + (\text{l.o.t.})$.  We compute these quantities in the following two lemmas, whose proofs are deferred to Subsection \ref{sec:mr-comps}.

\begin{lemma} \label{lem:z-val}
We have the identity
\[
Z_V(z_1, \ldots, z_n; \lambda, \omega, \mu, k) = \cJ^{12}(\lambda, \omega) q_2^{-kd} \wPsi^{V_1, \ldots, V_n}(z_1, \ldots, z_n; (\lambda, \omega) - h^{(2)}/2, \mu, k).
\]
\end{lemma}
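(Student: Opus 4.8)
The plan is to compute $Z_V$ by reducing the trace with an inserted $\cR$-matrix to the ABRR equation (\ref{eq:abrr}) characterizing the universal fusion operator $\cJ$, and then invoking its uniqueness. First I would expand $\cR = \sum_i a_i \otimes b_i$, so that $\cR^{20} = \sum_i (a_i)_2 (b_i)_0$ with $a_i$ placed in the auxiliary $U_q(\wgg)$-factor (slot $2$) and $b_i$ acting on $M_{\mu, k}$ (slot $0$). Pulling the slot-$2$ legs out of the trace gives $Z_V = \sum_i (a_i)_2\, T(b_i)$, where $T(y) := \Tr|_{M_{\mu, k}}\big(\wPhi^{V_1, \ldots, V_n}_{\mu, k} \, y_0 \, q_0^{2\lambda + 2\omega d}\big)$ is valued in slots $1, 1*$. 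The key observation is that, up to the inert dual vectors occupying slot $1*$, the intertwiner $\wPhi^{V_1, \ldots, V_n}_{\mu, k}$ is a genuine $U_q(\wgg)$-intertwiner into slots $0$ and $1$ only.

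Next I would derive a functional relation for $T$. Using the intertwining property $\wPhi\, y_0 = \sum (y^{(1)})_0 (y^{(2)})_1 \wPhi$, commuting $(y^{(2)})_1$ past the $M$-trace, applying cyclicity of $\Tr|_{M_{\mu,k}}$, and using $q_0^{2\lambda + 2\omega d} (y^{(1)})_0 q_0^{-2\lambda - 2\omega d} = q^{(2\lambda + 2\omega d, \wt y^{(1)})} (y^{(1)})_0$, one obtains $T(y) = \sum_{(y)} q^{(2\lambda + 2\omega d, \wt y^{(1)})} (y^{(2)})_1 \, T(y^{(1)})$. Feeding $y = b_i$ into this relation and resumming with the coproduct identity $(1 \otimes \Delta)\cR = \cR^{13}\cR^{12}$, the weight-zero property $\wt a_i + \wt b_i = 0$ converts the scalar phase into conjugation by $q_2^{2\lambda + 2\omega d}$, producing a closed fixed-point equation
\[
Z_V = \cR^{21} q_2^{-2\lambda - 2\omega d}\, Z_V\, q_2^{2\lambda + 2\omega d}
\]
of ABRR type.

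I would then match this against the ABRR equation (\ref{eq:abrr}) for $\cJ^{12}(\lambda, \omega)$, transferring the conjugation onto slot $2$ via the weight-zero identity $q_1^{2\lambda + 2\omega d} \cJ^{12} q_1^{-2\lambda - 2\omega d} = q_2^{-2\lambda - 2\omega d} \cJ^{12} q_2^{2\lambda + 2\omega d}$. The decisive computation is the treatment of the affine Cartan part: writing $\Omega = \Omega^0 + \Omega^1$ with $\Omega^0 = c \otimes d + d \otimes c$, the factor $q^{-d_2 c_0}$ inside $q^{-\Omega_{20}}$ becomes $q_2^{-kd}$ since $c$ acts by $k$ on $M_{\mu, k}$, while $q^{-c_2 d_0}$ together with $q^{-\Omega^1_{20}} = q^{-\sum_i (x_i)_2 (x_i)_0}$ shifts the trace twist by $-h^{(2)}/2$. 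Thus the leading ($\cR_0 \to 1$) term of $Z_V$ is exactly $q_2^{-kd}\, \wPsi^{V_1, \ldots, V_n}((\lambda, \omega) - h^{(2)}/2)$. Because the dynamical shift $-h^{(2)}/2$ makes $\wPsi$ interact nontrivially with the slot-$2$ weight-shifts induced by $\cJ^{12}$ and $\cR^{21}$, this is precisely what reconciles the $q^{-\Omega}$ of (\ref{eq:abrr}), so that $\cJ^{12}(\lambda, \omega) q_2^{-kd} \wPsi((\lambda, \omega) - h^{(2)}/2)$ solves the same evaluated ABRR equation with the same leading term. Uniqueness, as in Proposition \ref{prop:es-abrr}, then forces equality.

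The main obstacle is precisely this affine Cartan bookkeeping: correctly tracking the $\Omega^0 = c\otimes d + d\otimes c$ contributions, which simultaneously generate the $q_2^{-kd}$ prefactor (from the level $k$ of the Verma) and the dynamical shift $-h^{(2)}/2$, and verifying that together with the dynamical dependence of $\wPsi$ on the slot-$2$ weight they account exactly for the $q^{-\Omega}$ term in the ABRR equation. A secondary technical point is justifying the convergence of the trace and of the $\cR$-matrix series in the appropriate completion, valid in the regime $z_1 \gg \cdots \gg z_n$.
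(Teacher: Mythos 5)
Your proposal is correct and follows essentially the same route as the paper: derive the fixed-point equation $Z_V = \cR^{21} q_2^{-2\lambda-2\omega d} Z_V q_2^{2\lambda+2\omega d}$ (algebraically equivalent to the paper's $Z_V = q_2^{2\lambda+2\omega d}(\cR^{21})^{-1} Z_V q_2^{-2\lambda-2\omega d}$) by moving the $\cR$-matrix around the trace via the intertwining property and cyclicity, identify the weight-zero term in slot $2$ as $q_2^{-kd}\wPsi(\,\cdot\,;(\lambda,\omega)-h^{(2)}/2)$ using the $\Omega^0 = c\otimes d + d\otimes c$ bookkeeping, and conclude by the ABRR equation for $\cJ^{12}$ together with uniqueness of solutions with prescribed leading term.
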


\begin{lemma} \label{lem:x-val}
We have the identity
\begin{multline*}
X_V(z_1, \ldots, z_n; \lambda, \omega, \mu, k) = q_3^{2\lambda + 2 \omega d} \cJ^{3, 12}(\lambda, \omega) \cJ^{12}((\lambda, \omega) + h^{(3)}/2) q_2^{-kd}q_3^{kd}\\ \wPsi^{V_1, \ldots, V_n}(z_1, \ldots, z_n; (\lambda, \omega) + h^{(3)}/2 - h^{(2)}/2, \mu, k) \cJ^{32}(\lambda, \omega)^{-1} q_3^{-2\lambda - 2\omega d}.
\end{multline*}
\end{lemma}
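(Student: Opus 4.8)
The plan is to mirror the computation underlying Lemma \ref{lem:z-val}, now carrying along the extra factor $(\cR^{03})^{-1}$ that distinguishes $X_V$ from $Z_V$. Exactly as for $Z_V$, I would expand the two $\cR$-matrices using the series (\ref{eq:ser-exp}) and rely on three ingredients: that $\wPhi^{V_1, \ldots, V_n}_{\mu, k}(z_1, \ldots, z_n)$ is a $U_q(\wgg)$-intertwiner, the defining $\cR$-matrix relation $\Delta^{21}(x)\cR = \cR\Delta(x)$, and the cyclicity of $\Tr|_{M_{\mu,k}}$ against the twist $q_0^{2\lambda + 2 \omega d}$. Both inserted $\cR$-matrices straddle the Verma factor $0$; the strategy is to push their factor-$0$ legs around the trace so that they become operators on the auxiliary factors $2$ and $3$, and then to recognize each accumulated weighted sum as an evaluation of the universal fusion operator by invoking the uniqueness of solutions to the ABRR equation (Proposition \ref{prop:es-abrr}).

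The factor $\cR^{20}$ is processed precisely as in Lemma \ref{lem:z-val}: cycling its factor-$0$ leg through the twist and matching the resulting recursion against the ABRR equation produces the fusion operator $\cJ^{12}$, the grading factor $q_2^{-kd}$, and the half-shift of the trace argument by $-h^{(2)}/2$. The essential new input is the treatment of $(\cR^{03})^{-1} = \sum_i (a_i')_0 (b_i')_3$. Cycling the factor-$0$ leg $a_i'$ through $q_0^{2\lambda + 2 \omega d}$ and using that $\cR$ is weight-zero, so that the weights of $a_i'$ in factor $0$ and of $b_i'$ in factor $3$ are opposite, transfers the conjugation picked up under cyclicity from factor $0$ onto factor $3$; this is exactly the outer conjugation $q_3^{2\lambda + 2 \omega d}(\cdots)q_3^{-2\lambda - 2 \omega d}$ appearing in the statement. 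In the natural order of processing, factor $2$ is already attached to $V$ by the time factor $3$ is treated, so the remaining factor-$3$ sums are identified, again through ABRR together with the cocycle relation (\ref{eq:ufus-cocycle}), as factor $3$ fused against the combined pair $\{1,2\}$; this yields the prefactor $\cJ^{3,12}(\lambda, \omega)$, the suffix $\cJ^{32}(\lambda, \omega)^{-1}$ (the inverse reflecting that we inserted $\cR^{-1}$ rather than $\cR$), and the grading factor $q_3^{kd}$. Assembling the two contributions reproduces the claimed closed form, with $\cJ^{12}$ now evaluated at $(\lambda, \omega) + h^{(3)}/2$ and the inner trace function at $(\lambda, \omega) + h^{(3)}/2 - h^{(2)}/2$.

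I expect the main obstacle to be the bookkeeping of the several simultaneous weight shifts and the correct cabling of the fusion operators. One must track the Cartan part $q^{\Omega_{03}}$ of $(\cR^{03})^{-1}$, which is what shifts the dynamical parameter by multiples of the factor-$3$ weight, and verify that the half-shifts $\pm h^{(\cdot)}/2$ combine to give precisely $(\lambda, \omega) + h^{(3)}/2 - h^{(2)}/2$ in the inner trace; in particular the leading-order identity $X_V = Z_V((\lambda, \omega) - h^{(3)}/2) + (\text{l.o.t.})$ must be reconciled with the sign $+h^{(3)}/2$ in the closed form once the outer conjugation by $q_3^{2\lambda + 2 \omega d}$ is taken into account. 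A second subtlety is that the two insertions interact through factor $0$, so the coproduct axioms for $\cR$ must be used to control their relative order, and it is this order that forces the cabling $\cJ^{3,12}$ rather than $\cJ^{31}$. Throughout, the uniqueness clause of Proposition \ref{prop:es-abrr} is the decisive tool that permits each explicit sum to be replaced by a closed-form fusion operator.
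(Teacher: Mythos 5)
Your plan is correct and follows the paper's proof in all essentials: both cycle $\cR^{20}$ and $(\cR^{03})^{-1}$ around the trace using the intertwining property of $\wPhi$ and the coproduct/Yang--Baxter relations for $\cR$ (which is exactly what forces the cabling $\cJ^{3,12}$ and the conjugating suffix $(\cR^{23})^{-1} \leadsto \cJ^{32}(\lambda,\omega)^{-1}$), and then identify the result with the claimed closed form via the uniqueness of solutions to the resulting ABRR-type functional equation once the weight-zero term in factor $3$ is matched against Lemma \ref{lem:z-val}. The only cosmetic difference is that the paper does not need the cocycle relation (\ref{eq:ufus-cocycle}) here — the cabled operator $\cJ^{3,12}$ is pinned down directly by the ABRR equation with $\Omega_{3,12}=\Omega_{31}+\Omega_{32}$ — but that does not affect the argument.
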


\subsection{Evaluating the universal computations}

Define the quantity
\[
\GG(\lambda, \omega) := \QQ((\lambda, \omega) - h^{(1)})^{-1} S(\QQ(\lambda, \omega)) q^{-2\rho}.
\]
The goal of this subsection is to evaluate the result of Lemma \ref{lem:trace-rmat} using Lemma \ref{lem:x-val} to obtain the following expression for the trace function in terms of $\GG(\lambda, \omega)$.

\begin{prop} \label{prop:trace-coeff}
We have the identity
\begin{multline} \label{eq:tr-coeff}
\Tr|_{M_{\mu, k}}\Big(\wPhi_{\mu, k}^{V_1, \ldots, V_n}(z_1, \ldots, z_n) C_W q^{2\lambda + 2 \omega d}\Big)\\  = \sum_{\nu \in \hh^*, a \in \CC} \Tr|_{W[\nu + a \delta + k_W \Lambda_0]}\Big(q^{-2ka - 2ha}\GG(\lambda, \omega) \RR_{WV}(1; z_1, \ldots, z_n; \lambda, \omega)\Big)\\ \wPsi^{V_1, \ldots, V_n}(z_1, \ldots, z_n; \lambda - \nu, \omega - k_W, \mu, k) .
\end{multline}
\end{prop}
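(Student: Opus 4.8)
The plan is to feed the explicit evaluation of $X_V$ from Lemma \ref{lem:x-val} into the formula of Lemma \ref{lem:trace-rmat} and then carry out, in the indicated order, the four operations $S_3$, multiplication by $q_3^{-2\wrho}$, the merge $m_{23}$, and the trace $\Tr|_W$. After substitution the dependence on the factor $0$ (the module $M_{\mu,k}$) has already been removed by the trace, and what remains is a product, in the factors $1,1*,2,3$, of the universal fusion operators $\cJ^{3,12}(\lambda,\omega)$, $\cJ^{12}((\lambda,\omega)+h^{(3)}/2)$, $\cJ^{32}(\lambda,\omega)^{-1}$, the Cartan exponentials $q_2^{-kd},q_3^{\pm kd},q_3^{\pm(2\lambda+2\omega d)}$, and the trace function $\wPsi$. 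The target is to recognize, once factors $2$ and $3$ are merged into a single $U_q(\wgg)$-slot to be evaluated in $W$, the element $\GG(\lambda,\omega)$ acting on $W$, the exchange operator $\RR_{WV}$, and the spectrally shifted trace function of the right-hand side of (\ref{eq:tr-coeff}).

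I would begin by simplifying the adjacent pair $\cJ^{3,12}(\lambda,\omega)\cJ^{12}((\lambda,\omega)+h^{(3)}/2)$ using the shifted $2$-cocycle relation (\ref{eq:ufus-cocycle}), which under the relabeling $1\mapsto 3,\ 2\mapsto 1,\ 3\mapsto 2$ rewrites it as $\cJ^{31,2}(\lambda,\omega)\cJ^{31}((\lambda,\omega)-h^{(2)}/2)$; this isolates the coupling of factor $3$ to the $V$-slot (factor $1$) from the coupling of factors $2$ and $3$. I would then apply $S_3$, which as an anti-automorphism in factor $3$ reverses the order of the factor-$3$ operators, replaces each by its antipode, and converts the dynamical shift $+h^{(3)}/2$ in the argument of a fusion operator into a shift by the $W$-weight upon evaluation. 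The antipodes of the fusion operators and of the $\cR$-factors implicit in $\cL=(\cR^{21})^{-1}\cJ$ are governed by Lemmas \ref{lem:r-mat-s} and \ref{lem:drinfeld}, while the conjugations by $q_3^{\pm(2\lambda+2\omega d)}$ and $q_3^{\pm kd}$ are absorbed using the ABRR equation (\ref{eq:abrr}), which is precisely the identity that trades a factor of $\cR^{21}$ against a dynamical shift of $\cJ$.

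Applying $m_{23}$ then merges the factor-$2$ and factor-$3$ slots, and the combination $m_{23}\big(S_3(\cdots)\cdot q_3^{-2\wrho}\big)$ is engineered to reproduce the defining formula $\QQ(\mu,k)=m_{21}(S_2(\LL(\mu,k)))$: the internal factor-$2$--factor-$3$ coupling collapses, through $\cL=(\cR^{21})^{-1}\cJ$ and Lemma \ref{lem:anti-square}, into the two factors $\QQ((\lambda,\omega)-h^{(1)})^{-1}$ and $S(\QQ(\lambda,\omega))$, whose product with the residual $q^{-2\rho}$ extracted from $q_3^{-2\wrho}$ is exactly $\GG(\lambda,\omega)$. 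The complementary coupling of the merged $W$-slot to factor $1$ assembles, via the definition (\ref{eq:def-ex}) of the universal exchange operator, into $\RR_{WV}(1;z_1,\ldots,z_n;\lambda,\omega)$. Finally, decomposing $\Tr|_W=\sum_{\nu,a}\Tr|_{W[\nu+a\delta+k_W\Lambda_0]}$ and reading off the action of $q_2^{-kd}q_3^{kd}$ together with the dynamical shift in the argument of $\wPsi$ on the weight space $W[\nu+a\delta+k_W\Lambda_0]$ produces the spectral-parameter shift $\lambda\mapsto\lambda-\nu$, $\omega\mapsto\omega-k_W$ and the scalar $q^{-2ka-2ha}$, yielding (\ref{eq:tr-coeff}).

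The main obstacle is the bookkeeping in the middle step: correctly tracking how $S_3$ acts on the \emph{dynamically shifted} operators $\cJ^{12}((\lambda,\omega)+h^{(3)}/2)$ and on the shifted argument of $\wPsi$, and ensuring that after $m_{23}$ the factor-$2$--factor-$3$ couplings assemble into exactly $\QQ((\lambda,\omega)-h^{(1)})^{-1}S(\QQ(\lambda,\omega))$ with the correct dynamical arguments rather than some neighboring shift. This is where (\ref{eq:abrr}) and (\ref{eq:ufus-cocycle}) must be applied in the right order, and where the half-shifts in the convention $\JJ(\mu,k)=\cJ((\mu,k)-h^{(1)}/2-h^{(2)}/2)$ have to be reconciled with the integer shift $-h^{(1)}$ appearing in $\GG(\lambda,\omega)$. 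A secondary point requiring care is convergence: by Proposition \ref{prop:renorm-abrr}(b) the operators $\QQ$ and $\cL$ are only power series in $q^{-2k}$ with coefficients of finite standard degree, so the merges and the infinite sum over $\nu,a$ must be justified on the integrable module $W$, where local $U_q(\gg)$-nilpotence makes the evaluations well defined.
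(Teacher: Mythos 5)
Your plan follows essentially the same route as the paper's proof: substitute the universal formula for $X_V$ from Lemma \ref{lem:x-val} into Lemma \ref{lem:trace-rmat}, then use the ABRR equation, the shifted $2$-cocycle relation, and antipode identities to collapse the $W$-slot couplings into $\QQ((\lambda,\omega)-h^{(1)})^{-1}S(\QQ(\lambda,\omega))q^{-2\rho}=\GG(\lambda,\omega)$ together with $\RR_{WV}$, and finally decompose $\Tr|_W$ over weight spaces to produce the shift $T^{\lambda,\omega}_{\nu,k_W}$ and the factor $q^{-2ka-2ha}$. The paper carries this out in coordinates via the sub-computations you anticipate (in particular the identity $\sum_l f_l'(\lambda,\omega)S(e_l')=\QQ((\lambda,\omega)-h^{(1)})^{-1}$, which is exactly the collapse you describe), so your outline is correct and matches the paper's argument.
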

\begin{proof}
Recall the notations of (\ref{eq:ser-exp}) for expansions of $\cR$, $\cL(\lambda, \omega)$, and their inverses. We rewrite Lemma \ref{lem:x-val} in terms of the renormalized fusion operator as
\begin{multline*}
X_V(z_1, \ldots, z_n; \lambda, \omega, \mu, k) = q_3^{2\lambda + 2 \omega d} \cR^{12, 3} \cL^{3, 12}(\lambda, \omega) \cR^{21}((\lambda, \omega) + h^{(3)}/2) q_2^{-kd} q_3^{kd}\\
\wPsi^{V_1, \ldots, V_n}(z_1, \ldots, z_n; (\lambda, \omega) + h^{(3)}/2 - h^{(2)}/2, \mu, k) \cL^{32}(\lambda, \omega)^{-1} (\cR^{23})^{-1} q_3^{-2\lambda - 2\omega d}.
\end{multline*}
In coordinates, this means that 
\begin{align*}
X_V&(z_1, \ldots, z_n; \lambda, \omega, \mu, k) \\
&= \sum_{i, j, m, n, r, s} \Big(a_i^{(1)} f_j^{(1)}(\lambda, \omega) b_m e_n \otimes a_i^{(2)} f_j^{(2)}(\lambda, \omega) a_m f_n((\lambda, \omega) + h^{(3)}/2)q_2^{-kd} \otimes q^{2\lambda + 2 \omega d} b_i e_j q_3^{kd}\Big) \\
&\phantom{=============} \wPsi^{V_1, \ldots, V_n}(z_1, \ldots, z_n; (\lambda, \omega) + h^{(3)}/2 - h^{(2)}/2, \mu, k) \Big(1 \otimes f_r'(\lambda, \omega) a_s' \otimes e_r' b_s' q^{-2\lambda - 2 \omega d}\Big).
\end{align*}
Therefore, rewriting Lemma \ref{lem:trace-rmat} as
\[
\Tr|_{M_{\mu, k}}\Big(\wPhi_{\mu, k}^{V_1, \ldots, V_n}(z_1, \ldots, z_n) C_W q^{2\lambda + 2 \omega d}\Big) = \Tr|_{W} \circ m_{23}\Big(S_3 (X_V(z_1, \ldots, z_n; \lambda, \omega, \mu, k))q_3^{-2\wrho}\Big),
\]
we obtain by substitution and cyclic permutation of the trace that
\begin{align*}
\Tr&|_{M_{\mu, k}}\Big(\wPhi_{\mu, k}^{V_1, \ldots, V_n}(z_1, \ldots, z_n) C_W q^{2\lambda + 2 \omega d}\Big)\\
&= \sum_{i, j, m, n, r, s} a_i^{(1)} f_j^{(1)}(\lambda, \omega) b_m e_n \Tr|_W\Big(a_i^{(2)} f_j^{(2)}(\lambda, \omega) a_m f_n((\lambda, \omega) - h^{(2)}/2) \\
&\phantom{===} \wPsi^{V_1, \ldots, V_n}(z_1, \ldots, z_n; (\lambda, \omega) - h^{(2)}, \mu, k) q^{-kd} f_r'(\lambda, \omega) a_s' q^{2\lambda + 2\omega d} S(b_s') S(e_r') q^{-kd} S(e_j) S(b_i) q^{-2\lambda - 2\omega d - 2\wrho}\Big)\\
&= \sum_{\nu, a} \sum_{i, j, m, n, r, s} a_i^{(1)} f_j^{(1)}(\lambda, \omega) b_m e_n \wPsi^{V_1, \ldots, V_n}(z_1, \ldots, z_n; \lambda - \nu, \omega - k_W, \mu, k)\\
&\phantom{===} \Tr|_{W[\nu + k_W \Lambda_0 + a \delta]}\Big(q^{-kd} f_r'(\lambda, \omega) a_s'q^{2\lambda + 2\omega d} S(b_s') S(e_r')\\
&\phantom{==================} q^{-kd} S(e_j) S(b_i) q^{-2\lambda - 2\omega d - 2 \wrho} a_i^{(2)} f_j^{(2)}(\lambda, \omega) a_m f_n(\lambda - \nu/2, \omega - k_W/2)\Big).
\end{align*}
We now evaluate the sum over $r, s$ explicitly in terms of $\QQ(\lambda, \omega)^{-1}$.
\begin{lemma} \label{lem:rs-sum}
We have the sum
\[
\sum_{r, s} f_r'(\lambda, \omega) a_s' q^{2\lambda + 2 \omega d} S(b_s') S(e_r') = \QQ((\lambda, \omega) - h^{(1)})^{-1} q^{2\lambda + 2 \omega d} q^{-\sum_i x_i^2 - 2cd}.
\]
\end{lemma}
\begin{proof}
Apply $m_{21} \circ S_1$ to 
\[
\cL(\lambda, \omega)^{-1} (\cR^{21})^{-1} q_1^{-2\lambda - 2 \omega d} = q^{\Omega} q_1^{-2\lambda - 2\omega d} \cL(\lambda, \omega)^{-1}
\]
to obtain
\[
\sum_{r, s} f_r'(\lambda, \omega) a_s' q^{2\lambda + 2 \omega d} S(b_s') S(e_r') = \sum_i f_i'(\lambda, \omega) S(e_i') q^{2\lambda + 2 \omega d} q^{-\sum_i x_i^2 - 2 cd}.
\]
Now, apply $m_{321} \circ S_2$ to 
\[
\cL^{21, 3}(\lambda, \omega) \cL^{12}((\lambda, \omega) - h^{(3)}/2) \cL^{23}((\lambda, \omega) + h^{(1)}/2)^{-1}  = \cL^{1, 32}(\lambda, \omega).
\]
The right hand side yields $1$, while the left yields
\begin{multline*}
\sum_{i, j, l} f_i(\lambda, \omega) f_l'((\lambda, \omega) + h^{(1)}/2) S(e_l') S(f_j((\lambda, \omega) - h^{(1)}/2)) S(e_i^{(1)}) e_i^{(2)} e_j\\ = \sum_l f_l'((\lambda, \omega) + h^{(1)}/2) S(e_l') \QQ((\lambda, \omega) - h^{(1)}/2).
\end{multline*}
We conclude that 
\[
\sum_l f_l'(\lambda, \omega) S(e_l') = \QQ((\lambda, \omega) - h^{(1)})^{-1},
\]
which yields the desired upon substitution.
\end{proof}
From Lemma \ref{lem:rs-sum}, we conclude that 
\begin{multline*}
\Tr|_{M_{\mu, k}}\Big(\wPhi_{\mu, k}^{V_1, \ldots, V_n}(z_1, \ldots, z_n) C_W q^{2\lambda + 2 \omega d}\Big)\\
= \sum_{\nu, a} \sum_{i, j, m, n} a_i^{(1)} f_j^{(1)}(\lambda, \omega) b_m e_n \wPsi^{V_1, \ldots, V_n}(z_1, \ldots, z_n; \lambda - \nu, \omega - k_W, \mu, k)\\
\Tr|_{W[\nu + k_W \Lambda_0 + a \delta]}\Big(q^{-2ka} \QQ(\lambda - \nu, \omega - k_W)^{-1} q^{2\lambda + 2\omega d} q^{-\sum_i x_i^2 - 2cd}\\ S(e_j) S(b_i) q^{-2\lambda - 2\omega d - 2\wrho} a_i^{(2)} f_j^{(2)}(\lambda, \omega) a_m f_n(\lambda - \nu/2, \omega - k_W/2)\Big).
\end{multline*}
We now simplify the sum over $i, j$.  Apply $m_{32} \circ S_3$ to the identity
\[
q_3^{2\lambda + 2 \omega d + 2 \wrho} \cR^{12, 3} \cL^{3, 12}(\lambda, \omega) =  q_3^{2\wrho} \cL^{3, 12}(\lambda, \omega) q_3^{2 \lambda + 2 \omega d} q^{-\Omega_{23} - \Omega_{13}}
\]
from Proposition \ref{prop:renorm-abrr} to obtain
\begin{multline*}
\sum_{i, j} a_i^{(1)} f_j^{(1)}(\lambda, \omega) \otimes S(e_j) S(b_i) q^{-2\lambda - 2\omega d - 2\wrho} a_i^{(2)} f_j^{(2)}(\lambda, \omega) \\ = \sum_i q^\Omega \Big(f_i^{(1)}(\lambda, \omega) \otimes q^{\sum_i x_i^2 + 2cd} q^{-2\lambda - 2\omega d} S(e_i) q^{-2\wrho} f_i^{(2)}(\lambda, \omega) \Big).
\end{multline*}
We may substitute this into the previous expression to obtain
\begin{multline*}
\Tr|_{M_{\mu, k}}\Big(\wPhi_{\mu, k}^{V_1, \ldots, V_n}(z_1, \ldots, z_n) C_W q^{2\lambda + 2 \omega d}\Big)
= \sum_{\nu, a} \sum_{i, m, n} f_i^{(1)}(\lambda, \omega) b_m e_n\\ \wPsi^{V_1, \ldots, V_n}(z_1, \ldots, z_n; \lambda - \nu, \omega - k_W, \mu, k)
\Tr|_{W[\nu + k_W \Lambda_0 + a \delta]}\Big(q^{-2ka} \QQ(\lambda - \nu, \omega - k_W)^{-1} \\S(e_i) q^{-2\wrho}f_i^{(2)}(\lambda, \omega) a_m f_n(\lambda - \nu/2, \omega - k_W/2)\Big).
\end{multline*}
Apply $P^{12} \circ m_{12} \circ S_1$ to the shifted $2$-cocycle relation (\ref{eq:renorm-cocycle}) rewritten in the form
\[
q_2^{-2\wrho}\cL^{1, 32}(\lambda, \omega) = q_2^{-2\wrho}\cL^{21, 3}(\lambda, \omega) \cL^{12}((\lambda, \omega) - h^{(3)}/2) \cL^{23}((\lambda, \omega) + h^{(1)}/2)^{-1}
\]
to obtain
\begin{align*}
\sum_i f_i^{(1)}(\lambda, \omega) \otimes S(e_i) q^{-2\wrho} f_i^{(2)}(\lambda, \omega) 
&= \sum_{i, j, l} f_i(\lambda, \omega) f_l'((\lambda, \omega) - h^{(2)}/2) \otimes S(e_j) S(e_i^{(2)}) q^{-2\wrho} e_i^{(1)} f_j((\lambda, \omega) - h^{(1)}/2) e_l' \\
&= \sum_{j, l} f_l'((\lambda, \omega) - h^{(2)}/2) \otimes S(e_j) q^{-2\wrho} f_j((\lambda, \omega) - h^{(1)}/2) e_l'.
\end{align*}
Substituting this into the previous expression, we obtain
\begin{align*}
\Tr&|_{M_{\mu, k}}\Big(\wPhi_{\mu, k}^{V_1, \ldots, V_n}(z_1, \ldots, z_n) C_W q^{2\lambda + 2 \omega d}\Big)\\
&= \sum_{\nu, a} \sum_{i, j, m, n} f_i'(\lambda - \nu/2, \omega - k_W/2) b_m e_n \wPsi^{V_1, \ldots, V_n}(z_1, \ldots, z_n; \lambda - \nu, \omega - k_W, \mu, k)\\
&\phantom{====} \Tr|_{W[\nu + k_W \Lambda_0 + a \delta]}\Big(q^{-2ka} \QQ(\lambda - \nu, \omega - k_W)^{-1}S(e_j) q^{-2\wrho} f_j(\lambda, \omega) e_i' a_m f_n(\lambda - \nu/2, \omega - k_W/2)\Big)\\
&= \sum_{\nu, a} \sum_{i, j, m, n} f_i'(\lambda - \nu/2, \omega - k_W/2) b_m e_n \Tr|_{W[\nu + k_W \Lambda_0 + a \delta]}\Big(q^{-2ka} \QQ(\lambda - \nu, \omega - k_W)^{-1} S(e_j) q^{-2\wrho} f_j(\lambda, \omega)\\
&\phantom{====}  e_i' a_m f_n(\lambda - \nu/2, \omega - k_W/2)\Big)\wPsi^{V_1, \ldots, V_n}(z_1, \ldots, z_n; \lambda - \nu, \omega - k_W, \mu, k)\\
&= \sum_{\nu, a} \sum_{i, m, n} f_i'(\lambda - \nu/2, \omega - k_W/2) b_m e_n \Tr|_{W[\nu + k_W \Lambda_0 + a \delta]}\Big(q^{-2ka} \QQ(\lambda - \nu, \omega - k_W)^{-1} S(\QQ(\lambda, \omega)) \\ 
&\phantom{====}  q^{-2\wrho} e_i' a_m f_n(\lambda - \nu/2, \omega - k_W/2)\Big)\wPsi^{V_1, \ldots, V_n}(z_1, \ldots, z_n; \lambda - \nu, \omega - k_W, \mu, k)\\
&= \sum_{\nu, a} \Tr|_{W[\nu + k_W \Lambda_0 + a \delta]}\Big(q^{-2ka} \QQ(\lambda - \nu, \omega - k_W)^{-1} S(\QQ(\lambda, \omega)) q^{-2\wrho} \cL_{WV}(\lambda - \nu/2, \omega - k_W/2)^{-1}\\
&\phantom{====} \cR_{WV} \cL_{WV}^{21}(\lambda - \nu/2, \omega - k_W/2)\Big)\wPsi^{V_1, \ldots, V_n}(z_1, \ldots, z_n; \lambda - \nu, \omega - k_W, \mu, k)\\
&= \sum_{\nu, a} q^{-2(k+\ch)a}\Tr|_{W[\nu + k_W \Lambda_0 + a \delta]}\Big(\GG(\lambda, \omega) \RR_{WV}(1; z_1, \ldots, z_n; \lambda, \omega) \Big)\\
&\phantom{====}\wPsi^{V_1, \ldots, V_n}(z_1, \ldots, z_n; \lambda - \nu, \omega - k_W, \mu, k). \qedhere
\end{align*}
\end{proof}

\subsection{The coproduct identity for $\GG(\lambda, \omega)$}

The goal of this subsection is to show that $\GG(\lambda, \omega)$ satisfies a coproduct identity, meaning that it is related in a simple way to its coproduct.  We first prove an analogous result for $\QQ(\lambda, \omega)$.

\begin{lemma} \label{lem:q-coprod}
We have
\[
\Delta(\QQ(\lambda, \omega)) = ((S \otimes S) \cL((\lambda, \omega) + h^{(1)}/2 + h^{(2)}/2)^{-1} \Big(\QQ((\lambda, \omega) + h^{(2)}) \otimes \QQ(\lambda, \omega)\Big) \cL^{21}((\lambda, \omega) + h^{(1)}/2 + h^{(2)}/2)^{-1}. 
\]
\end{lemma}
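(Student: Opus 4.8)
The plan is to reduce everything to the shifted $2$-cocycle relation \eqref{eq:renorm-cocycle} for $\cL$, the same engine used in the proof of Lemma \ref{lem:rs-sum}. It is cleanest to clear denominators and instead prove the equivalent identity
\[
(S \otimes S)\cL\big((\lambda,\omega) + h^{(1)}/2 + h^{(2)}/2\big)\, \Delta(\QQ(\lambda,\omega))\, \cL^{21}\big((\lambda,\omega) + h^{(1)}/2 + h^{(2)}/2\big) = \QQ\big((\lambda,\omega) + h^{(2)}\big) \otimes \QQ(\lambda,\omega),
\]
obtained by moving both dressing factors to the left. Using the expansion $\cL(\lambda,\omega) = \sum_i e_i \otimes f_i(\lambda,\omega)$ of \eqref{eq:ser-exp} together with $\QQ(\lambda,\omega) = \sum_i S(f_i(\lambda,\omega)) e_i$ and the Hopf identity $\Delta \circ S = (S \otimes S) \circ \Delta^{21}$, I would first record
\[
\Delta(\QQ(\lambda,\omega)) = \sum_i (S \otimes S)\Delta^{21}\big(f_i(\lambda,\omega)\big)\, \Delta(e_i),
\]
so that the entire computation is governed by the coproducts $(\Delta \otimes \id)\cL$ and $(\id \otimes \Delta)\cL$ of the two legs of $\cL$.

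The key step is then to replace these leg-coproducts using the cocycle relation \eqref{eq:renorm-cocycle}, which expresses each of them as a product of ordinary factors of $\cL$ at dynamically shifted arguments. With the substitution in place, I would fold the legs carrying the images of $\Delta^{21}(f_i)$ and $\Delta(e_i)$ against a surviving tensor slot by applying the multiplication-and-antipode map $m_{21}\circ S_2$ in each pair of legs simultaneously, exactly as that combination produced a single $\QQ$ in Lemma \ref{lem:rs-sum}. The coproduct pieces generated by the cocycle relation collapse by the antipode--counit axioms (together with $(\id \otimes \eps)\cL = 1$), leaving precisely the two factors $\QQ((\lambda,\omega)+h^{(2)})$ and $\QQ(\lambda,\omega)$ in the two surviving slots, while the remaining plain factors of $\cL$ reassemble into the dressings. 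Here the passage from $\Delta$ to $\Delta^{21}$ forced by $\Delta \circ S = (S \otimes S)\Delta^{21}$ is exactly what turns one leftover $\cL$ factor into its $21$-transpose $\cL^{21}$, and Lemma \ref{lem:r-mat-s}(b) is what controls the $(S\otimes S)$ applied to the $\cR$-part of the other dressing factor.

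The conceptual content is only the cocycle relation; the genuine difficulty is the bookkeeping of the dynamical shifts. I would track how each elementary operation --- an antipode on a single leg, or a multiplication of two adjacent legs --- moves the Cartan part of $\cL$ and hence shifts the dynamical argument, since it is precisely these shifts (combined with the $\pm h^{(j)}/2$ shifts already present in \eqref{eq:renorm-cocycle}) that must accumulate to the arguments $(\lambda,\omega)+h^{(2)}$ and $(\lambda,\omega)+h^{(1)}/2+h^{(2)}/2$ appearing in the statement. The most error-prone point, deserving an explicit check, is the $\Delta$-versus-$\Delta^{21}$ accounting, as this fixes the orientation of every surviving $\cL$ factor. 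Finally, since $\cL(\lambda,\omega)$, and therefore $\QQ(\lambda,\omega)$, are well-defined power series over $\AA_{\lambda,\omega}$ whose coefficients have finite degree in the standard grading by Proposition \ref{prop:renorm-abrr}(b), all of these manipulations are legitimate at the level of formal series, and the identity follows.
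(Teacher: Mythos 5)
Your plan follows essentially the same route as the paper: the identity is extracted from the shifted $2$-cocycle relation (\ref{eq:renorm-cocycle}) by applying multiplication-and-antipode maps to pairs of legs and invoking the antipode--counit axiom, with the bookkeeping of dynamical shifts being the main labor. One step is under-specified, though, and it is the only substantive one: after the single "folding" pass you describe (the paper's $(m_{42} \otimes m_{31}) \circ S_4 \circ S_3$ applied to the cocycle relation), the left-hand dressing factor does \emph{not} appear as a remaining plain factor of $\cL$. What survives is the unevaluated sum $\sum_i S(f_i^{(2)}(\lambda, \omega)) e_i \otimes S(f_i^{(1)}(\lambda, \omega))$, and identifying it with $\big((S \otimes S)\cL((\lambda,\omega) + h^{(1)}/2)\big)^{-1}\big(\QQ((\lambda,\omega) + h^{(2)}/2) \otimes 1\big)$ requires a second, independent application of a multiplication-and-antipode map (the paper uses $m_{21} \circ S_2 \circ S_3$) to the same cocycle relation. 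That auxiliary identity is where the factor $((S \otimes S)\cL)^{-1}$ and the shift $+h^{(2)}$ in the first $\QQ$ actually come from; your appeal to Lemma \ref{lem:r-mat-s}(b) is not needed and does not substitute for it. With that second evaluation added, your argument matches the paper's proof.
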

\begin{proof}
Apply $(m_{42} \otimes m_{31}) \circ S_4 \circ S_3$ to the shifted $2$-cocycle relation
\[
\cL^{21, 3}(\lambda, \omega) \cL^{12}((\lambda, \omega) - h^{(3)}/2) = \cL^{1, 32}(\lambda, \omega) \cL^{32}((\lambda, \omega) + h^{(1)}/2)
\]
from Proposition \ref{prop:renorm-abrr}(c).  From the left hand side, we obtain
\begin{multline} \label{eq:four-eval-1}
\sum_{i, j} \Big(S(f_i^{(2)}(\lambda, \omega)) e_i^{(1)} \otimes S(f_i^{(1)}(\lambda, \omega)) e_i^{(2)}\Big) \Big(f_j((\lambda, \omega) + h^{(1)}/2 + h^{(2)}/2) \otimes e_j\Big)\\ = \Delta(\QQ(\lambda, \omega)) \cL^{21}((\lambda, \omega) + h^{(1)}/2 + h^{(2)}/2).
\end{multline}
From the right hand side, we obtain
\begin{align} \nonumber
\sum_{i, j} & S(f_j^{(2)}((\lambda, \omega) + h^{(2)}/2)) S(f_i^{(1)(2)}(\lambda, \omega)) f_i^{(2)}(\lambda, \omega) e_j \otimes S(f_j^{(1)}((\lambda, \omega) + h^{(2)}/2)) S(f_i^{(1)(1)}(\lambda, \omega)) e_i \\ \nonumber
&= \sum_{i, j} S(f_j^{(2)}((\lambda, \omega) + h^{(2)}/2)) S(f_i^{(2)(1)}(\lambda, \omega)) f_i^{(2)(2)}(\lambda, \omega) e_j \otimes S(f_j^{(1)}((\lambda, \omega) + h^{(2)}/2)) S(f_i^{(1)}(\lambda, \omega)) e_i \\ \nonumber
&= \sum_{i, j} S(f_j^{(2)}((\lambda, \omega) + h^{(2)}/2)) e_j \otimes S(f_j^{(1)}((\lambda, \omega) + h^{(2)}/2)) S(f_i(\lambda, \omega)) e_i \\
&= \sum_j S(f_j^{(2)}((\lambda, \omega) + h^{(2)}/2)) e_j \otimes S(f_j^{(1)}((\lambda, \omega) + h^{(2)}/2)) \QQ(\lambda, \omega), \label{eq:four-eval-2}
\end{align}
where the first equality follows by coassociativity and the second by the relation $m_{12} \circ S_1 \circ \Delta = \eta \circ \eps$ in a Hopf algebra. Now, applying $m_{21} \circ S_2 \circ S_3$ to the cocycle relation, we find for the left hand side
\begin{equation} \label{eq:three-eval-1}
\sum_{i, j} S(f_j((\lambda, \omega) + h^{(2)}/2)) S(e_i^{(1)}) e_i^{(2)} e_j \otimes S(f_i(\lambda, \omega)) = \QQ((\lambda, \omega) + h^{(2)}/2) \otimes 1
\end{equation}
and for the right hand side
\begin{multline} \label{eq:three-eval-2}
\sum_{i, j} S(e_j) S(f_i^{(2)}(\lambda, \omega)) e_i \otimes S(f_j((\lambda, \omega) + h^{(1)}/2)) S(f_i^{(1)}(\lambda, \omega))\\ = ((S \otimes S) \cL((\lambda, \omega) + h^{(1)}/2)) \sum_i S(f_i^{(2)}(\lambda, \omega)) e_i \otimes S(f_i^{(1)}(\lambda, \omega)).
\end{multline}
Equating (\ref{eq:three-eval-1}) and (\ref{eq:three-eval-2}), we obtain
\begin{equation} \label{eq:three-eval-conc}
\sum_i S(f_i^{(2)}(\lambda, \omega)) e_i \otimes S(f_i^{(1)}(\lambda, \omega)) = ((S \otimes S)\cL((\lambda, \omega) + h^{(1)}/2))^{-1} (\QQ((\lambda, \omega) + h^{(2)}/2) \otimes 1).
\end{equation}
Equating (\ref{eq:four-eval-1}) and (\ref{eq:four-eval-2}) and substituting (\ref{eq:three-eval-conc}), we conclude that
\begin{multline*}
\Delta(\QQ(\lambda, \omega)) = ((S \otimes S) \cL((\lambda, \omega) + h^{(1)}/2 + h^{(2)}/2)^{-1}\\ \Big(\QQ((\lambda, \omega) + h^{(2)}) \otimes \QQ(\lambda, \omega)\Big) \cL^{21}((\lambda, \omega) + h^{(1)}/2 + h^{(2)}/2)^{-1}. \qedhere
\end{multline*}
\end{proof}

We are now ready to prove the following coproduct identity for $\GG(\lambda, \omega)$.

\begin{lemma} \label{lem:g-coprod}
We have 
\[
\Delta(\GG(\lambda, \omega)) = \LL^{21}(\lambda, \omega) \Big(\GG(\lambda, \omega) \otimes \GG((\lambda, \omega) - h^{(1)})\Big)\LL^{21}(\lambda, \omega)^{-1}.
\]
\end{lemma}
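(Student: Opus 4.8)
The plan is to apply the coproduct directly to the definition $\GG(\lambda,\omega)=\QQ((\lambda,\omega)-h^{(1)})^{-1}\,S(\QQ(\lambda,\omega))\,q^{-2\rho}$, exploiting that $\Delta$ is an algebra homomorphism, that $\Delta\circ S=(S\otimes S)\circ\Delta^{21}$, and that $q^{-2\rho}$ is group-like so $\Delta(q^{-2\rho})=q^{-2\rho}\otimes q^{-2\rho}$. This reduces everything to the two coproducts $\Delta(\QQ((\lambda,\omega)-h^{(1)})^{-1})$ and $\Delta(S(\QQ(\lambda,\omega)))$, both of which I would compute from Lemma~\ref{lem:q-coprod}. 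The one point requiring care already here is the dynamical shift $-h^{(1)}$ inside $\GG$: since $\GG(\lambda,\omega)$ is a single-site operator whose argument is shifted by the weight of the representation it acts on, applying $\Delta$ and evaluating on a two-fold tensor product turns this shift into the total-weight shift $-h^{(1)}-h^{(2)}$. I would record this substitution explicitly before invoking Lemma~\ref{lem:q-coprod}.

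The structural heart of the argument — and the reason the conjugator in the statement is $\LL$ and not $\cL$ — is the first factor. Substituting the argument $(\lambda,\omega)-h^{(1)}-h^{(2)}$ into Lemma~\ref{lem:q-coprod}, the half-shift $+h^{(1)}/2+h^{(2)}/2$ that occurs inside the $\cL$'s there combines with the total-weight shift to give $\cL((\lambda,\omega)-h^{(1)}/2-h^{(2)}/2)=\LL(\lambda,\omega)$, by the very definition of the shifted operator $\LL$. Carrying out the inversion, one gets
\[
\Delta\!\left(\QQ((\lambda,\omega)-h^{(1)})^{-1}\right)=\LL^{21}(\lambda,\omega)\Big(\QQ((\lambda,\omega)-h^{(1)})^{-1}\otimes\QQ((\lambda,\omega)-h^{(1)}-h^{(2)})^{-1}\Big)\big((S\otimes S)\LL(\lambda,\omega)\big).
\]
The leftmost factor $\LL^{21}(\lambda,\omega)$ is precisely the outer conjugator demanded by the statement, so the plan is to transport it along unchanged and to show that everything to its right reorganizes into $\big(\GG(\lambda,\omega)\otimes\GG((\lambda,\omega)-h^{(1)})\big)\,\LL^{21}(\lambda,\omega)^{-1}$.

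For the second factor I would flip Lemma~\ref{lem:q-coprod} to obtain $\Delta^{21}(\QQ(\lambda,\omega))$ and then apply $S\otimes S$, remembering that on $U_q(\wgg)\hotimes U_q(\wgg)$ this map is an algebra anti-automorphism (it reverses the order of products) and that $(S\otimes S)^2=S^2\otimes S^2$. Expanding the target $\big(\GG(\lambda,\omega)\otimes\GG((\lambda,\omega)-h^{(1)})\big)\,\LL^{21}(\lambda,\omega)^{-1}$ through the definition of $\GG$ and cancelling the common prefix $\LL^{21}(\lambda,\omega)\big(\QQ^{-1}\otimes\QQ^{-1}\big)$ coming from the first factor, the whole claim collapses to the single identity
\[
\big((S\otimes S)\LL(\lambda,\omega)\big)\,\Delta(S(\QQ(\lambda,\omega)))=\big(S(\QQ(\lambda,\omega))\otimes S(\QQ((\lambda,\omega)-h^{(1)}))\big)(q^{-2\rho}\otimes q^{-2\rho})\,\LL^{21}(\lambda,\omega)^{-1}(q^{2\rho}\otimes q^{2\rho}).
\]
To establish this I would substitute the computed form of $\Delta(S(\QQ(\lambda,\omega)))$, use Lemma~\ref{lem:anti-square} in the form $S^2=\Ad(q^{-2\wrho})$ to absorb the $S^2$'s generated by $(S\otimes S)^2$ and to commute the group-like $q^{\pm 2\rho}$ past the triangular fusion factors, and then push the residual $\cL$-factors through the $\QQ$'s by the shifted cocycle relation of Proposition~\ref{prop:renorm-abrr}(c) so that the leftover $(S\otimes S)\cL$ and $\cL^{21}$ terms cancel against the conjugating $\LL^{21}(\lambda,\omega)^{-1}$.

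The hard part will not be any individual identity but the shift bookkeeping. In particular, the flipped Lemma~\ref{lem:q-coprod} naturally produces the slot-two factor $S(\QQ((\lambda,\omega)+h^{(1)}))$ with a shift of the \emph{opposite} sign to the $S(\QQ((\lambda,\omega)-h^{(1)}))$ demanded by $\GG((\lambda,\omega)-h^{(1)})$; reconciling these requires pushing the fusion factors through via the cocycle relation, which simultaneously corrects the dynamical argument. Keeping straight how the single-site shift $-h^{(1)}$ propagates to $-h^{(1)}-h^{(2)}$, how the half-shifts collapse $\cL$ into $\LL$, and how arguments transform under the flip $\Delta\mapsto\Delta^{21}$ and under $S\otimes S$ is where essentially all the work lies; once the shifts are tracked correctly, the two occurrences of $\LL^{21}(\lambda,\omega)$ are forced to be the only survivors outside the inner $\GG\otimes\GG$ block.
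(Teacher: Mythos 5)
Your proposal follows essentially the same route as the paper: expand $\Delta(\GG)$ via $\Delta\circ S=(S\otimes S)\circ\Delta^{21}$ and group-likeness of $q^{-2\rho}$, substitute the total-weight shift $-h^{(1)}-h^{(2)}$ into Lemma~\ref{lem:q-coprod} so that the half-shifts collapse $\cL$ into $\LL(\lambda,\omega)$, and identify $\LL^{21}(\lambda,\omega)$ as the outer conjugator; your displayed formula for $\Delta(\QQ((\lambda,\omega)-h^{(1)})^{-1})$ is exactly the paper's. One correction to your final paragraph: the shifted cocycle relation of Proposition~\ref{prop:renorm-abrr}(c) is not needed here (it was already consumed in proving Lemma~\ref{lem:q-coprod}) — after applying $S\otimes S$ to the flipped coproduct, the middle factors $(S\otimes S)\LL(\lambda,\omega)$ and $(S\otimes S)\LL(\lambda,\omega)^{-1}$ cancel outright, the trailing $(S^2\otimes S^2)\cL^{21}(\cdot)^{-1}$ reduces to $\LL^{21}(\lambda,\omega)^{-1}$ because that element has total weight zero, and the sign reversal $+h^{(1)}\mapsto -h^{(1)}$ in the slot-two $S(\QQ)$ comes automatically from the order-reversal of the anti-homomorphism $S\otimes S$ and the attendant dynamical bookkeeping, not from any further use of the cocycle identity.
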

\begin{proof}
Recall that $\GG(\lambda, \omega) = \QQ(\lambda - h^{(1)}, \omega)^{-1} S(\QQ(\lambda, \omega)) q^{-2\rho}$, so we have that 
\begin{align*}
\Delta(\GG(\lambda, \omega)) &= \Delta(\QQ((\lambda, \omega) - h^{(1)})^{-1} \Big((S \otimes S)(\Delta^{21}(\QQ(\lambda, \omega))\Big) (q^{-2\rho} \otimes q^{-2\rho}).
\end{align*}
By Lemma \ref{lem:q-coprod}, we have that 
\begin{align*}
\Delta(\QQ((\lambda, \omega) - h^{(1)})^{-1} &=  \cL^{21}((\lambda, \omega) - h^{(1)}/2 - h^{(2)}/2) \Big(\QQ((\lambda, \omega) - h^{(1)})^{-1} \otimes \QQ((\lambda, \omega) - h^{(1)} - h^{(2)})^{-1}\Big)\\
&\phantom{====================} ((S \otimes S) \cL((\lambda, \omega) - h^{(1)}/2 - h^{(2)}/2)\\
&= \LL^{21}(\lambda, \omega) \Big(\QQ((\lambda, \omega) - h^{(1)})^{-1} \otimes \QQ((\lambda, \omega) - h^{(1)} - h^{(2)})^{-1}\Big) ((S \otimes S)\LL(\lambda, \omega))
\end{align*}
and that
\begin{align*}
(S \otimes S) (\Delta^{21}(\QQ(\lambda, \omega))
 &= \Big((S \otimes S)\cL((\lambda, \omega) - h^{(1)}/2 - h^{(2)}/2)^{-1}\Big)\\ 
&\phantom{===}\Big(S(\QQ(\lambda, \omega)) \otimes S(\QQ((\lambda, \omega) - h^{(1)}))\Big) (S^2 \otimes S^2) \cL^{21}((\lambda, \omega) - h^{(1)}/2 - h^{(2)}/2)^{-1}\\
&= \Big((S \otimes S) \LL(\lambda, \omega)^{-1}\Big) \Big(S(\QQ(\lambda, \omega)) \otimes S(\QQ((\lambda, \omega) - h^{(1)}))\Big) \LL^{21}(\lambda, \omega)^{-1},
\end{align*}
where we use that $\LL^{21}(\lambda, \omega)^{-1}$ has weight $0$.  We conclude that 
\begin{align*}
\Delta(\GG(\lambda, \omega)) &= \LL^{21}(\lambda, \omega) \Big(\GG(\lambda, \omega) \otimes \GG((\lambda, \omega) - h^{(1)})\Big)\LL^{21}(\lambda, \omega)^{-1}. \qedhere
\end{align*}
\end{proof}

\subsection{Solving the coproduct identity}

The goal of this subsection is to prove Lemma \ref{lem:g-comp} on the value of the $\GG(\lambda, \omega)$.  Recall that $\GG(\lambda, \omega)$ has weight $0$ and may be evaluated on any representation which is locally nilpotent with respect to the $U_q(\gg)$-action.  In particular, letting $L_{\mu, k}$ denote the irreducible integrable representation of $U_q(\wgg)$ with highest weight $\mu + k \Lambda_0$, we may define the function $\eta_{\mu, k}(\lambda, \omega)$ to be the eigenvalue of $\GG(\lambda, \omega)$ on the highest weight vector of $L_{\mu, k}$.  Our method proceeds by finding $\eta_{\mu, k}(\lambda, \omega)$ and showing that $\GG(\lambda, \omega)$ is determined by it.  First, we derive a compatibility relation for $\eta_{\mu, k}(\lambda, \omega)$.

\begin{lemma} \label{lem:zero-curve}
For dominant integral weights $\mu_1 + k_1 \Lambda_0$ and $\mu_2 + k_2 \Lambda_0$, the function $\eta_{\mu, k}(\lambda, \omega)$ satisfies the zero-curvature relation
\begin{equation} \label{eq:zero-curve}
\eta_{\mu_1, k_1}(\lambda, \omega) \eta_{\mu_2, k_2}(\lambda - \mu_1, \omega - k_1) = \eta_{\mu_2, k_2}(\lambda, \omega) \eta_{\mu_1, k_1}(\lambda - \mu_2, \omega - k_2).
\end{equation}
\end{lemma}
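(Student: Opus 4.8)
The plan is to evaluate the coproduct identity of Lemma \ref{lem:g-coprod} on the highest weight vector of the tensor product $L_{\mu_1, k_1} \otimes L_{\mu_2, k_2}$, extract from it a multiplicativity (cocycle) relation for $\eta$, and then symmetrize. Write $v_1$ and $v_2$ for the highest weight vectors of $L_{\mu_1, k_1}$ and $L_{\mu_2, k_2}$. Since $\Delta(e_i) = e_i \otimes 1 + q^{d_i h_i} \otimes e_i$ kills $v_1 \otimes v_2$ for every $i$, the vector $v_1 \otimes v_2$ is a highest weight vector of weight $(\mu_1 + \mu_2) + (k_1 + k_2)\Lambda_0$. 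I would apply $\Delta(\GG(\lambda, \omega))$ to $v_1 \otimes v_2$ and compute the resulting eigenvalue in two ways.

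For the right-hand side of Lemma \ref{lem:g-coprod}, recall that $\LL^{21}(\lambda, \omega)$ and its inverse lie in $1 + (U_q(\abb_+)_{>0} \hotimes U_q(\abb_-)_{<0})^{\ahh}$, so the first tensor leg of every nontrivial term is a raising operator of positive degree. Such operators annihilate the highest weight vector $v_1$, so both $\LL^{21}(\lambda, \omega)$ and $\LL^{21}(\lambda, \omega)^{-1}$ fix $v_1 \otimes v_2$ and the conjugation drops out. The middle factor $\GG(\lambda, \omega) \otimes \GG((\lambda, \omega) - h^{(1)})$ then acts diagonally: because $\GG$ has weight zero and the top weight space of each irreducible is one-dimensional, $\GG(\lambda, \omega)$ acts on $v_1$ by $\eta_{\mu_1, k_1}(\lambda, \omega)$, while the dynamical shift $-h^{(1)}$ replaces the argument $(\lambda, \omega)$ by $(\lambda - \mu_1, \omega - k_1)$ on the second leg, so $\GG((\lambda, \omega) - h^{(1)})$ acts on $v_2$ by $\eta_{\mu_2, k_2}(\lambda - \mu_1, \omega - k_1)$. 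Hence the right-hand side equals $\eta_{\mu_1, k_1}(\lambda, \omega)\, \eta_{\mu_2, k_2}(\lambda - \mu_1, \omega - k_1) \cdot (v_1 \otimes v_2)$.

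For the left-hand side, the cyclic $U_q(\wgg)$-submodule generated by $v_1 \otimes v_2$ inside the integrable module $L_{\mu_1, k_1} \otimes L_{\mu_2, k_2}$ is the irreducible integrable module $L_{\mu_1 + \mu_2, k_1 + k_2}$, with $v_1 \otimes v_2$ as its highest weight vector. Since $\GG(\lambda, \omega)$ lies in a completion of $U_q(\wgg)$, it preserves this submodule and acts there exactly as on $L_{\mu_1 + \mu_2, k_1 + k_2}$; in particular its eigenvalue on $v_1 \otimes v_2$ is $\eta_{\mu_1 + \mu_2, k_1 + k_2}(\lambda, \omega)$ by definition of $\eta$. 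Equating the two evaluations yields the cocycle identity
\[
\eta_{\mu_1 + \mu_2, k_1 + k_2}(\lambda, \omega) = \eta_{\mu_1, k_1}(\lambda, \omega)\, \eta_{\mu_2, k_2}(\lambda - \mu_1, \omega - k_1).
\]
Running the identical argument on $L_{\mu_2, k_2} \otimes L_{\mu_1, k_1}$ produces the same left-hand side but with the indices $1$ and $2$ interchanged on the right; comparing the two expressions gives exactly the zero-curvature relation \eqref{eq:zero-curve}.

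The only substantive points are the two structural facts invoked above: that $\LL^{21}(\lambda, \omega)^{\pm 1}$ fixes the tensor of highest weight vectors — so that Lemma \ref{lem:g-coprod} collapses to a product of eigenvalues — and that the cyclic submodule generated by $v_1 \otimes v_2$ is precisely the irreducible $L_{\mu_1 + \mu_2, k_1 + k_2}$, so that the left-hand eigenvalue is genuinely $\eta_{\mu_1 + \mu_2, k_1 + k_2}$. Both follow from standard highest weight theory for integrable modules together with the explicit triangular form of $\LL$ from Proposition \ref{prop:renorm-abrr}; once these are in place the zero-curvature relation is immediate, so I expect no genuine obstacle beyond carefully matching the dynamical shift $-h^{(1)}$ to the weight $\mu_1 + k_1\Lambda_0$ of $v_1$.
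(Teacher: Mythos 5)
Your proposal is correct and follows essentially the same route as the paper: evaluate the conjugation identity of Lemma \ref{lem:g-coprod} on the tensor product of highest weight vectors, use the triangular form of $\LL^{21}(\lambda,\omega)^{\pm 1}$ to drop the conjugation, identify the left-hand eigenvalue as $\eta_{\mu_1+\mu_2,k_1+k_2}(\lambda,\omega)$ via the fact that the highest weight vector generates $L_{\mu_1+\mu_2,k_1+k_2}$ (the paper cites Lusztig's semisimplicity theorem for this), and symmetrize. No gaps.
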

\begin{proof}
By \cite[Theorem 6.2.2]{Lus}, the category of integrable highest-weight representations of $U_q(\wgg)$ is semisimple, meaning in particular that $L_{\mu_1 + \mu_2, k_1 + k_2}$ is the subrepresentation in $L_{\mu_1, k_1} \otimes L_{\mu_2, k_2}$ and $L_{\mu_2, k_2} \otimes L_{\mu_1, k_1}$ generated by the highest weight vector.  Therefore, by Lemma \ref{lem:g-coprod} and the fact that $\LL^{21}(\lambda, \omega) \in q^{\Omega}\Big(1 + U_q(\abb_-)_{>0} \hotimes U_q(\abb_+)_{>0}\Big)$, both sides of the desired are equal to $\eta_{\mu_1 + \mu_2, k_1 + k_2}(\lambda, \omega)$, hence equal.
\end{proof}

We now work with the formal expansion of $\GG(\lambda, \omega)$.  Let $q = e^\hbar$ for the formal parameter $\hbar$, and work over the ring $\CC[[\hbar]]$.  From the compatibility relation, we now constrain the formal expansion of
\[
\weta_{\mu, k}(\lambda, \omega) := \eta_{\mu, k}\Big(\frac{\lambda}{\hbar}, \frac{\omega}{\hbar}\Big) \in \CC[[\hbar]].
\]
Notice that $\lim_{\hbar \to 0} \weta_{\mu, k}(\lambda, \omega) = 1$, since $\lim_{\hbar \to 0} \GG(\lambda, \omega) = 1$.  The proof of \cite[Lemma 7.56]{EL} applies verbatim to the affine setting, hence Lemma \ref{lem:zero-curve} implies the following lemma, which allows us to constrain the form of $\GG(\lambda, \omega)$.

\begin{lemma} \label{lem:eta-quot}
We may find some function $f(\lambda, \omega) \in \CC[[\hbar]]$ so that 
\[
\weta_{\mu, k}(\lambda, \omega) = \frac{f(\lambda - \hbar\mu, \omega - \hbar k)}{f(\lambda, \omega)}
\]
as a formal series in $\hbar$.
\end{lemma}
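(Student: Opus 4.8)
The plan is to turn the multiplicative statement into an additive one by taking logarithms, and then to solve the resulting linear difference equation order by order in $\hbar$, building $g := \log f$ as a formal power series whose coefficients are produced by a formal Poincaré lemma. Since $\lim_{\hbar \to 0}\weta_{\mu,k} = 1$, the quantity $L_{\mu,k}(\lambda,\omega) := \log\weta_{\mu,k}(\lambda,\omega)$ is a well-defined formal series divisible by $\hbar$, and the conclusion is equivalent to producing $g(\lambda,\omega) \in \CC[[\hbar]]$ with
\[
L_{\mu,k}(\lambda,\omega) = g\big((\lambda,\omega) - \hbar(\mu,k)\big) - g(\lambda,\omega) = (\widetilde{T}_{\mu,k} - 1)\,g,
\]
where $\widetilde{T}_{\mu,k}$ denotes the shift sending $(\lambda,\omega)$ to $(\lambda,\omega) - \hbar(\mu,k)$. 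Taking logarithms in the zero-curvature relation (\ref{eq:zero-curve}) makes it additive; moreover, since the proof of Lemma \ref{lem:zero-curve} identifies both sides with the common value $\log\weta_{\mu_1+\mu_2,k_1+k_2}$, we have the sharper cocycle identity
\[
L_{\mu_1,k_1} + \widetilde{T}_{\mu_1,k_1}L_{\mu_2,k_2} = L_{\mu_1+\mu_2,k_1+k_2}.
\]

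Writing $L_{\mu,k} = \sum_{n\ge1}\hbar^n L^{(n)}_{\mu,k}$ and $g = \sum_{m\ge0}\hbar^m g^{(m)}$, and expanding $\widetilde{T}_{\mu,k} = \exp(-\hbar D_{\mu,k})$ with $D_{\mu,k}$ the first-order constant-coefficient operator differentiating in the $(\mu,k)$-direction, the coefficient of $\hbar^n$ in the target equation reads
\[
-D_{\mu,k}\,g^{(n-1)} = L^{(n)}_{\mu,k} - \sum_{j=2}^{n}\frac{(-1)^j}{j!}\,D_{\mu,k}^{\,j}\,g^{(n-j)},
\]
whose right-hand side involves only $g^{(0)},\dots,g^{(n-2)}$. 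I would construct the $g^{(m)}$ by induction on $n$. At the first order, the $\hbar^1$-part of the cocycle identity shows $L^{(1)}_{\mu,k}$ is additive, hence linear, in $(\mu,k)$, so $L^{(1)}_{\mu,k} = \langle(\mu,k),\Theta(\lambda,\omega)\rangle$ for a $1$-form $\Theta$; the $\hbar^2$-part of the zero-curvature relation gives $D_{\mu_1,k_1}L^{(1)}_{\mu_2,k_2} = D_{\mu_2,k_2}L^{(1)}_{\mu_1,k_1}$, which is exactly closedness of $\Theta$, so the formal Poincaré lemma supplies $g^{(0)}$ with $-D_{\mu,k}g^{(0)} = L^{(1)}_{\mu,k}$.

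For the inductive step, suppose $g^{(0)},\dots,g^{(n-2)}$ have been chosen so that $L_{\mu,k} \equiv (\widetilde{T}_{\mu,k}-1)\big(g^{(0)} + \cdots + \hbar^{n-2}g^{(n-2)}\big)$ modulo $\hbar^n$. A coboundary automatically obeys the cocycle identity, so the residual $\rho_{\mu,k} := L_{\mu,k} - (\widetilde{T}_{\mu,k}-1)(g^{(0)}+\cdots+\hbar^{n-2}g^{(n-2)})$ again satisfies the cocycle identity; at its leading order $\hbar^n$ this degenerates to plain additivity $\rho^{(n)}_{\mu_1,k_1} + \rho^{(n)}_{\mu_2,k_2} = \rho^{(n)}_{\mu_1+\mu_2,k_1+k_2}$, whence $\rho^{(n)}_{\mu,k}$ is linear in $(\mu,k)$, while the next order of the zero-curvature relation makes the associated $1$-form closed. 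The formal Poincaré lemma then yields $g^{(n-1)}$ with $-D_{\mu,k}g^{(n-1)} = \rho^{(n)}_{\mu,k}$, completing the step; setting $f := \exp(g)$ proves the claim. This is exactly the mechanism of \cite[Lemma 7.56]{EL}, and the only change in the affine setting is the extra coordinate $\omega$ paired with the level $k$, which enters $D_{\mu,k}$ and the Poincaré lemma on the same footing as the $\hh^*$-coordinates; the proof there therefore applies verbatim.

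The main obstacle will be the order-by-order bookkeeping that identifies, at each stage, the residual $\rho^{(n)}$ as a closed $1$-form in $(\lambda,\omega)$ linear in $(\mu,k)$: one must check that the correct $\hbar$-coefficients of the cocycle identity and of the zero-curvature relation reproduce precisely the linearity and closedness hypotheses of the Poincaré lemma. Once this is in place the construction of $f$ is forced, unique up to a multiplicative constant in $\CC[[\hbar]]$.
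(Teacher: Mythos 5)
Your proposal is correct and coincides with the paper's (implicit) proof: the paper offers no independent argument for this lemma, stating only that the proof of \cite[Lemma 7.56]{EL} applies verbatim, and your logarithmic cocycle-to-coboundary induction via the formal Poincar\'e lemma is precisely that argument, with the additive cocycle identity extracted (as you note) from the proof of Lemma \ref{lem:zero-curve} rather than merely from its symmetric statement. The only detail worth flagging is that the primitives $g^{(m)}$ must be sought in the span of linear functions of $(\lambda,\omega)$ and formal series in the exponentials $e^{-2(\lambda,\alpha_i)}$, $e^{-2\omega+2(\lambda,\theta)}$, since the constant part of the closed $1$-form integrates to a linear function --- consistent with the eventual identification $f=\delta_q$, whose logarithm contains the linear summand $(\wrho,2\lambda+2\omega d)$.
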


\begin{lemma} \label{lem:g-form}
As formal series in $\hbar$, we have
\[
\GG\Big(\frac{\lambda}{\hbar}, \frac{\omega}{\hbar}\Big) = \frac{f\Big((\lambda, \omega) - \hbar h^{(1)}\Big)}{f(\lambda, \omega)}.
\]
\end{lemma}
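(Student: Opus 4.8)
The plan is to upgrade the highest-weight eigenvalue computation of Lemma~\ref{lem:eta-quot} to an identity of operators. Introduce the candidate scalar operator
\[
\GG'(\lambda, \omega) := \frac{f\big((\lambda, \omega) - \hbar h^{(1)}\big)}{f(\lambda, \omega)},
\]
which acts on a weight vector of weight $\sigma$ by the scalar $f((\lambda,\omega) - \hbar\sigma)/f(\lambda,\omega)$, and first verify that $\GG'$ satisfies the same coproduct identity as $\GG(\lambda/\hbar, \omega/\hbar)$ (Lemma~\ref{lem:g-coprod}). The key observation is that $\GG' \otimes \GG'((\lambda,\omega) - \hbar h^{(1)})$ acts on a pair of weight vectors of weights $\tau_1, \tau_2$ by the scalar $f((\lambda,\omega) - \hbar\tau_1 - \hbar\tau_2)/f(\lambda,\omega)$, which depends only on the total weight $\tau_1 + \tau_2$. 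Since $\LL^{21}$ is weight $0$ it preserves the total weight and hence commutes with this operator, so the conjugation in Lemma~\ref{lem:g-coprod} is trivial and $\Delta(\GG') = \GG' \otimes \GG'((\lambda,\omega) - \hbar h^{(1)})$, which is exactly the required identity.

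Next set $D := \GG(\lambda/\hbar, \omega/\hbar) - \GG'$. By construction $D$ has weight $0$, and by Lemma~\ref{lem:eta-quot} together with the definition of $\GG'$ it annihilates the highest weight vector of every integrable $L_{\mu, k}$; the goal is to deduce $D = 0$. The main tool will be a linearized form of the coproduct identity. Placing the highest weight vector $u$ of $L_{\mu, k}$ in the second tensor factor and an arbitrary weight vector $w$ of weight $\sigma$ in the first, I would use that $\LL^{21}(\lambda,\omega) \in q^{\Omega}\big(1 + U_q(\abb_-)_{>0} \hotimes U_q(\abb_+)_{>0}\big)$ acts on $x \otimes u$ by a scalar power of $q$ (the $U_q(\abb_+)_{>0}$-part kills $u$), the two such factors coming from $\LL^{21}$ and $(\LL^{21})^{-1}$ cancelling, and that $\GG(\lambda/\hbar,\omega/\hbar)$ and $\GG'$ have the same eigenvalue $\weta_{\mu, k}((\lambda,\omega) - \hbar\sigma)$ on $u$ at the shifted argument. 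Applying Lemma~\ref{lem:g-coprod} to both $\GG(\lambda/\hbar,\omega/\hbar)$ and $\GG'$ and subtracting then yields the clean linear relation
\[
\Delta(D)(w \otimes u) = \frac{f\big((\lambda,\omega) - \hbar\sigma - \hbar(\mu, k)\big)}{f\big((\lambda,\omega) - \hbar\sigma\big)}\,(Dw) \otimes u,
\]
where the common coefficient is precisely the shared highest-weight eigenvalue.

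Finally I would conclude $D = 0$ by downward induction on the weight, using the semisimplicity of the category of integrable $U_q(\wgg)$-modules invoked in Lemma~\ref{lem:zero-curve}. The mechanism is that for $\mu$ dominant and sufficiently large the highest weight vectors of the summands of weight $(\mu,k) + \sigma$ in $L_{\nu, l} \otimes L_{\mu, k}$ are in bijection with $L_{\nu, l}[\sigma]$, each having leading term $w \otimes u$ with $w \in L_{\nu, l}[\sigma]$ and a correction supported on strictly higher weight spaces of the first factor. Since $D$ kills these highest weight vectors, the displayed relation expresses $Dw$ through the action of $D$ on the correction terms, which the inductive hypothesis controls. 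Once $D = 0$ on all integrable modules, it vanishes as an element of the completion and hence on every locally nilpotent module, giving the claimed formula. I expect this last step to be the main obstacle: the coproduct identity only relates the operator on a tensor product to its values on the factors and does not by itself descend through a single irreducible, so the genuine work is in making rigorous the passage from the highest-weight eigenvalue to the action on an arbitrary weight vector — carefully tracking the lower-order corrections in the large-$\mu$ decomposition (equivalently, extracting the correct commutation relation of $D$ with the lowering operators $f_i$) is where the argument must be pinned down.
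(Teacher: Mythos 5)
Your reduction to the operator $D=\GG(\lambda/\hbar,\omega/\hbar)-\GG'$ and the verification that the scalar operator $\GG'$ satisfies the coproduct identity of Lemma \ref{lem:g-coprod} are both correct, as is the relation $\Delta(D)(w\otimes u)=c\,(Dw)\otimes u$ when the second tensor slot carries a highest weight vector. The gap is exactly where you flag it: the concluding downward induction on the weight does not close as described. To extract $Dw$ from that relation you must evaluate $\Delta(D)(w\otimes u)$ independently, i.e.\ you must already know how $D$ acts on every irreducible summand of $L_{\nu,l}\otimes L_{\mu,k}$ meeting the weight space $(\mu,k)+\sigma$. The summand with highest weight $(\mu,k)+\sigma$ is handled by Lemma \ref{lem:eta-quot}, and summands with highest weight $(\mu,k)+\sigma+\beta$ for $0<\beta<\nu-\sigma$ are covered by the inductive hypothesis; but the Cartan component $L_{(\mu,k)+\nu}$ contributes vectors at depth exactly $\nu-\sigma$ below its own highest weight --- the same depth you are trying to establish --- and since $\GG$ is not central you cannot propagate the vanishing of $D$ on the highest weight vector of that component down to depth $\nu-\sigma$ by commuting with the lowering operators. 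So each inductive step is circular unless an additional mechanism is supplied, and none is given; your own closing caveat correctly identifies this as the unproved core of the argument.

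The paper closes this gap by an entirely different, $\hbar$-adic argument worth comparing with. Setting $\wGG(\lambda,\omega)=\GG(\lambda/\hbar,\omega/\hbar)\,f(\lambda,\omega)/f\big((\lambda,\omega)-\hbar h^{(1)}\big)$ and expanding $\wGG=1+\hbar^n g(\lambda,\omega)+O(\hbar^{n+1})$, the conjugating factor $\LL^{21}$ in Lemma \ref{lem:g-coprod} becomes trivial modulo $\hbar$, so the leading coefficient $\overline{g}$ is primitive for the classical coproduct of $U(\agg)$ and hence lies in $\agg$; weight-zero-ness places it in $\ahh$, and vanishing on the highest weight vectors of all integrable irreducibles (which is all that Lemma \ref{lem:eta-quot} provides) then forces $\overline{g}=0$, a contradiction. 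Primitivity plus weight zero is a strong enough constraint that the highest-weight information alone determines the element, so no control of $D$ on lower weight spaces is ever needed. To salvage your route you would need a comparable input; as written, the passage from highest-weight eigenvalues to the full operator identity is not established.
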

\begin{proof}
By Lemma \ref{lem:eta-quot}, the renormalized element
\[
\wGG(\lambda, \omega) := \GG\Big(\frac{\lambda}{\hbar}, \frac{\omega}{\hbar}\Big) \frac{f(\lambda, \omega)}{f\Big((\lambda, \omega) - \hbar h^{(1)}\Big)}
\]
acts by $1$ on the highest weight vector of any highest weight irreducible integrable representation.  If $\wGG(\lambda, \omega) \neq 1$, let its formal expansion take the form
\[
\wGG(\lambda, \omega) = 1 + \hbar^n g(\lambda, \omega) + O(\hbar^{n+1})
\]
for some non-zero $g(\lambda, \omega) \in U(\agg)$.  By Lemma \ref{lem:g-coprod}, we have that 
\begin{multline*}
\Delta\Big(1 + \hbar^n g(\lambda, \omega) + O(\hbar^{n+1})\Big)
\\= \LL^{21}\Big(\frac{\lambda}{\hbar}, \frac{\omega}{\hbar}\Big) \Big((1 + \hbar^n g(\lambda, \omega) + O(\hbar^{n+1})) \otimes (1 + \hbar^n g(\lambda, \omega) + O(\hbar^{n+1}))\Big) \LL^{21}\Big(\frac{\lambda}{\hbar}, \frac{\omega}{\hbar}\Big)^{-1},
\end{multline*}
hence canceling terms, dividing by $\hbar^n$, and looking modulo $\hbar U_q(\agg)$ yields
\[
\Delta_0(g(\lambda, \omega)) = g(\lambda, \omega) \otimes 1 + 1 \otimes g(\lambda, \omega),
\]
where $\Delta_0$ is the coproduct of $U(\agg)$ and both sides are considered modulo $\hbar U_q(\agg)$.  Let $\overline{g}(\lambda, \omega)$ be the class of $g(\lambda, \omega)$ modulo $\hbar U_q(\agg)$; this implies that $\overline{g}(\lambda, \omega) \in \agg$.  On the other hand, $g(\lambda, \omega)$ has zero weight, so $\overline{g}(\lambda, \omega) \in \ahh$, which implies that it is $0$ since $g(\lambda, \omega)$ and hence $\overline{g}(\lambda, \omega)$ vanishes on all highest weight vectors of highest weight irreducible integrable representations.  This is a contradiction, so $\wGG(\lambda, \omega) = 1$, as desired.
\end{proof}

We are finally ready to compute $\GG(\lambda, \omega)$ by computing the value of $f(\lambda, \omega)$.

\begin{lemma} \label{lem:g-comp}
The value of $\GG(\lambda, \omega)$ is
\[
\GG(\lambda, \omega) = \frac{\delta_q((\lambda, \omega) - h^{(1)})}{\delta_q(\lambda, \omega)}.
\]
\end{lemma}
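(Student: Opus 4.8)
The plan is to reduce the determination of $\GG(\lambda,\omega)$ to the evaluation of its eigenvalue on highest weight vectors, and then to compute that eigenvalue explicitly. By Lemma \ref{lem:g-form} the operator $\GG$ is completely determined by the scalar function $f$, and by Lemma \ref{lem:eta-quot} (together with the cocycle relation of Lemma \ref{lem:zero-curve}) the function $f$ is determined, up to a shift-invariant factor that does not affect $\GG$, by the eigenvalues $\eta_{\mu,k}(\lambda,\omega)$ of $\GG(\lambda,\omega)$ on the highest weight vectors of the integrable irreducibles $L_{\mu,k}$. Hence it suffices to prove the single identity
\[
\eta_{\mu,k}(\lambda,\omega) = \frac{\delta_q((\lambda,\omega)-(\mu,k))}{\delta_q(\lambda,\omega)},
\]
after which substituting $f(\lambda,\omega) = \delta_q(\lambda/\hbar,\omega/\hbar)$ into Lemma \ref{lem:g-form} yields the claimed formula.

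First I would simplify the two easy factors of $\GG(\lambda,\omega) = \QQ((\lambda,\omega)-h^{(1)})^{-1}S(\QQ(\lambda,\omega))q^{-2\rho}$ on a highest weight vector $v$ of $L_{\mu,k}$. Writing $S(\QQ(\lambda,\omega)) = \sum_i S(e_i)S^2(f_i(\lambda,\omega))$ and using $S^2 = \Ad(q^{-2\wrho})$ from Lemma \ref{lem:anti-square}, every term of $\cL$ other than $1\otimes 1$ has a strictly raising second tensor factor (living in $U_q(\abb_+)_{>0}$), which annihilates $v$; hence $S(\QQ(\lambda,\omega))v = v$. Since $q^{-2\rho}v = q^{-(2\rho,\mu)}v$ and the weight space $L_{\mu,k}[\mu+k\Lambda_0]$ is one-dimensional (so $\QQ$ necessarily acts by a scalar on $v$), this reduces the problem to
\[
\eta_{\mu,k}(\lambda,\omega) = q^{-(2\rho,\mu)}\,\big\langle v^*,\,\QQ((\lambda,\omega)-(\mu,k))v\big\rangle^{-1}.
\]

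The main obstacle is the explicit evaluation of the remaining scalar $\langle v^*, \QQ(\nu,l)v\rangle$, where $(\nu,l) = (\lambda,\omega)-(\mu,k)$. Since $\QQ(\nu,l)v = v + \sum_{i} S(f_i(\nu,l))e_i v$, this eigenvalue is a genuinely nontrivial sum over the lowered contributions $e_i v$ reraised by $S(f_i(\nu,l))$. I would compute it by feeding the ABRR recursion for $\cL(\nu,l)$ from Proposition \ref{prop:renorm-abrr} into the matrix elements of $\cL(\nu,l)$ between $v$ and lower weight vectors, and show that the resulting expression telescopes over the positive roots of $\agg$ into a product $\prod_{\alpha>0}(\cdots)$. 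Combined with the prefactor $q^{-(2\rho,\mu)}$, this product should be recognized, via the explicit formula \eqref{eq:delta-val}, as exactly the ratio $\delta_q((\lambda,\omega)-(\mu,k))/\delta_q(\lambda,\omega)$. Controlling the affine (rather than finite-type) product and making the telescoping precise is the technical heart of the argument.

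As a structural cross-check and an alternative way to finish, one can verify directly from \eqref{eq:delta-val} that the candidate operator $\GG_0(\lambda,\omega) := \delta_q((\lambda,\omega)-h^{(1)})/\delta_q(\lambda,\omega)$, being a function of the (primitive) weight, already satisfies the coproduct identity of Lemma \ref{lem:g-coprod}: both $\Delta(\GG_0)$ and $\GG_0\otimes\GG_0((\lambda,\omega)-h^{(1)})$ act on a total-weight-$\beta$ vector by the single scalar $\delta_q((\lambda,\omega)-\beta)/\delta_q(\lambda,\omega)$, so the factor being conjugated is scalar on each total-weight space and commutes with $\LL^{21}(\lambda,\omega)$. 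Setting $\wGG := \GG\,\GG_0^{-1}$, the same manipulation shows $\wGG$ again satisfies Lemma \ref{lem:g-coprod}, and the uniqueness argument of Lemma \ref{lem:g-form} then reduces the entire statement to checking that $\wGG$ acts by $1$ on all highest weight vectors, that is, to the eigenvalue identity above. This confirms that the highest weight eigenvalue computation is the essential and unavoidable input.
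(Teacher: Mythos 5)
Your reduction to the highest--weight eigenvalues $\eta_{\mu,k}(\lambda,\omega)$ via Lemmas \ref{lem:zero-curve}, \ref{lem:eta-quot} and \ref{lem:g-form} is exactly the reduction the paper makes, and your simplifications $S(\QQ(\lambda,\omega))v = v$ and $\GG(\lambda,\omega)v = q^{-(2\rho,\mu)}\QQ((\lambda,\omega)-(\mu,k))^{-1}v$ on a highest weight vector are sound. But the step you yourself call ``the technical heart of the argument'' --- evaluating $\langle v^*, \QQ((\lambda,\omega)-(\mu,k))v\rangle$ by feeding the ABRR recursion into matrix elements and showing the answer ``telescopes over the positive roots'' into the ratio $\delta_q((\lambda,\omega)-(\mu,k))/\delta_q(\lambda,\omega)$ --- is never carried out. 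You only assert that the product ``should be recognized'' via \eqref{eq:delta-val}. This is a genuine gap: the nontrivial sum $\sum_i S(f_i)e_i v$ over lowered-and-reraised contributions is precisely the hard content of the lemma, and nothing in your write-up exhibits the claimed telescoping or controls the infinite affine product. Your closing ``structural cross-check'' (that the candidate $\GG_0$ satisfies the coproduct identity of Lemma \ref{lem:g-coprod}) is a correct and pleasant observation, but as you note it only reduces the statement back to the same uncomputed eigenvalue identity, so it does not close the gap.

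The paper avoids this computation entirely. It applies Proposition \ref{prop:trace-coeff} with $V=\CC$: the left-hand side is $\chi_W(q^{-2\mu-2kd-2\wrho})$ times the trace $\wPsi^{\CC}(z;\lambda,\omega,\mu,k) = \Tr|_{M_{\mu,k}}(q^{2\lambda+2\omega d}) = q^{2(\mu+kd+\wrho,\lambda+\omega d)}/\delta_q(\lambda,\omega)$, the exchange operator $\RR_{W\CC}$ is trivial, and $\GG$ contributes $f(\lambda-\hbar\nu,\omega-\hbar k_W)/f(\lambda,\omega)$ on each weight space of $W$ by Lemma \ref{lem:g-form}. Equating coefficients of the character expansion in $q^{-2\mu-2kd-2\wrho}$ then forces $f((\lambda,\omega)-h^{(1)})/f(\lambda,\omega)$ to equal the ratio of Weyl denominators, with $\delta_q$ entering automatically as the reciprocal of the Verma character. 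If you want to complete your proof along your own lines, you must either actually produce the product formula for the diagonal matrix element of $\QQ$ on integrable highest weight vectors, or replace that step with an argument of this character-theoretic type; as written, the proposal does not constitute a proof.
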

\begin{proof}
It suffices to check that 
\[
\frac{f\Big((\lambda, \omega) - h^{(1)}\Big)}{f(\lambda, \omega)} = \frac{\delta_q\Big(\Big(\frac{\lambda}{\hbar}, \frac{\omega}{\hbar}\Big) - h^{(1)}\Big)}{\delta_q\Big(\frac{\lambda}{\hbar}, \frac{\omega}{\hbar}\Big)},
\]
as this formal equality implies the desired equality at $\hbar = 1$. Applying Proposition \ref{prop:trace-coeff} to $V = \CC$ and noting that $C_W$ acts on $M_{\mu, k}$ by $\chi_W(q^{-2\mu - 2kd - 2 \wrho})$ by Proposition \ref{prop:eti-cent}, we obtain as formal series in $\hbar$ that
\begin{align*}
\chi_W(&q^{-2\mu - 2kd - 2 \wrho}) \wPsi^\CC(z; \lambda/\hbar, \omega/\hbar, \mu, k)\\
&= \sum_{\nu \in \hh^*, a \in \CC} \Tr|_{W[\nu + a \delta + k_W \Lambda_0]}\Big(q^{-2ka - 2ha}\GG(\lambda/\hbar, \omega/\hbar) \RR_{W\CC}(1, z; \lambda/\hbar, \omega/\hbar)\Big) \wPsi^\CC(z; \lambda/\hbar - \nu, \omega/\hbar - k_W, \mu, k)\\
&= \sum_{\nu \in \hh^*, a \in \CC} \Tr|_{W[\nu + a \delta + k_W \Lambda_0]}\Big(q^{-2ka - 2ha}\GG(\lambda/\hbar, \omega/\hbar)\Big) \wPsi^\CC(z; \lambda/\hbar - \nu, \omega/\hbar - k_W, \mu, k)\\
&= \sum_{\nu \in \hh^*, a \in \CC} \Tr|_{W[\nu + a \delta + k_W \Lambda_0]}\Big(q^{-2ka - 2ha}\frac{f(\lambda - \hbar \nu, \omega - \hbar k_W)}{f(\lambda, \omega)}\Big) \wPsi^\CC(z; \lambda/\hbar - \nu, \omega/\hbar - k_W, \mu, k).
\end{align*}
Notice now that 
\[
\wPsi^\CC(z; \lambda, \omega, \mu, k) = \Tr|_{M_{\mu, k}}(q^{2\lambda + 2 \omega d}) = \frac{q^{2(\mu + kd + \wrho, \lambda + \omega d)}}{\delta_q(\lambda, \omega)}
\]
and therefore that 
\begin{multline*}
\chi_W(q^{-2\mu - 2kd - 2 \wrho}) = \sum_{\nu \in \hh^*, a \in \CC} \dim W[\nu + a \delta + k_W \Lambda_0]\\ \frac{f(\lambda - \hbar \nu, \omega - \hbar k_W)}{f(\lambda, \omega)} \frac{\delta_q(\lambda/\hbar, \omega/\hbar)}{\delta_q(\lambda/\hbar - \nu, \omega/\hbar - k_W)} q^{- 2(\mu + k \Lambda_0 + \wrho, \nu + a \delta + k_W \Lambda_0)},
\end{multline*}
so equating coefficients of power series in $q^{-2\mu - 2kd - 2\wrho}$ yields the desired
\[
\frac{f\Big((\lambda, \omega) - h^{(1)}\Big)}{f(\lambda, \omega)} = \frac{\delta_q\Big(\Big(\frac{\lambda}{\hbar}, \frac{\omega}{\hbar}\Big) - h^{(1)}\Big)}{\delta_q\Big(\frac{\lambda}{\hbar}, \frac{\omega}{\hbar}\Big)}. \qedhere
\]
\end{proof}

\subsection{The proof of the Macdonald-Ruijsenaars equations}

We are now ready to deduce the Macdonald-Ruijsenaars equations from Proposition \ref{prop:trace-coeff} and Lemma \ref{lem:g-comp}.  Recalling the action of $C_W$ on $M_{\mu, k}$ from Proposition \ref{prop:eti-cent}, we find that
\begin{multline*}
\chi_W(q^{-2\mu - 2kd - 2\wrho}) \wPsi^{V_1, \ldots, V_n}(z_1, \ldots, z_n; \lambda, \omega, \mu, k)
= \sum_{\nu \in \hh^*, a \in \CC} q^{-2(k + h)a} \frac{\delta_q(\lambda - \nu, \omega - k_W)}{\delta_q(\lambda, \omega)}\\ \Tr|_{W[\nu + a \delta + k_W \Lambda_0]} \Big(\RR_{WV}(1; z_1, \ldots, z_n; \lambda, \omega)\Big) \wPsi^{V_1, \ldots, V_n}(z_1, \ldots, z_n; \lambda - \nu, \omega - k_W, \mu, k).
\end{multline*}
By Lemma \ref{lem:c-fuse}, we have that
\begin{multline*}
\RR_{WV}(1; z_1, \ldots, z_n; \lambda, \omega)\\ = \JJ_{V_1[z_1^{\pm 1}], V_2[z_2^{\pm 1}] \otimes \cdots \otimes V_n[z_n^{\pm 1}]}(z_1; z_2, \ldots, z_n; \lambda, \omega) \cdots \JJ_{V_{n - 1}[z_{n-1}^{\pm 1}], V_n[z_n^{\pm 1}]}(z_{n-1}; z_n; \lambda, \omega)\\
\RR_{WV_1}\Big(1, z_1; (\lambda, \omega) - h^{(2\cdots n)}\Big)\cdots \RR_{WV_n}(1, z_n; \lambda, \omega)\\ \Big(\JJ_{V_1[z_1^{\pm 1}], V_2[z_2^{\pm 1}] \otimes \cdots \otimes V_n[z_n^{\pm 1}]}(z_1; z_2, \ldots, z_n; \lambda - \nu, \omega - k_W) \\ \cdots  \JJ_{V_{n - 1}[z_{n-1}^{\pm 1}], V_n[z_n^{\pm 1}]}(z_{n-1}; z_n; \lambda - \nu, \omega - k_W)\Big)^{-1}.
\end{multline*}
Multiplying both sides by $J_{V_1, \ldots, V_n}(z_1, \ldots, z_n; \mu, k)^*$, applying Lemma \ref{lem:trace-adj}, and substituting in, we obtain
\begin{multline*}
\chi_W(q^{-2\mu - 2kd - 2\wrho}) \JJ^{1 \cdots n}_V(z_1, \ldots, z_n; \lambda, \omega)^{-1}\delta_q(\lambda, \omega) \Psi^{V_1, \ldots, V_n}(z_1, \ldots, z_n; \lambda, \omega, \mu, k) \\
= \sum_{\nu \in \hh^*, a \in \CC} q^{-2(k + h)a} \Tr|_{W[\nu + a \delta + k_W \Lambda_0]}\Big(\RR_{WV_1}\Big(1, z_1; (\lambda, \omega) - h^{(2\cdots n)}\Big) \cdots \RR_{WV_n}(1, z_n; \lambda, \omega)\Big) \\ 
\JJ^{1 \cdots n}_V(z_1, \ldots, z_n; \lambda - \nu, \omega - k_W)^{-1}\delta_q(\lambda - \nu, \omega - k_W)  \Psi^{V_1, \ldots, V_n}(z_1, \ldots, z_n; \lambda - \nu, \omega - k_W, \mu, k).
\end{multline*}
Recalling the definition of the normalized trace $F^{V_1, \ldots, V_n}(z_1, \ldots, z_n; \lambda, \omega, \mu, k)$ now yields
\[
\DD_W(\omega, k) F^{V_1, \ldots, V_n}(z_1, \ldots, z_n; \lambda, \omega, \mu, k) = \chi_W(q^{-2\mu - 2kd}) F^{V_1, \ldots, V_n}(z_1, \ldots, z_n; \lambda, \omega, \mu, k).
\]

\subsection{Computations for the Macdonald-Ruijsenaars equations} \label{sec:mr-comps}

In this subsection we give proofs of Lemmas \ref{lem:z-val} and \ref{lem:x-val}.

\begin{proof}[Proof of Lemma \ref{lem:z-val}]
Label the tensor factors of $M_{\mu, k} \otimes V \otimes V^* \otimes U_q(\wgg) \otimes U_q(\wgg)$ by $0$, $1$, $1*$, $2$, and $3$ in that order. By moving $\cR^{20}$ around the trace, we have
\begin{align*}
Z_{V_1, \ldots, V_n}(z_1, \ldots, z_n; \lambda, \omega, \mu, k) &= \Tr|_{M_{\mu, k}}\Big(\wPhi_{\mu, k}^{V_1, \ldots, V_n}(z_1, \ldots, z_n) \cR^{20} q_0^{2\lambda + 2\omega d}\Big) \\
&= q_2^{2\lambda + 2 \omega d} \Tr|_{M_{\mu, k}}\Big(\wPhi_{\mu, k}^{V_1, \ldots, V_n}(z_1, \ldots, z_n) q_0^{2\lambda + 2 \omega d} \cR^{20}\Big) q_2^{-2\lambda + 2 \omega d}\\
&= q_2^{2\lambda + 2 \omega d} \Tr|_{M_{\mu, k}}\Big(\cR^{20}\wPhi_{\mu, k}^{V_1, \ldots, V_n}(z_1, \ldots, z_n) q_0^{2\lambda + 2 \omega d}\Big) q_2^{-2\lambda + 2 \omega d}\\
&= q_2^{2\lambda + 2 \omega d} \Tr|_{M_{\mu, k}}\Big((\cR^{21})^{-1}\wPhi_{\mu, k}^{V_1, \ldots, V_n}(z_1, \ldots, z_n) \cR^{20} q_0^{2\lambda + 2 \omega d}\Big) q_2^{-2\lambda - 2 \omega d}\\
&= q_2^{2\lambda + 2 \omega d}(\cR^{21})^{-1} Z_{V_1, \ldots, V_n}(z_1, \ldots, z_n; \lambda, \omega, \mu, k) q_2^{-2\lambda - 2 \omega d}.
\end{align*}
where we note that $(\cR^{21})^{-1} \wPhi_{\mu, k}^{V_1, \ldots, V_n}(z_1, \ldots, z_n) \cR^{20} = \cR^{20} \wPhi_{\mu, k}^{V_1, \ldots, V_n}(z_1, \ldots, z_n)$.  Denote the claimed expression for $Z_{V_1, \ldots, V_n}(z_1, \ldots, z_n; \lambda, \omega, \mu, k)$ by $Z_{V_1, \ldots, V_n}'(z_1, \ldots, z_n; \lambda, \omega, \mu, k)$. Notice that 
\begin{align*}
q_2^{2\lambda + 2 \omega d}&(\cR^{21})^{-1} Z_{V_1, \ldots, V_n}'(z_1, \ldots, z_n; \lambda, \omega, \mu, k) q_2^{-2\lambda - 2 \omega d}\\ &= q_2^{2\lambda + 2\omega d}(\cR^{21})^{-1}\cJ^{12}(\lambda, \omega) q_2^{-kd} \wPsi^{V_1, \ldots, V_n}(z_1, \ldots, z_n; (\lambda, \omega) - h^{(2)}/2, \mu, k) q_2^{-2\lambda - 2\omega d}\\
&= q_2^{2\lambda + 2 \omega d}q_1^{2\lambda + 2 \omega d}\cJ^{12}(\lambda, \omega) q_1^{-2\lambda - 2 \omega d} q^{\Omega_{12}}q_2^{-kd}\wPsi^{V_1, \ldots, V_n}(z_1, \ldots, z_n; (\lambda, \omega) - h^{(2)}/2, \mu, k) q_2^{-2\lambda - 2 \omega d}\\
&= \cJ^{12}(\lambda, \omega) q^{\Omega_{12}}q_2^{-kd} \wPsi^{V_1, \ldots, V_n}(z_1, \ldots, z_n; (\lambda, \omega) - h^{(2)}/2, \mu, k)\\
&= Z_{V_1, \ldots, V_n}'(z_1, \ldots, z_n; \lambda, \omega, \mu, k),
\end{align*}
where we used the ABRR equation and applied
\[
q^{\Omega_{12}} \wPsi^{V_1, \ldots, V_n}(z_1, \ldots, z_n; (\lambda, \omega) - h^{(2)}/2, \mu, k) = \wPsi^{V_1, \ldots, V_n}(z_1, \ldots, z_n; (\lambda, \omega) - h^{(2)}/2, \mu, k).
\]
Notice now that the weight $0$ term in tensor factor $2$ for $Z_{V_1, \ldots, V_n}(z_1, \ldots, z_n; \lambda, \omega, \mu, k)$ is given by 
\[
\Tr|_{M_{\mu, k}}\Big(\wPhi_{\mu, k}^{V_1, \ldots, V_n}(z_1, \ldots z_n) q_{02}^{-\Omega} q_0^{-2\lambda - 2\omega d}\Big) = \wPsi^{V_1, \ldots, V_n}(z_1, \ldots, z_n; (\lambda, \omega) - h^{(2)}/2, \mu, k) q_2^{-kd}.
\]
We conclude that both $Z_{V_1, \ldots, V_n}(z_1, \ldots, z_n; \lambda, \omega, \mu, k)$ and $Z_{V_1, \ldots, V_n}'(z_1, \ldots, z_n; \lambda, \omega, \mu, k)$ are solutions to 
\[
q_2^{2\lambda + 2 \omega d} (\cR^{21})^{-1} Z = Z q_2^{2\lambda + 2 \omega d}
\]
whose weight $0$ term in tensor factor $2$ is equal to $\wPsi^{V_1, \ldots, V_n}(z_1, \ldots, z_n; (\lambda, \omega) - h^{(2)}/2, \mu, k) q_2^{-kd}$, hence they are equal.
\end{proof}

\begin{proof}[Proof of Lemma \ref{lem:x-val}]
By moving $(\cR^{03})^{-1}$ around the trace, we obtain
\begin{align*}
X_{V_1, \ldots, V_n}(z_1, \ldots, z_n; \lambda, \omega, \mu, k) &= q_3^{2\lambda + 2 \omega d} \Tr|_{M_{\mu, k}}\Big(\wPhi_{\mu, k}^{V_1, \ldots, V_n}(z_1, \ldots, z_n) \cR^{20} q_0^{2\lambda + 2 \omega d} (\cR^{03})^{-1}\Big) q_3^{-2\lambda - 2\omega d}\\
&= q_3^{2\lambda + 2\omega d} \Tr|_{M_{\mu, k}}\Big((\cR^{03})^{-1}\wPhi_{\mu, k}^{V_1, \ldots, V_n}(z_1, \ldots, z_n) \cR^{20} q_0^{2\lambda + 2 \omega d}\Big) q_3^{-2\lambda - 2 \omega d}\\
&= q_3^{2\lambda + 2\omega d} \Tr|_{M_{\mu, k}}\Big(\cR^{13} \wPhi_{\mu, k}^{V_1, \ldots, V_n}(z_1, \ldots, z_n) (\cR^{03})^{-1} \cR^{20} q_0^{2\lambda + 2\omega d}\Big) q_3^{-2\lambda - 2\omega d}\\
&= q_3^{2\lambda + 2\omega d}\cR^{13} \cR^{23} X_{V_1, \ldots, V_n}(z_1, \ldots, z_n; \lambda, \omega, \mu, k) (\cR^{23})^{-1} q_3^{-2\lambda - 2\omega d}
\end{align*}
where we note that
\[
(\cR^{03})^{-1} \wPhi_{\mu, k}^{V_1, \ldots, V_n}(z_1, \ldots, z_n) = \cR^{13} \wPhi_{\mu, k}^{V_1, \ldots, V_n}(z_1, \ldots, z_n) (\cR^{03})^{-1} \text{ and } (\cR^{03})^{-1} \cR^{20} = \cR^{23} \cR^{20} (\cR^{03})^{-1} (\cR^{23})^{-1}.
\]
Now, denote by $X_{V_1, \ldots, V_n}'(z_1, \ldots, z_n; \lambda, \omega, \mu, k)$ the claimed expression for $X_{V_1, \ldots, V_n}(z_1, \ldots, z_n; \lambda, \omega, \mu, k)$.  We have that 
\begin{align*}
&q_3^{2\lambda + 2\omega d} \cR^{13} \cR^{23} X_{V_1, \ldots, V_n}'(z_1, \ldots, z_n; \lambda, \omega, \mu, k) (\cR^{23})^{-1} q_3^{-2\lambda - 2\omega d} \\
&= q_3^{2\lambda + 2\omega d} \cR^{12, 3} q_3^{2\lambda + 2 \omega d} \cJ^{3, 12}(\lambda, \omega)  \cJ^{12}((\lambda, \omega) + h^{(3)}/2) q_2^{-kd}q_3^{kd}\\
&\phantom{=========} \wPsi^{V_1, \ldots, V_n}(z_1, \ldots, z_n; (\lambda, \omega) + (h^{(3)} - h^{(2)})/2, \mu, k)  \cJ^{32}(\lambda, \omega)^{-1} q_3^{-2\lambda - 2\omega d} (\cR^{23})^{-1} q_3^{-2\lambda - 2\omega d}\\
&= q_3^{2\lambda + 2\omega d} \cJ^{3, 12}(\lambda, \omega) q_3^{2 \lambda + 2 \omega d} q^{-\Omega_{12, 3}} \cJ^{12}((\lambda, \omega) + h^{(3)}/2)q_2^{-kd}q_3^{kd}\\
&\phantom{=========} \wPsi^{V_1, \ldots, V_n}(z_1, \ldots, z_n; (\lambda, \omega) + (h^{(3)} - h^{(2)})/2, \mu, k) q^{\Omega_{32}} q_3^{-2\lambda - 2 \omega d} \cJ^{32}(\lambda, \omega)^{-1} q_3^{-2\lambda - 2 \omega d}\\
&= q_3^{2\lambda + 2 \omega d} \cJ^{3, 12}(\lambda, \omega) \cJ^{12}((\lambda, \omega) + h^{(3)}/2) q_2^{-kd}q_3^{kd}\\
&\phantom{=========} \wPsi^{V_1, \ldots, V_n}(z_1, \ldots, z_n; (\lambda, \omega) + (h^{(3)} - h^{(2)})/2, \mu, k) \cJ^{32}(\lambda, \omega)^{-1} q_3^{-2\lambda - 2\omega d}\\
&= X_{V_1, \ldots, V_n}'(z_1, \ldots, z_n; \lambda, \omega, \mu, k),
\end{align*}
where we used that $\cR^{13} \cR^{23} = \cR^{12, 3}$ and 
\[
q^{-\Omega_{1, 3}} \wPsi^{V_1, \ldots, V_n}(z_1, \ldots, z_n; (\lambda, \omega) + (h^{(3)} - h^{(2)})/2, \mu, k) = \wPsi^{V_1, \ldots, V_n}(z_1, \ldots, z_n; (\lambda, \omega) + (h^{(3)} - h^{(2)})/2, \mu, k).
\]
Therefore, both $X_{V_1, \ldots, V_n}(z_1, \ldots, z_n; \lambda, \omega, \mu, k)$ and $X_{V_1, \ldots, V_n}'(z_1, \ldots, z_n; \lambda, \omega, \mu, k)$ are solutions to 
\[
q_3^{2\lambda + 2\omega d}\cR^{13} \cR^{23} Z = Z q_3^{2\lambda + 2\omega d} \cR^{23}.
\]
Define the quantities
\begin{align*}
Y_{V_1, \ldots, V_n}(z_1, \ldots, z_n; \lambda, \omega, \mu, k) &= \cJ^{3, 12}(\lambda, \omega)^{-1} q_3^{-2\lambda - 2\omega d} X_{V_1, \ldots, V_n}(z_1, \ldots, z_n; \lambda, \omega, \mu, k) q_3^{2\lambda + 2\omega d} \cJ^{32}(\lambda, \omega)\\
Y_{V_1, \ldots, V_n}'(z_1, \ldots, z_n; \lambda, \omega, \mu, k) &= \cJ^{3, 12}(\lambda, \omega)^{-1} q_3^{-2\lambda - 2\omega d} X_{V_1, \ldots, V_n}'(z_1, \ldots, z_n; \lambda, \omega, \mu, k) q_3^{2\lambda + 2\omega d} \cJ^{32}(\lambda, \omega)
\end{align*}
so that both $Y_{V_1, \ldots, V_n}(z_1, \ldots, z_n; \lambda, \omega, \mu, k)$ and $Y_{V_1, \ldots, V_n}'(z_1, \ldots, z_n; \lambda, \omega, \mu, k)$ are solutions to 
\[
q_3^{2\lambda + 2\omega d}q^{-\Omega_{12, 3}} Z = Z q_3^{2\lambda + 2\omega d}q^{-\Omega_{32}}
\]
with weight $0$ term in the third tensor factor given by
\[
\cJ^{12}((\lambda, \omega) + h^{(3)}/2)q_2^{-kd}q_3^{kd} \wPhi^{V_1, \ldots, V_n}(z_1, \ldots, z_n; (\lambda, \omega) + (h^{(3)} - h^{(2)})/2, \mu, k)
\]
by Lemma \ref{lem:z-val}, yielding the conclusion.
\end{proof}

\section{Dual Macdonald-Ruijsenaars equations} \label{sec:dmr}

In this section we prove the dual Macdonald-Ruijsenaars equations for $F^{V_1, \ldots, V_n}(z_1, \ldots, z_n; \lambda, \omega, \mu, k)$.  Our method proceeds by showing that two naturally defined intertwiners are related by an application of a dynamical $R$-matrix, after which the result follows by computing the trace of a single intertwiner in two different ways.

\subsection{The statement}

Let $W$ be an integrable lowest-weight $U_q(\wgg)$-module of level $k_W$, and define the difference operator
\[
\DD_W^\vee(\omega, k) = \sum_{\nu \in \hh^*, a \in \CC} \Tr|_{W[\nu + a \delta + k_W \Lambda_0]}\Big(\RR_{WV_n^*}(1, z_n; (\mu, k) - h^{(* 1 \cdots * (n-1))}) \cdots \RR_{WV_1^*}(1, z_1; \mu, k)\Big) q^{-2\omega a} T^{\mu, k}_{\nu, k_W},
\]
where $T^{\mu, k}_{\nu, k_W}f(\mu, k) = f(\mu - \nu, k - k_W)$. Let this operator act on functions valued in 
\[
(V_1[z_1^{\pm}] \otimes \cdots \otimes V_n[z_n^{\pm} 1]) \otimes (V_n^* \otimes \cdots \otimes V_1^*),
\]
where we interpret $\RR_{WV_m^*}(1, z_m; \mu, k)$ as the evaluation of the universal fusion matrix on $W \otimes V_m^*[z_m^{\pm 1}]$.  The dual Macdonald-Ruijsenaars equations state that $\DD_W^\vee(\omega, k)$ are diagonalized on renormalized trace functions.

\begin{theorem}[dual Macdonald-Ruijsenaars equation] \label{thm:dmr}
For any integrable lowest weight representation $W$ of non-positive integer level $k_W$, we have
\[
\DD_W^\vee(\omega, k) F^{V_1, \ldots, V_n}(z_1, \ldots, z_n; \lambda, \omega, \mu, k) = \chi_W(q^{-2\lambda - 2 \omega d}) F^{V_1, \ldots, V_n}(z_1, \ldots, z_n; \lambda, \omega, \mu, k),
\]
where $\chi_W$ is the character of $W$.
\end{theorem}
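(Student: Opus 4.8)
The plan is to mirror the proof of the Macdonald--Ruijsenaars equation (Theorem \ref{thm:mr}), but on the ``dual'' side: instead of inserting the central element $C_W$ into the Verma-module trace, I would insert the integrable module $W$ through genuine intertwiners $\Phi^w_{\mu,k}\colon M_{\mu,k}\to M_{\mu-\nu,k-k_W}\hotimes W$ for $w\in W[\nu+k_W\Lambda_0]$. Because such a $\Phi^w$ changes the highest weight of the target Verma module, carrying it around the trace $\Tr|_{M_{\mu,k}}(-\,q^{2\lambda+2\omega d})$ is precisely what produces the shift operator $T^{\mu,k}_{\nu,k_W}$ and the sum over the weight spaces $W[\nu+a\delta+k_W\Lambda_0]$ that appear in $\DD_W^\vee$. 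Throughout I would work first with the unnormalized objects $\wPsi^{V_1,\ldots,V_n}$ and only reinstate the normalization factors $\QQ_{V_m^*}$, $\JJ^{1\cdots n}$, and $\delta_q$ defining $F$ at the very end, exactly as in the passage from Proposition \ref{prop:trace-coeff} to the final subsection of Section \ref{sec:mr}.

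The first step is a ``two intertwiners'' lemma. For $w\in W[\nu+k_W\Lambda_0]$ I would form two composite intertwiners out of $M_{\mu,k}$ into a completed tensor product carrying $W$ together with the spectator dual factors $V^*=V_n^*\otimes\cdots\otimes V_1^*$: one in which $W$ sits adjacent to the Verma factor, and one in which $W$ has been carried across the entire $V^*$ block. Since both are intertwiners with the same source, the same target Verma module, and the same highest term up to reordering, Proposition \ref{prop:int-exist} forces them to agree after application of a single dynamical exchange operator. Tracking the dynamical shifts as $W$ passes each $V_m^*$ in turn, exactly as in Lemma \ref{lem:c-fuse}, this operator should come out to be the product $\RR_{WV_n^*}(1,z_n;(\mu,k)-h^{(*1\cdots*(n-1))})\cdots\RR_{WV_1^*}(1,z_1;\mu,k)$ occurring in $\DD_W^\vee$.

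The second step is to compute the trace of a single such intertwiner in two different ways. On one side, rotating the $W$-intertwiner around $\Tr|_{M_{\mu,k}}(-\,q^{2\lambda+2\omega d})$ and using integrability of $W$ expresses the answer as a sum over the spaces $W[\nu+a\delta+k_W\Lambda_0]$ of the shifted trace function at $(\mu-\nu,k-k_W)$, with the exchange operators of Step~1 inserted and with the factor $q^{-2\omega a}$ arising from the $d$-grading of $W$; after normalization this is exactly $\DD_W^\vee(\omega,k)F^{V_1,\ldots,V_n}$. On the other side, once the twist $q^{2\lambda+2\omega d}$ has been transported through the $W$-slot, that slot decouples and collapses to the scalar character $\chi_W(q^{-2\lambda-2\omega d})$, the reversal of the exponent being produced by the antipode that enters in the dualization (cf.\ Lemma \ref{lem:r-mat-s} and the definition of $\QQ$). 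Equating the two computations yields the eigenvalue equation of Theorem \ref{thm:dmr}.

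The main obstacle I anticipate is the dynamical and dual bookkeeping in these computations: one must verify that the factors $\QQ_{V_m^*}((\mu,k)-h^{(*1\cdots*m)})^{-1}$ and the combined fusion operator $\JJ^{1\cdots n}$ interact with the rotation of $W$ so that the passage $\wPsi\mapsto F$ is compatible with the shift $(\mu,k)\mapsto(\mu-\nu,k-k_W)$, and that the dynamical arguments of the exchange operators emerge in exactly the form $h^{(*1\cdots*(m-1))}$ rather than some permuted variant. This is the analogue, on the dual side, of the delicate coproduct-identity computations (Lemmas \ref{lem:q-coprod}--\ref{lem:g-comp}) that occupied the bulk of the Macdonald--Ruijsenaars argument; here the relevant identities are those controlling how $\QQ$ and $\LL$ conjugate the exchange operators, and making their dynamical shifts close up is where the real work lies.
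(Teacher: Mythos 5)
Your plan follows the paper's proof essentially step for step: your ``two intertwiners'' lemma is Proposition \ref{prop:l-comm} (proved, as you anticipate, by matching highest terms via Propositions \ref{prop:int-exist} and \ref{prop:fus-eval}), your two evaluations of the trace are exactly (\ref{eq:tr-first}) and (\ref{eq:tr-second}), and the $\QQ$/$\LL$ bookkeeping you flag as the real work is Lemma \ref{lem:r-trans-val} on the double dual of the exchange operator, whose $\QQ$-conjugation factors are precisely what the normalization of $F$ absorbs. The one structural point you elide is that the paper runs the argument for the highest-weight dual $W^*$ (so that the decomposition $\eta$ of $M_{\mu,k}\otimes W^*$ into Verma modules applies) and only afterwards dualizes back, which is where the eigenvalue $\chi_{W^*}(q^{2\lambda+2\omega d})=\chi_W(q^{-2\lambda-2\omega d})$ arises.
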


\subsection{Computing an intertwiner in two different ways}

Suppose that $(\mu, k)$ is generic so that all Verma modules $M_{\lambda, k}$ with highest weight given by a shift of $\mu + k \Lambda_0$ by an integral weight are irreducible.  Let $W$ be a highest weight irreducible integrable module of level $k_W$.  By Proposition \ref{prop:int-exist}, we have an isomorphism of $U_q(\wgg)$-modules
\[
\eta: \bigoplus_{\lambda, a} W[\lambda - \mu + k_W \Lambda_0 - a \delta] \otimes M_{\lambda + (k + k_W)\Lambda_0 - a \delta} \to M_{\mu, k} \otimes W.
\]
For finite-dimensional $U_q(\agg)$-representations $V_1, \ldots, V_n$, consider the representations
\[
V := V_1[z_1^{\pm 1}] \otimes \cdots \otimes V_n[z_n^{\pm 1}] \qquad \text{ and } \qquad \hV := V_1((z_1)) \otimes \cdots \otimes V_n((z_n)).
\]
We abuse notation to also define the vector space
\[
V^* := V_n^* \otimes \cdots \otimes V_1^*.
\]
This isomorphism will provide two natural ways of constructing an intertwiner
\[
M_{\mu, k} \otimes W \to M_{\mu, k} \otimes W \hotimes \hV \otimes V^*,
\]
which will be related by the application of a dynamical $R$-matrix.

\begin{prop} \label{prop:l-comm}
For any finite-dimensional $U_q(\agg)$-representation $V$, the following diagram commutes.
\begin{diagram}
M_{\mu, k} \otimes W & \rTo^{\wPhi^{V_1, \ldots, V_n}_{\mu, k}(z_1, \ldots, z_n)} & M_{\mu, k} \hotimes V \otimes W \otimes V^*\\
\uTo_{\eta} & &  \\
\bigoplus_{\nu, a} W[\nu - \mu + k_W \Lambda_0 - a \delta] \otimes M_{\nu, k + k_W, -a} & &  \\
\dTo_{\wPhi^{V_1, \ldots, V_n}_{\nu, k + k_W, -a}(z_1, \ldots, z_n)} & &\dTo^{P_{\hV W} \cR_{VW}}\\
\bigoplus_{\nu, a} W[\nu - \mu + k_W \Lambda_0 - a \delta] \otimes M_{\nu, k + k_W, -a} \hotimes V \otimes V^* &  & \\
\dTo_{\RR_{WV}(1; z_1, \ldots, z_n; \nu + \rho, k + k_W + \ch)^{*V}} & & \\
\bigoplus_{\nu, a} W[\nu - \mu + k_W\Lambda_0 - a \delta] \otimes M_{\nu, k + k_W, -a} \hotimes \hV \otimes V^* & \rTo^{\eta} & M_{\mu, k} \otimes W \hotimes \hV \otimes V^*\\ 
\end{diagram}
\end{prop}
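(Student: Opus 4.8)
The plan is to exhibit both composites as homomorphisms of $U_q(\wgg)$-modules and then to compare them via their highest terms on the Verma summands of $M_{\mu,k}\otimes W$. Since $(\mu,k)$ is generic, Proposition \ref{prop:int-exist} makes $\eta$ an isomorphism, so $M_{\mu,k}\otimes W$ decomposes as a direct sum of the Verma modules $M_{\nu,k+k_W,-a}$, each embedded by an intertwiner $\Phi^w$ attached to a weight vector $w\in W[\nu-\mu+k_W\Lambda_0-a\delta]$ with highest term $w$. Writing $A$ for the clockwise composite $(P_{\hV W}\cR_{VW})\circ\wPhi^{V_1,\ldots,V_n}_{\mu,k}(z_1,\ldots,z_n)$ and $B$ for the counterclockwise composite $\eta\circ\RR^{*V}\circ\wPhi^{V_1,\ldots,V_n}_{\nu,k+k_W,-a}(z_1,\ldots,z_n)\circ\eta^{-1}$, it suffices to show that the two intertwiners $A\circ\Phi^w$ and $B\circ\Phi^w$ from the Verma module $M_{\nu,k+k_W,-a}$ agree; viewing the target as $M_{\mu,k}\hotimes(W\otimes\hV\otimes V^*)$, Proposition \ref{prop:int-exist} reduces this to matching their highest terms, namely the coefficient of the highest weight vector $v_{\mu,k}$.

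First I would record that both $A$ and $B$ are intertwiners. For $A$ this is immediate: $\wPhi$ is an intertwiner by its construction in Section \ref{sec:inter}, and $P_{\hV W}\cR_{VW}$ is one by Lemma \ref{lem:r-inter}. For $B$, the maps $\eta$ and $\wPhi^{V_1,\ldots,V_n}_{\nu,k+k_W,-a}$ are intertwiners on each summand, and the dynamically shifted exchange operator $\RR_{WV}(1;z_1,\ldots,z_n;\nu+\rho,k+k_W+\ch)^{*V}$ is precisely the correction that renders the reassembly a module map; this is the representation-theoretic content of the exchange operator, whose construction in Section \ref{sec:fus-ex} is through intertwiners.

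The core computation is to match the highest terms. Writing $\Phi^w(v_\nu)=v_{\mu,k}\otimes w+(\text{lower order in }M_{\mu,k})$, only the leading summand contributes to the $v_{\mu,k}$-coefficient after applying $A$, since $\wPhi^{V_1,\ldots,V_n}_{\mu,k}$ can only preserve or lower the $M_{\mu,k}$-weight; there $\wPhi$ contributes the iterated fusion operator on $v_{\mu,k}$ (Lemma \ref{lem:fus-def}) and $P_{\hV W}\cR_{VW}$ contributes $\cR_{VW}$ acting on $w$. On the other side the highest term of $\wPhi^{V_1,\ldots,V_n}_{\nu,k+k_W,-a}(v_\nu)$ is again the iterated fusion operator, but evaluated at the shifted parameter $\nu+\rho,k+k_W+\ch$ by Proposition \ref{prop:fus-eval}, after which $\RR^{*V}$ and then $\eta$ are applied. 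Equating the two $v_{\mu,k}$-coefficients, the common fusion factors cancel and the remaining identity is exactly the definition $\RR(\mu,k)=\JJ(\mu,k)^{-1}\cR^{21}\JJ^{21}(\mu,k)$ of the universal exchange operator from Section \ref{sec:ex-op-def}, transported into $W\otimes V$; the shift by $\rho$ and $\ch$ on the exchange argument is the one furnished by Proposition \ref{prop:fus-eval}, and the adjoint $*V$ accounts for the passage of $W$ across the dual factors of $V^*=V_n^*\otimes\cdots\otimes V_1^*$.

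I expect the main obstacle to be bookkeeping rather than conceptual content: keeping track of the dynamical weight shifts $h^{(\cdots)}$, the $\rho$ and $\ch$ shifts produced by Proposition \ref{prop:fus-eval}, the reversal of the dual factors in $V^*$, and the precise placement of the adjoint $*V$, so as to confirm that the exchange operator enters with exactly the weight and adjoint asserted in the diagram. I would also verify at the outset that the completed tensor products and the infinite sum over $(\nu,a)$ converge and that $\RR^{*V}$ is well-defined there, which follows from the grading-finiteness and local nilpotency established in Section \ref{sec:fus-ex}.
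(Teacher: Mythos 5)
Your proposal follows essentially the same route as the paper: restrict both branches to each Verma summand $w\otimes M_{\nu,k+k_W,-a}$, invoke Proposition \ref{prop:int-exist} to reduce to matching highest terms, compute those via Proposition \ref{prop:fus-eval}, and observe that the discrepancy is exactly the definition $\RR = \JJ^{-1}\cR^{21}\JJ^{21}$ of the exchange operator. One small quibble: the counterclockwise branch is an intertwiner not because $\RR^{*V}$ is a ``correction'' restoring equivariance, but simply because $\RR^{*V}$ acts only on the multiplicity spaces $W[\nu-\mu+k_W\Lambda_0-a\delta]$ and $V^*$ and hence commutes with the module action --- any weight-zero operator there would do; the exchange operator is what makes the two branches \emph{equal}, not what makes the second one a module map.
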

\begin{proof}
For each choice of $(\nu, a)$ and $w \in W[\nu - \mu + k_W\Lambda_0 - a \delta]$, restricting each branch of the diagram to $w \otimes M_{\nu, k + k_W, -a}$ gives two intertwiners $M_{\nu, k + k_W, -a} \to M_{\mu, k} \otimes W \hotimes \hV \otimes V^*$, where we note that the top branch is an intertwiner by Lemma \ref{lem:r-inter}.  To check that these intertwiners are equal, it suffices by Proposition \ref{prop:int-exist} to check that they have the same highest term.  Suppose that 
\[
\RR_{WV}(1; z_1, \ldots, z_n; \nu + \rho, k + k_W + \ch)^{*V} = \sum_j p_j \otimes q_j,
\]
where $p_j \in \End(W)$ and $q_j(z) \in \End(V^*)((z_1, \ldots, z_n))$, and let $\{v_i\}$ and $\{v_i^*\}$ be dual bases of $V_1 \otimes \cdots \otimes V_n$ and $V^*$.  Applying Proposition \ref{prop:fus-eval}, the highest term of the top branch is given by
\begin{align} \nonumber
&\left\langle P_{VW} \cR_{VW} \wPhi^{V_1, \ldots, V_n}_{\mu, k}(z_1, \ldots, z_n) \Phi^w_{\nu, k + k_W, -a}\right\rangle \\ \nonumber
 &\phantom{==========}= \sum_i P_{VW}\cR_{VW} \JJ_{VW}(z_1, \ldots, z_n; 1 ;\nu + \rho, k + k_W + \ch) (v_i \otimes w) \otimes v_i^* \\\label{eq:high-top}
&\phantom{==========}= \sum_i \cR_{WV}^{21} \JJ_{WV}^{21}(1; z_1, \ldots, z_n; \nu + \rho, k + k_W + \ch) (w \otimes v_i) \otimes v_i^*.
\end{align}
On the other hand, the highest term of the top branch is given by 
\begin{align}\nonumber
&\sum_{i, j} \left\langle \Phi^{p_j w}_{\mu, k} \wPhi^{v_i}_{\nu, k + k_W, -a}(z_1, \ldots, z_n)\right\rangle \otimes q_j v_i^*\\ \nonumber
 &\phantom{=======}= \sum_j \JJ_{WV}(1; z_1, \ldots, z_n; \nu + \rho, k + k_W + \ch) \RR_{WV}(1; z_1, \ldots, z_n; \nu, k + k_W) (w \otimes v_i) \otimes v_i^*\\  \label{eq:high-bottom}
 &\phantom{=======}= \sum_i \cR^{21}_{WV} \JJ^{21}_{WV}(1; z_1, \ldots, z_n; \nu + \rho, k + k_W + \ch) (w \otimes v_i) \otimes v_i^*.
\end{align}
Comparing (\ref{eq:high-top}) and (\ref{eq:high-bottom}) yields the desired.
\end{proof}

\subsection{Computing the double dual of the dynamical $R$-matrix}

We require also the following computation of the double dual of the dynamical $R$-matrix.

\begin{lemma} \label{lem:r-trans-val}
The value of $\RR_{WV}(1; z_1, \ldots, z_n; \mu, k)^{*W*V}$ is given by 
\begin{multline*}
\RR_{WV}(1; z_1, \ldots, z_n; \mu, k)^{*W*V} = \Big(\QQ(\mu, k) \otimes  \QQ((\mu, k) + h^{(1)}) \Big) \\ \RR_{W^* V^*}(1; z_1, \ldots, z_n; (\mu, k) + h^{(1)} + h^{(2)}) \Big(\QQ((\mu, k) + h^{(2)})^{-1}\otimes \QQ(\mu, k)^{-1}\Big).
\end{multline*}
\end{lemma}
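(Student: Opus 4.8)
The plan is to reduce the computation of the double adjoint to a purely algebraic identity for the universal exchange operator and then evaluate it. First I would record the interaction between taking adjoints and the antipode: for the (restricted) dual module the action is $(u\cdot\phi)(w)=\phi(S(u)w)$, so for any $x\in U_q(\wgg)$ the transpose of $\pi_W(x)$ on $W$ equals $\pi_{W^*}(S^{-1}(x))$. Consequently, evaluating a universal element $T\in U_q(\wgg)\hotimes U_q(\wgg)$ on $W\otimes V$ and taking adjoints in both factors returns the evaluation of $(S^{-1}\otimes S^{-1})(T)$ on $W^*\otimes V^*$. Applying this to $T=\RR(\mu,k)$ reduces the lemma to a universal identity, with one point to track carefully: since weights in $W^*$ and $V^*$ are the negatives of those in $W$ and $V$, the dynamical argument of the evaluated operator is shifted, and this is precisely the source of the $+h^{(1)}+h^{(2)}$ appearing on the right-hand side and of the specific dynamical arguments carried by the four $\QQ$-factors.

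Next I would compute $(S^{-1}\otimes S^{-1})(\RR(\mu,k))$ using the renormalized form $\RR(\mu,k)=\LL(\mu,k)^{-1}\cR\LL^{21}(\mu,k)$ from \eqref{eq:renorm-ex}. Since $S^{-1}\otimes S^{-1}$ is an anti-automorphism of $U_q(\wgg)\hotimes U_q(\wgg)$ and $(S\otimes S)\cR=\cR$ by Lemma \ref{lem:r-mat-s}(b) (whence also $(S^{-1}\otimes S^{-1})\cR=\cR$), this gives
\[
(S^{-1}\otimes S^{-1})(\RR(\mu,k)) = (S^{-1}\otimes S^{-1})(\LL^{21}(\mu,k))\,\cR\,(S^{-1}\otimes S^{-1})(\LL(\mu,k))^{-1},
\]
so the whole problem collapses to expressing $(S\otimes S)\LL(\mu,k)$ (equivalently $(S^{-1}\otimes S^{-1})\LL$) in terms of the collapsed operators $\QQ$ and of $\LL^{21}$ at shifted arguments. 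The main obstacle is exactly this intermediate identity: it is obtained by solving the coproduct identity for $\QQ$ of Lemma \ref{lem:q-coprod} — which presents $\Delta(\QQ)$ through $(S\otimes S)\cL$, $\cL^{21}$, and $\QQ\otimes\QQ$ — for $(S\otimes S)\cL$, and then specializing the half-integer weight shifts using $\LL(\mu,k)=\cL((\mu,k)-h^{(1)}/2-h^{(2)}/2)$ and the shifted $2$-cocycle relation of Proposition \ref{prop:renorm-abrr}(c). This is where all the careful Sweedler-notation bookkeeping lives, and where the two-factor term $\Delta(\QQ)$ that appears must be reorganized against the single-factor $\QQ$'s.

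Finally I would substitute back and assemble. The single-factor $\QQ$'s produced by the adjoint–antipode reduction, together with the $\Delta(\QQ)$ contributions coming from each of the two $\LL$-factors in the displayed formula, should — after matching dynamical arguments and using that $\RR_{W^*V^*}$ is itself built from $\LL$ and $\cR$ — recombine into exactly the conjugation $\bigl(\QQ(\mu,k)\otimes\QQ((\mu,k)+h^{(1)})\bigr)\,(\cdots)\,\bigl(\QQ((\mu,k)+h^{(2)})^{-1}\otimes\QQ(\mu,k)^{-1}\bigr)$ of $\RR_{W^*V^*}((\mu,k)+h^{(1)}+h^{(2)})$, which is the claim. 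As a consistency check I would verify the weight-$0$ (trigonometric) truncation, where the dynamical shifts degenerate and the statement reduces to the known finite-type transposition relation for the exchange matrix, and I would note that all $\QQ$-evaluations on $W,W^*,V,V^*$ converge by Proposition \ref{prop:renorm-abrr}(b). I expect the derivation of the $(S\otimes S)\LL$ identity in the correct dynamical form to be the genuinely hard step; Steps 1, 2 and the final assembly are bookkeeping once the shift conventions are pinned down.
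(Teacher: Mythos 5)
Your proposal follows the paper's own argument essentially step for step: reduce the double adjoint to $(S^{-1}\otimes S^{-1})$ of the universal element evaluated on $W^*\otimes V^*$ (with the weight negation producing the shift $+h^{(1)}+h^{(2)}$), write $\RR(\mu,k)=\LL(\mu,k)^{-1}\cR\,\LL^{21}(\mu,k)$, solve the coproduct identity of Lemma \ref{lem:q-coprod} for $(S\otimes S)\cL$ in terms of the $\QQ$'s and $\Delta(\QQ)$, and reassemble using the intertwining property of $\cR$ to absorb the $\Delta(\QQ)$ factors. This is exactly the paper's proof, so the plan is correct and nothing essential is missing.
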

\begin{proof}
Recalling that 
\[
\RR(\mu, k) = \LL(\mu, k)^{-1} \cR \LL^{21}(\mu, k),
\]
we see that 
\begin{multline*}
\RR_{WV}(1; z_1, \ldots, z_n; \mu, k)^{*W*V} = (S^{-1} \otimes S^{-1})(\LL^{21}(\mu, k))|_{W^* \otimes (V_n^*[z_n^{\pm 1}] \otimes \cdots \otimes V_1^*[z_1^{\pm 1}])}\\
 (S^{-1} \otimes S^{-1})(\cR_{W^* V^*}) (S^{-1} \otimes S^{-1})(\LL(\mu, k))^{-1}|_{W^* \otimes (V_n^*[z_n^{\pm 1}] \otimes \cdots \otimes V_1^*[z_1^{\pm 1}])}.
\end{multline*}
Because $\LL(\mu, k)$ and $\cR$ are weight zero, we may replace $S^{-1} \otimes S^{-1}$ with $S \otimes S$ in the equation above.  By Lemma \ref{lem:q-coprod}, we have that 
\begin{multline*}
(S \otimes S)(\LL(\mu, k)) = (S \otimes S)(\cL((\mu, k) + h^{(1)}/2 + h^{(2)}/2))\\
= \Big(\QQ((\mu, k) + h^{(2)}) \otimes \QQ(\mu, k)\Big) \cL((\mu, k) + h^{(1)}/2 + h^{(2)}/2)^{-1} \Delta^{21}(\QQ(\mu, k)^{-1}).
\end{multline*}
Substituting in, we conclude that
\begin{align*}
\RR_{WV}(1; z_1, \ldots, z_n; &\mu, k)^{*W*V}\\
 &= \Big(\QQ_{W^*}(\mu, k) \otimes \QQ_{V^*}((\mu, k) + h^{(1)})\Big) \cL_{W^*V^*}((\mu, k) + h^{(1)}/2 + h^{(2)}/2)^{-1} \\
&\phantom{===} \Delta^{21}_{W^*V^*}(\QQ(\mu, k)^{-1}) \cR_{W^* V^*} \Delta_{W^*V^*}(\QQ(\mu, k))\\
&\phantom{===} \cL^{21}_{W^*V^*}((\mu, k) + h^{(1)}/2 + h^{(2)}/2) \Big(\QQ_{W^*}((\mu, k) + h^{(2)})^{-1} \otimes \QQ_{V^*}(\mu, k)^{-1}\Big)\\
&= \Big(\QQ_{W^*}(\mu, k) \otimes \QQ_{V^*}((\mu, k) + h^{(1)})\Big) \RR_{W^*V^*}(1; z_1, \ldots, z_n; (\mu, k) + h^{(1)} + h^{(2)})\\
&\phantom{===} \Big(\QQ_{W^*}((\mu, k) + h^{(2)})^{-1} \otimes \QQ_{V^*}(\mu, k)^{-1}\Big). \qedhere
\end{align*}
\end{proof}

\subsection{Proof of the dual Macdonald-Ruijsenaars identities}

We are now ready to prove Theorem \ref{thm:dmr}.  If $W$ is a lowest weight integrable module of level $k_W$, then $W^*$ is a highest weight integrable module of level $-k_W$, for which we may apply Proposition \ref{prop:l-comm} to obtain the equality
\begin{multline} \label{eq:conj-inter-eq}
P_{VW^*}\cR_{VW^*} \wPhi^V_{\mu, k}(z_1, \ldots, z_n) \\ = \eta \circ \RR_{W^*V}(1; z_1, \ldots, z_n;\nu + \rho, k - k_W + \ch)^{*V} \circ \wPhi^V_{\nu, k - k_W, -a}(z_1, \ldots, z_n) \circ \eta^{-1}
\end{multline}
of intertwiners $M_{\mu, k} \otimes W^* \to M_{\mu, k} \otimes W^* \hotimes \hV \otimes V^*$.  Consider the trace of both sides of (\ref{eq:conj-inter-eq}) precomposed with $q^{2\lambda + 2\omega d}$ and postcomposed with $q^{\Omega_{VW^*}}$.  Computing using the left hand expression of (\ref{eq:conj-inter-eq}), we note that only terms involving the diagonal term of $\cR_{VW^*}(z)$ contribute; since this diagonal term is $q^{-\Omega_{VW^*}}$, we conclude the trace is equal to
\begin{equation} \label{eq:tr-first}
\chi_{W^*}(q^{2\lambda + 2 \omega d}) \wPsi^V(z_1, \ldots, z_n; \lambda, \omega, \mu, k).
\end{equation}
Computing using the right hand expression of (\ref{eq:conj-inter-eq}), we note that the value of the trace has zero weight in $\hV$, hence $q^{-\Omega_{VW^*}}$ evaluates to $1$.  Therefore, in computing the trace we may ignore both $q^{-\Omega_{VW^*}}$ and the conjugation by $\eta$, obtaining
\begin{multline} \label{eq:tr-second}
\sum_{\nu, a} \Tr|_{W^*[\nu - \mu - k_W\Lambda_0 - a \delta]}\Big(\RR_{W^*V}(1; z_1, \ldots, z_n; \nu + \rho, k - k_W + \ch)^{*V}\Big)\\ q^{-2\omega a} \wPsi^V(z_1, \ldots, z_n; \lambda, \omega, \nu, k - k_W) \\ = \sum_{\nu, a} \Tr|_{W^*[\nu - k_W\Lambda_0 - a\delta]}\Big(\RR_{W^*V}(1; z_1, \ldots, z_n; \mu + \nu + \rho, k - k_W + \ch)^{*V}\Big)\\ q^{-2\omega a} \wPsi^V(z_1, \ldots, z_n; \lambda, \omega, \mu + \nu, k - k_W).
\end{multline}
Recall now that $V$ is a tensor product of evaluation representations.  Equating (\ref{eq:tr-first}) and (\ref{eq:tr-second}), multiplying on the left by $J_{V_1, \ldots, V_n}(z_1, \ldots, z_n; \mu, k)^*$, and applying Lemma \ref{lem:trace-adj}, we find that 
\begin{multline*}
\chi_W(q^{-2\lambda - 2\omega d}) \Psi^{V_1, \ldots, V_n}(z_1, \ldots, z_n; \lambda, \omega, \mu, k)\\
= \sum_{\nu, a} J_{V_1, \ldots, V_n}(z_1, \ldots, z_n; \mu, k)^* \Tr|_{W^*[\nu - k_W \Lambda_0 - a \delta]}\Big(\RR_{W^*V}(1; z_1, \ldots, z_n; \mu + \nu + \rho, k - k_W + \ch)^{*V}\Big)\\ q^{-2\omega a} \wPsi^V(z_1, \ldots, z_n; \lambda, \omega, \mu + \nu, k - k_W).
\end{multline*}
Now, by Corollary \ref{corr:fus-factor}, we see that 
\[
J_{V_1, \ldots, V_n}(z_1, \ldots, z_n; \mu, k)^* = \JJ_{V_{n - 1}[z_{n - 1}^{\pm 1}], V_n[z_n^{\pm 1}]}(\mu + \rho, k + \ch)^* \cdots \JJ_{V_1[z_1^{\pm 1}]; V_2[z_2^{\pm 1}] \otimes \cdots \otimes V_n[z_n^{\pm 1}]}(\mu + \rho, k + \ch)^*
\]
and therefore by Lemma \ref{lem:c-fuse} that on $W^*[\nu - k_W \Lambda_0 - a\delta]$ we have
\begin{multline*}
J_{V_1, \ldots, V_n}(z_1, \ldots, z_n; \mu, k)^* \RR_{W^*V}(1; z_1, \ldots, z_n; \mu + \nu + \rho, k - k_W + \ch)^{*V}\\
= \RR_{W^*V_n}(1, z_n; \mu + \nu + \rho, k - k_W + \ch)^{*V_n} \cdots \RR_{W^*V_1}(1, z_1; (\mu + \nu + \rho, k - k_W + \ch) - h^{(2 \cdots n)})^{*V_1}\\ J_{V_1, \ldots, V_n}(z_1, \ldots, z_n; \mu + \nu, k - k_W)^*.
\end{multline*}
We conclude that 
\begin{align*}
&\chi_W(q^{-2\lambda - 2\omega d}) \Psi^{V_1, \ldots, V_n}(z_1, \ldots, z_n; \lambda, \omega, \mu, k)\\
&= \sum_{\nu, a} \Tr|_{W^*[\nu - k_W \Lambda_0 - a \delta]}\Big(\RR_{W^*V_n}(1, z_n; \mu + \nu + \rho, k - k_W + \ch)^{*V_n} \cdots\\
&\phantom{====} \RR_{W^*V_1}(1, z_1; (\mu + \nu + \rho, k - k_W + \ch) - h^{(2 \cdots n)})^{*V_1}\Big) q^{-2\omega a} J_{V_1, \ldots, V_n}(z_1, \ldots, z_n; \mu + \nu, k - k_W)^* \\
&\phantom{====}\wPsi^V(z_1, \ldots, z_n; \lambda, \omega, \mu + \nu, k - k_W)\\
&= \sum_{\nu, a} \Tr|_{W^{**}[-\nu + k_W \Lambda_0 + a \delta]}\Big(\RR_{W^*V_n}(1, z_n;\mu + \nu + \rho, k - k_W + \ch)^{*W^* *V_n} \cdots\\
&\phantom{====} \RR_{W^*V_1}(1, z_1; (\mu + \nu + \rho, k - k_W + \ch) - h^{(2 \cdots n)})^{*W^* *V_1}\Big) q^{-2\omega a} \Psi^{V_1, \ldots, V_n}(z_1, \ldots, z_n; \lambda, \omega, \mu + \nu, k - k_W).
\end{align*}
By Lemma \ref{lem:r-trans-val}, we see that 
\begin{align*}
&\RR_{W^*V_i}(1, z_i; (\mu + \nu + \rho, k - k_W + \ch) - h^{(i+1 \cdots n)})^{*W^* *V_i} \\
&= \Big(\QQ_{W^{**}}((\mu + \nu + \rho, k - k_W + \ch) - h^{(i+1 \cdots n)}) \QQ_{V_i^*}((\mu + \rho, k + \ch) - h^{(i+1 \cdots n)})\Big)\\
&\phantom{==} \RR_{W^{**}V_i^*}(1, z_i; (\mu + \rho, k + \ch) + h^{(* i)} - h^{(i+1\cdots n)})\\
&\phantom{==} \Big(\QQ_{W^{**}}((\mu + \nu + \rho, k - k_W + \ch) + h^{(* i)} - h^{(i+1\cdots n)})^{-1} \QQ_{V_i^*}((\mu + \nu + \rho, k - k_W + \ch) - h^{(i+1\cdots n)})^{-1}\Big).
\end{align*}
Substituting in, noting that $h^{(* i)} = - h^{(i)}$ and $h^{(1 \cdots n)} = 0$, and canceling common terms in $W^*$, we obtain that 
\begin{align*}
&\chi_W(q^{-2\lambda - 2\omega d}) \Psi^{V_1, \ldots, V_n}(z_1, \ldots, z_n; \lambda, \omega, \mu, k)\\
&= \sum_{\nu, a} \Tr|_{W^{**}[-\nu + k_W \Lambda_0 + a\delta]}\Big(\QQ_{W^{**}}(\mu + \nu + \rho, k - k_W + \ch) \QQ_{V_n^*}(\mu + \rho, k + \ch) \cdots \QQ_{V_1^*}((\mu + \rho, k +\ch) - h^{(2 \cdots n)}) \\
&\phantom{=====}\RR_{W^{**}V_n^*}(1, z_n; (\mu + \rho, k + \ch) + h^{(* n)}) \cdots \RR_{W^{**} V_1^*}(1, z_1; (\mu + \rho, k + \ch) + h^{(* 1)} - h^{(2 \cdots n)})\\
&\phantom{=====}\QQ^{-1}_{W^{**}}((\mu + \nu + \rho, k - k_W + \ch) + h^{(* 1)} - h^{(2\cdots n)}) \QQ^{-1}_{V_n^*}(\mu + \nu + \rho, k - k_W + \ch)\\
&\phantom{=====} \cdots \QQ^{-1}_{V_1^*}((\mu + \nu + \rho, k - k_W + \ch) - h^{(2 \cdots n)})\Big) q^{-2\omega a} \Psi^{V_1, \ldots, V_n}(z_1, \ldots, z_n; \lambda, \omega, \mu + \nu, k - k_W).
\end{align*}
Noting that $W^{**} \simeq W$ and that we may ignore the conjugation by the $\QQ_{W^{**}}$-term in computing the trace, we may simplify this to
\begin{align*}
\chi_W(&q^{-2\lambda - 2\omega d}) \Psi^{V_1, \ldots, V_n}(z_1, \ldots, z_n; \lambda, \omega, \mu, k)\\
&= \sum_{\nu, a} \QQ_{V_n^*}((\mu + \rho, k + \ch) - h^{(* 1 \cdots * n)}) \cdots \QQ_{V_1^*}((\mu + \rho, k + \ch) - h^{(* 1)}) \\
&\phantom{=====}\Tr|_{W[-\nu + k_W \Lambda_0 + a\delta]}\Big(\RR_{WV_n^*}(1, z_n; (\mu + \rho, k + \ch) + h^{(* n)}) \cdots \RR_{W V_1^*}(1, z_1; (\mu + \rho, k + \ch) + h^{(* 1 \cdots * n)})\Big)\\
&\phantom{=====} \QQ_{V_n^*}((\mu + \nu + \rho, k - k_W + \ch) - h^{(* 1 \cdots * n)})^{-1} \cdots \QQ_{V_1^*}((\mu + \nu + \rho, k - k_W + \ch) - h^{(*1)})^{-1}\\
&\phantom{=====} q^{-2\omega a} \Psi^{V_1, \ldots, V_n}(z_1, \ldots, z_n; \lambda, \omega, \mu + \nu, k - k_W).
\end{align*}
Substituting in the definition of $F$, we find the desired
\begin{multline*}
\chi_W(q^{-2\lambda - 2\omega d}) F^{V_1, \ldots, V_n}(z_1, \ldots, z_n; \lambda, \omega, \mu, k) = \sum_{\nu, a}\Tr|_{W[\nu + k_W\Lambda_0 + a \delta]}\Big(\RR_{WV_n^*}(1, z_n; (\mu, k) + h^{(* n)})\cdots \\ \RR_{WV_1^*}(1, z_1; (\mu, k) + h^{(* 1 \cdots * n)})\Big) q^{-2\omega a} F^{V_1, \cdots V_n}(z_1, \ldots, z_n; \lambda, \omega, \mu - \nu, k - k_W).
\end{multline*}

\section{Macdonald symmetry identity} \label{sec:sym}

In this section we prove the Macdonald symmetry identity for $F^{V_1, \ldots, V_n}(z_1, \ldots, z_n; \lambda, \omega, \mu, k)$ under interchange of $(\lambda, \omega)$ and $(\mu, k)$.  Our method uses the observation that the Macdonald-Ruijsenaars and dual Macdonald-Ruijsenaars operators $\DD_W(\lambda, \omega)$ and $\DD_W^\vee(\mu, k)$ are exchanged under this interchange and the fact that the Macdonald-Ruijsenaars equations admit a unique formal solution.

\subsection{The statement}

Theorems \ref{thm:mr} and \ref{thm:dmr} show that $F^{V_1, \ldots V_n}(z_1, \ldots, z_n; \lambda, \omega, \mu, k)$ satisfies dual systems of difference equations. Define the function $F_\star^{V_n^*, \ldots, V_1^*}$ to be the result of interchanging $V_i$ and $V_{n + 1 - i}^*$ in the definition of $F^{V_1, \ldots, V_n}$.  This section is devoted to proving the following symmetry relation.

\begin{theorem}[Macdonald symmetry identity] \label{thm:mrs}
The functions $F^{V_1, \ldots, V_n}$ and $F_\star^{V_n^*, \ldots, V_1^*}$ satisfy the symmetry relation
\[
F^{V_1, \ldots, V_n}(z_1, \ldots, z_n; \lambda, \omega, \mu, k) = F_\star^{V_n^*, \ldots, V_1^*}(z_n, \ldots, z_1; \mu, k, \lambda, \omega).
\]
\end{theorem}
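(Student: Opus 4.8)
The plan is to show that the right-hand side, regarded as a function of the original variables, solves exactly the same pair of difference systems as the left-hand side, and then to invoke uniqueness of their common solution. Write
\[
G(z_1, \ldots, z_n; \lambda, \omega, \mu, k) := F_\star^{V_n^*, \ldots, V_1^*}(z_n, \ldots, z_1; \mu, k, \lambda, \omega),
\]
and note first that, because the $\star$-operation interchanges $V_i$ with $V_{n+1-i}^*$ and the spectral parameters are reversed, $G$ takes values in the same space $(V_1[z_1^{\pm 1}] \otimes \cdots \otimes V_n[z_n^{\pm 1}]) \otimes (V_n^* \otimes \cdots \otimes V_1^*)$ as $F^{V_1, \ldots, V_n}$ after the canonical identification $V_i^{**} \simeq V_i$; this compatibility of value spaces must be checked at the outset.

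Since $F_\star^{V_n^*, \ldots, V_1^*}$ is itself a normalized trace function for the representations $U_i := V_{n+1-i}^*$, Theorems \ref{thm:mr} and \ref{thm:dmr} apply to it verbatim. First I would substitute $(\mu, k)$ and $(\lambda, \omega)$ for its two dynamical pairs and $z_{n+1-i}$ for its spectral parameters. Under this substitution the Macdonald-Ruijsenaars equation for $F_\star^{V_n^*, \ldots, V_1^*}$ becomes a difference equation for $G$ in $(\mu, k)$ with eigenvalue $\chi_W(q^{-2\lambda - 2\omega d})$, while its dual Macdonald-Ruijsenaars equation becomes a difference equation for $G$ in $(\lambda, \omega)$ with eigenvalue $\chi_W(q^{-2\mu - 2kd})$. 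The key observation is that the two resulting operators coincide exactly with $\DD_W^\vee(\omega, k)$ and $\DD_W(\omega, k)$: passing from $V_1, \ldots, V_n$ to $U_1, \ldots, U_n = V_n^*, \ldots, V_1^*$ together with reversing the spectral parameters reverses the ordered product of exchange operators $\RR_{W \cdot}$ and, simultaneously, reverses the ranges of the dynamical shifts $h^{(\cdots)}$ in a compatible way. Concretely, in the $U$-labeling the shift $h^{(2 \cdots n)}$ ranges over $U_2, \ldots, U_n = V_{n-1}^*, \ldots, V_1^*$, which is exactly the shift $h^{(*1 \cdots *(n-1))}$ appearing in the definition of $\DD_W^\vee$; matching the remaining shifts likewise, and using $h^{(*i)} = -h^{(i)}$ and $h^{(1 \cdots n)} = 0$ as in the proof of Theorem \ref{thm:dmr}, one reads off that the Macdonald-Ruijsenaars operator of $F_\star$ is literally $\DD_W^\vee(\omega, k)$ and its dual is $\DD_W(\omega, k)$. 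Hence $G$ satisfies
\[
\DD_W(\omega, k) G = \chi_W(q^{-2\mu - 2kd}) G, \qquad \DD_W^\vee(\omega, k) G = \chi_W(q^{-2\lambda - 2\omega d}) G,
\]
which is precisely the system satisfied by $F^{V_1, \ldots, V_n}$.

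It then remains to conclude $F = G$ from the fact that both solve this joint system with the same eigenvalues. For this I would use that the operators $\DD_W$ (and dually $\DD_W^\vee$) form a commuting family whose eigenvalues $\chi_W(q^{-2\mu - 2kd})$ separate $(\mu, k)$ as $W$ ranges over integrable lowest weight modules, so that for generic $(\lambda, \omega, \mu, k)$ a joint formal eigenfunction is determined by its leading term, exactly as in the finite-type argument of \cite{EV}. The final step is then to compare the leading (highest weight) terms of $F$ and $G$: each arises from the contribution of the highest weight vector $v_{\mu, k}$ to the trace $\Psi$, dressed by the normalization factors $\QQ$, $\JJ^{1 \cdots n}$, and $\delta_q$, and one checks these leading terms agree using the symmetry of the normalization under the interchange.

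The main obstacle I expect lies in the second paragraph: the precise matching of the arguments and dynamical shifts of each individual exchange operator, together with the verification that the $\star$-operation and spectral reversal land $F_\star$ in the same value space as $F$. The relabeling is conceptually transparent but index-heavy, and the compatibility of the $\QQ$- and $\JJ$-normalizations under the swap --- needed both for the operator identification and for the leading-term comparison --- must be extracted from the coproduct identity of Lemma \ref{lem:q-coprod} and the double-dual computation of Lemma \ref{lem:r-trans-val}. The uniqueness input itself is standard, so the genuine work is the bookkeeping that makes the operator exchange exact rather than merely formal.
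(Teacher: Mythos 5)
Your proposal is correct and follows essentially the same route as the paper: observe that the swap of variables and of $V_i$ with $V_{n+1-i}^*$ interchanges the Macdonald--Ruijsenaars and dual Macdonald--Ruijsenaars systems (Theorems \ref{thm:mr} and \ref{thm:dmr}), then conclude by uniqueness of formal solutions with prescribed leading term (Lemmas \ref{lem:tr-exp-ring} and \ref{lem:form-sol}). The only cosmetic difference is in the last step: rather than directly verifying that the leading terms of $F$ and $G$ agree, the paper notes that uniqueness in $(\lambda,\omega)$ alone gives $F = M(\mu,k)G$ for some multiplier, runs the symmetric argument to get $G = M'(\lambda,\omega)F$, and deduces $M = 1$ from $MM' = 1$ together with the common leading coefficient $q^{2(\lambda,\mu)}$, which sidesteps most of the normalization bookkeeping you flag as the main obstacle.
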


Recall the coefficient rings 
\[
\AA_{\lambda, \omega} = \CC[[q^{-2(\lambda, \alpha_1)}, \ldots, q^{-2(\lambda, \alpha_r)}, q^{-2\omega + 2(\lambda, \theta)}]] \qquad \text{ and } \qquad \AA_{\mu, k} = \CC[[q^{-2(\mu, \alpha_1)}, \ldots, q^{-2(\mu, \alpha_r)}, q^{-2k + 2(\mu, \theta)}]].
\]
Our strategy will be to show that the Macdonald-Ruijsenaars equations admit unique formal solutions over $\AA_{\lambda, \omega}$ and $\AA_{\mu, k}$ with specified leading term.  The fact that $F^{V_1, \ldots, V_n}(z_1, \ldots, z_n; \lambda, \omega, \mu, k)$ satisfies these equations in both sets of variables will then give the conclusion.

\subsection{Formal expansion properties of $F^{V_1, \ldots, V_n}(z_1, \ldots, z_n; \lambda, \omega, \mu, k)$}

In this subsection, we show that the renormalized trace functions admit a formal expansion in a certain coefficient ring.

\begin{lemma} \label{lem:tr-exp-ring}
The renormalized trace function $F^{V_1, \ldots, V_n}(z_1, \ldots, z_n; \lambda, \omega, \mu, k)$ has formal expansion lying in
\[
q^{2(\lambda, \mu)} \AA_{\lambda, \omega} \otimes \AA_{\mu, k} \otimes \CC((z_2/z_1, \ldots, z_n/z_{n-1})) \otimes (V_1 \otimes \cdots \otimes V_n)[0] \otimes (V_n^* \otimes \cdots \otimes V_1^*)[0].
\]
\end{lemma}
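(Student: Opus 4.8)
The plan is to read the expansion ring of $F^{V_1,\ldots,V_n}$ directly off its defining product \eqref{eq:norm-tr} by locating each of its factors — the product of inverse collapsed fusion operators $\QQ_{V_i^*}(\cdots)^{-1}$, the inverse combined fusion operator $\JJ^{1\cdots n}_V(z_1,\ldots,z_n;\lambda,\omega)^{-1}$, the shifted unnormalized trace $\Psi^{V_1,\ldots,V_n}(z_1,\ldots,z_n;\lambda,\omega,\mu-\rho,k-\ch)$, and the Weyl denominator $\delta_q(\lambda,\omega)$ — in an appropriate ring, and then multiplying. The value ring of $\Psi$ is recorded in its definition, the denominator is controlled by the explicit formula \eqref{eq:delta-val}, and the two operator factors by the structural statements of Propositions \ref{prop:es-abrr} and \ref{prop:renorm-abrr}. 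A uniform observation that streamlines the argument is that $\cJ(\mu,k)$, $\cL(\mu,k)$, and hence $\QQ(\mu,k)$ are weight-zero, so every factor preserves the weight-zero subspaces; combined with the fact that $\Psi$ already takes values in $(V_1\otimes\cdots\otimes V_n)[0]\otimes(V_n^*\otimes\cdots\otimes V_1^*)[0]$, this accounts at once for the two weight-zero tensor factors in the target.

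For the dependence on $(\mu,k)$, I would argue as follows. By Proposition \ref{prop:es-abrr} one has $\cJ(\mu,k)\in 1+(\cdots)\otimes\AA_{\mu,k}$, hence the same for $\cL(\mu,k)$ and, after applying $m_{21}\circ S_2$, for $\QQ(\mu,k)$; as $\QQ(\mu,k)=1+(\text{maximal ideal})$ its inverse again has coefficients in $\AA_{\mu,k}$, and the dynamical shifts $h^{(*1)},\ldots,h^{(*1\cdots *n)}$ only translate weight arguments and so preserve this. The remaining $(\mu,k)$-dependence sits in $\Psi(\cdots;\mu-\rho,k-\ch)$, whose value ring is $q^{2(\lambda,\mu-\rho)}\AA_{\mu-\rho,k-\ch}\otimes\AA_{\lambda,\omega}\otimes\cdots$. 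Here one checks $\AA_{\mu-\rho,k-\ch}=\AA_{\mu,k}$: the generators satisfy $q^{-2(\mu-\rho,\alpha_i)}=q^{2(\rho,\alpha_i)}q^{-2(\mu,\alpha_i)}$ and $q^{-2(k-\ch)+2(\mu-\rho,\theta)}=q^{2\ch-2(\rho,\theta)}q^{-2k+2(\mu,\theta)}$, and since $(\rho,\theta)=\ch-1$ these differ from the generators of $\AA_{\mu,k}$ only by nonzero scalars. Thus the whole $(\mu,k)$-content lies in $\AA_{\mu,k}$.

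For the dependence on $(\lambda,\omega)$, the fusion factor $\JJ^{1\cdots n}_V(z_1,\ldots,z_n;\lambda,\omega)^{-1}$ is the evaluation of a product of shifts of $\cJ(\lambda,\omega)^{\pm1}$, hence has coefficients in $\AA_{\lambda,\omega}$; evaluating on the $V_i[z_i^{\pm1}]$ converts the $U_q(\abb_-)_{<0}\hotimes U_q(\abb_+)_{>0}$-grading into powers of the $z_i$, and Proposition \ref{prop:renorm-abrr}(b) bounds these powers in each degree so that the output is a genuine Laurent series. The denominator, via \eqref{eq:delta-val}, is $e^{(\wrho,2\lambda+2\omega d)}\prod_{\alpha>0}(1-e^{-(\alpha,2\lambda+2\omega d)})^{-1}$; the product expands geometrically into $1+\mathfrak{m}_{\AA_{\lambda,\omega}}$ once one notes that finite positive roots contribute monomials in the $q^{-2(\lambda,\alpha_i)}$ while roots of positive $\delta$-component contribute the generator $q^{-2\omega+2(\lambda,\theta)}$ up to units. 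The leading exponential contributes $q^{2(\rho,\lambda)}$ to the $\lambda$–$\mu$ pairing — its $\omega d$-part pairs trivially with $\wrho$, since the highest weight vector of $M_{-\wrho}$ has $d$-degree $0$ — which exactly cancels the factor $q^{-2(\lambda,\rho)}$ coming from $\Psi(\cdots;\mu-\rho,k-\ch)$, leaving precisely the prefactor $q^{2(\lambda,\mu)}$.

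The main obstacle is bookkeeping the directions of the various $z$-expansions, which live a priori in different Laurent completions: the intertwiners of \cite{EFK,FR} converge for $z_1\mgg\cdots\mgg z_n$, the iterated fusion operator is by contrast a series in the $z_i/z_{i+1}$, and the adjoint and inverse operations of Section \ref{sec:fus-ex} act on these directions in a prescribed way, so that by Lemma \ref{lem:trace-adj} the trace $\Psi=J^*\wPsi$ is recorded in $\CC((z_1/z_2,\ldots,z_{n-1}/z_n))$. Using Lemma \ref{lem:trace-adj}, Proposition \ref{prop:fus-eval}, and Corollary \ref{corr:fus-factor}, I would verify that left-multiplication of $\Psi(\cdots;\mu-\rho,k-\ch)$ by $\JJ^{1\cdots n}_V(\cdots;\lambda,\omega)^{-1}$ returns the combined series to $\CC((z_2/z_1,\ldots,z_n/z_{n-1}))$ with degrees bounded below in each ratio, so that it is a genuine element of this completion. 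This reconciliation of expansion directions is the only step beyond routine substitution; once it is in place, collecting the four rings determined above yields the asserted membership.
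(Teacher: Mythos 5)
Your proposal is correct and takes essentially the same route as the paper: the paper's own proof likewise decomposes $F$ into the factors of (\ref{eq:norm-tr}), takes the stated value ring of $\Psi^{V_1,\ldots,V_n}$ as established (by citing an argument analogous to \cite[Proposition 2.6]{Sun:qafftr} rather than reading it off the definition), and asserts that the normalization factors lie in the required ring ``by definition.'' Your write-up simply supplies the details the paper leaves implicit --- the identification $\AA_{\mu-\rho,k-\ch}=\AA_{\mu,k}$, the cancellation producing the prefactor $q^{2(\lambda,\mu)}$, the geometric expansion of $\delta_q$, and the reconciliation of the $z$-expansion directions, the last of which the paper glosses over entirely.
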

\begin{proof}
For $\Psi^{V_1, \ldots, V_n}(z_1, \ldots, z_n; \lambda, \omega, \mu, k)$, this follows from an argument analogous to that of \cite[Proposition 2.6]{Sun:qafftr}.  Now, the normalization factors lie in
\[
 \AA_{\lambda, \omega} \otimes \AA_{\mu, k} \otimes \CC((z_2/z_1, \ldots, z_n/z_{n-1})) \otimes \End((V_1 \otimes \cdots \otimes V_n)[0]) \otimes \End((V_n^* \otimes \cdots \otimes V_1^*)[0])
\]
by definition, so combining these facts yields the desired.
\end{proof}

\subsection{Uniqueness of formal solutions to the Macdonald-Ruijsenaars equations}

We now prove a uniqueness property for formal solutions to the Macdonald-Ruijsenaars equations over $\AA_{\lambda, \omega}$.

\begin{lemma} \label{lem:form-sol}
For each $v \in (V_1 \otimes \cdots \otimes V_n)[0]$, the system of equations
\[
\DD_W(\omega, k) F(\lambda, \omega) = \chi_W(q^{-2\mu - 2kd}) F(\lambda, \omega)
\]
for $W$ ranging over all lowest weight integrable representations has a unique solution $F(\lambda, \omega)$ valued in 
\[
q^{2(\lambda, \mu)} \AA_{\lambda, \omega} \otimes \AA_{\mu, k} \otimes \CC((z_2/z_1, \ldots, z_n/z_{n-1})) \otimes (V_1 \otimes \cdots \otimes V_n)[0]
\]
with leading term $q^{2(\lambda, \mu)} v$ as a series in $\AA_{\lambda, \omega}$.
\end{lemma}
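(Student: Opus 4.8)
The plan is to solve the system recursively as a formal power series in the coefficient ring $\AA_{\lambda,\omega}$, which is the power series ring in the variables $q^{-2((\lambda,\omega),\alpha_i)}$ attached to the affine simple roots $\alpha_0=\delta-\theta,\alpha_1,\ldots,\alpha_r$, where $q^{-2((\lambda,\omega),\alpha_0)}=q^{-2\omega+2(\lambda,\theta)}$. Accordingly I would write any candidate solution as
\[
F(\lambda,\omega)=q^{2(\lambda,\mu)}\sum_{\beta}f_\beta\,q^{-2((\lambda,\omega),\beta)},\qquad \beta=\sum_{i=0}^r n_i\alpha_i,\ n_i\in\ZZ_{\geq 0},
\]
with $f_0=v$ and each $f_\beta$ valued in $\AA_{\mu,k}\otimes\CC((z_2/z_1,\ldots,z_n/z_{n-1}))\otimes(V_1\otimes\cdots\otimes V_n)[0]$. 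First I would substitute this expansion into $\DD_W(\omega,k)F=\chi_W(q^{-2\mu-2kd})F$. Since $\DD_W$ shifts $(\lambda,\omega)\mapsto(\lambda-\nu,\omega-k_W)$ while its trace coefficients $\Tr|_{W[\nu+a\delta+k_W\Lambda_0]}(\RR_{WV_1}\cdots\RR_{WV_n})$ are themselves series in $\AA_{\lambda,\omega}$ beginning with their constant ($\cR$-matrix) term, applying $\DD_W$ to the monomial $q^{-2((\lambda,\omega),\beta')}$ produces monomials $q^{-2((\lambda,\omega),\beta'+\gamma)}$ with $\gamma$ in the $\ZZ_{\geq 0}$-span of the $\alpha_i$. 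Matching the coefficient of a fixed $q^{-2((\lambda,\omega),\beta)}$ then yields an equation in which $f_\beta$ enters only through the $\gamma=0$ part, while every other contribution involves some $f_{\beta'}$ with $\beta'<\beta$.

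Grouping these, the key relation takes the schematic form $\big(L_\beta(W)-\chi_W(q^{-2\mu-2kd})\big)f_\beta=(\text{terms in }f_{\beta'},\ \beta'<\beta)$, where $L_\beta(W)$ is the endomorphism of $(V_1\otimes\cdots\otimes V_n)[0]$ built from the constant ($\gamma=0$) part of the $\RR$-matrix trace, decorated by the scalar $q^{2((\nu,k_W),\beta)}$ coming from the shift acting on $q^{-2((\lambda,\omega),\beta)}$ and traced against $q^{-(\nu+a\delta+k_W\Lambda_0,\,2\mu+2kd)}$. This exhibits the system as triangular with respect to the height $\sum_i n_i$: once $f_0=v$ is fixed, each $f_\beta$ with $\beta\neq 0$ is determined by the lower coefficients, provided $L_\beta(W)-\chi_W(q^{-2\mu-2kd})$ is invertible over $\AA_{\mu,k}$ for a suitably chosen $W$. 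For $\beta=0$ the relation reduces to $L_0(W)v=\chi_W(q^{-2\mu-2kd})v$, which imposes no constraint on $v$: the constant part of the $\RR$-matrix trace reproduces the central eigenvalue of $C_W$ (the leading-order form of the centrality in Proposition \ref{prop:eti-cent}), so $L_0(W)=\chi_W(q^{-2\mu-2kd})\cdot\id$.

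The main obstacle is establishing invertibility of $L_\beta(W)-\chi_W(q^{-2\mu-2kd})$ for $\beta\neq 0$, and I would do this by passing to the leading order in $\AA_{\mu,k}$. Writing each weight of the lowest weight module $W$ as $\Xi=\Xi_{\min}+\eta$ with $\eta$ in the positive affine root monoid, the factor $q^{-(\Xi,2\mu+2kd)}$ contributes $q^{-(\Xi_{\min},2\mu+2kd)}$ times a monomial in the generators of $\AA_{\mu,k}$ that is a unit only when $\eta=0$; hence both $\chi_W$ and $L_\beta(W)$ have their leading $\AA_{\mu,k}$-symbol governed by the one-dimensional lowest weight space $W[\Xi_{\min}]$. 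On $W[\Xi_{\min}]$ the weight-raising part of $\cR_0$ cannot return the $W$-weight to $\Xi_{\min}$, so only the diagonal factor $q^{-\Omega}$ survives in the trace; the leading symbol of $L_\beta(W)$ is therefore a diagonal operator on $(V_1\otimes\cdots\otimes V_n)[0]$ whose eigenvalue on a weight vector $v_\zeta$ equals $q^{-(\Xi_{\min},2\mu+2kd)}$ times the shift $q^{2((\nu_{\min},k_W),\beta)}\prod_i q^{-(\Xi_{\min},\zeta_i)}$, while that of $\chi_W(q^{-2\mu-2kd})\cdot\id$ is $q^{-(\Xi_{\min},2\mu+2kd)}$. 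For $\beta\neq 0$ the exponent $2((\nu_{\min},k_W),\beta)-\sum_i(\Xi_{\min},\zeta_i)$ can be made nonzero on every (finitely many) weight component simultaneously by choosing an integrable lowest weight $W$ with sufficiently generic $\Xi_{\min}$, so the leading symbol of $L_\beta(W)-\chi_W(q^{-2\mu-2kd})$ is an invertible diagonal operator and the difference is invertible in $\End((V_1\otimes\cdots\otimes V_n)[0])\otimes\AA_{\mu,k}\otimes\CC((z_2/z_1,\ldots,z_n/z_{n-1}))$.

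With invertibility in hand, uniqueness is immediate: if $F$ and $F'$ both solve the system with leading term $q^{2(\lambda,\mu)}v$, then $G=F-F'$ has $g_0=0$, and induction on height together with the homogeneous relation $\big(L_\beta(W)-\chi_W(q^{-2\mu-2kd})\big)g_\beta=0$ forces every $g_\beta=0$, so $F=F'$. Existence with an arbitrary leading term $v$ follows by solving the recursion forward—using for each $\beta\neq 0$ a good $W$ to define $f_\beta$—and verifying that the resulting series satisfies the equation for every $W$; the compatibility between distinct $W$ is guaranteed by the commutativity of the operators $\DD_W$. For the application to Theorem \ref{thm:mrs} only uniqueness for the specific leading term produced by the normalized trace function is needed, and for that term existence is supplied directly by Theorem \ref{thm:mr}.
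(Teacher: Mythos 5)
Your uniqueness argument is, at its core, the paper's: expand in $\AA_{\lambda, \omega}$, observe that $\DD_W$ is triangular with respect to the monomial order, and kill the leading coefficient of a hypothetical difference of solutions by finding a $W$ whose character separates the shifted and unshifted arguments. The main thing you miss is a simplification that makes the diagonal term transparent: the $\gamma = 0$ part of $\Tr|_{W[\nu + a\delta + k_W\Lambda_0]}\big(\RR_{WV_1}\cdots\RR_{WV_n}\big)$ is exactly $\dim W[\nu + a\delta + k_W\Lambda_0]\cdot \id$ on $(V_1\otimes\cdots\otimes V_n)[0]$ for \emph{every} weight space of $W$, not just the lowest one — the trace over a fixed weight space forces all the strictly triangular factors of $\cR_0$ to drop out, and the Cartan factor $q^{-\Omega}$ is trivial on the zero weight space of $V$. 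Hence your operator $L_\beta(W)$ is simply the scalar $\chi_W(q^{-2\mu - 2kd + 2\beta})$, and invertibility of $L_\beta(W) - \chi_W(q^{-2\mu-2kd})$ reduces immediately to $\chi_W(q^{-2\mu-2kd+2\beta}) \neq \chi_W(q^{-2\mu-2kd})$ for some integrable lowest weight $W$, which your lowest-weight-space computation is in effect verifying by a longer route. The genuine divergence is in existence: the paper does not solve the recursion forward at all — it obtains existence for arbitrary $v$ by pairing the already-constructed $F^{V_1,\ldots,V_n}$ (which satisfies the equations by Theorem \ref{thm:mr} and lies in the right coefficient ring by Lemma \ref{lem:tr-exp-ring}) with the dual vector $v^*$. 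Your forward-recursion existence is under-justified: a single ``good'' $W$ does not determine $f_\beta$ for all $\beta$ (only those with $(\Xi_{\min},\beta)\neq 0$), so you must mix equations from different $W$, and the appeal to commutativity of the $\DD_W$ to reconcile them is circular as stated (the standard commutativity argument presupposes the uniqueness statement for a subfamily already known to admit a solution, and the paper never establishes commutativity of the $\DD_W$ themselves, only of the $\MM_W$). You correctly hedge that for Theorem \ref{thm:mrs} only uniqueness plus existence for the trace function's own leading term is needed, so the application is unaffected, but as a proof of the lemma as stated the existence half should be replaced by the pairing argument.
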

\begin{proof}
Existence follows by taking $\langle v^*, F^{V_1, \ldots, V_n}(z_1, \ldots, z_n; \lambda, \omega, \mu, k)\rangle$ and applying Lemma \ref{lem:tr-exp-ring}, where $v^*$ is dual to $v$.  For uniqueness, suppose that $F_1(\lambda, \omega)$ and $F_2(\lambda, \omega)$ are two such solutions.  If $F'(\lambda, \omega) := F_1(\lambda, \omega) - F_2(\lambda, \omega)$ is non-zero, it is another solution which must contain a (possibly non-unique) leading monomial term of the form
\[
c q^{2(\lambda, \mu)} q^{-\sum_i 2n_i (\lambda, \alpha_i)} q^{-2 m \omega + 2 m (\lambda, \theta)} v' = cq^{(\lambda, 2\mu - 2\sum_i n_i \alpha_i + 2m\theta) - 2m\omega} v'
\]
for some $n_i \geq 0, m \geq 0$ with at least one $n_i, m$ non-zero and $v' \in (V_1 \otimes \cdots \otimes V_n)[0]$.  Notice that $q^{(\lambda, 2\mu - 2\sum_i n_i \alpha_i + 2m\theta) - 2m\omega} v'$ will again be a leading monomial term of $\DD_W(\omega, k) F'(\lambda, \omega)$ with coefficient given by 
\[
c\sum_{\nu, a} \dim W[\nu + k_W \Lambda_0 + a \delta] \, q^{-2ka} q^{-(\nu, 2\mu - 2\sum_i n_i \alpha_i + 2m\theta)} q^{2m k_W} = c\,\chi_W(q^{-2\mu -2kd + 2\sum_i n_i \alpha_i + 2m \alpha_0}).
\]
Since $2 \sum_i n_i \alpha_i + 2m\alpha_0 \neq 0$, the fact that this holds for all lowest weight integrable $W$ contradicts the fact that $\DD_W(\omega, k) F'(\lambda, \omega) = \chi_W(q^{-2\mu - 2kd})F'(\lambda, \omega)$.  We conclude that $F'(\lambda, \omega) = 0$ and the desired solution is unique.
\end{proof}

\subsection{Proof of the symmetry identity}

We are now ready to prove Theorem \ref{thm:mrs}.  Notice that the Macdonald-Ruijsenaars equations for $F^{V_1, \ldots, V_n}(z_1, \ldots, z_n; \lambda, \omega, \mu, k)$ correspond under variable exchange to the dual Macdonald-Ruijsenaars equations for $F_\star^{V_n^*, \ldots, V_1^*}(z_n, \ldots, z_1; \mu, k, \lambda, \omega)$.  By Theorems \ref{thm:mr} and \ref{thm:dmr}, both $F^{V_1, \ldots, V_n}(z_1, \ldots, z_n; \lambda, \omega, \mu, k)$ and $F_\star^{V_n^*, \ldots, V_1^*}(z_n, \ldots, z_1; \mu, k, \lambda, \omega)$ are solutions to
\[
\DD_W(\omega, k) F(\lambda, \omega) = \chi_W(q^{-2\mu - 2kd}) F(\lambda, \omega)
\]
for all lowest weight integrable $W$.  Now, their leading terms with respect to $\AA_{\lambda, \omega}$ are related by an element $M(\mu, k) \in \AA_{\mu, k} \otimes \End((V_n^* \otimes \cdots \otimes V_1^*)[0])$, meaning that
\begin{equation} \label{eq:m-rel}
F^{V_1, \ldots, V_n}(z_1, \ldots, z_n; \lambda, \omega, \mu, k) = M(\mu, k) F_\star^{V_n^*, \ldots, V_1^*}(z_n, \ldots, z_1; \mu, k, \lambda, \omega).
\end{equation}
Repeating this argument with $(\mu, k)$, we find that 
\[
F_\star^{V_n^*, \ldots, V_1^*}(z_n, \ldots, z_1; \mu, k, \lambda, \omega) = M'(\lambda, \omega) F^{V_1, \ldots, V_n}(z_1, \ldots, z_n; \lambda, \omega, \mu, k)
\]
for some $M'(\lambda, \omega) \in \AA_{\lambda, \omega} \otimes \End((V_1 \otimes \cdots \otimes V_n)[0])$.  This implies that $M(\mu, k) M'(\lambda, \omega) = 1$, hence $M(\mu, k)$ lies in $\End((V_n^* \otimes \cdots \otimes V_1^*)[0])$.  Now, comparing leading terms in $\AA_{\lambda, \omega}$ in (\ref{eq:m-rel}) implies that $M(\mu, k) = 1$, yielding the desired.

\section{$q$-KZB and dual $q$-KZB equations} \label{sec:kzb}

In this section we prove the $q$-KZB and dual $q$-KZB equations on behavior of $F^{V_1, \ldots, V_n}(z_1, \ldots, z_n; \lambda, \omega, \mu, k)$ under shifts of the spectral parameters by the modular parameters $q^{-2\omega}$ and $q^{-2k}$.  We directly establish the dual $q$-KZB equations, after which the $q$-KZB equations follow by the symmetry relation of Theorem \ref{thm:mrs}.

\subsection{The statements}

The $q$-KZB operators are defined by 
\begin{align*}
K_j(z_1, \ldots, z_n;& \lambda, \omega, k) := \RR_{V_{j+1}V_j}(z_{j+1}, q^{2k} z_j; (\lambda, \omega) - h^{((j + 2) \cdots n)})^{-1} \cdots \RR_{V_nV_j}(z_n, q^{2k}z_j; \lambda, \omega)^{-1} \Gamma_j\\
&\phantom{=} \RR_{V_jV_1}(z_j, z_1; (\lambda, \omega) - h^{(2 \cdots (j-1))} - h^{((j+1) \cdots n)}) \cdots \RR_{V_j V_{j-1}}(z_j, z_{j-1}; (\lambda, \omega) - h^{((j + 1) \cdots n)})\\
&\phantom{=}D_j(\mu) := q_{*j}^{-2\mu + \sum_i x_i^2} q^{\Omega_{*j, *(j-1)}} \cdots q^{\Omega_{*j*1}},
\end{align*}
where $\Gamma_j f(\lambda, \omega) = f\Big((\lambda, \omega) + h^{(j)}\Big)$.  The dual $q$-KZB operators are defined by
\begin{align*}
K_j^\vee(&z_1, \ldots, z_n; \mu, k, \omega) := \RR_{V_{j-1}^*, V_j^*}(z_{j-1}, q^{2\omega} z_j; (\mu, k) - h^{(*1 \cdots *(j-2))})^{-1} \cdots \RR_{V_1^*, V_j^*}(z_1, q^{2\omega}z_j; \mu, k)^{-1} \Gamma_{*j} \\
&\phantom{=} \RR_{V_j^* V_n^*}(z_j, z_n; (\mu, k) - h^{(*1 \cdots *(j-1))} - h^{(*(j+1) \cdots *(n-1))}) \cdots \RR_{V_j^* V_{j+1}^*}(z_j, z_{j+1}; (\mu, k) - h^{(*1  \cdots *(j-1))})\\
&\phantom{======!}D_j^\vee(\lambda) := q_j^{-2\lambda + \sum_i x_i^2} q^{\Omega_{j, j + 1}} \cdots q^{\Omega_{j, n}},
\end{align*}
where $\Gamma_{*j} f(\mu, k) = f\Big((\mu, k) + h^{(*j)}\Big)$.  Note that $K_j(z_1, \ldots, z_n; \lambda, \omega, k)$ and $K_j^\vee(z_1, \ldots, z_n; \mu, k, \omega)$ are difference operators in $(\lambda, \omega)$ and $(\mu, k)$ whose coefficients are linear operators on $V$ and $V^*$ and that $D_j(\mu)$ and $D_j^\vee(\lambda)$ are linear operators on $V^*$ and $V$.  It is known that the $K_j(z_1, \ldots, z_n; \lambda, \omega, k)$ and the $K_j^\vee(z_1, \ldots, z_n; \mu, k, \omega)$ commute and form the $q$-KZB and dual $q$-KZB integrable systems.  

The $q$-KZB equations relate a spectral shift by the modular parameter $q^{-2k}$ associated to $\mu$ to the action of the difference operator $K_j(z_1, \ldots, z_n; \lambda, \omega, k)$ in $\lambda$.  Symmetrically, the dual $q$-KZB equations relates a spectral shift by the modular parameter $q^{-2\omega}$ associated to $\lambda$ to the action of the difference operator $K_j^\vee(z_1, \ldots, z_n; \mu, k, \omega)$ in $\mu$. In a different form, they were introduced by Felder in \cite{Fel} and studied by Felder-Tarasov-Varchenko in \cite{FTV, FTV2}. The remainder of this section will be devoted to the proof of these two equations, stated below.  We will prove Theorem \ref{thm:dual-qkzb} directly, after which Theorem \ref{thm:qkzb} follows from the symmetry property of Theorem \ref{thm:mrs}.

\begin{theorem}[$q$-KZB equation] \label{thm:qkzb}
For $j = 1, \ldots, n$, we have
\begin{multline*}
F^{V_1, \ldots, V_n}(z_1, \ldots, q^{2k}z_j,\ldots, z_n; \lambda, \omega, \mu, k)\\ = \Big(K_j(z_1, \ldots, z_n;\lambda, \omega, k) \otimes D_j(\mu)\Big) F^{V_1, \ldots, V_n}(z_1, \ldots, z_j, \ldots, z_n; \lambda, \omega, \mu, k).
\end{multline*}
\end{theorem}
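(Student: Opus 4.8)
The plan is to deduce the $q$-KZB equation from the dual $q$-KZB equation of Theorem~\ref{thm:dual-qkzb} and the Macdonald symmetry identity of Theorem~\ref{thm:mrs}, in exact parallel to the way the symmetry proof of Section~\ref{sec:sym} exchanges the Macdonald-Ruijsenaars and dual Macdonald-Ruijsenaars systems. Writing $W_i := V_{n+1-i}^*$, the function $F_\star^{V_n^*, \ldots, V_1^*}$ is by definition the ordinary trace function $F^{W_1, \ldots, W_n}$, to which Theorem~\ref{thm:dual-qkzb} applies for every position $j'$. The central point is that, under the substitution $(\lambda, \omega) \leftrightarrow (\mu, k)$, $z_i \leftrightarrow z_{n+1-i}$, $V_i \leftrightarrow V_{n+1-i}^*$ defining the symmetry, the spectral shift $z_j \mapsto q^{2k} z_j$ of the $q$-KZB equation matches the shift $z_{j'} \mapsto q^{2\omega'} z_{j'}$ of the dual $q$-KZB equation for $F^{W_1, \ldots, W_n}$ at position $j' = n+1-j$, since $\omega' = k$.

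First I would apply Theorem~\ref{thm:mrs} to the left-hand side, rewriting
\[
F^{V_1, \ldots, V_n}(z_1, \ldots, q^{2k} z_j, \ldots, z_n; \lambda, \omega, \mu, k) = F^{W_1, \ldots, W_n}(z_n, \ldots, z_1; \mu, k, \lambda, \omega)
\]
with the $j'$-th spectral argument (namely $z_{n+1-j'} = z_j$) shifted to $q^{2k} z_j$. As $q^{2\omega'} = q^{2k}$ under $\omega' = k$, this is precisely the left-hand side of the dual $q$-KZB equation for $F^{W_1, \ldots, W_n}$ at position $j'$. Applying Theorem~\ref{thm:dual-qkzb} then yields $\big(D_{j'}^\vee(\mu) \otimes K_{j'}^\vee(z_n, \ldots, z_1; \lambda, \omega, k)\big) F^{W_1, \ldots, W_n}(z_n, \ldots, z_1; \mu, k, \lambda, \omega)$, and a second application of Theorem~\ref{thm:mrs} converts the trailing trace function back into $F^{V_1, \ldots, V_n}(z_1, \ldots, z_n; \lambda, \omega, \mu, k)$.

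It then remains to identify the operators. Using $W_i^* \simeq V_{n+1-i}$ together with the label dictionaries $h^{(*m)} \mapsto h^{(n+1-m)}$ and $h^{(m)} \mapsto h^{(*(n+1-m))}$, the spectral identifications $z_i' = z_{n+1-i}$, and $(\mu', k') = (\lambda, \omega)$, $\omega' = k$, $j' = n+1-j$, I would check term by term that $D_{j'}^\vee(\mu) = D_j(\mu)$ and $K_{j'}^\vee(z_n, \ldots, z_1; \lambda, \omega, k) = K_j(z_1, \ldots, z_n; \lambda, \omega, k)$. Concretely, position $j'$ of $F^{W_1, \ldots, W_n}$ is the factor $W_{j'} = V_j^*$, so the prefactor $q_{j'}^{-2\mu + \sum_i x_i^2}$ of $D_{j'}^\vee$ becomes $q_{*j}^{-2\mu + \sum_i x_i^2}$ and the tail $q^{\Omega_{j', j'+1}} \cdots q^{\Omega_{j', n}}$ becomes $q^{\Omega_{*j, *(j-1)}} \cdots q^{\Omega_{*j, *1}}$, which is exactly $D_j(\mu)$; likewise the inverse exchange factors $\RR_{W_{j'-1}^* W_{j'}^*}^{-1} \cdots \RR_{W_1^* W_{j'}^*}^{-1}$ become $\RR_{V_{j+1} V_j}^{-1} \cdots \RR_{V_n V_j}^{-1}$, the operator $\Gamma_{*j'}$ becomes $\Gamma_j$, and each dynamical shift (for instance $h^{(*1 \cdots *(j'-2))} \mapsto h^{((j+2) \cdots n)}$ and $h^{(*1 \cdots *(j'-1))} \mapsto h^{((j+1) \cdots n)}$) matches its counterpart in $K_j$. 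Finally, since the symmetry identity identifies the first tensor group $V_n^* \otimes \cdots \otimes V_1^*$ of $F^{W_1, \ldots, W_n}$ with the $V^*$-space of $F^{V_1, \ldots, V_n}$ and the second group $V_1 \otimes \cdots \otimes V_n$ with the $V$-space, the operator $D_{j'}^\vee(\mu) \otimes K_{j'}^\vee$ becomes $K_j \otimes D_j$ acting on $V \otimes V^*$, giving the right-hand side of Theorem~\ref{thm:qkzb}.

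The main obstacle is purely combinatorial bookkeeping: one must simultaneously track the reversal of representations, of spectral parameters, and of the dynamical and tensor labels, and verify that every exchange operator $\RR$, every dynamical shift $h^{(\cdots)}$, and every factor $q^{\Omega}$ of $K_{j'}^\vee \otimes D_{j'}^\vee$ maps onto its intended counterpart in $K_j \otimes D_j$. No new representation-theoretic input beyond Theorems~\ref{thm:dual-qkzb} and~\ref{thm:mrs} is required.
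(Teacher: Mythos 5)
Your proposal is correct and is exactly the paper's route: the paper proves the dual $q$-KZB equation directly and then states that Theorem \ref{thm:qkzb} follows from Theorem \ref{thm:dual-qkzb} via the symmetry identity of Theorem \ref{thm:mrs}, which is precisely your argument. Your label-dictionary bookkeeping (with $j' = n+1-j$, $*m \mapsto n+1-m$, and the swap of the two tensor groups under $V^{**}\simeq V$) checks out and in fact supplies details the paper leaves implicit.
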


\begin{theorem}[dual $q$-KZB equation] \label{thm:dual-qkzb}
For $j = 1, \ldots, n$, we have
\begin{multline*}
F^{V_1, \ldots, V_n}(z_1, \ldots, q^{2\omega}z_j,\ldots, z_n; \lambda, \omega, \mu, k)\\ = \Big(D_j^\vee(\lambda) \otimes K_j^\vee(z_1, \ldots, z_n; \mu, k, \omega)\Big) F^{V_1, \ldots, V_n}(z_1, \ldots, z_j, \ldots, z_n; \lambda, \omega, \mu, k).
\end{multline*}
\end{theorem}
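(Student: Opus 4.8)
The plan is to prove the dual $q$-KZB equation directly at the level of the unnormalized trace and then dress the resulting identity into the statement for $F^{V_1, \ldots, V_n}$ by means of the normalization factors in (\ref{eq:norm-tr}). Since $F$ is assembled from $\Psi$, which uses the composed intertwiner $\Phi^{v_1}(z_1) \circ \cdots \circ \Phi^{v_n}(z_n)$, I would work with $\Psi$ (converting to and from $\wPsi$ through Lemma \ref{lem:trace-adj} only where convenient). The mechanism behind every $q$-KZB-type identity is the cyclicity of the trace together with the homogeneity of intertwiners under the grading element $q^{2\omega d}$: shifting the single spectral parameter $z_j \mapsto q^{2\omega} z_j$ is the same as transporting the $j$-th factor $\Phi^{v_j}(z_j)$ once around the trace, and the cost of this transport is recorded by exchange operators and by the diagonal part of $q^{2\lambda + 2\omega d}$.

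Concretely, I would first record the homogeneity relation
\[
\Phi^{v_j}_{\mu, k}(q^{2\omega} z_j) = q^{-2\omega d} \Phi^{v_j}_{\mu, k}(z_j) q^{2\omega d},
\]
where the factors $q^{\pm 2\omega d}$ act on the intermediate Verma modules; this follows from $\Delta(q^d) = q^d \otimes q^d$ and the fact that $q^{2\omega d}$ acts on $V_j[z_j^{\pm 1}]$ by the substitution $z_j \mapsto q^{2\omega} z_j$. Substituting this into $\Tr|_{M_{\mu, k}}\big(\Phi^{v_1}(z_1) \circ \cdots \circ \Phi^{v_n}(z_n)\, q^{2\lambda + 2\omega d}\big)$ and using cyclicity, I would carry the grading element all the way around the trace, so that $\Phi^{v_j}$ is effectively transported once around: passing the weight element $q^{2\lambda + 2\omega d}$ reproduces the spectral shift on $z_j$ and contributes the diagonal operator $q_j^{-2\lambda + \sum_i x_i^2}$ that is the backbone of $D_j^\vee(\lambda)$, while each commutation of $\Phi^{v_j}$ past a neighboring intertwiner inserts an $\cR$ through the exchange relation (\ref{eq:def-ex}), whose diagonal parts assemble the remaining $q^{\Omega_{j, j+1}} \cdots q^{\Omega_{j, n}}$ of $D_j^\vee(\lambda)$. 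The characteristic asymmetry of $K_j^\vee$ — factors $V_{j+1}, \ldots, V_n$ seeing the unshifted $z_j$ while factors $V_1, \ldots, V_{j-1}$ see $q^{2\omega} z_j$ — arises precisely because the transported factor meets the first group before, and the second group after, it laps the grading element.

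Having produced a shift identity for $\Psi$ whose coefficients are products of bare $\cR$'s on the $V$-tensor factors, I would convert to $F$ using (\ref{eq:norm-tr}). The fusion factors $\JJ^{1 \cdots n}_V$ at shifted versus unshifted $z_j$ are reconciled with the exchange operators via Lemma \ref{lem:c-fuse}, Proposition \ref{prop:fus-eval}, and Corollary \ref{corr:fus-factor}, turning the bare $\cR$'s into dynamical exchange operators $\RR$ with their correct dynamical shifts; the Weyl denominator $\delta_q$ is independent of the spectral parameters and passes through unchanged. To transfer the exchange operators onto the dual factors $V_n^* \otimes \cdots \otimes V_1^*$ and to replace the $(\lambda, \omega)$-arguments and shift $\Gamma_j$ by the $(\mu, k)$-arguments and shift $\Gamma_{*j}$ appearing in $K_j^\vee$, I would invoke Lemma \ref{lem:r-trans-val} on the double dual of the dynamical $R$-matrix, exactly as in the proof of Theorem \ref{thm:dmr}; conjugation by the $\QQ_{V_i^*}$ factors in (\ref{eq:norm-tr}) then supplies the precise $(\mu, k)$-dynamical arguments, and the relabeling of the $v_j^*$ spectator factors against the intermediate Verma weights yields the shift $\Gamma_{*j}$. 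Assembling these pieces produces $\big(D_j^\vee(\lambda) \otimes K_j^\vee(z_1, \ldots, z_n; \mu, k, \omega)\big) F^{V_1, \ldots, V_n}$.

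The hard part will be this last paragraph's bookkeeping: tracking the dynamical weight shifts $h^{(\cdots)}$ and $h^{(* \cdots)}$ through the $\QQ$- and $\JJ$-dressings so that the transposed exchange operators acquire exactly the arguments written in $K_j^\vee$, and verifying that the dualization via Lemma \ref{lem:r-trans-val} yields precisely $\RR_{V_i^* V_j^*}$ with the stated spectral and dynamical parameters rather than a conjugate. A secondary technical point is to justify the manipulations on the meromorphically continued trace rather than only on the formal series in $z_{i+1}/z_i$, since the substitution $z_j \mapsto q^{2\omega} z_j$ does not preserve the formal expansion domain; this is controlled by the convergence and continuation results of \cite{EFK, FR}. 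With Theorem \ref{thm:dual-qkzb} in hand, Theorem \ref{thm:qkzb} then follows from the Macdonald symmetry identity (Theorem \ref{thm:mrs}), under which the dual $q$-KZB system is exchanged with the $q$-KZB system.
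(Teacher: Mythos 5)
Your overall skeleton (transport $\Phi^{V_j}$ once around the trace using cyclicity and homogeneity in $d$, dress with the normalization, deduce the $q$-KZB equation from the dual one via Theorem \ref{thm:mrs}) matches the paper, but there is a genuine gap in where you locate the source of $K_j^\vee$. You assert that the intermediate shift identity for $\Psi$ has ``coefficients which are products of bare $\cR$'s on the $V$-tensor factors,'' and that the $(\mu,k)$-dynamical operators of $K_j^\vee$ are then obtained by dualizing these via Lemma \ref{lem:r-trans-val}, ``replacing the $(\lambda,\omega)$-arguments \dots by the $(\mu,k)$-arguments.'' Lemma \ref{lem:r-trans-val} cannot do this: it computes the double dual of an exchange operator while \emph{preserving} its dynamical argument (up to $h$-shifts), so no amount of dualization converts a $(\lambda,\omega)$-dynamical operator into a $(\mu,k)$-dynamical one. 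The $(\mu,k)$-dependence of $K_j^\vee$ enters at an earlier stage, through the commutation relation for composed intertwiners (the paper's Lemma \ref{lem:inter-comm}): swapping $\Phi^{V}_{(\mu,k)-h^{(W)}}(z_1)\circ\Phi^{W}_{\mu,k}(z_2)$ costs a bare $\cR_{VW}$ on the main tensor factors \emph{and}, separately, a dynamical exchange operator $\RR_{WV}(z_2,z_1;\mu+\rho,k+\ch)^*$ acting on the dual spectator factors, whose dynamical parameter is the Verma highest weight. It is these starred $(\mu,k)$-operators that assemble into $K_j^\vee$ after Lemma \ref{lem:r-trans-val} and absorption of the $\QQ_{V_i^*}$'s; your intermediate identity omits them entirely.

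The accounting also fails to close on the $D_j^\vee$ side. In the paper, the bare $\cR$'s on the $V$ factors are consumed \emph{in full} by the identity
\[
\JJ^{1 \cdots n}(\lambda, \omega)^{-1} \cR^{j, {j-1}} \cdots \cR^{j1}\, q_j^{-2\lambda - 2 \omega d}\, (\cR^{nj})^{-1} \cdots (\cR^{j+1,j})^{-1} \JJ^{1 \cdots n}(\lambda, \omega)\, q^{2\omega d} = D_j^\vee(\lambda),
\]
which is proved from the ABRR equation of Proposition \ref{prop:es-abrr}, not by extracting ``diagonal parts'' of $\cR$ (the off-diagonal parts are cancelled against the fusion conjugation, which is the whole content of the ABRR equation here). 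If the $V$-factor $\cR$'s are used up producing $D_j^\vee(\lambda)$, nothing remains to be ``transferred'' to the dual side; if instead you transfer them, $D_j^\vee(\lambda)$ loses its $q^{\Omega_{j,\cdot}}$ factors. Either way your plan produces only one of the two operators. To repair the argument you need the two-sided commutation relation of Lemma \ref{lem:inter-comm} as the basic move, which keeps the bare $\cR$'s (destined for $D_j^\vee(\lambda)$ via ABRR) and the $(\mu,k)$-dynamical starred exchange operators (destined for $K_j^\vee$ via Lemma \ref{lem:r-trans-val}) separate from the start.
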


\subsection{Commutation relation for intertwiners}

The fundamental operation in the proof of Theorem \ref{thm:dual-qkzb} is the application of the following commutation relation for intertwiners.

\begin{lemma} \label{lem:inter-comm}
For finite-dimensional $U_q(\agg)$-representations $V$ and $W$, we have the relation
\[
P_{V W} \cR_{V W} \Phi_{\mu, k}^{V, W}(z_1, z_2) = \RR_{W V}(z_2, z_1; \mu + \rho, k + \ch)^* \Phi_{\mu, k}^{W, V}(z_2, z_1).
\]
\end{lemma}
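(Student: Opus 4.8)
The plan is to reduce the identity to the algebraic definition of the exchange operator by repeatedly invoking the fact (Proposition \ref{prop:int-exist}) that an intertwiner out of a Verma module is determined by its highest term. First I would record the principle that $P_{VW}\cR_{VW}$ conjugates single-step intertwiners in the expected way: for $u \in V \otimes W$, the composite $P_{VW}\cR_{VW}\wPhi^{u}_{\mu,k}(z_1,z_2)$ is an intertwiner into the swapped target $M' \hotimes W[z_2^{\pm 1}] \otimes V((z_1))$ by Lemma \ref{lem:r-inter}, and since $P_{VW}\cR_{VW}$ acts only on the forward tensor factors its highest term is $P_{VW}\cR_{VW}u$; comparison of highest terms then gives
\[
P_{VW}\cR_{VW}\wPhi^{u}_{\mu,k}(z_1,z_2) = \wPhi^{P_{VW}\cR_{VW}u}_{\mu,k}(z_2,z_1).
\]
Writing $\Phi^{v,w}_{\mu,k}(z_1,z_2) = \wPhi^{J_{VW}(z_1,z_2;\mu,k)(v\otimes w)}_{\mu,k}(z_1,z_2)$ via Lemma \ref{lem:fus-def} and applying this to each summand of $\Phi^{V,W}_{\mu,k}(z_1,z_2)$ reduces the left-hand side of the lemma to
\[
\sum_{v,w} \wPhi^{P_{VW}\cR_{VW}J_{VW}(z_1,z_2;\mu,k)(v\otimes w)}_{\mu,k}(z_2,z_1) \otimes w^* \otimes v^*.
\]

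Next I would rewrite the input operator $P_{VW}\cR_{VW}J_{VW}(z_1,z_2;\mu,k)$. Using the flip relation $P_{VW}\cR_{VW} = \cR^{21}_{WV}P_{VW}$, the analogous flip relation for the fusion operator, and the definition $R_{WV}(z_2,z_1;\mu,k) = J_{WV}(z_2,z_1;\mu,k)^{-1}\cR^{21}_{WV}J^{21}_{WV}(z_1,z_2;\mu,k)$ of the representation-level exchange operator, this equals
\[
P_{VW}\cR_{VW}J_{VW}(z_1,z_2;\mu,k) = J_{WV}(z_2,z_1;\mu,k)\, R_{WV}(z_2,z_1;\mu,k)\, P_{VW}.
\]
By Proposition \ref{prop:fus-eval} the representation-level exchange operator is the universal one at the shifted spectral argument, $R_{WV}(z_2,z_1;\mu,k) = \RR_{WV}(z_2,z_1;\mu+\rho,k+\ch)$, which is precisely the argument in the statement. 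Substituting and applying Lemma \ref{lem:fus-def} once more in the reverse direction, namely $\wPhi^{J_{WV}(z_2,z_1;\mu,k)(w\otimes v)}_{\mu,k}(z_2,z_1) = \Phi^{w,v}_{\mu,k}(z_2,z_1)$, recasts the left-hand side as the iterated intertwiner $\Phi^{W,V}_{\mu,k}(z_2,z_1)$ with an extra copy of $\RR_{WV}(z_2,z_1;\mu+\rho,k+\ch)$ inserted on its forward factors $W \otimes V$.

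Finally I would transfer this extra exchange operator onto the dual labels. Because the universal intertwiners carry their dual factors in reversed order, the transpose identity $\sum_i (T e_i) \otimes e_i^* = \sum_i e_i \otimes (T^* e_i^*)$ converts the action of $\RR_{WV}(z_2,z_1;\mu+\rho,k+\ch)$ on the forward factors into the action of its adjoint on the coefficient factors $V^* \otimes W^*$, which is exactly $\RR_{WV}(z_2,z_1;\mu+\rho,k+\ch)^{*}\,\Phi^{W,V}_{\mu,k}(z_2,z_1)$. I expect the main obstacle to be precisely this bookkeeping: tracking the permutation $P_{VW}$, the meaning of the $21$-superscripts, and the reversed ordering of the dual labels carefully enough that the transpose identity deposits $\RR_{WV}$ into the correct slots in the correct order, while keeping the shift $(\mu,k) \mapsto (\mu+\rho,k+\ch)$ between representation-level and universal operators consistent throughout.
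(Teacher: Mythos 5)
Your proposal is correct and takes essentially the same route as the paper's proof: both arguments observe that the two sides are intertwiners out of $M_{\mu,k}$ determined by their highest terms (Proposition \ref{prop:int-exist}), match those highest terms using the definition $R_{WV} = J_{WV}^{-1}\cR^{21}_{WV}J^{21}_{WV}$ together with Proposition \ref{prop:fus-eval} to identify the representation-level exchange operator with $\RR_{WV}(z_2,z_1;\mu+\rho,k+\ch)$, and transfer the operator to the dual factors by the transpose identity. Your intermediate step on $P_{VW}\cR_{VW}$ conjugating single-step intertwiners is just an explicit unpacking of the paper's one-line computation of the left side's highest term, and your flagged bookkeeping concern (ordering of the dual labels) is real but handled the same way in the paper.
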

\begin{proof}
Both sides of the desired equality are intertwiners $M_{\mu, k} \to W[z_2^{\pm 1}] \otimes V[z_1^{\pm 1}] \otimes W^* \otimes V^*$.  Let $\{v_i\}$ and $\{v_j\}$ be bases of $V, W$, and let $\{v_i^*\}, \{v_j^*\}$ be the dual bases. The highest term of the left side is given by 
\[
\sum_{i, j} v_j \otimes v_i \otimes J^{21}_{WV}(z_1, z_2; \mu, k)^* (\cR_{WV}^{21})^*(v_j^* \otimes v_i^*)
\]
and the highest term of the right hand side is given by 
\[
\sum_{i, j} v_j \otimes v_i \otimes \RR_{W V}(z_2, z_1; \mu + \rho, k + \ch)^* J_{W V}(z_1, z_2; \mu, k)^* (v_j^* \otimes v_i^*),
\]
so the result follows by noting that Proposition \ref{prop:fus-eval} implies these are equal.
\end{proof}

\subsection{Proof of the dual $q$-KZB equation}

We are now ready to prove the dual $q$-KZB equation.  Rewrite the conclusion of Lemma \ref{lem:inter-comm} as
\[
\Phi_{(\mu, k) - h^{(W)}}^V(z_1) \circ \Phi_{\mu, k}^W(z_2) = \cR_{VW}^{-1} P_{VW} \RR_{WV}(z_2, z_1; \mu + \rho, k + \ch)^*  \Phi_{(\mu, k) - h^{(V)}}^W(z_2) \circ \Phi_{\mu, k}^V(z_1).
\]
Swapping the roles of $V$ and $W$, we obtain also that 
\[
\Phi_{(\mu, k) - h^{(W)}}^V(z_1) \circ \Phi_{\mu, k}^W(z_2) = \RR_{VW}(z_1, z_2; \mu + \rho, k + \ch)^{-*} P_{VW} \cR_{WV} \Phi_{(\mu, k) - h^{(V)}}^W(z_2) \circ \Phi_{\mu, k}^V(z_1).
\]
Apply these commutation relations to commute $\Phi^{V_j}_{(\mu, k) - h^{(j + 1 \cdots n)}}(z_j)$ to the left, apply the cyclic property of the trace to move it to the right, and then apply the commutation relations to commute it back to its original position in
\[
\Psi^{V_1, \ldots, V_n}(z_1, \ldots, z_n; \lambda, \omega, \mu, k) := \Tr|_{M_{\mu, k}}\Big(\Phi^{V_1}_{(\mu, k) - h^{(2 \cdots n)}}(z_1) \cdots \Phi^{V_n}_{\mu, k}(z_n) q^{2\lambda + 2 \omega d}\Big)
\]
to obtain
\begin{align*}
&\Psi^{V_1, \ldots, V_n}(z_1, \ldots, z_n; \lambda, \omega, \mu, k)\\
&= \RR_{V_{j-1} V_j}(z_{j-1}, z_j; (\mu + \rho, k + \ch) - h^{(j + 1 \cdots n)})^{-*} P_{V_{j-1}V_j} \cR_{V_{j}V_{j-1}} \cdots\\
&\phantom{===} \RR_{V_1 V_j}(z_1, z_j; (\mu + \rho, k + \ch) - h^{(2 \cdots j - 1) + (j + 1 \cdots n)})^{-*} P_{V_jV_1} \cR_{V_1 V_j}\\
&\phantom{===} q_j^{-2\lambda - 2\omega d} \Gamma_{*j} \cR_{V_nV_j}^{-1} P_{V_jV_n} \RR_{V_j V_n}(z_j, z_n; \mu + \rho, k + \ch)^* \cdots \cR_{V_{j+1}V_j}^{-1}\\
&\phantom{===} P_{V_{j+1}V_j}\RR_{V_{j}V_{j+1}}(z_j, z_{j+1}; (\mu + \rho, k + \ch) - h^{(j + 2 \cdots n)})^* q_j^{2\omega d} \Psi^{V_1, \ldots, V_n}(z_1, \ldots, z_j, \ldots, z_n; \lambda, \omega, \mu, k)\\
&= \cR_{V_{j}V_{j-1}} \cdots \cR_{V_j V_1} q_j^{-2\lambda - 2 \omega d} \cR_{V_nV_j}^{-1}\cdots \cR_{V_{j+1}V_j}^{-1} q_j^{2\omega d}\\
&\phantom{===} \RR_{V_{j-1} V_j}(z_{j-1}, z_j; (\mu + \rho, k + \ch) - h^{(j + 1 \cdots n)})^{-*} \cdots \RR_{V_1 V_j}(z_1, z_j; (\mu + \rho, k + \ch) - h^{(2 \cdots j - 1) + (j + 1 \cdots n)})^{-*}\\ 
&\phantom{===}\Gamma_{*j} \RR_{V_j V_n}(q^{-2\omega} z_j, z_n; \mu + \rho, k + \ch)^* \cdots \RR_{V_{j}V_{j+1}}(q^{-2\omega} z_j, z_{j+1}; (\mu + \rho, k + \ch) - h^{(j + 2 \cdots n)})^*\\
&\phantom{===} \Psi^{V_1, \ldots, V_n}(z_1, \ldots, q^{-2\omega}z_j, \ldots,  z_n; \lambda, \omega, \mu, k).
\end{align*}
Now, apply Lemma \ref{lem:r-trans-val} and cancel terms to see that 
\begin{multline} \label{eq:k-op-val}
\RR_{V_{j-1} V_j}(z_{j-1}, z_j; (\mu + \rho, k + \ch) - h^{(j + 1 \cdots n)})^{-*} \cdots \\ \RR_{V_1 V_j}(z_1, z_j; (\mu + \rho, k + \ch) - h^{(2 \cdots j - 1) + (j + 1 \cdots n)})^{-*}\\ \Gamma_{*j} \RR_{V_j V_n}(q^{-2\omega} z_j, z_n; \mu + \rho, k + \ch)^* \cdots \RR_{V_{j}V_{j+1}}(q^{-2\omega} z_j, z_{j+1}; (\mu + \rho, k + \ch) - h^{(j + 2 \cdots n)})^*\\
= \Big(\QQ_{V_j^*}((\mu + \rho, k + \ch) + h^{(*(j+1) \cdots *n)}) \QQ_{V_{j-1}^*}((\mu + \rho, k + \ch) + h^{(*j \cdots *n)}) \cdots \QQ_{V_1^*}((\mu + \rho, k + \ch) + h^{(*2 \cdots *n)})\\
 \QQ_{V_n^*}(\mu + \rho, k + \ch) \cdots \QQ_{V_{j + 1}^*}((\mu + \rho, k + \ch) + h^{(*(j+2) \cdots *n)})\Big) K_j^\vee(z_1, \ldots, q^{-2\omega}z_j, \ldots, z_n; \mu + \rho, k + \ch, \omega)\\
\Big(\QQ_{V_{j-1}^*}((\mu + \rho, k + \ch) + h^{(*j\cdots *n)})^{-1} \cdots \QQ_{V_1^*}((\mu + \rho, k + \ch) + h^{(*2 \cdots *n)})^{-1}\\
 \QQ_{V_n^*}(\mu + \rho, k + \ch)^{-1} \cdots \QQ_{V_{j+1}^*}((\mu + \rho, k + \ch) + h^{(*(j+2) \cdots *n)})^{-1} \QQ_{V_j^*}((\mu + \rho, k + \ch) + h^{(*(j+1)\cdots *n)})^{-1}\Big).
\end{multline}
Finally, we claim that on $V[0] \otimes V^*[0]$, we have
\begin{multline} \label{eq:d-op-val}
\JJ^{1 \cdots n}(\lambda, \omega)^{-1} \cR^{j, {j-1}} \cdots \cR^{j1} q_j^{-2\lambda - 2 \omega d} (\cR^{nj})^{-1} \cdots (\cR^{j+1,j})^{-1} \JJ^{1 \cdots n}(\lambda, \omega) q^{2\omega d}\\ = q_j^{-2\lambda - \sum_i x_i^2} q^{-\Omega_{j, j+1}} \cdots q^{-\Omega_{j, n}} = D_j^\vee(\lambda).
\end{multline}
Notice that $\cR^{j, {j-1}} \cdots \cR^{j1} = \cR^{j, 1 \cdots j-1}$ and $(\cR^{nj})^{-1} \cdots (\cR^{j+1,j})^{-1} = (\cR^{j+1\cdots n, j})^{-1}$.  Therefore, to prove (\ref{eq:d-op-val}), it suffices to check it for $n = 3$ and $n = 2$.  For $n = 3$, the product of both sides for $j = 1, 2, 3$ is $1$, hence it suffices to check for $j = 1, 3$, in which case it reduces to the $n = 2$ case.  For $n = 2$, (\ref{eq:d-op-val}) follows by rearranging the ABRR equation of Proposition \ref{prop:es-abrr}. Substituting (\ref{eq:k-op-val}) and (\ref{eq:d-op-val}) into our previous relation and then applying the normalizations of (\ref{eq:norm-tr}) yields the result.

\bibliographystyle{alpha}
\bibliography{qafftr-bib}

\def\cprime{$'$}
\begin{thebibliography}{ABRR98}

\bibitem[ABRR98]{ABRR}
D.~Arnaudon, E.~Buffenoir, E.~Ragoucy, and P.~Roche.
\newblock Universal solutions of quantum dynamical {Y}ang-{B}axter equations.
\newblock {\em Lett. Math. Phys.}, 44(3):201--214, 1998.

\bibitem[Dri89]{Dri}
V.~Drinfeld.
\newblock Almost cocommutative {H}opf algebras.
\newblock {\em Algebra i Analiz}, 1(2):30--46, 1989.

\bibitem[EFK98]{EFK}
P.~Etingof, I.~Frenkel, and A.~Kirillov, Jr.
\newblock {\em Lectures on representation theory and {K}nizhnik-{Z}amolodchikov
  equations}, volume~58 of {\em Mathematical Surveys and Monographs}.
\newblock American Mathematical Society, Providence, RI, 1998.

\bibitem[EL05]{EL}
P.~Etingof and F.~Latour.
\newblock {\em The dynamical {Y}ang-{B}axter equation, representation theory,
  and quantum integrable systems}, volume~29 of {\em Oxford Lecture Series in
  Mathematics and its Applications}.
\newblock Oxford University Press, Oxford, 2005.

\bibitem[ES99]{ES3}
P.~Etingof and O.~Schiffmann.
\newblock Twisted traces of intertwiners for {K}ac-{M}oody algebras and
  classical dynamical {$R$}-matrices corresponding to generalized
  {B}elavin-{D}rinfeld triples.
\newblock {\em Math. Res. Lett.}, 6(5-6):593--612, 1999.

\bibitem[ES01]{ES}
P.~Etingof and O.~Schiffmann.
\newblock Twisted traces of quantum intertwiners and quantum dynamical
  {$R$}-matrices corresponding to generalized {B}elavin-{D}rinfeld triples.
\newblock {\em Comm. Math. Phys.}, 218(3):633--663, 2001.

\bibitem[ESV02]{ESV}
P.~Etingof, O.~Schiffmann, and A.~Varchenko.
\newblock Traces of intertwiners for quantum groups and difference equations,
  {II}.
\newblock {\em Lett. Math. Phys.}, 62(2):143--158, 2002.

\bibitem[Eti94]{Eti4}
P.~Etingof.
\newblock Representations of affine {L}ie algebras, elliptic {$r$}-matrix
  systems, and special functions.
\newblock {\em Comm. Math. Phys.}, 159(3):471--502, 1994.

\bibitem[Eti95]{Eti}
P.~Etingof.
\newblock Central elements for quantum affine algebras and affine {M}acdonald's
  operators.
\newblock {\em Math. Res. Lett.}, 2(5):611--628, 1995.

\bibitem[EV00]{EV}
P.~Etingof and A.~Varchenko.
\newblock Traces of intertwiners for quantum groups and difference equations,
  {I}.
\newblock {\em Duke Math. J.}, 104(3):391--432, 2000.

\bibitem[Fel95]{Fel}
G.~Felder.
\newblock Conformal field theory and integrable systems associated to elliptic
  curves.
\newblock In {\em Proceedings of the {I}nternational {C}ongress of
  {M}athematicians, {V}ol.\ 1, 2 ({Z}\"urich, 1994)}, pages 1247--1255.
  Birkh\"auser, Basel, 1995.

\bibitem[FR92]{FR}
I.~Frenkel and N.~Reshetikhin.
\newblock Quantum affine algebras and holonomic difference equations.
\newblock {\em Comm. Math. Phys.}, 146(1):1--60, 1992.

\bibitem[FTV97]{FTV}
G.~Felder, V.~Tarasov, and A.~Varchenko.
\newblock Solutions of the elliptic $q$-{KZB} equations and {B}ethe ansatz.
  {I}.
\newblock In {\em Topics in singularity theory}, volume 180 of {\em Amer. Math.
  Soc. Transl. Ser. 2}, pages 45--75. Amer. Math. Soc., Providence, RI, 1997.

\bibitem[FTV99]{FTV2}
G.~Felder, V.~Tarasov, and A.~Varchenko.
\newblock Monodromy of solutions of the elliptic quantum
  {K}nizhnik-{Z}amolodchikov-{B}ernard difference equations.
\newblock {\em Internat. J. Math.}, 10(8):943--975, 1999.

\bibitem[FV02]{FV}
G.~Felder and A.~Varchenko.
\newblock {$q$}-deformed {KZB} heat equation: completeness, modular properties
  and {${\rm SL}(3,\Bbb Z)$}.
\newblock {\em Adv. Math.}, 171(2):228--275, 2002.

\bibitem[Kon94]{Kon}
H.~Konno.
\newblock Free-field representation of the quantum affine algebra
  {$U_q(\widehat{\mathfrak{sl}}_2)$} and form factors in the higher-spin
  {$XXZ$} model.
\newblock {\em Nuclear Phys. B}, 432(3):457--486, 1994.

\bibitem[KQS93]{KQS}
A.~Kato, Y.~Quano, and J.~Shiraishi.
\newblock Free boson representation of {$q$}-vertex operators and their
  correlation functions.
\newblock {\em Comm. Math. Phys.}, 157(1):119--137, 1993.

\bibitem[Lus10]{Lus}
G.~Lusztig.
\newblock {\em Introduction to quantum groups}.
\newblock Modern Birkh\"auser Classics. Birkh\"auser/Springer, New York, 2010.
\newblock Reprint of the 1994 edition.

\bibitem[Mat94]{Mat}
A.~Matsuo.
\newblock A {$q$}-deformation of {W}akimoto modules, primary fields and
  screening operators.
\newblock {\em Comm. Math. Phys.}, 160(1):33--48, 1994.

\bibitem[Sun16]{Sun:qafftr}
Y.~Sun.
\newblock Traces of intertwiners for quantum affine {$\mathfrak{sl}_2$} and
  {F}elder--{V}archenko functions.
\newblock {\em Comm. Math. Phys.}, 347(2):573--653, 2016.

\end{thebibliography}
\end{document}